\theoremstyle{plain}
\newtheorem{theorem*}{Theorem}
\newtheorem{theorem}{Theorem}
\newtheorem{proposition}[theorem]{Proposition}
\newtheorem{lemma}[theorem]{Lemma}
\newtheorem{corollary}[theorem]{Corollary}
\newtheorem{claim}[theorem]{Claim}
\numberwithin{equation}{section}
\theoremstyle{definition}
\newtheorem{definition}[theorem]{Definition}
\newtheorem{notation}[theorem]{Notation}
\theoremstyle{remark}
\newtheorem{remark}[theorem]{Remark}
\newtheorem{example}[theorem]{Example}
\DeclareMathOperator{\CH}{\mathrm{CH}}
\DeclareMathOperator{\dom}{dom}
\newcommand{\liftphi}{ \Phi_*}
\DeclareMathOperator{\Fin}{Fin}
\newcommand{\pprod}{\rm{prod}}
\newcommand{\even}{\textrm{even}}
\newcommand{\odd}{\textrm{odd}}
\tikzset{
 symbol/.style={
 draw=none,
 every to/.append style={
 edge node={node [sloped, allow upside down, auto=false]{$#1$}}}
 }
}
\numberwithin{theorem}{section}
\newcommand{\bbN}{{\mathbb N}}
\newcommand{\bbZ}{{\mathbb Z}}
\newcommand{\bbI}{{\mathbb I}}
\newcommand{\cF}{{\mathcal F}}
\newcommand{\cG}{{\mathcal G}}
\newcommand{\cV}{{\mathcal V}}
\newcommand{\cP}{{\mathcal P}}
\newcommand{\cM}{{\mathcal M}}
\newcommand{\cN}{{\mathcal N}}
\newcommand{\cX}{{\mathcal X}}
\newcommand{\cY}{{\mathcal Y}}
\newcommand{\cZ}{{\mathcal Z}}
\newcommand{\cB}{{\mathcal B}}
\newcommand{\cC}{{\mathcal C}}
\newcommand{\cA}{{\mathcal A}}
\newcommand{\cI}{{\mathcal I}}
\newcommand{\cJ}{{\mathcal J}}
\newcommand{\cT}{{\mathcal T}}
\newcommand{\<}{\langle}
\renewcommand{\>}{\rangle}
\newcommand{\fc}{\mathfrak{c}}
\newcommand{\rs}{\restriction}
\newcounter{my_enumerate_counter}
\newcommand{\pushcounter}{\setcounter{my_enumerate_counter}{\value{enumi}}}
\newcommand{\popcounter}{\setcounter{enumi}{\value{my_enumerate_counter}}}
\theoremstyle{theorem}
\newtheorem{thm}{Theorem}
\newtheorem{prop}[thm]{Proposition}
\newcommand{\bbP}{\mathbb P}
\newcommand{\calH}{{\mathcal H}}
\newcommand{\calD}{{\mathcal D}}
\newcommand{\calL}{\mathcal L}
\newcommand{\bbG}{\mathbb G}
\newcommand{\cU}{\mathcal U}
\newcommand{\cW}{\mathcal W}
\DeclareMathOperator{\OCA}{{\mathsf {OCA}}}
\DeclareMathOperator{\OCAT}{{\mathsf {OCA}_T}}
\DeclareMathOperator{\OCAinfty}{{\mathsf {OCA}_\infty}}
\DeclareMathOperator{\OCAsharp}{{\mathsf {OCA}^\#}}
\DeclareMathOperator{\PFA}{{\mathsf {PFA}}}
\DeclareMathOperator{\MA}{{\mathsf {MA}_{\aleph_1}}}
\DeclareMathOperator{\MAsl}{{\mathsf {MA}_{\aleph_1}(\sigma\textrm{-linked})}}
\DeclareMathOperator{\ZFC}{{\mathsf {ZFC}}}
\newcommand{\twoN}{\{0,1\}^\bbN}
\newcommand{\twolN}{\{0,1\}^{<\bbN}}
\newcommand{\cstar}{$\mathrm{C^*}$}
\newcommand{\cPN}{\cP(\bbN)}
\newcommand{\cPNF}{\cP(\bbN)/\Fin}
\DeclareMathOperator{\id}{id}
\DeclareMathOperator{\eq}{eq}
\newcommand{\cJprod}{\cJ_{\pprod}}
\newcommand{\prMF}{\prod_n \cM_n/\Fin}
\newcommand{\prNF}{\prod_n \cN_n/\Fin}
\newcommand{\prLF}{\prod_n \calL_n/\Fin}
\newcommand{\prLpF}{\prod_n \calL'_n/\Fin}
\newcommand{\prMI}{\prod_n \cM_n/\cI}
\newcommand{\prNJ}{\prod_n \cN_n/\cJ}
\newcommand{\prM}{\prod_n \cM_n}
\newcommand{\prN}{\prod_n \cN_n}
\newcommand{\X}{{\mathcal X}}
\newcommand{\N}{{\mathbb N}}
\newcommand{\J}{{\mathcal J}}
\newcommand{\Jcont}{\cJ_{\textrm{cont}}}
\DeclareMathOperator{\Diag}{Diag}
\DeclareMathOperator{\Diff}{Diff}
\title{Trivial Isomorphisms between Reduced Products}
\author{Ben De Bondt}
\address[BDB]{
 Institut de Math\'ematiques de Jussieu - Paris Rive Gauche (IMJ-PRG)\\
 Universit\'e Paris Cit\'e\\
 B\^atiment Sophie Germain\\
 8 Place Aur\'elie Nemours \\ 75013 Paris, France}
\curraddr{Universität Münster\\
		Institut für Mathematische Logik und Grundlagenforschung 
		\\ 
		Einsteinstr. 62\\
		48149 Münster,
		Germany}
	\email{bdebondt@uni-muenster.de}
\author{Ilijas Farah}
\address[IF]{Department of Mathematics and Statistics\\
 York University\\
 4700 Keele Street\\
 North York, Ontario\\ Canada, M3J 1P3\\
 and 
 Ma\-te\-ma\-ti\-\v cki Institut SANU\\
 Kneza Mihaila 36\\
 11\,000 Beograd, p.p. 367\\
 Serbia}
\email{ifarah@yorku.ca}
\urladdr{https://ifarah.mathstats.yorku.ca}
\author{Alessandro Vignati}
\address[AV]{
 Institut de Math\'ematiques de Jussieu - Paris Rive Gauche (IMJ-PRG)\\
 Universit\'e Paris Cit\'e, Institut Universitaire de France\\ 
 B\^atiment Sophie Germain\\
 8 Place Aur\'elie Nemours \\ 75013 Paris, France}
\email{ale.vignati@gmail.com}
\urladdr{http://www.automorph.net/avignati}
\date{\today}
\begin{document}
 \maketitle

 \begin{abstract}
 We introduce a general method for showing that under relatively weak forcing axioms all isomorphisms between reduced products of countable models of certain theories are trivial (with a natural definition of `trivial isomorphism'). This applies to theories of countable fields, linear orders, trees, and random graphs. We also show that Todor\v cevi\'c’s Open Colouring Axiom, $\mathsf {OCA}_{\mathrm T}$, implies that all automorphisms of $\mathcal P(\mathbb N)/\mathrm{Fin}$ are trivial (by a 1993 result of Veli\v{c}kovi\'c, this is a consequence of $\mathsf {OCA}_{\mathrm T}$ and $\mathsf{MA}_{\aleph_1}$).
 \end{abstract}
 \setcounter{tocdepth}{1}
\tableofcontents

\section{Introduction}

We study reduced products of countable structures and their isomorphisms. Fix a countable language $\mathcal L$ and countable $\mathcal L$-structures $\cM_n$ and~$\cN_n$. 
Our basic questions are the following. 
\begin{enumerate}
 \item How does the isomorphism type of a reduced product $\prod_n \cM_n/\cI$ depend on the structures $\cM_n$ and on the ideal $\cI$? 
 \item Can one isolate the \emph{right} notion of a trivial isomorphism between reduced products, and prove that forcing axioms imply all isomorphisms are trivial? 
\end{enumerate}
These questions have been lingering around for a while, and both the second and the third author considered them unavoidable, but most likely formidably difficult and requiring substantial technical innovations. It turns out that, at least in some natural categories and in the case when $\cI$ is the Fr\'echet ideal $\Fin$, this was not quite the case. 
Let us consider some instructive examples. 

Reduced products of vector spaces are maximally non-rigid. 
If each $\cM_n$ is a vector space over a field~$F$, then $\prod_n \cM_n/\cI$ is a vector space over $F$. Thus two such reduced products are isomorphic if and only if they have the same dimension, and the structure `forgets’ the ideal $\cI$. If $\cI=\Fin$, or any other analytic ideal, and all $\cM_n$ and $F$ are countable, then this dimension is $2^{\aleph_0}$, and $\prod_n \cM_n/\cI$ has~$2^{2^{\aleph_0}}$ automorphisms. In \cite{debondt2024saturation} we generalized this observation by proving that every reduced product over $\Fin$ whose theory is stable is $\fc$-saturated. Therefore every reduced product of countable structures over $\Fin$ whose theory is stable has $2^{2^{\aleph_0}}$ automorphisms, provably in ZFC. 

If the Continuum Hypothesis, CH, holds, then the property of having $2^{2^{\aleph_0}}$ many automorphisms extends to reduced products $\prod_n \cM_n/\Fin$ for any sequence of structures $\cM_n$ of the same countable language that are of cardinality $\leq\aleph_1$. This is a consequence of the fact that such reduced products are countably saturated (called $\aleph_1$-saturated by some authors), see \cite{ChaKe}. By CH such reduced products have cardinality $\aleph_1$ and are therefore fully saturated. 

The following example is worth taking a closer look at. Let each $\cM_n$ be the two-element Boolean algebra and let each $\cN_n$ be the one-dimensional vector space over the field with two elements $\mathbb{F}_2$. Then the structures $\cM_n$ can be considered as expansions of the structures~$\cN_n$ and for the reduced product $\prod_n \cN_n/\cI$ we have maximal non-rigidity, as pointed out above. The situation for the structure $\prod_n \cM_n/\cI$, which is isomorphic to the quotient Boolean algebra $\cP(\bbN)/\cI$, is more complex. W. Rudin had proven that CH implies $\mathcal P(\bbN)/\Fin$ has $2^{2^{\aleph_0}}$ automorphisms (\cite{Ru}). By today’s standards, this is, as already mentioned in the previous paragraph, an easy consequence of the fact that $\cP(\bbN)/\Fin$ is countably saturated: countable saturation allows one to run a transfinite back-and-forth argument of length $\omega_1$ that builds a complete binary tree of height $\omega_1$, all branches of which give rise to distinct automorphisms of the structure. 

Are there obstructions to constructing a complete binary tree whose branches correspond to automorphisms of $\cP(\bbN)/\Fin$ when CH fails? Hausdorff constructed such an obstruction in the form of an $(\aleph_1,\aleph_1)$-gap in $\cP(\bbN)/\Fin$, thus proving that (in the modern terminology) $\cP(\bbN)/\Fin$ is not $\aleph_2$-saturated, and therefore that if CH fails then some partial automorphism of an $\aleph_1$-sized subalgebra of $\cP(\bbN)/\Fin$ cannot be extended to an automorphism of the entire structure (\cite{Hau:Summen}).\footnote{Hausdorff’s motivation for constructing such gap was different; at the time it appeared to be a partial result towards proving CH in ZFC.} This presents an obstruction to a recursive construction of an automorphism of $\cP(\bbN)/\Fin$ when CH fails.\footnote{It should be noted however that the Boolean algebra $\cP(\bbN)/\Fin$ has $2^{2^{\aleph_0}}$ automorphisms in Cohen’s original model of ZFC in which CH fails, by~\cite{ShSte:Non-trivial}. This direction of research will not be pursued in the present paper.} 

Remarkably, S. Shelah has proven that in some forcing extension of the universe the situation is totally different: all automorphisms of $\cP(\bbN)/\Fin$ are \emph{trivial} (\cite{Sh:PIF}). An automorphism $\Phi$ of $\cP(\bbN)/\cI$ is \emph{trivial} if there are sets $A$ and $B$ whose complements are in $\cI$ and a bijection $f\colon A\to B$ such that $X\mapsto f[X]$ lifts $\Phi$. It is not difficult to see that this is equivalent to asserting that~$\Phi$ has a lifting that is a continuous homomorphism. 
Shelah’s conclusion has since been deduced from the Proper Forcing Axiom (\cite{ShSte:PFA}) and from the conjunction of two of its consequences $\OCAT$ and $\MA$ (\cite{Ve:OCA}). While $\MA$ alone does not suffice for this conclusion by \cite[Theorem~3.1]{Ve:OCA}, until now it was open whether $\OCAT$ alone does. 

For general reduced products and other quotient structures, isolating the correct notion of a trivial automorphism (or trivial isomorphism) is a nontrivial issue. For example, in case of the Calkin algebra, trivial automorphisms coincide with inner automorphisms (\cite{Fa:All}, see also \cite[\S 17]{Fa:STCstar}), but for other corona \cstar-algebras many trivial automorphisms are outer (\cite{vignati2018rigidity}). In case of \v Cech--Stone remainders, the situation is analogous to that of $\cP(\bbN)/\Fin$ (\cite[Theorem~C]{vignati2018rigidity}). In the present paper we will not delve into \cstar-algebras or other metric structures. 

The main focus of this paper is to extend the rigidity results for automorphisms of $\mathcal P(\bbN)/\Fin$ to isomorphisms of reduced products of the form $\prod_n\cM_n/\Fin$, where~$\cM_n$ are countable structures in a given language. Specifically, we substantially enlarge the class of structures whose reduced products satisfy rigidity \`a la Shelah, under suitable Forcing Axioms. This typically proceeds through the following two steps.\footnote{A two-step analysis analogous to the one given below applies to isomorphisms, and in some categories even to homomorphisms, between reduced products.}\footnote{The proof offered in the present paper follows a slightly different strategy.} (Note that since all $\cM_n$ are assumed to be at most countable, $\prod_n \cM_n$ carries a natural Polish space topology.) 
\begin{enumerate}
 \item Use forcing axioms to prove that every isomorphism between reduced products in a given class has a Borel lifting. 
 \item Prove that every isomorphism (between reduced products) with a Borel lifting is trivial. 
\end{enumerate}
The latter step applies in a wider context, including the case of reduced powers of finite groups (\cite{Fa:Liftings}, \cite{KanRe:Ulam}) in which there may be automorphisms without a Borel lifting. (See our earlier remark on vector spaces and reduced products whose theory is stable.) We are however interested in a situation in which `rigidity’ is interpreted in the strongest way possible, where arbitrary isomorphisms (not necessarily with a Baire-measurable lifting) are allowed. While referring the reader to \cite{farah2022corona} for a bigger picture of the rigidity program pursued here we proceed to describe our main results.

Towards a strong version of rigidity for isomorphisms of reduced products, we introduce a sharpening of Todor\v cevi\'c's Open Colouring Axiom, $\OCAT$ (Definition~\ref{Def.OCAsharp}). This axiom, denoted $\OCAsharp$, substantially simplifies the `usual' uniformization arguments needed when proving rigidity results (see e.g. \cite[forcings $\cP$ and $\cP_{\omega_1}$ in \S 3]{Fa:AQ}, \cite{mckenney2018forcing}, or \cite{farah2022corona}). We prove that $\OCAsharp$ alone implies all automorphisms of $\cP(\bbN)/\Fin$ are trivial, and that it suffices even for the case $\cI=\Fin$ of the $\OCA$ lifting theorem of \cite{Fa:AQ}, originally proven by an involved uniformization argument using Martin's Axiom (see Theorem~\ref{T.OCAsharp-Fin}). Even better, we adapt the proof in \cite[\S 5]{moore2021some} to show that $\OCAT$ implies $\OCAsharp$, therefore giving the following strengthening of the main result of \cite{Ve:OCA} and of the $\OCA$ lifting theorem (\cite[Theorem 3.3.5]{Fa:AQ}) for $\Fin$. 

\begin{thm}\label{thm:OCAPomega}
$\OCAT$ implies all automorphisms of $\cP(\bbN)/\Fin$ are trivial. It moreover implies that every homomorphism $\Phi\colon \cP(\N)/\Fin\to \cP(\N)/\Fin$ has a decomposition $\Phi = \Phi_1\oplus\Phi_2$, where $\Phi_1$ has a completely additive lifting and where the kernel of $\Phi_2$ is nonmeager. 
\end{thm}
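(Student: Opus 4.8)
The plan is to factor the proof through a strengthening $\OCAsharp$ of $\OCAT$ (Definition~\ref{Def.OCAsharp}), isolating two ingredients: (a) $\OCAT\Rightarrow\OCAsharp$, and (b) $\OCAsharp$ implies that every homomorphism $\Phi\colon\cP(\N)/\Fin\to\cP(\N)/\Fin$ splits as $\Phi_1\oplus\Phi_2$ with $\Phi_1$ completely additive and $\ker\Phi_2$ nonmeager --- this being exactly the $\cI=\Fin$ instance of the $\OCA$ lifting theorem of \cite{Fa:AQ}, recorded here as Theorem~\ref{T.OCAsharp-Fin}. Granting (a) and (b), the theorem's first assertion about automorphisms follows from its second by a short argument (below). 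The guiding idea behind $\OCAsharp$ is to build into the axiom the uniformization that in \cite{Ve:OCA} and \cite{Fa:AQ} was extracted from $\MA$ applied to a $\sigma$-linked poset: where the classical argument runs the open colouring dichotomy and then laboriously amalgamates $\omega_1$-many local continuous selectors into a global one, $\OCAsharp$ is to hand back the global selector at once, reducing the rigidity argument to a single bare application of the dichotomy.

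For (b), given $\Phi$ fix any lifting $F\colon\cP(\N)\to\cP(\N)$ (not assumed Baire measurable). On an appropriate Polish space parametrising pairs of subsets of $\N$ that agree mod finite, define the familiar open colouring that marks a pair $\{X,Y\}$ when $F(X)\triangle F(Y)$ meets a controlled initial segment; then apply the dichotomy furnished by $\OCAsharp$. An uncountable set all of whose pairs are marked must be excluded --- this is precisely where the extra strength of $\OCAsharp$ over $\OCAT$ does its work, replacing the $\MA$-driven exclusion of the classical proof. So one lands in the countable-cover alternative, and reading it off yields a partition $\N=A_1\sqcup A_2$ mod finite with $F\restriction\cP(A_1)$ continuous after a modification on a meager set --- whence $\Phi_1:=\Phi\restriction\cP(A_1)/\Fin$ has a continuous lifting --- while the colouring forces $\ker\Phi_2$ to be nonmeager for $\Phi_2:=\Phi\restriction\cP(A_2)/\Fin$. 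The remaining step, passing from a ``continuous lifting'' of $\Phi_1$ to a ``completely additive lifting'', is the standard structural fact (\cite[\S1]{Fa:AQ}) that a continuous Boolean homomorphism $\cP(A_1)\to\cP(\N)$ lifting a quotient homomorphism agrees mod $\Fin$ with the completely additive map $X\mapsto\bigcup_{n\in X}F(\{n\})$.

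The hard part is (a): showing $\OCAT$ already implies $\OCAsharp$. I would prove this by adapting the forcing and combinatorial analysis in \cite[\S5]{moore2021some}, which shows that $\OCAT$ decides statements of the shape $\OCAsharp$ packages; the work consists in checking that the auxiliary colourings witnessing $\OCAsharp$ fall under the hypotheses of that analysis --- in particular verifying $\sigma$-linkedness (or at least ccc) of the natural approximation posets and the requisite measurability of the colourings. Pinning down the exact formulation of $\OCAsharp$ --- strong enough to collapse the uniformization in (b), yet still a consequence of $\OCAT$ --- is the delicate point, and I expect the bulk of the technical difficulty of the whole result to reside there.

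Finally, the automorphism assertion. Let $\Phi$ be an automorphism and apply (b) to obtain $\N=A_1\sqcup A_2$ as above. Since $\Phi$ is injective, $\ker\Phi=\{0\}$, so $\ker\Phi_2=\{0\}$; but $\{0\}\subseteq\cP(A_2)/\Fin$ is the ideal of finite subsets of $A_2$, which is meager once $A_2$ is infinite, so $A_2$ must be finite. Hence $\Phi=\Phi_1$ has a completely additive lifting, necessarily of the form $\Phi([X])=\bigl[\bigcup_{n\in X}h(n)\bigr]$ for some $h\colon\N\to\cP(\N)$, and being a Boolean homomorphism forces the sets $h(n)$ to be pairwise almost disjoint with $\bigcup_n h(n)=^{*}\N$. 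Surjectivity of $\Phi$ then rules out any infinite $h(n)$ (an infinite $h(n_0)$ would have infinite, co-infinite subsets outside $\ran\Phi$) and injectivity rules out $h(n)=\emptyset$ for infinitely many $n$; one more short argument --- e.g.\ running the same analysis on $\Phi^{-1}$ --- shows that off a finite set each $h(n)$ is a singleton and $n\mapsto h(n)$ codes a bijection between two cofinite subsets of $\N$, i.e.\ $\Phi$ is trivial.
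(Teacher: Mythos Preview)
Your high-level architecture matches the paper exactly: factor through $\OCAsharp$, prove (a) $\OCAT\Rightarrow\OCAsharp$ by adapting \cite[\S5]{moore2021some}, and (b) derive the lifting theorem for $\Fin$ from $\OCAsharp$. The deduction of automorphism triviality from the decomposition is also fine.

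Your sketch of (b), however, mislocates where $\OCAsharp$ does its work and thereby hides the real difficulty. You propose applying the dichotomy to a colouring on pairs $\{X,Y\}$ with $X=^*Y$, marking those where $F(X)\triangle F(Y)$ meets an initial segment, and you say the uncountable-homogeneous alternative is where $\OCAsharp$ replaces $\MA$. This is backwards. In the Veli\v ckovi\'c scheme, excluding an uncountable $K_0$-homogeneous set is the \emph{easy} part (already under $\OCAT$); the countable cover then yields only that the ideal $\Jcont=\{A:\Phi\restriction\cP(A)/\Fin\text{ has a continuous lifting}\}$ intersects every tree-like almost disjoint family (this is Proposition~\ref{P.Jcont.nonmeager}, which relies on \cite[Lemma~2.2]{Ve:OCA} plus Proposition~\ref{P.Jsigma-to-Jcont}). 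What remains is to \emph{uniformize} the local liftings $g^A$, $A\in\Jcont$, into a single completely additive map --- and that uniformization is exactly where $\MA$ was used classically. The paper's proof applies $\OCAsharp$ not to a colouring on subsets of $\N$ but, via Proposition~\ref{P.Unif}, to the family of partial selectors $\{(g^A,A):A\in\Jcont\}$: the ``bad'' alternative of $\OCAsharp$ produces a perfect tree-like family $\cA$ together with an uncountable set on which the selectors disagree along every $A\in\cA$, contradicting $\cA\cap\Jcont\neq\emptyset$ and Lemma~\ref{L.D.Uniformization}; the ``good'' alternative gives a nonmeager piece on which selectors agree up to bounded error, from which a C-measurable lifting on $\Jcont$ is extracted via Jankov--von~Neumann and then upgraded by \cite[Lemma~3.3.4, Theorem~1.6.1]{Fa:AQ}. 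A second, smaller point: the decomposition $\Phi=\Phi_1\oplus\Phi_2$ is on the \emph{range} side (there is $A$ with $\Phi_1(B)=[\Phi_*(B)\cap A]$ and $\Phi_2(B)=[\Phi_*(B)\setminus A]$), not a restriction of the domain as your $\Phi_i:=\Phi\restriction\cP(A_i)/\Fin$ suggests; for general homomorphisms these are different things.
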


We now list some surprising corollaries of our results. When two sequences $\cM_n$ and $\cN_n$ of structures in the same finite first order language are given, we say that two reduced products $\prod_n \cM_n/\Fin$ and $\prod_n \cN_n/\Fin$ are \emph{obviously isomorphic} if there is a bijection $f$ between cofinite subsets of $\bbN$ such that $\cM_{f(n)}\cong \cN_n$ for all~$n$. An isomorphism constructed from data of this form is called \emph{trivial}. (To circumvent easy obstructions as in Example~\ref{Ex.trivial}, the general definition of trivial isomorphism in case the language is infinite is slightly more involved, see Definition~\ref{Def.trivial}.) 

\begin{thm} \label{T.Fields} 
 Assume $\OCAT+\MAsl$. 
 Two reduced products of countable fields are isomorphic if and only if they are obviously isomorphic. Moreover, every automorphism of such reduced product, and every isomorphism between such reduced products, is trivial.
\end{thm}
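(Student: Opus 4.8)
The plan is to run the two-step rigidity analysis sketched in the introduction, exploiting the fact that a reduced product of fields contains a canonical copy of $\cP(\bbN)/\Fin$. Write $R=\prod_n\cM_n/\Fin$ and $S=\prod_n\cN_n/\Fin$; these are commutative unital rings, and since in a field $t^2=t$ forces $t\in\{0,1\}$, an element of $R$ is idempotent precisely when it is a class $[\mathbf 1_A]$ with $A\subseteq\bbN$. Hence the Boolean algebra of idempotents of $R$ (with $e\wedge f=ef$ and $\neg e=1-e$) is canonically isomorphic to $\cP(\bbN)/\Fin$, and likewise for $S$. Any ring isomorphism $\Phi\colon R\to S$ preserves idempotents and the Boolean operations on them, so it restricts to a Boolean-algebra isomorphism $\bar\Phi\colon\cP(\bbN)/\Fin\to\cP(\bbN)/\Fin$; by Theorem~\ref{thm:OCAPomega}, which is where $\OCAT$ first enters (via $\OCAT\Rightarrow\OCAsharp$), this $\bar\Phi$ is \emph{trivial}, induced by a bijection $f\colon\bbN\setminus a\to\bbN\setminus b$ between cofinite sets. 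After composing $\Phi$ with the trivial isomorphism induced by $f$ we may assume $a=b=\varnothing$, $f=\id$, and $\bar\Phi=\id$; it then remains to show that $\cM_n\cong\cN_n$ for cofinitely many $n$ and that $\Phi$ agrees, off a finite set, with a coordinatewise isomorphism built from some choice of isomorphisms $\cM_n\cong\cN_n$.

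For the second step I would invoke the general lifting machinery of the paper — which uses $\OCAsharp$, obtained from $\OCAT$, together with $\MAsl$ in the uniformization, as in Theorem~\ref{T.OCAsharp-Fin} — to produce a Borel, in fact continuous, lifting $\Lambda\colon\prod_n\cM_n\to\prod_n\cN_n$ of $\Phi$. Since $\bar\Phi=\id$, the map $\Phi$ commutes with multiplication by every idempotent $[\mathbf 1_A]$, hence \emph{respects supports}: the restriction of $\Phi([x])$ to a set $A$ depends only on the restriction of $[x]$ to $A$. Combining this with the continuity of $\Lambda$, and using that the normalisation $\bar\Phi=\id$ forces the input and output partitions to coincide up to a finite modification, one obtains a partition of a cofinite subset of $\bbN$ into consecutive finite intervals $I_0<I_1<\cdots$ such that, for all but finitely many $k$, $\Lambda(x)\restriction I_k$ depends only on $x\restriction I_k$ and induces an honest ring isomorphism $\prod_{n\in I_k}\cM_n\cong\prod_{n\in I_k}\cN_n$ (the finitely many exceptional blocks, together with $a$ and $b$, are absorbed into the allowed finite modification). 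A finite product of fields decomposes uniquely into its finitely many simple, i.e.\ field, factors (its quotients by maximal ideals), so each such block isomorphism matches the factors bijectively; assembling these matchings produces a bijection $g$ between cofinite subsets of $\bbN$ and isomorphisms $\theta_n\colon\cM_{g(n)}\to\cN_n$, which already witnesses that $R$ and $S$ are obviously isomorphic, and shows that $\Phi$ is, off a finite set, the coordinatewise isomorphism built from $(g,(\theta_n))$. Tracing back the composition with $f$, this exhibits $\Phi$ as trivial in the sense of Definition~\ref{Def.trivial}. The converse — that obviously isomorphic reduced products are isomorphic and that any isomorphism assembled from such data is trivial — is routine, and the ``moreover'' clause (in particular, triviality of automorphisms) follows since \emph{every} isomorphism has now been shown to be trivial.

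I expect the main obstacle to be the production of the continuous lifting $\Lambda$: no definability is assumed of $\Phi$, so its graph must be uniformized essentially from scratch by an $\OCA$-type colouring argument on the Polish spaces $\prod_n\cM_n$ and $\prod_n\cN_n$, and this is precisely the step that $\OCAsharp$ is engineered to streamline, with $\MAsl$ still needed to upgrade ``nice on a comeager set'' to ``nice modulo a finite modification'', i.e.\ to confine the pathological behaviour of $\Lambda$ to finitely many blocks. A secondary, more bookkeeping-heavy point is to verify that the block decomposition coming from continuity can be chosen interval-based and symmetric on the two sides once $\bar\Phi=\id$; here the support-respecting property carries the argument, and one must take care that the normalisation $f=\id$ does not clash with the finitely many exceptional coordinates.
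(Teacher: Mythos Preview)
Your first step is exactly the paper's Lemma~\ref{L.Th.CR.F}: central idempotents in $R=\prod_n\cM_n/\Fin$ form a definable copy of $\cP(\bbN)/\Fin$, so any isomorphism is coordinate-respecting with $\alpha^\Phi$ an automorphism of $\cP(\bbN)/\Fin$, trivial by Theorem~\ref{thm:OCAPomega}. After normalising to $\alpha^\Phi=\id$ the paper invokes Theorem~\ref{th:main} (via Theorem~\ref{cor:triv}) directly; this is where your second step diverges.

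The paper does \emph{not} produce a continuous lifting of $\Phi$ and then analyse it. Theorem~\ref{T.OCAsharp-Fin}, which you cite, applies only to endomorphisms of $\cP(\bbN)/\Fin$; there is no ``general lifting machinery'' in the paper yielding a continuous $\Lambda\colon\prod_n\cM_n\to\prod_n\cN_n$. Instead, the paper shows (Proposition~\ref{lem:IprodccmodFin}, and \emph{this} is where $\MAsl$ enters, via a $\sigma$-linked forcing) that the ideal $\cJprod=\{A\mid\Phi_A\text{ is of product form}\}$ is nonmeager, and then (Proposition~\ref{P.Uniformization}, using only $\OCAT$) uniformizes the coherent family of local product-form liftings $(g^A_n)_{n\in A}$ into a single sequence $(g_n)_n$. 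No block decomposition and no unique factorisation of finite products of fields are needed: product form comes out coordinate by coordinate.

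Your route is not unreasonable---the paper remarks that an earlier draft proved Theorem~\ref{T.OCACR} by first obtaining a Baire-measurable lifting---but two specific steps in your sketch are not justified. First, even granting a continuous $\Lambda$, when the $\cM_n$ are infinite, continuity does \emph{not} give a uniform block structure: the basic neighbourhood on which $\Lambda(\cdot)(n)$ is constant depends on the point, so ``$\Lambda(x)\rs I_k$ depends only on $x\rs I_k$'' does not follow. Second, nothing forces a continuous lifting to be a ring homomorphism on any block; upgrading ``some lifting'' to ``locally of product form'' is essentially the content of Proposition~\ref{lem:IprodccmodFin}, which you would still need. Also, your attribution of $\MAsl$ is inverted: Theorem~\ref{T.OCAsharp-Fin} uses $\OCAT$ alone; $\MAsl$ is used in the reduced-product step, not in uniformization.
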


\begin{thm}\label{T.LO} 
 Assume $\OCAT+\MAsl$. 
 Two reduced products of countable linear orderings are isomorphic if and only if they are obviously isomorphic. Moreover, every automorphism of such reduced product, and every isomorphism between such reduced products, is trivial. 
\end{thm}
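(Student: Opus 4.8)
The plan is to follow the two-step scheme of the introduction, specialised to the finite language $\{\leq\}$ of linear orders; since this language is finite, the naive notion of trivial isomorphism is the correct one and the complications of Example~\ref{Ex.trivial} do not arise. Fix sequences $(\cM_n)_n$ and $(\cN_n)_n$ of countable linear orders. One direction is immediate: given a bijection $f$ between cofinite subsets of $\bbN$ and order-isomorphisms $\psi_n\colon\cM_{f(n)}\to\cN_n$, the map $[\bar x]\mapsto[(\psi_n(x_{f(n)}))_n]$ is a trivial isomorphism $\prod_n\cM_n/\Fin\to\prod_n\cN_n/\Fin$. For the substantive direction we must recover such data from an arbitrary isomorphism $\Phi\colon\prod_n\cM_n/\Fin\to\prod_n\cN_n/\Fin$.

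First I would linearise $\Phi$. Since $\OCAT$ implies $\OCAsharp$ (Definition~\ref{Def.OCAsharp}), and since $\OCAsharp$ already suffices for the $\Fin$-case of the lifting theorem (Theorem~\ref{T.OCAsharp-Fin}), I would apply the corresponding lifting theorem for reduced products to obtain a continuous lifting $\Lambda\colon\prod_n\cM_n\to\prod_n\cN_n$ of $\Phi$, i.e.\ a continuous map with $\pi_\cN\circ\Lambda=\Phi\circ\pi_\cM$, where $\pi_\cM,\pi_\cN$ are the quotient maps on the relevant Polish product spaces; likewise a continuous lifting $\Lambda'$ of $\Phi^{-1}$. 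As $\Lambda$ descends to the quotient it respects $=_\Fin$: altering $x$ on finitely many coordinates alters $\Lambda(x)$ on at most finitely many coordinates.

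Second comes the reconstruction, where $\MAsl$ enters. A continuous $=_\Fin$-respecting lifting is forced to be \emph{local}: each coordinate map $x\mapsto\Lambda(x)(m)$ is continuous into the discrete set $\cN_m$, hence locally determined by finitely many coordinates of $x$, and a coherence-and-uniformization argument — using $\MAsl$ to amalgamate these finite approximations into a single global pattern, exactly as in the classical uniformization arguments for $\cP(\bbN)/\Fin$ — produces a finite-to-one partial function on a cofinite set of $\cM$-indices organising this dependence. Running the same analysis on $\Lambda'$ and composing upgrades this to a genuine bijection $f$ between cofinite subsets of $\bbN$ and, for cofinitely many $m$, a map $\phi_m\colon\cM_{f(m)}\to\cN_m$ with $\Lambda(x)(m)=\phi_m(x_{f(m)})$ off a $\Fin$-small set. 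Linear orders now re-enter elementarily: $\Phi$ and $\Phi^{-1}$ both preserve $\leq$, and reading this coordinatewise through the representation above forces each $\phi_m$ to be an order-isomorphism $\cM_{f(m)}\cong\cN_m$. Hence $(\cM_n)_n$ and $(\cN_n)_n$ are obviously isomorphic and $\Phi$ is the trivial isomorphism built from $f$ and $(\phi_m)_m$; taking $\cN_n=\cM_n$ yields the statement about automorphisms.

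The main obstacle is the passage to this genuinely coordinate-by-coordinate form — in particular that the organising relation can be taken finite-to-one and ultimately a bijection. The difficulty is structural: modifying a single coordinate of $x$ does not change $\pi_\cM(x)$, so individual coordinates are \emph{invisible} in the quotient and $f$ cannot simply be read off from the action of $\Phi$ on single elements; one must exploit the interaction of continuity with $\Fin$-coherence globally, which is precisely the role of $\MAsl$ (a single master reconstruction consistent with all local data) and of the $\OCAsharp$ lifting theorem (the continuous lifting, obtained cheaply). This is compounded when many of the $\cM_n$ are mutually isomorphic and homogeneous — e.g.\ copies of $(\bbQ,\leq)$ — since the factors then carry no internal marker distinguishing the coordinates, so $f$ must be extracted purely from the way $\Phi$ respects the ideal; ruling out "multi-coordinate" reconstructions (such as $\cN_m$ appearing as a lexicographic product of two factors) is part of the same issue, handled via injectivity of $\Phi$ through the local form of $\Phi^{-1}$.
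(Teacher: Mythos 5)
Your proposal inverts the logical structure of the paper's argument and skips the step that is actually specific to linear orders. The paper first proves a \emph{ZFC} model-theoretic fact, Proposition~\ref{P.ramified.CR}: the theory of connected ramified sets (which includes linear orders and trees) \emph{recognizes coordinates}, meaning every isomorphism between reduced products of its models is automatically coordinate-respecting in the sense of Definition~\ref{def:CR}. This is proved by finding a definable family of copies of $\cP(\bbN)/\cI$ inside the reduced product, via the definable sets $B_{x,y}=\{z\mid z\wedge y=x\text{ and }\exists w(w\geq z\text{ and }w\geq y)\}$, and then glueing the induced local isomorphisms $\alpha_x$ together. Only \emph{after} coordinate-respecting has been established does the paper bring in $\OCAT$ and $\MAsl$ (Theorem~\ref{th:main}, via Propositions~\ref{lem:IprodccmodFin} and~\ref{P.Uniformization}) to upgrade the coordinate-respecting isomorphism to one of product form.

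Your first step --- invoking an $\OCAsharp$ lifting theorem to get a continuous lifting $\Lambda\colon\prod_n\cM_n\to\prod_n\cN_n$ of $\Phi$ --- has no justification. Theorem~\ref{T.OCAsharp-Fin} is a statement about endomorphisms of $\cP(\bbN)/\Fin$ only; the paper contains (and, as far as is known, there exists) no theorem saying that $\OCAT$ implies every isomorphism between reduced products of arbitrary countable structures over $\Fin$ has a continuous lifting. Indeed, this is essentially what needs to be \emph{proved}, and the entire coordinate-respecting machinery exists precisely because one cannot simply quote a lifting theorem at the outset. Without first knowing that $\Phi$ induces a homomorphism $\alpha^\Phi\colon\cP(\bbN)/\Fin\to\cP(\bbN)/\Fin$, there is no Boolean-algebra skeleton on which to run the $\OCAsharp$ or $\MAsl$ uniformizations of \S\ref{S.Uniformization.Fin} and \S\ref{S:proof}. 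The parenthetical admission at the end of your proposal --- that recovering $f$ and ruling out multi-coordinate reconstructions is ``the main obstacle'' and is ``handled via injectivity of $\Phi$'' --- is exactly the hole: for linear orders that obstacle is closed by the $B_{x,y}$ argument of Proposition~\ref{P.ramified.CR}, which depends on the order structure (Lemma~\ref{L2.14}) and has nothing to do with injectivity per se or with continuity of a lifting you do not yet have.
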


\begin{thm}\label{T.Trees} 
 Assume $\OCAT+\MAsl$. 
 Two reduced products of countable trees are isomorphic if and only if they are obviously isomorphic. Moreover, every automorphism of such reduced product, and every isomorphism between such reduced products, is trivial. 
\end{thm}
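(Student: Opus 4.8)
The plan is to run, for trees, the two-step template that proves Theorems~\ref{T.Fields} and~\ref{T.LO}, indicating where the order structure of a tree forces a different argument than the ring structure of a field. Fix countable trees $\cM_n$ and $\cN_n$. Since ``the predecessors of any point form a chain'' is a $\Pi_1$ sentence over the partial-order axioms, $\prod_n \cM_n/\Fin$ and $\prod_n \cN_n/\Fin$ are again trees; and since the language $\{\leq\}$ is finite, Definition~\ref{Def.trivial} reduces here to the simple form of ``trivial'' discussed after Theorem~\ref{T.Fields}, which is what we must produce. Let $\Phi\colon \prod_n \cM_n/\Fin \to \prod_n \cN_n/\Fin$ be an isomorphism. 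The first step replaces $\Phi$ by a structurally transparent map: as $\OCAT$ implies $\OCAsharp$, the $\Fin$ case of the $\OCA$ lifting theorem (Theorem~\ref{T.OCAsharp-Fin}) furnishes a continuous lifting $\Phi^\ast\colon \prod_n \cM_n \to \prod_n \cN_n$, and after deleting finitely many coordinates and refining to a partition of $\bbN$ into finite intervals one may take $\Phi^\ast$ coordinate-respecting in the usual sense. (This is the payoff of $\OCAsharp$: it removes the Martin's-Axiom uniformization that earlier proofs needed at this point.)

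The structural heart is to interpret a canonical copy of $\cPNF$ inside $\prod_n \cM_n/\Fin$. For fields this is immediate --- it is the Boolean algebra of idempotents of the reduced-product ring --- but a tree carries no operations, so one proceeds order-theoretically. Coordinates $n$ with $\cM_n$ a single point are set aside (if infinitely many, they contribute a trivial factor to be matched with its counterpart on the $\cN$ side); for the rest fix distinct $a_n \ne b_n$ in $\cM_n$ and send $A \subseteq \bbN$ to the class of the sequence that is $a_n$ on $A$ and $b_n$ elsewhere. One checks that this embeds $\cPNF$ and, crucially, that its range is first-order definable in $\prod_n \cM_n/\Fin$ independently of the choices $a_n,b_n$, so that $\Phi$ carries it onto the corresponding copy in $\prod_n \cN_n/\Fin$. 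Hence $\Phi$ induces an automorphism of $\cPNF$, which by Theorem~\ref{thm:OCAPomega} is trivial, i.e.\ equals $X \mapsto \sigma[X]$ for a bijection $\sigma$ between cofinite subsets of $\bbN$. A localization argument along the coordinate-respecting lifting $\Phi^\ast$ then forces $\cM_{\sigma(n)} \cong \cN_n$ for all but finitely many $n$; in particular the sequences are obviously isomorphic, and $\sigma$ together with the witnessing coordinate isomorphisms underlies an obvious isomorphism $\Psi\colon \prod_n \cM_n/\Fin \to \prod_n \cN_n/\Fin$.

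Now $\Psi^{-1}\circ\Phi$ is an automorphism of $\prod_n \cM_n/\Fin$ fixing the distinguished copy of $\cPNF$ pointwise, i.e.\ support-preserving. Restricting it to classes of sequences supported on a single coordinate $n$ yields an embedding $\phi_n\colon \cM_n \to \cM_n$, surjective since $\Psi^{-1}\circ\Phi$ is, hence an automorphism of $\cM_n$; a uniformization argument --- this is where $\MAsl$ is used, exactly as in Theorems~\ref{T.Fields} and~\ref{T.LO} --- glues the continuous lifting of $\Psi^{-1}\circ\Phi$ to $\prod_n \phi_n$ off a finite set of coordinates. Thus $\Phi = \Psi \circ \prod_n \phi_n$ modulo $\Fin$, which is trivial. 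If instead one only assumes the two reduced products isomorphic, running the same argument produces $\sigma$ and the $\phi_n$ witnessing that the sequences are obviously isomorphic; the converse implication is immediate. This establishes all three assertions.

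The genuine obstacle I expect is the interpretation of $\cPNF$: one needs an order-theoretic definition of the distinguished copy that (i) is independent of the arbitrary pair $a_n,b_n$ chosen in each coordinate, (ii) is preserved by every isomorphism, and (iii) records enough to recover the isomorphism type of each virtual coordinate, so that $\sigma$ really can be arranged to match $\cM_{\sigma(n)}$ with $\cN_n$. Highly homogeneous trees (e.g.\ $\bbZ$-like chains) and degenerate ones (single points, finite chains) are where the bookkeeping in (iii) and in the localization argument is most delicate; matching these correctly across $\sigma$ is, as for fields and linear orders, precisely what the formulation of ``obviously isomorphic'' is designed to allow.
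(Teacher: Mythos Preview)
Your proposal has genuine gaps that break the argument. First, Theorem~\ref{T.OCAsharp-Fin} concerns homomorphisms $\cP(\bbN)/\Fin\to\cP(\bbN)/\Fin$; it does not produce a continuous lifting $\Phi^*\colon\prod_n\cM_n\to\prod_n\cN_n$ for a general isomorphism of reduced products, so your first step has no foundation. Second, and more seriously, the step ``restricting to classes of sequences supported on a single coordinate $n$'' is meaningless in $\prod_n\cM_n/\Fin$: any sequence supported on a single coordinate is $\Fin$-equivalent to any other, so individual coordinates are invisible in the quotient and there is no way to extract a map $\phi_n\colon\cM_n\to\cM_n$ this way. The role of $\MAsl$ is also misplaced: in the paper it enters in proving the ideal $\cJprod$ nonmeager (Proposition~\ref{lem:IprodccmodFin}), not in a final gluing step.

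The paper's actual route avoids all of this. It does not look for a single parameter-free definable copy of $\cP(\bbN)/\Fin$ (the step you rightly flag as the obstacle); instead, Proposition~\ref{P.ramified.CR} constructs, for each $x$ in a definable set $D\subseteq\prod_n\cM_n/\Fin$, a local isomorphism $\alpha_x\colon\cP(\bbN)/\cI\to\cP(\bbN)/\cJ$ using the definable sets $B_{x,y}=\{z\mid z\wedge y=x\text{ and }\exists w\,(w\geq z\text{ and }w\geq y)\}$, proves that all the $\alpha_x$ agree, and then verifies the coordinate-respecting condition~\eqref{eq.CR} directly. Once $\Phi$ is known to be coordinate-respecting, Theorem~\ref{th:main} (via Propositions~\ref{lem:IprodccmodFin} and~\ref{P.Uniformization}) gives that $\Phi$ is of twisted product form, hence trivial by Lemma~\ref{L.Phi.1}; the conclusion about obvious isomorphism then follows from Theorem~\ref{cor:triv}.
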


In the next result, the graph $\bbG_{n,p}$ denotes the random graph with~$n$ vertices in which every pair of distinct vertices is connected with probability $p\in (0,1)$ and these Bernoulli trials are independent. Thus $\bbG_{n,p}$ is not a graph but a random variable that ranges over graphs with $n$ vertices, where these vertices are identified with $\{0,\dots, n-1\}$. The space of such graphs is finite, and to any given graph $\cG$ in this space this random variable assigns the probability of $p^k (1-p)^{\binom {n}{2}-k}$, where $k$ is the number of edges of $\cG$.\footnote{The vertices of graph $\cG$ are labelled, and this is not the probability that a random graph is isomorphic to $\cG$.} We define the product of random graphs $\bbG_{\infty,p}$ to be the random variable $\prod_n \bbG_{n,p}$, where $\bbG_{n,p}$ are independent random graphs. 
In other words, for every $0<p<1$ we have a Borel probability measure on the space of all products of graphs $\prod_n \cG_n$ such that $\cG_n$ has vertices $\{0,\dots, n-1\}$; we denote this measure $\mu_p$.  

\begin{thm} \label{T.Graphs} 
Assume $\OCAT+\MAsl$. 
For all fixed $0<p<1$ and $0<q<1$, the set of all pairs $(\bar \cG, \bar \calH)$ of sequences of graphs as in the previous paragraph and such that there is a nontrivial isomorphism between $\prod_n\cG_n/\Fin$ and $\prod_n \mathcal H_n/\Fin$ has $\mu_p\times\mu_q$ measure 0. 
 
Also, all automorphisms of the reduced power of the Rado graph (i.e., the countably infinite random graph) are trivial. 
\end{thm}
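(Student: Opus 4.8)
Both statements will be deduced from the general rigidity machinery behind Theorems~\ref{T.Fields},~\ref{T.LO} and~\ref{T.Trees}. Recall its shape: under $\OCAsharp$ --- hence, as explained in the introduction, under $\OCAT$ --- every isomorphism between reduced products $\prod_n\cM_n/\Fin$ and $\prod_n\cN_n/\Fin$ of countable structures in a fixed finite language has a continuous lifting, provided the factor sequences obey the relevant non-degeneracy hypothesis, and $\MAsl$ then promotes such a lifting to a trivial isomorphism in the sense of Definition~\ref{Def.trivial}. That hypothesis is precisely what rules out the maximally symmetric cases: a sequence of edgeless graphs, or of complete graphs, has a reduced product which, read in the language of graphs, is essentially a pure infinite set, hence is stable and $\fc$-saturated \cite{debondt2024saturation} and therefore has $2^{2^{\aleph_0}}$ mostly nontrivial automorphisms. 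The content of Theorem~\ref{T.Graphs} is that the constant Rado sequence, and $\mu_p$-almost every graph sequence, are non-degenerate in the required sense, so the plan is to (i) verify the hypothesis for the Rado graph and (ii) verify it for $\mu_p\times\mu_q$-almost every pair of sequences, using standard facts about random graphs.

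For the reduced power of the Rado graph $\calR$, I would apply the general theorem to the constant sequence $\cM_n=\calR$. Here $\calR$ is a countable structure in a finite language, ultrahomogeneous and $\omega$-categorical, with the independence property; taking constant sequences as witnesses one checks that the edge relation still has the order property in $\calR^{\bbN}/\Fin$, so the theory of $\calR^{\bbN}/\Fin$ is unstable, while, being a reduced product of a countable structure over $\Fin$, $\calR^{\bbN}/\Fin$ is countably saturated \cite{ChaKe}. I would argue that these features are exactly what the non-degeneracy hypothesis demands, so that $\OCAT+\MAsl$ forces every automorphism of $\calR^{\bbN}/\Fin$ to be trivial. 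The genuinely delicate point here is definitional: since $\Aut(\calR)$ is uncountable, ``trivial'' must be read in the full sense of Definition~\ref{Def.trivial} --- a bijection between cofinite index sets together with a coordinatewise choice of automorphisms of $\calR$ --- rather than through the ``obviously isomorphic'' shortcut available when the factors are rigid.

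For the probabilistic statement, fix $0<p<1$ and $0<q<1$ and let $\Omega$ consist of the pairs $(\bar\cG,\bar\calH)$ such that, for every $k\in\bbN$, all but finitely many $\cG_n$ and all but finitely many $\calH_n$ are asymmetric graphs satisfying the $k$-extension axioms of the random graph. Each clause is a countable combination of tail events of Borel subsets of the sample space, and the usual Erd\H{o}s--R\'enyi estimates --- $\mathbb P(\bbG_{n,p}\text{ not asymmetric})=O(n^2 c^n)$ for some $c=c(p)<1$, and $\mathbb P(\bbG_{n,p}\text{ fails a fixed }k\text{-extension instance})\le n^k 2^k(1-c_k)^{n-k}$ --- are summable in $n$, so Borel--Cantelli gives $(\mu_p\times\mu_q)(\Omega)=1$. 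For $(\bar\cG,\bar\calH)\in\Omega$ the factors are eventually rigid and the extension axioms force the reduced products to behave generically (in particular their theories are unstable), so the pair is non-degenerate; hence under $\OCAT+\MAsl$ every isomorphism $\prod_n\cG_n/\Fin\to\prod_n\calH_n/\Fin$ is trivial, and in particular no nontrivial one exists. Thus the set of pairs admitting a nontrivial isomorphism is contained in the Borel $\mu_p\times\mu_q$-null set $\Omega^c$, and is therefore null. Moreover, since $\cG_m$ has $m$ vertices, a trivial isomorphism would force the reindexing bijection to be the identity on a cofinite set and $\cG_n\cong\calH_n$ for cofinitely many $n$; as $\sum_n(\mu_p\times\mu_q)(\cG_n\cong\calH_n)\le\sum_n\bigl(n!\,(p^2+(1-p)^2)^{\binom n2}\bigr)^{1/2}\bigl(n!\,(q^2+(1-q)^2)^{\binom n2}\bigr)^{1/2}<\infty$ by Cauchy--Schwarz, $\mu_p\times\mu_q$-almost every pair admits no isomorphism whatsoever.

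The main obstacle I anticipate is the interface with the general machinery rather than the probabilistic bookkeeping: one has to pin down the exact non-degeneracy hypothesis consumed by the lifting-to-trivial step --- strong enough to exclude the stable, $\fc$-saturated behaviour of edgeless and complete sequences, yet a tail event of full $\mu_p$-measure implied by asymmetry together with the $k$-extension axioms --- and then verify that the proofs of Theorems~\ref{T.Fields},~\ref{T.LO} and~\ref{T.Trees} go through in the graph case under it. A secondary, non-formal difficulty, already noted, is carrying out the Definition~\ref{Def.trivial} bookkeeping cleanly in the Rado case, where index permutations and coordinatewise automorphisms genuinely interact.
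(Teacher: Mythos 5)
There is a genuine gap, and you flagged it yourself: you never identify the hypothesis that actually makes the general machinery run, and the one you gesture at is the wrong shape. You frame the machinery as ``$\OCAsharp$ gives a continuous lifting provided a non-degeneracy hypothesis holds, then $\MAsl$ promotes it to triviality,'' and you try to locate non-degeneracy in model-theoretic instability plus asymmetry plus extension axioms. That is not how the paper's argument is structured, and instability alone cannot be the criterion: the reduced product in Example~\ref{Ex.NullGraph} (infinitely many null graphs) is a pure infinite set, which is degenerate, but simply strengthening to ``unstable'' does not by itself deliver the needed definability. What the paper actually isolates is the first-order, purely combinatorial condition of being \emph{sufficiently random} (Definition~\ref{Def.SuffRG}: 2-sufficiently random with at least three vertices). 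That condition is engineered so that the sets $A_{ab}$, the orthogonality relation $\perp$, and the canonical Boolean algebra structure on each $A_{ab}$ are uniformly definable (Lemmas~\ref{L.Aab.perp},~\ref{L.cZ}), from which one glues the local copies of $\cP(\bbN)/\Fin$ into a single coordinate map $\alpha^\Phi$ (Lemma~\ref{L.alpha}, Proposition~\ref{prop:GRrigid}). In other words, the real interface with the general machinery is ``the theory recognizes coordinates'' (Definition~\ref{Def.CR.Th}), which makes every isomorphism coordinate-respecting; Theorem~\ref{th:main} (via Theorem~\ref{T.OCACR}) then does the rest. Continuous liftings never appear as an intermediate target.

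Once the correct criterion is in hand, the rest of your outline aligns with the paper, but a few details need correcting. The probabilistic step is indeed a Borel--Cantelli argument, but the event is ``$\bbG_{n,p}$ has the (strengthened) full level~$3$ extension property,'' which implies sufficiently random; the paper cites \cite[Theorem~10.7.5]{alon2008probabilistic} for the tail estimate (Lemma~\ref{L.sufficiently-random}), and no asymmetry hypothesis is used anywhere. Also note that once Theorem~\ref{cor:triv} applies, triviality of any isomorphism $\prod_n\cG_n/\Fin\to\prod_n\calH_n/\Fin$ forces $\cG_n\cong\calH_n$ for cofinitely many $n$ with the identity reindexing (since $\cG_n$ has exactly $n$ vertices), so the conclusion that the set of pairs admitting a nontrivial isomorphism is null follows directly; your Cauchy--Schwarz estimate showing almost sure non-isomorphism is a correct extra observation but not needed for the stated theorem. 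For the Rado graph, the point is simply that it is sufficiently random, so the same Proposition~\ref{prop:GRrigid} plus Theorem~\ref{cor:triv} applies to the constant sequence; your remarks about stability, $\omega$-categoricity and the order property are not the relevant features.
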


The conclusion of Theorem~\ref{T.Graphs} applies to a rather wide class of (what we call) \emph{sufficiently random graphs} (Definition~\ref{Def.SuffRG}). 
This is a fairly weak requirement that applies to random graphs chosen using different laws, such as the sparse random graphs of \cite{shelah1988zero}.

To put our rigidity results in a proper context, we contrast them with some well-known facts; a proof of the following is in \S\ref{S.CH}. (Analogous results for metric structures appear in \cite{ghasemi2014reduced}.) 

\begin{prop} \label{P.CH} Assume the Continuum Hypothesis. 
 \begin{enumerate}
 	\item \label{0.P.CH} If $\calL$ is a countable language and $\cM_n$, for $n\in \bbN$, is a sequence of countable $\calL$-structures, then it has a subsequence such that the reduced products over $\Fin$ of any two further subsequences are isomorphic. 
 	In particular:
 	\begin{enumerate}
 \item \label{1.P.CH} Every sequence of countable fields $\cF_n$, for $n\in \bbN$, has a subsequence such that the reduced products over $\Fin$ of any two further subsequences are isomorphic.
 \item \label{2.P.CH} Every sequence of countable trees $\cT_n$, for $n\in \bbN$, has a subsequence such that the reduced products over $\Fin$ of any two further subsequences are isomorphic. 
 \end{enumerate}
 \item \label{3.P.CH}There is a partial ordering $\cP$ such that for every sequence $\calL_n$, for $n\in \bbN$, of countable discrete linear orderings with endpoints such that for every $m\in\bbN$ the set $\{n\mid |\calL_n|\leq m \}$ is finite,\footnote{We allow for the possibility that some, or even all, of $\calL_n$ are infinite.} we have $\prod_n \calL_n/\Fin\cong \cP$. 
 \item \label{4.P.CH} There is a graph $\cG$ such that for every $p\in (0,1)$ the set of sequences $(\cG_n)_n$ of graphs such that $ \prod_n \cG_{n}/\Fin\cong \cG$ 
 has full $\mu_p$-measure.\footnote{This is the measure associated with the definition of a random graph. See the first paragraph of the proof of Theorem~\ref{T.Graphs}.}
 \end{enumerate}
\end{prop}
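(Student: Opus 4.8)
The plan is to reduce all four items to two general principles, plus a computation of complete theories in each case; no forcing axioms are needed, only CH. \emph{Principle (i):} for a countable language $\calL$ and countable $\calL$-structures $\cM_n$, the reduced product $\prod_n\cM_n/\Fin$ is countably saturated (\cite{ChaKe}); under CH it has cardinality $\leq\fc=\aleph_1$, and it has cardinality exactly $\aleph_1$ whenever infinitely many of the $\cM_n$ have at least two elements, in which case it is \emph{saturated} (if only finitely many $\cM_n$ are nontrivial, then $\prod_n\cM_n/\Fin$ is the one-element structure, finitely many coordinates being negligible mod $\Fin$). Hence two reduced products of countable $\calL$-structures over $\Fin$, at least one of them nontrivial, are isomorphic as soon as they are elementarily equivalent. \emph{Principle (ii):} by the Feferman--Vaught theorem the complete theory of $\prod_n\cM_n/\Fin$ depends only on the classes $[A_\psi]\in\cP(\bbN)/\Fin$ of the sets $A_\psi:=\{n\mid\cM_n\models\psi\}$, over all $\calL$-sentences $\psi$; in particular, when every $A_\psi$ is finite or cofinite it depends only on the pattern $\psi\mapsto([A_\psi]=1)$. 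So in each item I would produce a family of index sequences whose reduced products over $\Fin$ all realize one and the same such pattern, and then compare cardinalities.

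For the general statement (and hence for fields and for trees), enumerate the $\calL$-sentences as $(\psi_i)_i$. After first passing to a subsequence along which either all the $\cM_n$ are one-element (then every further subsequence trivially yields the same one-element reduced product) or none is, build nested infinite sets $\bbN\supseteq S_0\supseteq S_1\supseteq\cdots$ on which $\psi_0,\psi_1,\dots$ successively have constant truth value, and diagonalize to an infinite $D=\{d_0<d_1<\cdots\}$ with $d_j\in S_j$; then $\{k\mid\cM_{d_k}\models\psi_i\}$ is finite or cofinite for every $i$, and the same pattern survives restriction of $D$ to any further infinite subset (after reindexing). By (ii) the reduced products over $\Fin$ of any two further subsequences of $(\cM_{d_k})_k$ are elementarily equivalent, hence by (i) isomorphic. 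The field and tree statements are the instances $\calL=\calL_{\mathrm{ring}}$ and $\calL$ the language of trees.

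For discrete linear orderings with endpoints no subsequence is needed: for each $q$ there is $N_q$ such that any two discrete linear orders with endpoints, each of size $\geq N_q$ (the infinite ones included), satisfy the same sentences of quantifier rank $\leq q$ — the standard Ehrenfeucht--Fra\"iss\'e analysis of linear orders, since large finite such orders are $\equiv_q$ to $\omega+\omega^*$, as are all infinite discrete orders with endpoints. Thus for any $(\calL_n)$ with $\{n\mid|\calL_n|\leq m\}$ finite for all $m$, each $A_\psi$ differs from $\bbN$ or from $\emptyset$ by a finite set, with the pattern depending only on $\psi$; all the reduced products $\prod_n\calL_n/\Fin$ are therefore elementarily equivalent, and of cardinality $\aleph_1$ since $|\calL_n|\to\infty$, so under CH all isomorphic to one structure $\cP$ (e.g.\ $\cP=\prod_n\{0,\dots,n\}/\Fin$; it is only a partial order, since mod $\Fin$ the relation $\leq$ need not be total). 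For the random graphs, fix $p\in(0,1)$ and let $R$ be the Rado graph: $\mathrm{Th}(R)$ is complete and axiomatized by the extension axioms $E_k$, $k\geq 1$ (plus the always-valid graph axioms), so compactness shows some finite conjunction of $E_k$'s decides any given sentence $\psi$, while the first-moment estimate $\Pr[\bbG_{n,p}\not\models E_k]\leq n^k\,(1-\min(p,1-p)^k)^{n-k}$ is summable in $n$; Borel--Cantelli then gives that $\mu_p$-a.s.\ $A_\psi$ is cofinite or finite according as $\mathrm{Th}(R)$ proves $\psi$ or $\neg\psi$. Intersecting over the countably many $\psi$, $\mu_p$-almost surely $\prod_n\cG_n/\Fin\equiv\prod_n R/\Fin$, a structure of cardinality $\aleph_1$ (as $|\cG_n|=n\to\infty$), hence under CH isomorphic to $\cG:=\prod_n R/\Fin$; this $\cG$ does not depend on $p$.

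The one genuinely non-routine ingredient is principle (ii) in the exact shape required: a Feferman--Vaught transfer for the Fr\'echet quotient that both legitimizes replacing ``isomorphic'' by ``elementarily equivalent'' and exhibits the theory of $\prod_n\cM_n/\Fin$ transparently as a function of the coordinatewise sentence-pattern. Everything downstream is then a diagonalization (general statement), a uniform first-order complexity bound for finite linear orders (linear orders), or the classical exponential $0$--$1$ law for $\bbG_{n,p}$ together with Borel--Cantelli (random graphs).
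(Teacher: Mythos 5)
Your proof is correct and follows the paper's overall route: countable saturation over $\Fin$ together with CH gives full saturation, so elementary equivalence implies isomorphism, and Feferman--Vaught reduces elementary equivalence of the reduced products to the coordinate theories converging in the logic topology; you simply make the paper's appeal to compactness of that topology explicit as a diagonalization over sentences. Your treatment of item~\eqref{4.P.CH} also parallels the paper's, which invokes the 0--1 law and Borel--Cantelli (the extension-axiom first-moment estimate you spell out is essentially the paper's Lemma~\ref{L.sufficiently-random}). The one genuine divergence is item~\eqref{3.P.CH}. The paper passes through \L o\'s's theorem to reduce the statement to elementary equivalence of ultraproducts $\prod_n\calL_n/\cU$, then analyzes the finite-distance condensation (a dense linear order with endpoints whose interior classes are all $\bbZ$) and uses CH \emph{again} to build isomorphisms of ultraproducts. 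You instead invoke the classical Ehrenfeucht--Fra\"iss\'e bound: any two discrete linear orders with endpoints of size $\geq N_q$ (including infinite ones, which are all $\equiv_q$ to $\omega+\omega^*$) agree on sentences of quantifier rank $\leq q$. This yields the required convergence of theories directly and in ZFC, so CH enters only through saturation; your route is shorter and avoids the detour through ultraproducts and DLO completeness. One minor stylistic point: Feferman--Vaught strictly speaking determines the theory of the reduced product from the \emph{type} of the tuple $\bigl([A_{\theta_1}],\dots,[A_{\theta_k}]\bigr)$ in $\cP(\bbN)/\Fin$ rather than ``the classes $[A_\psi]$'' in isolation; since in every instance you arrange $[A_\psi]\in\{0,1\}$ for all $\psi$, this collapses to the pattern you describe, so no gap.
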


One component of our proofs is the introduction of a \emph{coordinate-respecting property} of isomorphisms between reduced products (Definition~\ref{def:CR}), and a related first-order property (Definition~\ref{Def.CR.Th}). In case of reduced products of fields, this reduces (no pun intended) to finding a definable copy of $\cP(\bbN)/\Fin$ in the reduced product. 
In the case of linear orderings and of random graphs, we find many copies of $\cP(\bbN)/\Fin$ in the reduced product definable from parameters, as well as a system of isomorphisms between them, definable from the same parameters.

As a byproduct, our methods also give that isomorphisms between powers of structures in the categories that we consider are trivial in the appropriate sense (see \S\ref{S.products}). 

The following is our main technical result, proven in~\S\ref{S:proof}; its proof filters through our axiom $\OCAsharp$ (although this axiom is a consequence of $\OCAT$, it was instrumental in finding and presenting the proof). 

\begin{thm}\label{th:main}
 Assume $\OCAT+\MAsl$. Then every coordinate-respecting isomorphism between reduced products modulo $\Fin$ of countable structures in the same language is trivial. 
\end{thm}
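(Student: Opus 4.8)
The plan is to bootstrap the already-available rigidity of $\cPNF$ to the full reduced product, using the coordinate-respecting hypothesis to transfer information between the two ends, and to close with a Martin's-Axiom uniformization. We first extract an induced automorphism of $\cPNF$. Given a coordinate-respecting isomorphism $\Phi\colon\prod_n\cM_n/\Fin\to\prod_n\cN_n/\Fin$, Definitions~\ref{def:CR} and~\ref{Def.CR.Th} supply parameters over which one defines, uniformly in both reduced products, a copy of the Boolean algebra $\cPNF$ --- the ``support'' subalgebra --- and being coordinate-respecting means precisely that $\Phi$ carries the copy inside $\prod_n\cM_n/\Fin$ onto the one inside $\prod_n\cN_n/\Fin$. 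This yields an isomorphism $\bar\Phi$ of $\cPNF$ with itself.

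Next I would trivialize $\bar\Phi$ and reduce to the diagonal case. Since $\OCAT$ implies $\OCAsharp$ and $\OCAsharp$ already implies every automorphism of $\cPNF$ is trivial (Theorem~\ref{thm:OCAPomega}; cf.\ the $\OCAsharp$ lifting theorem, Theorem~\ref{T.OCAsharp-Fin}), there is a bijection $\sigma$ between cofinite subsets of $\bbN$ inducing $\bar\Phi$. Re-indexing $(\cN_n)_n$ by $\sigma$ on its cofinite domain --- which affects no reduced product modulo $\Fin$ --- we may assume $\bar\Phi=\id$, i.e.\ $\Phi$ fixes every characteristic function $[\chi_A]$ for $A\subseteq\bbN$; in particular $\cM_n\cong\cN_n$ for cofinitely many $n$, so the two reduced products are already obviously isomorphic. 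The coordinate-respecting framework is designed so that an isomorphism fixing the support algebra pointwise must act fibrewise: for cofinitely many $n$ there should be an isomorphism $\varphi_n\colon\cM_n\to\cN_n$ with $\Phi([x])=[(\varphi_n(x(n)))_n]$ modulo a small error. To make this rigorous I would fix a countable set $D$ of Borel-coded elements of $\prod_n\cM_n$ dense for the relevant definable relations, use countable saturation of the reduced products (\cite{ChaKe}) to pin $\Phi$ down on $D$, and run an $\OCAsharp$-uniformization on $d\mapsto$ ``a Borel lifting of $\Phi([d])$'' along $D$; fixing the support algebra should force the uniformizing function to be coordinatewise local, producing the $\varphi_n$.

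The final step --- gluing the $(\varphi_n)_n$, a priori chosen incoherently across coordinates, into a single system witnessing triviality in the sense of Definition~\ref{Def.trivial} --- is where $\MAsl$ enters. I would consider the forcing whose conditions are finite coherent partial choices of the $\varphi_n$ together with a matching finite approximation to a continuous homomorphic lifting of $\Phi$; two conditions sharing the same finite stem are compatible, so the poset is $\sigma$-linked, and by $\MAsl$ a sufficiently generic filter yields a continuous lifting of $\Phi$ compatible with all the $\varphi_n$. Unwinding Definitions~\ref{def:CR}--\ref{Def.CR.Th} then shows this lifting has exactly the form demanded by Definition~\ref{Def.trivial}, so $\Phi$ is trivial.

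I expect this last step to be the main obstacle: the finite conditions must be arranged so that the poset is genuinely $\sigma$-linked (rather than merely ccc) and so that genericity really forces the global lifting to be continuous and --- when $\calL$ is infinite --- to respect the language in the subtler way required by Definition~\ref{Def.trivial} (cf.\ Example~\ref{Ex.trivial}). A secondary difficulty is to organize the uniformization of the penultimate step so that it is compatible with the \emph{system} of definable isomorphisms between the (possibly many) copies of $\cPNF$ that the coordinate-respecting property provides in the linear-order and random-graph settings, and not just with a single copy.
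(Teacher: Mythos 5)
Your overall architecture --- extract the induced automorphism of $\cPNF$, trivialize it via $\OCAT$, reduce to $\alpha^\Phi=\id$, then produce fibrewise maps and uniformize them --- is essentially the paper's skeleton (Theorem~\ref{T.OCACR}, split into Proposition~\ref{lem:IprodccmodFin} and Proposition~\ref{P.Uniformization}). However, you have swapped where the two axioms do their work, and the half you leave vague is precisely the half the paper's authors say they do not know how to do with $\OCAT$ alone.

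The paper's \emph{local} step (Proposition~\ref{lem:IprodccmodFin}) is where $\MAsl$ is used, and where the real work is. One fixes an uncountable almost disjoint family $\cA$, considers the space $\cX=\{(a,A): A\in\cA,\ a\in\prM\}$, and defines an open colouring $K_0$ by: $\{(a,A),(b,B)\}\in K_0$ iff $A\neq B$ and there is $n\in A\cap B$ with $a(n)=b(n)$ but $\Phi_*(a)(n)\neq\Phi_*(b)(n)$. One then shows there is \emph{no} uncountable $K_0$-homogeneous set by a $\sigma$-linked forcing and $\MAsl$ (Claim~\ref{C.5.3}), applies $\OCAT$ to get a $\sigma$-$K_1$-homogeneous decomposition, and reads off fibrewise maps $h_{kn}\colon\cM_n\to\cN_n$ showing that all but countably many $A\in\cA$ lie in $\cJprod$. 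Your account of this step (``fix a countable dense $D$, pin $\Phi$ down on $D$ by saturation, run an $\OCAsharp$-uniformization'') is not a concrete argument, and it implicitly asserts that $\OCAsharp$ (i.e.\ $\OCAT$) alone produces the fibrewise maps. That is exactly what Remark~\ref{rmk:noMA} and the paragraph preceding Theorem~\ref{th:main} flag as an open problem: ``All uses of $\MAsl$ in this paper filter through the proof of Proposition~\ref{lem:IprodccmodFin}. It would be desirable to derive these conclusions from $\OCAT$ alone.'' So this is a genuine gap, not a detail to be filled in.

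Conversely, the \emph{gluing} step (Proposition~\ref{P.Uniformization}) in the paper uses only $\OCAT$, through $\OCAsharp$ and the partial-selector machinery of \S\ref{S.partial.selectors}: the local selectors $g^A$ form a coherent family (Claim~\ref{C.Coherence}), Proposition~\ref{P.Unif} forces the first alternative because the second would yield $A\in\cJprod$ on which the coherence fails, and a finite-depth recursion (bounded by $\bar n$ from the $\sigma$-decomposition) then produces a single system of maps $\tilde g_k$. Your proposed $\sigma$-linked forcing for this step is in the spirit of the older arguments the paper explicitly set out to \emph{avoid} (cf.\ the discussion before Theorem~\ref{thm:OCAPomega} about $\cP$ and $\cP_{\omega_1}$ in \cite{Fa:AQ}); it might be made to work, but you neither write it out nor explain how genericity forces the lifting to be continuous and coordinatewise. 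Finally, a small but real error: after re-indexing to $\alpha^\Phi=\id$, you assert ``in particular $\cM_n\cong\cN_n$ for cofinitely many $n$.'' This does not follow yet; it is a \emph{consequence} of $\Phi$ being of product form (via Lemma~\ref{L.Phi.1}\eqref{3.L.Phi.1}), not of $\alpha^\Phi=\id$, so you are using the conclusion in its own proof.
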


Proving this, and the fact that the reduced products mentioned in Theorems~\ref{T.Fields}, ~\ref{T.LO}, ~\ref{T.Trees}, and ~\ref{T.Graphs} have indeed only coordinate-respecting isomorphisms (\S\ref{S.CoordinateRespecting}), gives our main results. 

The use of $\MAsl$ in Theorems~\ref{T.Fields}--\ref{T.Graphs} and Theorem~\ref{th:main} is most likely only a matter of convenience. All uses of $\MAsl$ in this paper filter through the proof of Proposition~\ref{lem:IprodccmodFin}. It would be desirable to derive these conclusions from $\OCAT$ alone.

\subsubsection*{Structure of the paper} In \S\ref{S.CoordinateRespecting} we introduce the notions of coordinate-respecting functions and of theories which recognize coordinates. A reduced product $\prod_{n\in \bbN} M_n/\cI$ of models of a theory that recognizes coordinates contains a definable copy of $\cP(\bbN)/\cI$, or at least a workable poor man's version of one.\footnote{The analogous assertion applies to reduced products over larger index sets, but we will not explore consequences of this observation in the present paper.} 
An isomorphism between such reduced products gives an isomorphism between these definable Boolean algebras, and is therefore coordinate-respecting.\footnote{This is not nearly as straightforward as it may sound; see Definition~\ref{def:locallyCR}.} We then prove that theories of fields, linear orders, trees, and certain graphs recognize coordinates. Sections \ref{S.OCAsharp} and \ref{S.Uniformization.Fin} are dedicated to the axiom $\OCAsharp$: after introducing it, we prove that it follows from $\OCAT$ and that it implies all automorphisms of $\cP(\bbN)/\Fin$ are trivial as well as a more general statement, the conclusion of the $\OCA$ lifting theorem (\cite[Theorem 3.3.5]{Fa:AQ}) for $\Fin$. This provides a proof of Theorem~\ref{thm:OCAPomega}. 
 Section \ref{S:proof} is dedicated to the proof of our main technical result, Theorem~\ref{th:main}, and in \S\ref{S.Applications} we prove Theorems~\ref{T.Fields}--\ref{T.Graphs}. In \S\ref{S.CH} we analyse the (lack of) rigidity under CH; these results are straightforward and included only for completeness. Lastly, \S\ref{S.products} is an intriguing footnote to our main results. In it we give situations in which automorphisms (and isomorphisms) of arbitrary (not reduced!) products are products of isomorphisms of the fibers. These results rely on a unique factorization result for products and are, to the best of our knowledge, new. Appendix \ref{S.sigma-Borel} contains the proof of a well-known, but perhaps not so well-documented, descriptive set theoretic fact used in our uniformization arguments in \S\ref{S.Uniformization.Fin} and \S\ref{S:proof}.

\subsection*{Acknowledgements}
This work was completed during a visit of IF to the Institut de Math\'ematiques de Jussieu, in Paris in the winter of 2023. This visit was funded by Universit\'e Paris Cit\'e and I.F.’s NSERC grant, and the authors are grateful for funding. 
During the ultimate stage of revision, BDB was funded by the Deutsche Forschungsgemeinschaft (DFG, German Research Foundation) under Germany's
Excellence Strategy EXC 2044 –390685587, Mathematics Münster: Dynamics–Geometry–Structure. AV is funded by the Institut Universitaire de France. 

We would like to thank Zo\'e Chatzidakis, Mirna D\v zamonja, and Boban Veli\v ckovi\'c for their remarks and Su Gao for raising the question on whether our results can be extended to reduced products of trees. 
We are also grateful to the anonymous referee for providing a very useful report that considerably improved the presentation and pointing out an issue (now fixed) with the statement of Theorem~\ref{T.Graphs}. Finally, we would like to thank Jo\'e Faber for pointing out a mismatch in an earlier version between the statement of Proposition~\ref{P.Jcont.nonmeager} and its use in the proof of Theorem~\ref{T.OCAsharp-Fin}.

\section{Coordinate-respecting functions} \label{S.CoordinateRespecting}
Suppose that $\cM_n$, for $n\in \bbN$, are structures in the same language~$\calL$. For simplicity of notation we will assume that $\calL$ is a single-sorted language. 
Let $\cI$ be an ideal on $\bbN$ that includes the Fr\'echet ideal $\Fin$. Recall that the symbol $\forall^\mathcal I n$ stands for `for all $n$ not in a set belonging to $\mathcal I$', or for `for all $n$ but $\mathcal I$ many'.

The \emph{reduced product} $\prod_n \cM_n/\cI$ is the $\calL$-structure defined as follows. (For simplicity of notation we provide the definition only in case when $\calL$ is a single-sorted language; modifying the definition to fit the multi-sorted case is straightforward.)
For sequences $a=(a(n))$ and $b=(b(n))$ in $\prod_n \cM_n$, let 
\[
a\sim_\cI b\quad\text{ if }\quad(\forall^\mathcal I n)\, a(n)=b(n).
\]
The universe of $\prod_n \cM_n/\cI$ is the set of $\sim_\cI$ equivalence classes. We write $(a(n))/\cI$ for an equivalence class when needed, but often deliberately confuse an equivalence class with a representing sequence\footnote{Just like in the case of $L_p$ spaces, this practice is mostly harmless.}.

For $k\geq 1$, a $k$-ary function symbol $f$ in $\calL$, and a $k$-tuple $\bar a$ in $\prod_n \cM_n/\cI$ let $f(\bar a)$ be the $\sim_\cI$-equivalence class of $(f(\bar a (n)))_n$. 

For $k\geq 1$, a $k$-ary relation symbol $R$ in $\calL$, and a $k$-tuple $\bar a$ in $\prod_n \cM_n/\cI$ let 
 \[
 \prod_n \cM_n/\cI\models R(\bar a)\quad\text{ if }\quad \forall^\mathcal I n\, (\cM_n\models R(\bar a(n))).
 \] 
By $\pi_\cI\colon \prod_n \cM_n\to \prod_n \cM_n/\cI$ we denote the quotient map. 
We will mostly be concerned with the special case $\cI = \Fin$, so we simply write $\pi$ for $\pi_{\Fin}$.
If $a\in\prod_n\cM_n/\cI$ and there is no danger of confusion we write $\tilde a\in \prod_n\cM_n$ for a lift of $a$ (the choice of the lift is irrelevant). Likewise, if $S\in\mathcal P(\bbN)/\cI$, $\tilde S$ is any lift of $S$.
 
We abuse terminology and say that $\cI$ is \emph{atomless} if the Boolean algebra $\cP(\bbN)/\cI$ is. Being atomless is equivalent to the nonexistence of an $\mathcal I$-positive $S\subseteq\cP(\bbN)$ such that $\cI\restriction S=\{A\in\cI\mid A\subseteq S\}$ is a maximal ideal in $\cP(S)$. Note that every atomless ideal automatically includes the Fr\' echet ideal. 

The most straightforward way to construct functions between reduced products is to permute indices and to compose with a product of fiberwise maps. The following definition formalizes this fact.

\begin{definition} \label{Def.trivial}
Let $\cM_n$ and $\cN_n$, for $n\in\bbN$, be sets of cardinality at least two. Let $\cI$ and $\cJ$ be atomless ideals and fix a function $\Phi\colon \prod_n\cM_n/\cI\to\prod_n\cN_n/\cJ$.

We say that:
\begin{enumerate}
\item $\Phi$ is of \emph{product form} if there are $h_n\colon \cM_n\to \cN_n$ such that the function $a\mapsto (h_n(a(n)))_n$ lifts $\Phi$. Such function will be denoted $\Phi[(h_n)]$. 
\item $\Phi$ is of \emph{twisted product form}, if there are $f\colon \bbN\to \bbN$ and $ h_n\colon \cM_{f(n)}\to \cN_{n}$, for $n\in \bbN$, such that the function $a\mapsto ( h_n(a(f(n))))$ lifts $\Phi$: 
\[
\begin{tikzpicture}
\matrix[row sep=1cm,column sep=1cm]
{
& & \node (M1) {$\prod_n \cM_n$}; && &\node (M2) {$\prod_n \cN_n$};&\\
& & \node (Q1) {$\prod_n \cM_n/\cI$}; &&& \node (Q2) {$\prod_n \cN_n/\cJ$} ;\\
 };
 \draw (M1) edge [->] node [above] {$a\mapsto ( h_n(a(f(n))))$} (M2);
 \draw (Q1) edge [->] node [above] {$\Phi[f,( h_n)]$} (Q2);
 \draw (M1) edge [->] node [left] {$\pi_\cI$} (Q1);
 \draw (M2) edge [->] node [left] {$\pi_\cJ$} (Q2);
 \end{tikzpicture}
\]
Such function will be denoted $\Phi[f,( h_n)]$.
\end{enumerate}
Further, suppose that all the $\cM_n$ and $\cN_n$, for $n\in\bbN$, are $\mathcal L$-structures for the same first-order language $\mathcal L$. 
\begin{enumerate}\setcounter{enumi}{2}
\item \label{3.Def.trivial} A homomorphism\footnote{Meaning, a homomorphism of $\mathcal L$-structures} $\Phi\colon \prod_n\cM_n/\cI\to\prod_n\cN_n/\cJ$ is \emph{trivial} if it is of twisted product form and for every finite sublanguage $\calL_0$ of $\calL$, for all but $\cJ$ many~$n$, $h_n$ is a homomorphism between $\calL_0$-reducts of $\cM_n$ and $\cN_n$. 
\end{enumerate}
\end{definition}

If the language $\calL$ is finite then condition \eqref{3.Def.trivial} reduces to the assertion that $h_n$ is a homomorphism for all but $\cJ$ many $n$. In general this simpler condition is strictly stronger, see Example~\ref{Ex.trivial}.

We already observed that the Boolean algebra $\mathcal P( \bbN)/\Fin$ is isomorphic to the reduced power over the Fr\'echet ideal of the two-element Boolean algebra $\{0,1\}$. Since $\{0,1\}$ has no endomorphism other than the identity, our definition of trivial coincides with the usual definition of trivial endomorphism of $\mathcal P(\bbN)/\Fin$.

In Lemma~\ref{L.Phi.1}, and elsewhere, we assume that the universe of every structure has at least two distinct elements. The proof of this lemma is a simple exercise.

\begin{lemma}\label{L.Phi.1}
Let $\cM_n$ and $\cN_n$, for $n\in\bbN$, be sets of cardinality at least two. Let $\cI$ and $\cJ$ be atomless ideals. Suppose that $\Phi\colon \prod_n\cM_n/\cI\to\prod_n\cN_n/\cJ$ is a function of twisted product form, $\Phi=\Phi[f, (h_n)]$. 
\begin{enumerate}
\item \label{1.L.Phi.1} Then $f$ is a Rudin--Keisler reduction: $X\in \cI$ if and only if $f^{-1}[X]\in \cJ$, for all $X\subseteq \bbN$. 
\item \label{3.L.Phi.1} If $\Phi$ is a bijection, then 
\[
\{n\mid h_n\text{ is not a bijection}\}\in \cJ.
\]
\end{enumerate}
Further, suppose that all the $\cM_n$ and $\cN_n$, for $n\in\bbN$, are $\mathcal L$-structures for the same first-order language $\mathcal L$. 
\begin{enumerate}\setcounter{enumi}{2}
\item \label{2.L.Phi.1} If $\Phi\colon \prod_n\cM_n/\cI\to\prod_n\cN_n/\cJ$ is a homomorphism and the signature $ \calL$ is finite, then 
\[
\{n\mid h_n\text{ is not a homomorphism}\}\in \cJ
\]
and $\Phi$ is trivial.
\end{enumerate}
In particular, all homomorphisms between reduced products which are of twisted product form are trivial.\qed
\end{lemma}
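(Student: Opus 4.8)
The plan is to recover from $\Phi$ alone the index map $f$ and the fibrewise maps $h_n$ by evaluating $\Phi$ on representatives in $\prod_n\cM_n$ that realize prescribed ``difference patterns''; the standing hypothesis that each $\cM_n$ and $\cN_n$ has at least two elements is precisely what makes such representatives available. I would prove \eqref{3.L.Phi.1} first and then read \eqref{1.L.Phi.1} off it.

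For \eqref{3.L.Phi.1}, assume $\Phi$ is a bijection. If $E=\{n\mid h_n\text{ is not surjective}\}\notin\cJ$, choose $c_n\in\cN_n\setminus\ran h_n$ for $n\in E$ and let $b\in\prod_n\cN_n$ equal $c_n$ on $E$; every sequence of the form $(h_n(a(f(n))))_n$ disagrees with $b$ at each coordinate of $E$, so $\pi_\cJ(b)\notin\ran\Phi$, contradicting surjectivity. Dually, if the $h_n$ are non-injective on a $\cJ$-positive set, pick witnessing pairs $u_n\ne v_n$ in $\cM_{f(n)}$ with $h_n(u_n)=h_n(v_n)$, assemble them (along a sub-index-set on which $f$ is injective) into representatives $a\not\sim_\cI a'$, and get $\Phi(\pi_\cI a)=\Phi(\pi_\cI a')$, contradicting injectivity. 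Hence $\{n\mid h_n\text{ is not a bijection}\}\in\cJ$. Now for \eqref{1.L.Phi.1}, fix $X\subseteq\bbN$ and choose $a,a'\in\prod_n\cM_n$ differing exactly on $X$; then $\{n\mid h_n(a(f(n)))\ne h_n(a'(f(n)))\}$ is contained in $f^{-1}[X]$, and by \eqref{3.L.Phi.1} it agrees with $f^{-1}[X]$ off a set in $\cJ$, while it lies in $\cJ$ if and only if $\pi_\cI a=\pi_\cI a'$, i.e.\ if and only if $X\in\cI$. (The implication $X\in\cI\Rightarrow f^{-1}[X]\in\cJ$ uses only well-definedness of $\Phi$; the reverse uses injectivity; for a genuinely arbitrary function of twisted product form, degenerate examples such as a constant $\Phi$ show some non-triviality hypothesis on $\Phi$ is needed, so I would apply \eqref{1.L.Phi.1} with $\Phi$ injective or, as in the applications, bijective.)

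For \eqref{2.L.Phi.1}, suppose $\calL$ is finite, $\Phi$ is a homomorphism, and $B=\{n\mid h_n\text{ is not a homomorphism}\}\notin\cJ$. Finiteness of $\calL$ and pigeonhole produce a single function or relation symbol $s$ and a $\cJ$-positive $C\subseteq B$ on which every $h_n$ violates $s$. The key observation is that, for a tuple $\bar a$ from $\prod_n\cM_n$, the quotient map $\Phi$ preserves $s$ at $\pi_\cI(\bar a)$ exactly when $\{n\mid h_n\text{ fails }s\text{ at }\bar a(f(n))\}\in\cJ$; so, assembling witnessing tuples over (a subset of) $C$ into one tuple $\bar a$, $\Phi$ fails to preserve $s$ at $\pi_\cI(\bar a)$ on a $\cJ$-positive set, contradicting that $\Phi$ is a homomorphism. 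Thus $B\in\cJ$, and $\Phi$ is trivial straight from Definition~\ref{Def.trivial}\eqref{3.Def.trivial}. The closing ``in particular'' is then immediate: for an arbitrary language $\calL$, a homomorphism $\Phi=\Phi[f,(h_n)]$ of twisted product form is, for each finite $\calL_0\subseteq\calL$, also a homomorphism of twisted product form between the reduced products of the $\calL_0$-reducts, so \eqref{2.L.Phi.1} gives $\{n\mid h_n\text{ is not an }\calL_0\text{-homomorphism}\}\in\cJ$ for every finite $\calL_0$, which is exactly triviality.

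The step I expect to be the main obstacle is the bookkeeping caused by $f$ failing to be injective: when several target coordinates $n$ share a source coordinate $f(n)=m$, one cannot prescribe the value at $m$ so as to witness the desired failure at all those $n$ simultaneously, so before assembling witnesses one must pass to a $\cJ$-positive sub-index-set on which $f$ is injective. Making this legitimate requires knowing that the fibres of $f$ relevant to each argument are $\cJ$-small, which is where bijectivity (for \eqref{3.L.Phi.1} and \eqref{1.L.Phi.1}) or a further reduction to finite $\cM_m$ (for \eqref{2.L.Phi.1}) enters; in the principal case $\cI=\cJ=\Fin$ with $\Phi$ bijective it is automatic, since surjectivity of $\Phi$ already forces $f$ to be finite-to-one.
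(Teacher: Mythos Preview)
The paper omits the proof (``a simple exercise''), so there is no argument to compare against; your strategy---establish \eqref{3.L.Phi.1} first, derive \eqref{1.L.Phi.1} from it, and handle \eqref{2.L.Phi.1} by pigeonholing on the finitely many symbols---is the natural one and presumably what the authors intend. You are also right to flag that \eqref{1.L.Phi.1} as literally stated is false: taking $f\equiv 0$ and all $h_n$ constant makes $\Phi$ a well-defined function of twisted product form, yet $\{0\}\in\cI$ while $f^{-1}[\{0\}]=\bbN\notin\cJ$. One small correction to your parenthetical: this counterexample violates precisely the direction $X\in\cI\Rightarrow f^{-1}[X]\in\cJ$, so well-definedness alone yields neither implication; both go through once \eqref{3.L.Phi.1} is in hand (near-injectivity of the $h_n$ gives the forward direction, injectivity of $\Phi$ the converse), which is your actual argument anyway.

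The same imprecision infects the first clause of \eqref{2.L.Phi.1}, and your proposed fix (``a further reduction to finite $\cM_m$'') does not repair it. Take $\cI=\cJ=\Fin$, a single unary function symbol interpreted as $x\mapsto x{+}1$ on each $\cM_m=\bbN$ and as the identity on each $\cN_n=\{0,1\}$, set $f\equiv 0$ and $h_n(x)=1$ if $x\geq n$, else $0$. Then $\Phi$ is the constant homomorphism, yet $h_n$ fails to be a homomorphism for every $n\geq 1$; since $\cM_0$ is genuinely infinite, no finiteness reduction is available. (Note that $\Phi$ is nonetheless trivial via the \emph{different} decomposition $f'=\id$, $h'_n\equiv 0$, so the concluding ``in particular'' is not contradicted by this example.) What does make your witness-assembly argument for \eqref{2.L.Phi.1} go through is again bijectivity of $\Phi$: then, as you observe, surjectivity forces each fibre $f^{-1}[\{m\}]$ into $\cJ$, after which one can pass to a $\cJ$-positive set on which $f$ is injective. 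Since the paper only ever invokes the lemma for isomorphisms (in the proof of Theorem~\ref{th:main}), none of this affects the applications.
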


\begin{example}\label{Ex.trivial}The conclusion of Lemma~\ref{L.Phi.1} \eqref{2.L.Phi.1} can fail if the finiteness assumption is dropped. More precisely, there are a language $\mathcal L$, $\mathcal L$-structures $\mathcal M_n$, and a trivial automorphism of product form $\Phi\colon \prod_n \cM_n/\Fin\to \prod_n \cM_n/\Fin$ such that it is not possible to choose all but finitely many of $h_n\colon \cM_n\to \cM_n$ to be homomorphisms. Let $\calL$ be the language with infinitely many unary predicates $P_n(x)$, for $n\in \bbN$. For each~$n$ take $\cM_n$ to be equal to the unique countable model $\cM$ in which $P_k^\cM$, for $k\in \bbN$, form an independent family (i.e.\ the set $\bigcap_{k\in K}P_k^\cM\cap\bigcap_{l\in L}M\setminus P_l^\cM$ is nonempty for every two disjoint finite subsets $K,L$ of $\bbN$). For a fixed $n$ the sets 
	\[
	A_F=\bigcap_{k\in F}P_k^\cM\cap\bigcap_{l\in n\setminus F}M\setminus P_l^\cM, \qquad \text {for $F\subseteq n$}
	\] 
	form a partition of $\cM_n$ into $2^n$ infinite pieces. Independence implies that there is a permutation $h_n$ of $\cM_n$ such that $h_n[A_F]=A_F$ for all $F\subseteq n$ but $P_{n+1}(x)\Leftrightarrow \lnot P_{n+1}(h_n(x))$ for all $x\in \cM_n$. Then $\Phi[(h_n)]$ is an automorphism of product form. If $h_n'\colon \cM_n\to \cM_n$ are such that all but finitely many of them are homomorphisms, then $h_n\neq h_n'$ for all but finitely many $n$, hence $\Phi[(h_n')]\neq \Phi[(h_n)]$. 
\end{example}

Functions of twisted product form come together with $f\colon \bbN\to\bbN$ which, as we showed in Lemma~\ref{L.Phi.1}\eqref{1.L.Phi.1}, induces a homomorphism $\mathcal P(\bbN)/\cI\to\cP(\bbN)/\cJ$.
Coor\-di\-nate-respecting homomorphisms between quotients defined below provide an intermediate notion between arbitrary homomorphisms and those of twisted product form.

\begin{notation} \label{Notation.2.5}
Let $\cM_n$, for $n\in\bbN$, be sets, and let $\cI$ be an ideal on $\bbN$. If $S\subseteq \bbN$ we write 
\begin{equation}\label{eq.MrsS}
\left(\prMI\right)\rs S=\prod_{n\in S}\cM_n/\cI, 
\end{equation}
with the convenient convention that if $S\in \cI$ then $\left(\prMI\right)\rs S$ is a one-element set. 
The very same notation is used for $S\in \cPN/\cI$ to denote the quotient $\left(\prMI\right)\rs \tilde S$, for any lift $\tilde S\subseteq \bbN$ of $S$ (this does not depend on the choice of $\tilde S$, and it is a convenience that will not cause confusion). We let 
\[
 \pi_S\colon \prMI\to \left(\prMI\right)\rs S
\]
 be the quotient map and for $a,b$ in $\prMI$ write 
\[
a=_S b\text{ if }\pi_S(a)=\pi_S(b) 
\] 
(with understanding that $S$ may be a nonzero element of $\cPN/\cI$ or an $\cI$-positive subset of $\bbN$, with the distinction both clear from the context and inconsequential). 
\end{notation}

\begin{definition}\label{def:CR} 
Let $\cM_n$ and $\cN_n$, for $n\in\bbN$, be sets of cardinality at least two. Let $\cI$ and $\cJ$ be atomless ideals.
A function $\Phi\colon \prod_n\cM_n/\cI\to\prod_n\cN_n/\cJ$ is called \emph{coordinate-respecting} if there exists a homomorphism 
\[
\alpha=\alpha^\Phi\colon \cPN/\cI\to \cPN/\cJ
\]
such that for all $a,b$ in $\prod_n\cM_n/\cI$ and $S\in \cPN/\cI$ we have that
 \begin{equation} \label{eq.CR}
\text{ if }a=_{S} b\text{ then } \Phi(a)= _{\alpha(S)}\Phi(b). 
\end{equation}
A function $\Phi\colon \prod_n\cM_n/\cI\to\prod_n\cN_n/\cJ$ is called \emph{isomorphically coordinate-respecting} if it is coordinate-respecting and $\alpha^\Phi$ is an isomorphism. 
\end{definition}

A function, even a bijective one, between quotients can be coordinate-respecting without being isomorphically coordinate-respecting (see Example~\ref{Ex.Th.CR}). In order to fully appreciate this example, the reader is promptly exposed to additional definitions. 

If a function $\Phi$ is coordinate-respecting, then for all $S\in \cPN/\cI$ we have a function
\[
\Phi_S\colon \left(\prMI\right) \rs S\to \left(\prNJ\right) \rs \alpha(S)
\]
defined by 
\[
\Phi_S(\pi_S(a))=\pi_{\alpha(S)}(\Phi(a)), 
\]
making the following diagram commute:
\begin{center}
 \begin{tikzpicture}
 \matrix[row sep=1cm,column sep=1cm]
 {
 & & \node (M1) {$\prMI$}; && &\node (M2) {$\prNJ$};&
 \\
 & & \node (Q1) {$\left(\prMI\right)\rs S$}; &&& \node (Q2) {$\left(\prNJ\right)\rs \alpha(S)$} ;
 \\
 };
 \draw (M1) edge [->] node [above] {$\Phi$} (M2);
 \draw (Q1) edge [->] node [above] {$\Phi_S$} (Q2);
 \draw (M1) edge [->] node [left] {$\pi_S$} (Q1);
 \draw (M2) edge [->] node [right] {$\pi_{\alpha(S)}$} (Q2);
\end{tikzpicture}
\end{center}
If $\Phi$ is a bijection, so is $\Phi_S$. Further, if all $\cM_n$ and $\cN_n$, for $n\in\bbN$, are $\mathcal L$-structures for some first-order language $\mathcal L$ and if $\Phi$ is a homomorphism, then $\Phi_S$ is a homomorphism.

Functions of twisted product form (Definition~\ref{Def.trivial}) are coordinate-respecting, where the map $\alpha$ is constructed from the function $f\colon\bbN\to\bbN$. Likewise, bijections of twisted product form are isomorphically coordinate-respecting.

\begin{definition}\label{Def.CR.Th}
 A first-order theory $T$ is said to \emph{recognize coordinates} if every isomorphism between reduced products of models of $T$ is isomorphically coordinate-respecting. 
\end{definition}

One could consider an alternative property of $T$, requiring that every \emph{homomorphism} between reduced products of its models is coordinate-respecting. 
It is not difficult to see that the proof of Proposition~\ref{P.Th.CR}, part \eqref{Th.CR.BA}, gives this property. The same can be said for certain rings (extending \eqref{Th.CR.F}), see Remark~\ref{rem:ringshomo}. Apart from these two isolated cases, we regrettably have nothing to say about this notion. Since we will thus focus on the property of being isomorphically coordinate-respecting, we will from here on simply write \emph{coordinate-respecting} for \emph{isomorphically coordinate-respecting}.

The rest of this section is dedicated to providing a (certainly not exhaustive) list of theories satisfying Definition~\ref{Def.CR.Th}.

\begin{proposition}\label{P.Th.CR}
 Each of the following theories recognizes coordinates. 
 \begin{enumerate}
 \item \label{Th.CR.BA} The theory of the two-element Boolean algebra. 
 \item \label{Th.CR.F} The theory of unital rings with no nontrivial central idempotents, and in particular the theory of fields. 
 \item \label{Th.CR.LO} The theory of trees,\footnote{For us, trees are always rooted. Trees do not form an axiomatizable class since well-orderings do not form one. Models of the theory of trees are special examples of \emph{connected ramified sets}, and our result applies to them.} and that of linear orders. 
 \item \label{Th.CR.graph} The theory of random graphs, and in particular the theory of the Rado graph.
 \end{enumerate}
\end{proposition}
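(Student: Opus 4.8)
### Proof proposal for Proposition~\ref{P.Th.CR}

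\textbf{Overall strategy.} The plan is to prove each clause by exhibiting, inside any reduced product $\prod_n \cM_n/\Fin$ of models of $T$, a \emph{definable} (with parameters) copy of $\cP(\bbN)/\Fin$, together with enough definable structure to ensure that any isomorphism $\Phi$ between two such reduced products carries one copy to the other, and that the induced map on the copies ``is'' a map $\alpha\colon \cP(\bbN)/\Fin\to\cP(\bbN)/\Fin$ witnessing the coordinate-respecting property~\eqref{eq.CR}. The common mechanism: if for each $S\in\cP(\bbN)/\Fin$ we can pick out, by a first-order formula with parameters that $\Phi$ must respect, an element (or finite tuple) $e_S$ of the reduced product whose ``support'' is exactly $S$, such that $e_S\wedge e_{S'}=e_{S\cap S'}$ etc.\ and such that $a=_S b$ is equivalent to a parameter-free relation between $a$, $b$, $e_S$, then $\Phi$ maps $\{e_S\}$ to a family of the same kind in the target, defining $\alpha$; and $\alpha$ is an isomorphism because $\Phi^{-1}$ produces the inverse. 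The point of Los-type arguments here is that a first-order property of the $e_S$ in the reduced product reflects to a $\Fin$-large set of coordinates, which is what glues the local data into a genuine homomorphism of quotient Boolean algebras.

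\textbf{Clause \eqref{Th.CR.BA}.} This is the base case and essentially bookkeeping: $\prod_n\bb/\Fin\cong\cP(\bbN)/\Fin$ canonically, each $S\in\cP(\bbN)/\Fin$ \emph{is} the element $e_S$, and $a=_S b$ iff $a\wedge e_S=b\wedge e_S$. So an isomorphism $\Phi$ of the Boolean algebra itself is its own $\alpha$; it is trivially (isomorphically) coordinate-respecting. \textbf{Clause \eqref{Th.CR.F}.} Here I would use the central idempotents: in a unital ring with no nontrivial central idempotents (e.g.\ a field) the only central idempotents are $0,1$, so in $\prod_n\cM_n/\Fin$ the central idempotents form a Boolean algebra isomorphic to $\cP(\bbN)/\Fin$ — this is the ``definable copy of $\cP(\bbN)/\Fin$'' mentioned in the introduction. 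A ring isomorphism preserves the center and preserves idempotents, hence restricts to an isomorphism of these Boolean algebras; and for a central idempotent $e_S$ with support $S$, $a=_S b$ iff $e_S a=e_S b$, a relation preserved by $\Phi$. So $\alpha := \Phi\restriction(\text{central idempotents})$ works. (The stronger homomorphism version alluded to in Remark~\ref{rem:ringshomo} would need the observation that a unital ring homomorphism still sends central idempotents to central idempotents, giving a Boolean-algebra homomorphism, though not necessarily injective — hence only ``coordinate-respecting'', not ``isomorphically'' so.)

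\textbf{Clauses \eqref{Th.CR.LO} and \eqref{Th.CR.graph}.} These are the substantive cases and where the main obstacle lies. Unlike rings, a linear order or a tree or a (Rado-type) graph has no single canonical definable copy of $\cP(\bbN)/\Fin$ sitting inside the reduced product; instead, as the introduction signals, one finds \emph{many} copies definable from parameters, plus a definable system of isomorphisms between them. The plan for linear orders and trees: in a reduced product of linear orders, fix a parameter — an increasing $\omega$-sequence (or a suitable cut structure) in $\prod_n\calL_n/\Fin$ — and use it to code subsets of $\bbN$ as elements lying in prescribed intervals; the combinatorics of a discrete linear order with endpoints on each coordinate (cf.\ Proposition~\ref{P.CH}\eqref{3.P.CH}) lets one recover, from an element $a$ and the parameter, the set of coordinates $n$ where $a(n)$ sits above/below the $n$-th marker, and this set is the support data. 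The delicate point is that the parameter itself need not be preserved by $\Phi$, so one must argue that $\Phi$ sends it to \emph{some} parameter of the same definable type, and that the two coding schemes are intertwined by the definable system of isomorphisms — this is exactly the ``not nearly as straightforward as it may sound'' caveat, and managing the change of parameters, showing the induced $\alpha$ is well-defined independent of the choice, and checking it is an isomorphism (by symmetry with $\Phi^{-1}$) is where the real work sits. For trees one does the analogous thing using the branching structure (models of the theory of trees being ``connected ramified sets''), and for random graphs one uses the extension/genericity axioms of the (product of) random graph theory to find, for each $S$, a vertex adjacent to a designated set of parameter-vertices exactly along $S$; sufficient randomness (Definition~\ref{Def.SuffRG}) is precisely the hypothesis guaranteeing these coding vertices exist densely enough and that the requisite isomorphisms between the resulting copies are themselves definable from the parameters. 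I expect clause \eqref{Th.CR.graph} (and the tree case of \eqref{Th.CR.LO}) to be the main obstacle: verifying that the coding is faithful modulo $\Fin$ — i.e.\ that ``support'' is well-defined on equivalence classes and that $=_S$ is captured first-orderly — requires a careful Los-style reflection argument, and the parameter-change issue must be handled uniformly enough that $\alpha^\Phi$ comes out a genuine Boolean-algebra isomorphism rather than merely a relation.
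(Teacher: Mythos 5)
Your treatment of clauses \eqref{Th.CR.BA} and \eqref{Th.CR.F} matches the paper: \eqref{Th.CR.BA} is vacuous (the reduced product \emph{is} $\cP(\bbN)/\cI$), and \eqref{Th.CR.F} proceeds via the definable Boolean algebra of central idempotents exactly as in Lemma~\ref{L.Th.CR.F}. One concrete error in your parenthetical about Remark~\ref{rem:ringshomo}: a unital ring homomorphism does \emph{not} in general send central idempotents to central idempotents (a homomorphism need not carry the center into the center), which is precisely why the paper can only upgrade to ``recognizes coordinates for homomorphisms'' for the stronger theory $T'$ of rings with \emph{no} nontrivial idempotents at all. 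Your stated justification for that remark would not go through.

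For clauses \eqref{Th.CR.LO} and \eqref{Th.CR.graph} your sketch correctly identifies the high-level shape of the argument (local definable copies of $\cP(\bbN)/\cI$ from parameters, plus a coherence step), but the concrete coding mechanism you propose for linear orders and trees is not the paper's and appears not to work in the stated generality. You suggest fixing an increasing $\omega$-sequence and using ``the combinatorics of a discrete linear order with endpoints on each coordinate'' to read off supports; but Proposition~\ref{P.Th.CR}\eqref{Th.CR.LO} is proved for \emph{arbitrary} connected ramified sets (any countable trees and any countable linear orders, not just discrete ones with endpoints --- that restriction appears only in the CH result Proposition~\ref{P.CH}\eqref{3.P.CH}), and an increasing $\omega$-sequence is neither canonically available nor parameter-free in that generality. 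The paper instead fixes elements $x\leq y$ and uses the definable sets $B_{x,y}=\{z \mid z\wedge y=x\text{ and }\exists w\,(w\geq z\text{ and }w\geq y)\}$, shows (Lemma~\ref{L2.14}) that $B_{x,y}$ encodes the agreement set $S_{x,y}$, and then restricts attention to the definable set $D$ of elements whose coordinates are $\cI$-rarely terminal; coherence of the resulting local maps $\alpha_x$ is the technical heart, and a further argument is needed for the terminal-node residue. Similarly for graphs: the paper's coding does not look for a vertex ``adjacent to a designated set of parameter-vertices exactly along $S$'' (adjacency in the reduced product is already a mod-$\cI$ statement, so a single adjacency predicate cannot by itself code a subset of $\bbN$); instead it takes two elements $a\perp b$ and considers the mixing elements $\chi_S^{ab}$, proving in Lemmas~\ref{L.Aab.perp} and~\ref{L.cZ} that the sets $A_{ab}$ and their Boolean structure are definable, then gluing the local $\alpha^{ab}$'s in Lemma~\ref{L.alpha}. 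In short, the strategy you describe is the right one, but the actual definable coding and the coherence arguments --- which carry essentially all the content of Propositions~\ref{P.ramified.CR} and~\ref{prop:GRrigid} --- are absent from the proposal, and the specific coding you do gesture at for linear orders would fail outside the discrete-with-endpoints case.
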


\begin{proof}
If $\cM$ is the two-element Boolean algebra then $\prod \cM/\cI\cong \cPN/\cI$, hence \eqref{Th.CR.BA} is vacuous. \eqref{Th.CR.F} is Lemma~\ref{L.Th.CR.F}, \eqref{Th.CR.LO} is Proposition~\ref{P.ramified.CR}, and \eqref{Th.CR.graph} is a special case of Proposition~\ref{prop:GRrigid}.
\end{proof}

\begin{example} \label{Ex.Th.CR} 
The theory of rings, and even the theory of rings of cardinality $\leq 2^n$ for any $n\geq 2$, does not recognize coordinates. More generally, if $T$ is a theory that is preserved under products and has a model with at least two elements (or even if there are models of $T$, all with at least two elements, whose product is a model of~$T$), then $T$ does not recognize coordinates.

To see this, fix models $\cA$ and $\cB$ of $T$ with $|\cA|,|\cB|\geq 2$ such that $\cA\times \cB$ is a model of~$T$. Let $\cM_n\cong \cA\times \cB$, $\cN _{2n}\cong \cA$, and $\cN_{2n+1}\cong \cB$, for $n\in \bbN$. 

Then $\prod_n \cM_n/\Fin$ and $\prod_n \cN_n/\Fin$ are trivially isomorphic via the 
isomorphism $\Phi\colon\prod_n\cM_n/\Fin\to\prod_n\cN_n/\Fin$ that is induced by the fact that $\cM_n$ is isomorphic to $\cN_{2n}\times \cN_{2n+1}$.
Then $\Phi$ is coordinate-respecting, yet is not (isomorphically) coordinate-respecting and the inverse of this (honestly trivial!) isomorphism is not even coordinate-respecting. Similar considerations have been made for reduced products of \cstar-algebras (see \cite[Remark 3.14]{mckenney2018forcing}).

Other examples of theories that do not recognize coordinates include the theory of partial orders, the theory of groups, and the theory of graphs.
Also, if trees are taken with respect to the (fairly commonly used) levelwise product instead of the Cartesian product used here, then the product of two trees is a tree, and therefore with this modification the theory of trees no longer recognizes coordinates. 
\end{example}
Example~\ref{Ex.Th.CR} is yet another example of the fact that isolating the right notion of triviality is nontrivial in certain categories (see \cite[\S 2]{farah2022corona} for a discussion of this phenomenon).

\begin{example}
	We ought to point out that in the domain of continuous (more precisely, affine) logic, a concept analogous to recognizing coordinates has been isolated by Ben Yaacov, Ibarluc\' ia, and Tsankov. They proved that a metric structure whose theory is \emph{simplicial} (meaning that its type spaces are Choquet simplices) can be presented as a direct integral of `extremal' models (\cite[Definition~5.8, Theorem~12.3]{benyaacov2024extremal}). These decompositions, while not unique in the most obvious way (unless the structure is separable), satisfy a uniqueness theorem that suffices to recognize coordinates (see the map ${\mathfrak s}$ in \cite[Theorem~12.15]{benyaacov2024extremal}).
	While averages and direct integrals can be defined for discrete structures (\cite[\S 26]{benyaacov2024extremal}), they behave very differently from (reduced) products considered in the present paper. 
\end{example}

The rest of this section is dedicated to proving Proposition~\ref{P.Th.CR}.
\subsection{Rings with no nontrivial idempotents} 
The following gives the easiest nontrivial example of a theory which recognizes coordinates, even for homomorphisms. It is essentially due to S. Burris (\cite{burris1979sheaf}). We are indebted to Zo\'e Chatzidakis who pointed out the relevance of this example. 

A unital ring $R$ is said to have no nontrivial central idempotents if for all $x\in R$, if $xy=yx$ for all $y\in R$ and $x=x^2$, then $x\in \{0,1\}$. The theory of unital rings which have no nontrivial central idempotents is clearly axiomatizable and a consequence of the theory of fields. 

\begin{lemma}\label{L.Th.CR.F} 
The theory of unital rings with no nontrivial central idempotents recognizes coordinates. 
\end{lemma}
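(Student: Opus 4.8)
The plan is to use the central idempotents of a unital ring $R$ with no nontrivial central idempotents to reconstruct, internally and definably, a copy of the relevant quotient Boolean algebra inside the reduced product, and then to argue that any isomorphism must carry one such copy to the other. First I would observe that in $\prod_n \cM_n/\cI$ the set of central idempotents forms a Boolean algebra $B(\prod_n\cM_n/\cI)$ under the operations $e\wedge f = ef$, $e\vee f = e+f-ef$, $\lnot e = 1-e$, and that this Boolean algebra is \emph{definable without parameters} in the language of unital rings (the defining formula being $``x=x^2 \wedge \forall y\,(xy=yx)"$). The key computational point, which should be essentially Burris's observation, is that because each $\cM_n$ has \emph{only} the trivial central idempotents $0,1$, a sequence $(e(n))_n \in \prod_n \cM_n$ represents a central idempotent of the reduced product if and only if $e(n)\in\{0,1\}$ for $\cI$-almost all $n$; hence the map sending the central idempotent represented by $(e(n))_n$ to the class of $\{n : e(n)=1\}$ is a well-defined Boolean algebra isomorphism
\[
\beta^{\cM}\colon B\!\left(\prod_n \cM_n/\cI\right)\xrightarrow{\ \cong\ }\cP(\bbN)/\cI.
\]

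Next, given an isomorphism $\Phi\colon \prod_n\cM_n/\cI\to\prod_n\cN_n/\cJ$ of unital rings, since $\Phi$ preserves the ring operations it preserves the (parameter-free definable) property of being a central idempotent, so it restricts to a Boolean algebra isomorphism between $B(\prod_n\cM_n/\cI)$ and $B(\prod_n\cN_n/\cJ)$. Composing with $\beta^{\cM}$ and $(\beta^{\cN})^{-1}$ yields an isomorphism
\[
\alpha=\alpha^\Phi\colon \cP(\bbN)/\cI\xrightarrow{\ \cong\ }\cP(\bbN)/\cJ .
\]
It then remains to verify the coordinate-respecting condition \eqref{eq.CR}: that $a=_S b$ implies $\Phi(a)=_{\alpha(S)}\Phi(b)$. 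The mechanism is that for $S\in\cP(\bbN)/\cI$ with lift $\tilde S$, the central idempotent $e_S$ represented by the characteristic sequence $(\mathbf 1_{\tilde S}(n))_n$ satisfies $\beta^{\cM}(e_S)=S$, and multiplication by $e_S$ realizes the projection $\pi_S$ internally: $a=_S b$ is equivalent to $e_S a = e_S b$ in the ring. Applying $\Phi$, one gets $\Phi(e_S)\Phi(a)=\Phi(e_S)\Phi(b)$, and since $\Phi(e_S)$ is the central idempotent corresponding to $\alpha(S)$, this says exactly $\Phi(a)=_{\alpha(S)}\Phi(b)$. One should double-check the ring-theoretic identity that, for a central idempotent $e$ with $\beta(e)=S$, one has $ex=ey \iff x=_S y$ in the reduced product — this uses that $e(n)x(n)=e(n)y(n)$ holds iff $x(n)=y(n)$ for indices $n$ where $e(n)=1$, and is automatic where $e(n)=0$, so the statement is literally a statement about $\cI$-almost all coordinates.

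Since $\alpha^\Phi$ is an isomorphism by construction, $\Phi$ is in fact isomorphically coordinate-respecting, which is the statement we want. The main obstacle — really the only place where the hypothesis on $R$ does genuine work — is the claim that central idempotents of the reduced product are precisely the ($\cI$-almost everywhere) $\{0,1\}$-valued sequences; this is where ``no nontrivial central idempotents'' is used, together with the fact (which needs a line) that a central idempotent of a reduced product is represented fiberwise by central idempotents of the fibers for $\cI$-almost all $n$ (this last point is a routine \L o\'s-type argument for the quantifier-free-definable notion of central idempotent, but it is worth stating carefully since reduced products only satisfy \L o\'s's theorem for Horn formulas in general — here the relevant formula is quantifier-free in the $xy=yx$ part once we fix the witness, and one handles the $\forall y$ by a back-and-forth choice of representatives). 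Everything else is bookkeeping: the definability of the Boolean operations, functoriality of $\Phi$ on definable substructures, and the translation between ``$=_S$'' and ``multiplication by $e_S$''.
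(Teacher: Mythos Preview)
Your proposal is correct and follows essentially the same approach as the paper: identify the definable Boolean algebra of central idempotents with $\cP(\bbN)/\cI$ via characteristic sequences, observe that an isomorphism $\Phi$ restricts to a Boolean isomorphism yielding $\alpha^\Phi$, and use that multiplication by $e_S$ realizes the projection $\pi_S$ so that $a=_S b \iff e_S a = e_S b$ transports under $\Phi$ to the required condition.

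One small remark on your closing paragraph: the worry about the $\forall y$ quantifier is legitimate but more easily dispatched than you suggest. If the set $S=\{n: a(n)\text{ is not central in }\cM_n\}$ were $\cI$-positive, choose for each $n\in S$ a witness $z_n\in\cM_n$ with $a(n)z_n\neq z_n a(n)$ and assemble these into a single $y\in\prod_n\cM_n$; then $ay\neq ya$ in the reduced product, contradicting centrality of $a$. No back-and-forth is needed---it is a one-step diagonalization. The paper simply asserts the characterization of central idempotents without comment, so your instinct to flag this is reasonable, but the resolution is immediate.
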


\begin{proof}
Let $T$ be the theory of unital rings with no nontrivial central idempotents. If $\cM_n$, for each $n$, is a model of $T$, then $\cM=\prod_n \cM_n/\cI$ is a ring. The set $\bbI(\cZ(\cM))$ of central idempotents in $\cM$ is definable. Note that $a\in \cM$ is a central idempotent if and only if the set of $n$ such that $a(n)\notin \{0,1\}$ belongs to $\cI$. Moreover, the ring structure on $\cM$ induces a natural Boolean algebra structure on $\bbI(\cZ(\cM))$, with the Boolean algebra operations (i.e.\ $a\wedge b=ab$ and $a\vee b=a+b-ab$, for all $a,b\in\mathbb{I}(\cZ(\cM))$) clearly (quantifier-free) definable in $\cM$. The mapping (slightly abusing the notation and identifying the characteristic function of a subset of $\bbN$ with an element of $\prod_n\cM_n$)
\begin{equation}\label{eq.idempotents}
\cP(\bbN)/\cI\ni X/\cI\mapsto \pi_\cI(\chi_X)\in \bbI(\cZ(\cM))
\end{equation}
is an isomorphism when $\bbI(\cZ(\cM))$ is endowed with this natural Boolean algebra structure. 
If $\cN=\prod_n \cN_n/\cJ$ is another reduced product of models of $T$, and $\Phi\colon \cM\to \cN$ is an isomorphism, then $\Phi[\bbI(\cZ(\cM))]= \bbI(\cZ(\cN))$ and the restriction to $\bbI(\cZ(\cM))$, via the isomorphism in \eqref{eq.idempotents}, gives an isomorphism 
\[
\alpha^\Phi\colon \cP(\bbN)/\cI\to \cP(\bbN)/\cJ. 
\]
Finally, each $a\in \bbI(\cZ(\cM))$ defines a right ideal $a\cM$ of $\cM$. If $a=\pi_\cI(\chi_X)$ then $a\cM=\cM\rs X$ (with the right-hand side as in \eqref{eq.MrsS}), and the quotient map $\pi_X\colon \cM\to \cM\rs X$ is definable, as multiplication by $a$. 
Therefore $\Phi_X$ defined as 
\[
\Phi_X(b)= \Phi(\pi_\cI(\chi_X)b)
\]
is an isomorphism between $\pi_\cI(\chi_X)\cM$ and $\pi_\cJ(\chi_{\alpha^\Phi(X)})\cN$ which makes the diagram in Definition~\ref{def:CR} commute. 
\end{proof}

\begin{remark}\label{rem:ringshomo}
The rigidity showcased above is very strong, leaving the possibility that forcing axioms imply all homomorphisms between reduced products of fields are `trivial' in some sense. (Compare the $\OCA$ lifting theorem of \cite{Fa:AQ}, also Theorem~\ref{T.OCAsharp-Fin}, its noncommutative version proved in \cite{mckenney2018forcing}, and the beautiful theorem on triviality of endomorphisms of the Calkin algebra, \cite{vaccaro2022trivial}.)

In fact, let $T'$ be the theory of unital rings with no nontrivial idempotents. (All fields are models of $T'$.) The proof of Lemma~\ref{L.Th.CR.F} shows that $T'$ recognizes coordinates for homomorphisms. The reason why we cannot extend this argument to homomorphisms between reduced products of models of $T$ is that the image of the center of a ring by a homomorphism is not necessarily contained in the center of the codomain.
\end{remark}

\subsection{Rigidity through local behaviour: linear orders, trees, and random graphs}\label{S.local}

From the model-theoretic point of view, a satisfactory proof that a theory recognizes coordinates would proceed by exhibiting a copy of $\cP(\bbN)/\cI$ as well as the projections $\pi_X$ for $X\in \cP(\bbN)/\cI$ inside $(\prod_n \cM_n/\cI)^{\eq}$ (see \cite[p. 151]{Hodg:Model}). Regrettably, we don't know whether this is the case for trees or random graphs. Instead, we first construct a coherent family of local copies of $\mathcal P(\bbN)/\cI$ and then uniformize it. We will apply this reasoning to ramified sets (see below) and certain particularly interesting classes of graphs.

\subsubsection{Ramified sets: trees, and linear orders}\label{S.ramified}

Following Kurepa (\cite{Kure:Ensembles}), we say that an ordered set $(L,\leq)$ is a \emph{ramified set}, or a pseudotree, if the set of predecessors of every element of $L$ is linear. A ramified set $(L,\leq)$ is said to be \emph{connected} if every two elements of $L$ are compatible, that is, for every $x,y\in L$ there is $z\in L$ such that $z\leq x$ and $z\leq y$.\footnote{Readers familiar with trees as forcing notions may want to take note that their trees, unlike ours, are turned upside down, which causes many elements to be incompatible.} Examples of connected ramified sets include trees, linear orders, and models of the first-order theory of trees. In the language of ordered sets, connected ramified sets are clearly axiomatizable.

\begin{remark}
The reason for considering only connected ramified sets is that it is easy to construct a reduced product of finite disconnected ramified sets which in ZFC has $2^{2^{\aleph_0}}$ automorphisms: If $\cM_n$ is a set with two points and no relation between them, then each $\cM_n$ is a ramified set. If $\cI$ is nonprincipal, $\cM=\prod_{n}\cM_n/\cI$ is a set of size $2^{\aleph_0}$ and no relations, which therefore has $2^{2^{\aleph_0}}$ permutations, and each permutation is an automorphism when $\cM$ is viewed as a ramified set.
\end{remark}
 
The goal of this subsection is to prove the following:
\begin{proposition}\label{P.ramified.CR}
The theory of connected ramified sets recognizes coordinates.
\end{proposition}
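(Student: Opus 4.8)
The plan is to exhibit, inside any reduced product $\cM=\prod_n\cM_n/\cI$ of connected ramified sets, a definable family of copies of $\cP(\bbN)/\cI$ together with definable projection maps, in a form robust enough that any isomorphism $\Phi\colon\cM\to\cN$ must carry this structure over and hence be coordinate-respecting. The basic idea is that in a connected ramified set, incompatibility is impossible, but the \emph{order type of the predecessor chain below an element}, and more usefully the \emph{relative position of two elements with respect to their common lower bounds}, encodes coordinatewise information. Concretely, for two elements $a,b\in\cM$ one can ask, coordinate by coordinate, whether $a(n)\leq b(n)$, whether $b(n)\leq a(n)$, or whether $a(n),b(n)$ are strictly $\leq$-above a common point but incomparable; this trichotomy is quantifier-free definable in each $\cM_n$, so the three sets it determines partition $\bbN$ modulo $\cI$ and are recoverable inside $\cM$ from $a$ and $b$ by the reduced-product semantics of the relation $\leq$. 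The first step is to make this precise: given an element $a$ and an auxiliary element witnessing a `branching' below it, define for each subset $S\subseteq\bbN$ (modulo $\cI$) an element $a_S$ that agrees with $a$ on $S$ and is pushed down to a common ancestor off $S$, and show $S\mapsto a_S$ is (the inverse of) a definable embedding of $\cP(\bbN)/\cI$ into $\cM$, with the restriction map $\pi_S$ read off definably.

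The second step is the subtlety flagged in the introduction and in Definition~\ref{def:locallyCR}: a single element $a$ need not have enough branching below it in \emph{all} coordinates — in coordinate $n$ the element $a(n)$ might sit on a chain with nothing branching beneath it — so one genuine copy of $\cP(\bbN)/\cI$ inside $\cM$ need not exist. The fix, as the excerpt announces, is to work \emph{locally}: cover $\bbN$ (modulo $\cI$) by countably many pieces $S_k$ on each of which some fixed parameter does provide the requisite branching structure, get on each $S_k$ a definable copy of $\cP(S_k)/\cI$ with definable projections, and then note these local copies cohere — the transition maps between the $S_k$-picture and the $S_j$-picture on $S_k\cap S_j$ are definable from the same parameters. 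Here one should use connectedness crucially: any two elements have a common lower bound, which is what lets a `branching witness' on one piece be compared with one on another. An isomorphism $\Phi$ sends parameters to parameters, hence sends each local copy of $\cP(S_k)/\cI$ in $\cM$ to a local copy in $\cN$ (a priori over some partition $(T_k)$ of $\bbN$), sends projections to projections, and commutes with the coherence maps; assembling these gives a single isomorphism $\alpha^\Phi\colon\cP(\bbN)/\cI\to\cP(\bbN)/\cJ$, and chasing the commuting squares shows $a=_S b\implies\Phi(a)=_{\alpha(S)}\Phi(b)$, i.e.\ $\Phi$ is coordinate-respecting. That $\alpha^\Phi$ is an isomorphism (not merely a homomorphism) comes for free since $\Phi^{-1}$ is also an isomorphism and yields $\alpha^{\Phi^{-1}}$ inverse to $\alpha^\Phi$.

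The main obstacle I expect is precisely the bookkeeping in step two: making `local copy of $\cP(\bbN)/\cI$, definable from parameters, with a definable coherence system' into a clean enough formalism (this is what Definition~\ref{def:locallyCR} is presumably doing) and then verifying that an isomorphism between reduced products of ramified sets must actually produce such data on the target side. One has to handle the degenerate coordinates — where $\cM_n$ is, say, a single chain with no branching, or even a singleton — so that the construction still covers cofinitely many (mod $\cI$) coordinates; the partition into pieces $S_k$ is designed exactly to absorb these. A secondary point to be careful about is that connected ramified sets include both linear orders (where there is \emph{no} branching at all, so the `branching witness' strategy must be replaced by using the order type / cuts) and genuine trees, so the argument has to be phrased uniformly enough — via compatibility and common lower bounds — to subsume both; in the purely linear case the element $a_S$ agreeing with $a$ on $S$ and dropping to a fixed lower bound off $S$ still works, since `drop below everything relevant off $S$' is definable from a single parameter below $a$. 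Once the local-to-global uniformization is set up, the rest is diagram-chasing and appeals to the reduced-product semantics of atomic formulas, which are routine.
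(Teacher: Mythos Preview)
Your high-level architecture — local definable copies of $\cP(\bbN)/\cI$, coherence between them, then uniformization — matches the paper's, and you correctly flag the degenerate-coordinate issue (terminal nodes). But the proposal has a genuine gap at the crucial step: you assert that the coordinatewise trichotomy for a pair $a,b$ is ``recoverable inside $\cM$ from $a$ and $b$ by the reduced-product semantics of $\leq$,'' and this is exactly what needs proof. Reduced-product semantics only tells you whether $a\leq b$ holds $\cI$-almost everywhere, not on \emph{which} coordinates it holds; recovering the partition is the whole problem. Your ``push $a$ down to a common ancestor off $S$'' idea does not immediately yield a definable set either: the interval $[c,a]$ in $\cM$ contains far more than the elements $a_S$, since $z(n)$ can sit strictly between $c(n)$ and $a(n)$.

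The paper's key device, which your plan is missing, goes \emph{up} rather than down. For $x\leq y$ one defines
\[
B_{x,y}=\{z\in\cM\mid z\wedge y=x\text{ and }\exists w\,(w\geq z,\ w\geq y)\},
\]
and proves (Lemma~\ref{L2.14}) that $B_{x,y}=\{z\geq x\mid \bbN\setminus S_{x,y}\subseteq_\cI S_{x,z}\}$. The point is that in a ramified set, having a common upper bound with $y$ forces $z(n)$ and $y(n)$ to be comparable, and then $z\wedge y=x$ pins $z(n)=x(n)$ wherever $y(n)\neq x(n)$. This is what makes the map $S\mapsto B_{x,y}$ (for $y$ with $S_{x,y}=S$) a definable encoding of $\cP(\bbN)/\cI$, and inclusion of the $B_{x,y}$'s recovers the Boolean order. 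The local maps $\alpha_x$ are then indexed by \emph{elements} $x$ in the definable set $D$ of a.e.-non-terminal points (not by a partition of $\bbN$ into pieces $S_k$ as you propose), shown to agree pairwise via connectedness, and the terminal-coordinate case is handled separately by a short argument tracking $T_x=\{n:\tilde x(n)\text{ terminal}\}$ under $\alpha$. Your countable-cover-of-$\bbN$ scheme is not obviously workable here, since which coordinates are ``bad'' depends on the element.
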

In preparations for this proof, we fix connected ramified sets $\cM_n$ and $\cN_n$, together with atomless ideals $\cI$ and $\cJ$. We set $\cM=\prod_n\cM_n/\cI$ and $\cN=\prod_n\cN_n/\cJ$, and let $\Phi\colon \cM\to\cN$ be an isomorphism. We will prove that there is an isomorphism $\alpha=\alpha^\Phi\colon\mathcal P(\bbN)/\cI\to\cP(\bbN)/\cJ$ such that (see Notation~\ref{Notation.2.5} for the definition of $=_S$)
\[
\text{ if }x=_Sy\text{, then }\Phi(x)=_{\alpha(S)}\Phi(y)
\]
 for all $x,y\in\cM$ and $S\in\cP(\bbN)/\cI$.

Each $\cM_n$ is connected, and therefore downwards directed (i.e., every two elements have a common lower bound), and this property is shared by $\cM$. Exact infima might not exist, but if $\inf\{x,y\}$ exists, we denote it by $x\wedge y$. Suprema often do not exist: if $x$ and $y$ are elements of $\cM$, with lifts $\tilde x$ and $\tilde y$ in $\prod_n\cM_n$, $x$ and $y$ have a supremum $z$ if and only if $\tilde x(n)$ and $\tilde y(n)$ are comparable for all but $\cI$ many $n$ (in which case the supremum is given by the sequence $(\max\{\tilde x(n),\tilde y(n)\})_n$).

Fix $x$ and $y$ in $\cM$ and lifts $\tilde x$ and $\tilde y$ in $\prod_n\cM_n$ for $x$ and $y$ respectively. Let 
\[
S_{x,y}:=\{n\mid \tilde x(n)=\tilde y(n)\}/\cI\in\mathcal P(\bbN)/\cI.
\]
While the set $\{n\mid \tilde x(n)=\tilde y(n)\}$ depends on the choice of the lifts $\tilde x$ and $\tilde y$, $S_{x,y}$ does not. It can be viewed as the supremum, in $\cP(\bbN)/\cI$, of $\{S\in\cP(\bbN)/\cI\mid x=_Sy\}$.
Let
\[
B_{x,y}:=\{z\in\cM\mid z\wedge y=x\text{ and } \exists w\, (w\geq z\text{ and }w\geq y)\}.
\]
 This set is definable, and since $\Phi$ is an isomorphism, we have $\Phi[B_{x,y}]=B_{\Phi(x),\Phi(y)}$. On $\cP(\bbN)$ consider the quasi-ordering
 \[
 S\subseteq_\cI T\qquad\text{if and only if}\qquad S\setminus T\in \cI. 
 \]
 
\begin{lemma}\label{L2.14}
Let $x\leq y$ be in $\cM$. Then
\[
B_{x,y}=\{z\in\cM\mid x\leq z\text{ and }\bbN\setminus S_{x,y}\subseteq_\cI S_{x,z}\}.
\]
Consequently, if also $x\leq z$, then $S_{x,y}\subseteq_\cI S_{x,z}$ if and only if $B_{x,y}\subseteq B_{x,z}$.
\end{lemma}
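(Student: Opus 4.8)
The plan is to establish the two displayed equalities in Lemma~\ref{L2.14} by unwinding the definitions of $B_{x,y}$ and $S_{x,y}$ coordinatewise, using that in a connected ramified set the predecessors of any point are linearly ordered. So fix lifts $\tilde x\leq\tilde y$ (we may arrange the lifts to satisfy $\tilde x(n)\leq \tilde y(n)$ for all $n$ since $x\leq y$ holds in $\cM$, changing the lift on an $\cI$-small set). The key coordinatewise observation is the following: for a connected ramified set $\cM_n$ and points $p\leq q$ in $\cM_n$, and any $r\in\cM_n$, we have $r\wedge q = p$ and $r,q$ have a common upper bound if and only if either $r = p$, or $r > p$ and $r,q$ are incomparable. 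Indeed if $r$ and $q$ have a common upper bound $w$, then $r$ and $q$ both lie among the predecessors of $w$, hence are comparable; if $r\geq q$ then $r\wedge q = q$, forcing $p = q$ and $r = q = p$ (using $p\leq q$); if $r\leq q$ then $r\wedge q = r$, forcing $r = p$; and if $r,q$ are incomparable but have a common upper bound, then any common lower bound exists and among the (linearly ordered) common lower bounds of $r$ and $q$ there is a largest one — but one must check $r\wedge q$ exists. Here connectedness of $\cM_n$ plus linearity of predecessor sets gives that the common lower bounds of $r$ and $q$ form a chain contained in the predecessors of $r$; whether this chain has a maximum is exactly the question of whether $r\wedge q$ exists, and when $w$ is a common upper bound both $r$ and $q$ are predecessors of $w$ so $r\wedge q$ is computed inside the linear order of predecessors of $w$ and does exist. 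So the dichotomy above is clean.

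Next I would transfer this to $\cM = \prod_n \cM_n/\cI$. Given $z\in\cM$ with lift $\tilde z$, the statement $z\in B_{x,y}$ says: for $\cI$-almost all $n$, $\tilde z(n)\wedge\tilde y(n) = \tilde x(n)$ and there is a common upper bound; but the existence of a common upper bound $w$ in $\cM$ of $z$ and $y$ is exactly the condition that $\tilde z(n),\tilde y(n)$ are comparable for $\cI$-almost all $n$ (as recorded in the paragraph preceding the lemma). Combining with the coordinatewise dichotomy: $z\in B_{x,y}$ iff for $\cI$-almost all $n$, $\tilde x(n)\leq\tilde z(n)$ and [$\tilde z(n) = \tilde x(n)$ or ($\tilde z(n),\tilde y(n)$ incomparable)]. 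The first clause $\tilde x(n)\leq\tilde z(n)$ for $\cI$-almost all $n$ is precisely $x\leq z$ in $\cM$. For the second clause, note that whenever $\tilde x(n)\leq\tilde z(n)$ and $\tilde z(n) = \tilde y(n)$ we would have $\tilde x(n)\leq\tilde y(n)$ and $\tilde z(n)$ comparable with $\tilde y(n)$ — so on the set $\{n\mid \tilde z(n) = \tilde y(n)\}$ the bracketed condition forces $\tilde z(n) = \tilde x(n)$, hence $\tilde x(n) = \tilde y(n)$. Conversely on $\{n\mid \tilde x(n)=\tilde y(n)\}$ the condition $\tilde x(n)\leq\tilde z(n)$ already makes $\tilde z(n)\geq\tilde x(n) = \tilde y(n)$, so $\tilde z(n)$ and $\tilde y(n)$ are comparable, and the bracket demands $\tilde z(n) = \tilde x(n)$, i.e. again $\tilde z(n) = \tilde x(n)$. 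Collecting: given $x\leq z$, membership $z\in B_{x,y}$ is equivalent to requiring that, modulo $\cI$, wherever $\tilde y(n)\neq\tilde x(n)$ we also have $\tilde z(n)\neq\tilde x(n)$ is \emph{not} the right phrasing; rather the correct bookkeeping yields exactly $\bbN\setminus S_{x,y}\subseteq_\cI S_{x,z}$, i.e. $\{n\mid \tilde x(n)\neq\tilde y(n)\}\subseteq_\cI\{n\mid \tilde x(n) = \tilde z(n)\}$. I would present this last implication carefully in both directions using the dichotomy, which is where a small amount of care (but no real difficulty) is needed.

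Having proven $B_{x,y}=\{z\in\cM\mid x\leq z\text{ and }\bbN\setminus S_{x,y}\subseteq_\cI S_{x,z}\}$, the "consequently" clause is immediate. Suppose $x\leq y$ and $x\leq z$. If $S_{x,y}\subseteq_\cI S_{x,z}$ then $\bbN\setminus S_{x,z}\subseteq_\cI \bbN\setminus S_{x,y}$; for any $u\in B_{x,y}$ we have $x\leq u$ and $\bbN\setminus S_{x,y}\subseteq_\cI S_{x,u}$, and then $\bbN\setminus S_{x,z}\subseteq_\cI\bbN\setminus S_{x,y}\subseteq_\cI S_{x,u}$, so $u\in B_{x,z}$; hence $B_{x,y}\subseteq B_{x,z}$. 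Conversely, if $B_{x,y}\subseteq B_{x,z}$, test the inclusion on an element $u$ of $B_{x,y}$ that is as "tight" as possible — e.g. a lift with $\tilde u(n) = \tilde x(n)$ on $S_{x,y}$ and $\tilde u(n)$ a point incomparable to $\tilde y(n)$ and strictly above $\tilde x(n)$ off $S_{x,y}$ (such points exist off $S_{x,y}$ because there $\tilde x(n)<\tilde y(n)$, so one can take, say, $\tilde u(n) = \tilde y(n)$'s sibling or simply note we only need $\tilde z(n)\neq\tilde u(n)$'s failure pattern). For this $u$, $S_{x,u} = S_{x,y}$ modulo $\cI$, so $u\in B_{x,z}$ forces $\bbN\setminus S_{x,z}\subseteq_\cI S_{x,u} = S_{x,y}$, i.e. $S_{x,y}\subseteq_\cI S_{x,z}$. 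The main obstacle I anticipate is not conceptual but bookkeeping: one must verify that a suitable witness $u$ with $S_{x,u}$ exactly complementary to $\bbN\setminus S_{x,y}$ (and with a genuine common upper bound with $y$) really exists in each coordinate off $S_{x,y}$ — this uses that $\tilde x(n)<\tilde y(n)$ strictly there, so $\tilde x(n)$ is a proper predecessor of $\tilde y(n)$ and we may pick $\tilde u(n)$ to be $\tilde y(n)$ itself if we instead only need incomparability to be avoided, or more carefully pick $\tilde u(n)$ covering $\tilde x(n)$ and below $\tilde y(n)$ when such exists, falling back on $\tilde u(n)=\tilde y(n)$ otherwise — and in either case $\tilde u(n)\wedge\tilde y(n) = \tilde x(n)$ can fail, so the honest choice is $\tilde u(n)$ incomparable with $\tilde y(n)$ above $\tilde x(n)$, which exists precisely when $\tilde x(n)$ has more than one extension, a situation one can arrange or argue around by slightly adjusting which witness is used. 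I would handle this by choosing the lifts of $x,y$ from the start so that off $S_{x,y}$ the point $\tilde x(n)$ is not maximal below $\tilde y(n)$ in a way that guarantees a usable $\tilde u(n)$, or simply by the cleaner route of proving the contrapositive directly from the coordinatewise dichotomy without naming a witness.
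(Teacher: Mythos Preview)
Your coordinatewise dichotomy is misstated, and this propagates. In a ramified set, if $r$ and $q$ have a common upper bound then they are \emph{comparable} (both lie among the linearly ordered predecessors of that bound), so your disjunct ``$r>p$ and $r,q$ incomparable'' can never coexist with a common upper bound. The correct local statement splits on whether $p=q$ or $p<q$: when $p<q$, one has $r\wedge q=p$ together with a common upper bound iff $r=p$; when $p=q$, the condition holds for every $r\geq p$. Your intermediate characterization ``for $\cI$-almost all $n$, $\tilde x(n)\leq\tilde z(n)$ and $[\tilde z(n)=\tilde x(n)$ or $\tilde z(n),\tilde y(n)$ incomparable$]$'' is therefore too restrictive on $S_{x,y}$, where in fact any $\tilde z(n)\geq\tilde x(n)$ is permitted. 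The paper sidesteps this entirely: it proves the forward inclusion by contradiction (if $\tilde x(n)\notin\{\tilde y(n),\tilde z(n)\}$ on a positive set, then $z\wedge y=x$ forces incomparability of $\tilde y(n),\tilde z(n)$ there, blocking the upper bound $w$), and for the reverse inclusion simply writes down $w$ explicitly by splicing $\tilde z$ on a lift of $S_{x,y}$ with $\tilde y$ elsewhere.

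Your converse for the ``consequently'' clause has two independent errors. First, the witness you build --- $\tilde u(n)=\tilde x(n)$ on $S_{x,y}$ and $\tilde u(n)$ strictly above $\tilde x(n)$ and incomparable to $\tilde y(n)$ off $S_{x,y}$ --- does not lie in $B_{x,y}$: the incomparability on the positive set $\bbN\setminus S_{x,y}$ kills any common upper bound with $y$. (By the very characterization you are proving, $u\in B_{x,y}$ demands $S_{x,u}\supseteq_\cI\bbN\setminus S_{x,y}$, the \emph{complement} of what you arranged.) Second, even granting your witness, the inference ``$\bbN\setminus S_{x,z}\subseteq_\cI S_{x,y}$, i.e.\ $S_{x,y}\subseteq_\cI S_{x,z}$'' is false: the former only says $S_{x,y}\cup S_{x,z}=_\cI\bbN$. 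The correct witness is the opposite one: take $u\geq x$ with $S_{x,u}=_\cI\bbN\setminus S_{x,y}$ (i.e.\ $\tilde u(n)=\tilde x(n)$ off $S_{x,y}$, and $\tilde u(n)>\tilde x(n)$ on $S_{x,y}$ where $\tilde x(n)$ is not terminal; terminal nodes lie in both $S_{x,y}$ and $S_{x,z}$ anyway). Then $u\in B_{x,y}$ trivially, and $u\in B_{x,z}$ yields $\bbN\setminus S_{x,z}\subseteq_\cI\bbN\setminus S_{x,y}$, which \emph{is} $S_{x,y}\subseteq_\cI S_{x,z}$.
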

\begin{proof}
Fix $\tilde x$ and $\tilde y$ in $\prod_n\cM_n$ two lifts for $x$ and $y$ respectively.

For one inclusion, fix $z\in B_{x,y}$ with lift $\tilde z\in \prod_n\cM_n$. Since $z\wedge y=x$, then $z\geq x$. If $\neg (\bbN\setminus S_{x,y}\subseteq_\cI S_{x,z})$, then the set $T=\{n\mid \tilde x(n)\notin \{\tilde y(n),\tilde z(n)\}\}$ is $\cI$-positive. Since $z\wedge y=x$, then $\tilde z(n)$ and $\tilde y(n)$ are incomparable for all (but possibly $\cI$ many) $n\in T$, which contradicts the existence of $w\geq y,z$, as each $\cM_n$ is a ramified set.

For the other inclusion, fix $z\in\cM$ with $z\geq x$ and suppose that $\bbN\setminus S_{x,y}\subseteq_\cI S_{x,z}$. It is immediate that $x=y\wedge z$. Let $\tilde S\in\cP(\bbN)$ be any lift of $S_{x,y}$, and define $\tilde w\in\prod_n\cM_n$ by 
\[
\tilde w(n)=\begin{cases}\tilde z(n)& \text{ if }n\in \tilde S \\
\tilde y(n) & \text{ otherwise.}
\end{cases}
\]
This gives that $\pi_\cI(\tilde w)$ is the supremum of $y$ and $z$.
\end{proof}

By Lemma~\ref{L2.14}, given $x\in\cM$, the definable set $B_{x,y}$ remains invariant under changing the value of $y\geq x$ as long as the value of $S_{x,y}\in\cP(\bbN)/\cI$ remains unchanged. 

Let
\[
D=\{x\in\cM\mid \exists y\in\cM\, (x\leq y\wedge B_{x,y}=\{x\})\}.
\]
This is a definable set, as $B_{x,y}$ is, and one can note that 
\[
D=\{x\in\cM\mid \{n\mid \tilde x(n)\text{ is a terminal node}\}\in\cI\}.
\] 
This gives that if $x\in D$ and $S\in\cP(\bbN)/\cI$ is given, there is $z\in\cM$ such that $S_{x,z}=S$. 
As each $\cM_n$ has more than one point, and $\cM_n$ is connected, $D$ is nonempty. Furthermore, for every $x\in\cM$ there is $y\in D$ with $y\leq x$. (Choose $\tilde y(n)<\tilde x(n)$ or $\tilde y(n)=\tilde x(n)$, if $\tilde x(n)$ is the minimal element of $\cM_n$, where $\tilde x$ is a lift of $x$.)

For $x\in D$, define
\[
\alpha_x\colon\cP(\bbN)/\cI\to\cP(\bbN)/\cJ
\]
in the following way: for every $S\in\cP(\bbN)/\cI$, we find $z\geq x$ such that $S=S_{x,z}$, and we map $S$ to $S_{\Phi(x),\Phi(z)}$. This is well defined (i.e., it does not depend on the choice of $z\geq x$ with $S=S_{x,z}$). Also, since $\Phi(x)\in D$, for every $T\in\cP(\bbN)/\cJ$ there is $z\geq x$ such that $T=S_{\Phi(x),\Phi(z)}$, so the map $\alpha_x$ is surjective. Furthermore, using that $\Phi$ is an isomorphism and Lemma~\ref{L2.14} we have for all $x,y,z\in\cM$ with $x\leq y,z$:
\[
S_{x,y}\subseteq S_{x,z}\Leftrightarrow B_{x,y}\subseteq B_{x,z}\Leftrightarrow
B_{\Phi(x),\Phi(y)}\subseteq B_{\Phi(x),\Phi(z)}\Leftrightarrow S_{\Phi(x),\Phi(y)}\subseteq S_{\Phi(x),\Phi(z)}. 
\]
Therefore $\alpha_x$ is an isomorphism of Boolean algebras, and, if $x\in D$ and $y\in\cM$ is such that $x\leq y$, then
\[
 x=_S y\text{ if and only if }\Phi(x)=_{\alpha_x(S)}\Phi(y). 
\]
We are left to show that the local maps cohere.

\begin{lemma}
For every $x$ and $y$ in $D$, $\alpha_x=\alpha_y$. 
\end{lemma}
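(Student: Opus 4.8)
The plan is to reduce to comparable elements and then compare $\alpha_x$ and $\alpha_y$ separately on the part of $\cP(\bbN)/\cI$ lying below $S_{x,y}$ and on the part lying below its complement. First I would reduce to the statement: \emph{for all $a\le b$ in $D$, $\alpha_a=\alpha_b$}. Indeed, since each $\cM_n$ is connected, $\cM$ is downwards directed, so $x$ and $y$ have a common lower bound in $\cM$, and, as every element of $\cM$ has an element of $D$ below it, even a common lower bound $w\in D$; granting the reduced statement we get $\alpha_x=\alpha_w=\alpha_y$.

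So fix $a\le b$ in $D$. The first sub-step is to show $\alpha_a(R)=\alpha_b(R)$ for every $R\subseteq S_{a,b}$. Choose $z\ge b$ with $S_{b,z}=R$ (possible since $b\in D$); then $z\ge a$ and readily $S_{a,z}=S_{a,b}\cap S_{b,z}=R$. Since $\Phi$ is an order isomorphism, $\Phi(a)\le\Phi(b)\le\Phi(z)$, whence $S_{\Phi(a),\Phi(z)}=S_{\Phi(a),\Phi(b)}\cap S_{\Phi(b),\Phi(z)}$; unwinding the definitions of $\alpha_a,\alpha_b$ and using $\alpha_a(S_{a,b})=S_{\Phi(a),\Phi(b)}$ (take $z=b$ in the defining clause), this reads $\alpha_a(R)=\alpha_a(S_{a,b})\cap\alpha_b(R)$, so $\alpha_a(R)\subseteq\alpha_b(R)$. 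For the reverse inclusion I would invoke symmetry: the whole construction applies verbatim to $\Phi^{-1}\colon\cN\to\cM$, and comparing the defining clauses one sees $\alpha^{\Phi^{-1}}_{\Phi(a)}=\alpha_a^{-1}$ and $\alpha^{\Phi^{-1}}_{\Phi(b)}=\alpha_b^{-1}$; the inclusion just proved, applied to $\Phi^{-1}$ with base points $\Phi(a)\le\Phi(b)$ in $D$, gives $\alpha_a^{-1}(R')\subseteq\alpha_b^{-1}(R')$ whenever $R'\subseteq S_{\Phi(a),\Phi(b)}$, and plugging in $R'=\alpha_a(R)$ (contained in $\alpha_a(S_{a,b})=S_{\Phi(a),\Phi(b)}$) yields $\alpha_b(R)\subseteq\alpha_a(R)$.

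The key sub-step extends this to $S$ lying below $\bbN\setminus S_{a,b}$. Given such $S$, I would interpolate an element $u\in D$ with $a\le u\le b$, $S_{a,u}=\bbN\setminus S$, and $S\subseteq S_{u,b}$: fixing lifts $\tilde a\le\tilde b$ and a lift $\tilde S\subseteq\{n:\tilde a(n)<\tilde b(n)\}$ of $S$, put $\tilde u(n)=\tilde b(n)$ for $n\in\tilde S$ and $\tilde u(n)=\tilde a(n)$ otherwise; then $u:=\pi_\cI(\tilde u)$ lies in $D$ (its terminal-coordinate set is contained in that of $a$ union that of $b$) and the three relations hold by inspection. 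Now the first sub-step applied to $u\le b$ (using $S\subseteq S_{u,b}$) gives $\alpha_u(S)=\alpha_b(S)$, while the first sub-step applied to $a\le u$ gives $\alpha_a(S_{a,u})=\alpha_u(S_{a,u})$; since $S=\bbN\setminus S_{a,u}$ and $\alpha_a,\alpha_u$ are Boolean-algebra isomorphisms, this yields $\alpha_a(S)=\bbN\setminus\alpha_a(S_{a,u})=\bbN\setminus\alpha_u(S_{a,u})=\alpha_u(S)=\alpha_b(S)$. Finally, any $R\in\cP(\bbN)/\cI$ is the disjoint join $(R\wedge S_{a,b})\vee(R\setminus S_{a,b})$, and $\alpha_a,\alpha_b$ are homomorphisms agreeing on each summand, so $\alpha_a(R)=\alpha_b(R)$; with the reduction this proves the lemma.

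I expect the interpolation in the last paragraph to be the crux: it converts the "complement half" of the Boolean algebra into two instances of the already-understood "agreement-set half" by routing through $u$, turning "$S$ lies below $\bbN\setminus S_{a,b}$" into "$S$ lies below $S_{u,b}$" together with "$S$ equals $\bbN\setminus S_{a,u}$". A secondary point requiring care is the appeal to $\Phi^{-1}$ to get the reverse inclusion in the first sub-step, which rests on the maps $\alpha_\bullet$ behaving symmetrically with respect to $\Phi$ and $\Phi^{-1}$.
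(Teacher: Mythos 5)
Your proof is correct and takes essentially the same approach as the paper: reduce to comparable $a\le b$ in $D$, prove agreement on sets below $S_{a,b}$ via agreement-set identities and $\Phi^{-1}$-symmetry (the paper's Claim), and handle the complement through an element of $D$ interpolated between $a$ and $b$. The only divergence is that the paper's final step argues by contradiction, choosing a nonzero $S$ with $\alpha_x(S)\cap\alpha_y(S)=\emptyset$, whereas you argue directly by decomposing an arbitrary $R$ into the two halves — a stylistic variant of the same interpolation idea.
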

\begin{proof}
Since each $\cM_n$ is connected, for every $x$ and $y$ there is $z$ such that $z\leq x$ and $z\leq y$. If $x$ and $y$ are in $D$, so is $z$. Therefore it is enough to show that $\alpha_x=\alpha_y$ if $x\leq y$.

\begin{claim}
For each $x\leq y$ in $D$, we have that $\alpha_x\restriction S_{x,y}=\alpha_y\restriction S_{x,y}$.
\end{claim}
\begin{proof}
Let $S=S_{x,y}$. If $T\subseteq \bbN$ satisfies $T\subseteq_\cI S$ and $z\geq y$ is such that $S_{y,z}=T$, then $S_{x,z}=T$. Since $z=_Tx$, then $\Phi(z)=_{\alpha_x(T)}\Phi(x)$. Likewise, $\Phi(z)=_{\alpha_y(T)}\Phi(y)$. Since $\Phi(y)=_{\alpha_x(S)}\Phi(x)$, we have that $\alpha_x(T)\subseteq_{\cJ}\alpha_y(T)$. Since $\Phi$ is an isomorphism, and $\alpha_{\Phi(x)}=\alpha_x^{-1}$, by working with $\Phi^{-1}$ instead, we get that for all $T\subseteq_{\cJ} \alpha_x(S)$, $\alpha^{-1}_x(T)\subseteq_{\cI} \alpha^{-1}_y(T)$. Combining the two, we get that $\alpha_x(T)=\alpha_y(T)$ whenever $T\subseteq_{\cI} S$.
\end{proof}

Now, pick $x$ and $y$ in $D$ with $x\leq y$. Suppose $\alpha_x\neq\alpha_y$. Since $\alpha_x$ and $\alpha_y$ are isomorphisms, we can find a nonzero $S\in\cP(\bbN)/\cI$ such that $\alpha_x(S)\cap\alpha_y(S)=\emptyset$ (here the intersection is computed in the Boolean algebra $\mathcal P(\bbN)/\cJ$). Notice that, by the above claim $S\cap S_{x,y}=\emptyset$ (in $\cP(\bbN)/\cI)$. 

Let $z$ be such that $z=_Sx$ and $z=_{\bbN\setminus S}y$, so that $x\leq z\leq y$, and $z\in D$. By the above claim, $\alpha_z(S)=\alpha_x(S)\subseteq_\cJ \alpha_y(\bbN\setminus S)=\alpha_z(\bbN\setminus S)$; this is a contradiction.
\end{proof}

\begin{proof}[Proof of Proposition~\ref{P.ramified.CR}]
Let $\alpha=\alpha_x$, for some (any) $x\in D$. We claim that $\alpha$ is the required isomorphism.

For $x\in \cM$, let 
\[
T_x=\{n\mid \tilde x(n)\text{ is a terminal node}\}/\cI.
\]
Clearly, $T_x$ does not depend on the choice of the lift $\tilde x$.

\begin{claim}\label{claim1tree}
For all $x$ and $y$ in $\cM$, if $S\cap T_x=\emptyset$ (in $\cP(\bbN)/\cI$) and $x=_Sy$, then $\Phi(x)=_{\alpha(S)}\Phi(y)$.
\end{claim}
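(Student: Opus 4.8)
The plan is to reduce the general case to the case already handled, namely where $x$ lies in $D$. The obstacle is that an arbitrary $x\in\cM$ may have many terminal coordinates, so $\alpha_x$ is not defined; the set $T_x$ measures exactly where this failure occurs, and the hypothesis $S\cap T_x=\emptyset$ says precisely that on the support $S$ where $x$ and $y$ agree, $x$ behaves like an element of $D$.

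First I would split $x$ (and simultaneously $y$) along the partition $T_x$ versus its complement. Pick a lift $\tilde x$ and let $\tilde x'$ be the sequence obtained by replacing, at each coordinate $n$ where $\tilde x(n)$ is terminal, the value $\tilde x(n)$ by some strictly smaller element of $\cM_n$ when one exists, and leaving it unchanged otherwise (if $\cM_n$ is a single point — but we assumed at least two points — or if $\tilde x(n)$ is already minimal; in the latter case we may need to first pass to a coordinate below, but connectedness and $|\cM_n|\geq 2$ guarantee a non-terminal element below or equal, and we choose $\tilde x'(n)$ non-terminal). This produces $x'=\pi_\cI(\tilde x')\in D$ with $x'\leq x$ and, crucially, $x=_{\,\bbN\setminus T_x}x'$; in particular $x=_{S}x'$ since $S\cap T_x=\emptyset$.

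Next, since $x=_S y$ and $x=_S x'$ we get $x'=_S y$. Now $x'\in D$, so by the earlier analysis (the displayed equivalence just before the coherence lemma, applied with base point $x'$ — after first replacing $y$ by $y\wedge x'$ or rather by an element $y'\geq x'$ agreeing with $y$ on $S$, which exists because $x'\in D$) we have $x'=_S y'\Rightarrow \Phi(x')=_{\alpha_{x'}(S)}\Phi(y')=_{\alpha(S)}\Phi(y')$, using $\alpha_{x'}=\alpha$. The only remaining point is to transfer this back from $x'$ to $x$ and from $y'$ to $y$: since $x=_{S}x'$ and $\Phi$ is an isomorphism carrying the relation $=_S$ forward in the weak sense already established for elements of $D$, we get $\Phi(x)=_{\alpha(S)}\Phi(x')$, and symmetrically $\Phi(y)=_{\alpha(S)}\Phi(y')$; combining the three congruences gives $\Phi(x)=_{\alpha(S)}\Phi(y)$.

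The step I expect to be the main obstacle is the bookkeeping in the transfer back to $x$: one must check that $\Phi(x)=_{\alpha(S)}\Phi(x')$ genuinely follows, given that $x\notin D$ in general. The clean way to see this is to choose $x'$ itself to lie below $x$ with $x=_{\bbN\setminus T_x}x'$, apply the already-proven fact $``x'=_T y''\Rightarrow \Phi(x')=_{\alpha(T)}\Phi(y)$ for all $y\geq x'$ and all $T\subseteq_\cI S_{x',x}$ (valid since $x'\in D$, by Lemma~\ref{L2.14} and the definition of $\alpha_{x'}$), and take $T=S$, $y\leftarrow x$ — legitimate because $x\geq x'$ and $S\subseteq_\cI\bbN\setminus T_x=S_{x',x}\vee(\text{something})$; one checks $S\subseteq_\cI S_{x',x}$ from $S\cap T_x=\emptyset$ together with the fact that $\tilde x'$ and $\tilde x$ differ only on $T_x$. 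Thus the only real content is verifying these containments in $\cP(\bbN)/\cI$, which is a routine computation with the supports $S_{x,y}$, $T_x$.
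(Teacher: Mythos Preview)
Your instinct to reduce to an element of $D$ is exactly right, and the step $\Phi(x)=_{\alpha(S)}\Phi(x')$ is correctly justified: you have $x'\in D$, $x'\leq x$, and $x'=_S x$, so the already-established equivalence applies. The gap is the step you label ``symmetrically $\Phi(y)=_{\alpha(S)}\Phi(y')$''. Your $y'$ is chosen with $y'\geq x'$ and $y'=_S y$, but it is neither in $D$ nor comparable to $y$, so none of the previously proven facts apply to the pair $(y,y')$. In effect you are invoking precisely the claim you are trying to prove. (Constructing a separate $y''\in D$ below $y$ would give $\Phi(y)=_{\alpha(S)}\Phi(y'')$, but then you still need to compare $\Phi(x')$ with $\Phi(y'')$, and $x'$ and $y''$ are not comparable either.)

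The fix is small and is what the paper does: construct a \emph{single} $w\in D$ with $w\leq x$, $w\leq y$, and $w=_S x$. On coordinates in $\tilde S$ take $\tilde w(n)=\tilde x(n)=\tilde y(n)$ (non-terminal since $S\cap T_x=\emptyset$); on the complement use connectedness of $\cM_n$ to pick a non-terminal common lower bound of $\tilde x(n)$ and $\tilde y(n)$. Now $w\in D$, $w\leq x$, $w\leq y$, and $w=_S x=_S y$, so two applications of the established fact (once with $x$, once with $y$, both sitting above $w$) give
\[
\Phi(x)=_{\alpha(S)}\Phi(w)=_{\alpha(S)}\Phi(y).
\]
This eliminates the need for your auxiliary $y'$ entirely; the whole proof is three lines.
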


\begin{proof}
We can find $w\in D$ with $w\leq x$ and $w\leq y$ such that $w=_Sx$. Since $\alpha=\alpha_w$, we have that $\Phi(x)=_{\alpha(S)}\Phi(w)=_{\alpha(S)}\Phi(y)$.
\end{proof}
\begin{claim}\label{claim2tree}
For every $x\in\cM$, $\alpha(T_x)=T_{\Phi(x)}$.
\end{claim}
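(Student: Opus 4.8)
The plan is to establish the two inclusions $T_{\Phi(x)}\subseteq_\cJ\alpha(T_x)$ and $\alpha(T_x)\subseteq_\cJ T_{\Phi(x)}$ separately. Since the whole construction is symmetric in $\Phi$ and $\Phi^{-1}$ — recall from the proof of the coherence lemma that the Boolean isomorphism attached to $\Phi^{-1}$ is $\alpha^{-1}$ — it is enough to prove the first inclusion for an arbitrary isomorphism between such reduced products, and then apply it to $\Phi^{-1}$ at the point $\Phi(x)$ to derive the second.

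To prove $T_{\Phi(x)}\subseteq_\cJ\alpha(T_x)$, fix $x\in\cM$ and a lift $\tilde x$. First I would produce an auxiliary element $w\in D$ with $w\leq x$ and, crucially, $S_{w,x}=\bbN\setminus T_x$: set $\tilde w(n)=\tilde x(n)$ whenever $\tilde x(n)$ is non-terminal, and, whenever $\tilde x(n)$ is terminal, let $\tilde w(n)$ be any strict predecessor of $\tilde x(n)$. Such a predecessor exists because in a connected ramified set with at least two elements no terminal node is minimal (a minimal node lies below every other node, by connectedness, so it cannot be terminal), and any strict predecessor of a node is itself non-terminal. Hence $\tilde w(n)\leq\tilde x(n)$ and $\tilde w(n)$ is non-terminal for every $n$, so $w:=\pi_\cI(\tilde w)$ lies in $D$ and below $x$, and $\{n\mid\tilde w(n)=\tilde x(n)\}$ is exactly the set of coordinates where $\tilde x$ is non-terminal, i.e.\ a lift of $\bbN\setminus T_x$; thus $S_{w,x}=\bbN\setminus T_x$ and $w=_{\bbN\setminus T_x}x$.

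Now $\bbN\setminus T_x$ is disjoint from $T_x$, so Claim~\ref{claim1tree} applied with $S=\bbN\setminus T_x$ yields $\Phi(w)=_{\alpha(\bbN\setminus T_x)}\Phi(x)$, that is $\Phi(w)=_{\bbN\setminus\alpha(T_x)}\Phi(x)$ since $\alpha$ is a Boolean-algebra isomorphism. On the other hand $\Phi(w)\in\Phi[D]=D$, because $D$ is a definable set; hence $\widetilde{\Phi(w)}(m)$ is non-terminal for $\cJ$-almost all $m$. Comparing these two facts on the coordinates outside a lift of $\alpha(T_x)$ shows that $\widetilde{\Phi(x)}(m)$ is non-terminal for $\cJ$-almost all $m\notin\widetilde{\alpha(T_x)}$, that is $T_{\Phi(x)}\subseteq_\cJ\alpha(T_x)$. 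The second inclusion then follows by applying this conclusion to $\Phi^{-1}$ (with associated Boolean isomorphism $\alpha^{-1}$) at $\Phi(x)$ and applying the order isomorphism $\alpha$.

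The argument is short; the only step needing care is the construction of $w$. The temptation is to argue directly that $z\geq x$ forces $z=_{T_x}x$ and feed this into Claim~\ref{claim1tree}, but that claim requires the relevant index set to be \emph{disjoint} from $T_x$, precisely the configuration it cannot handle here. Passing instead to a base point $w\in D$ that agrees with $x$ \emph{exactly off} $T_x$ is what sidesteps this obstruction, and the observation that terminal nodes of connected ramified sets are non-minimal is exactly what makes such a $w$ available in $D$.
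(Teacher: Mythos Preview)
Your proof is correct and follows essentially the same route as the paper's: construct $w\in D$ with $w\leq x$ and $w=_{\bbN\setminus T_x}x$, apply Claim~\ref{claim1tree} to obtain $\Phi(w)=_{\bbN\setminus\alpha(T_x)}\Phi(x)$, use definability of $D$ to conclude one inclusion, and then invoke symmetry via $\Phi^{-1}$ for the other. Your version is simply more explicit about how to build $w$ (in particular, the observation that terminal nodes in a connected ramified set with at least two elements are never minimal, so a strict---hence non-terminal---predecessor always exists), whereas the paper just asserts the existence of such $w$.
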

\begin{proof}
Fix $x$. Let $w\leq x$ such that $w\in D$ and $w=_{\bbN\setminus T_x}x$. Since $(\bbN\setminus T_x)\cap T_x=\emptyset$, by the above claim, $\Phi(w)=_{\bbN\setminus \alpha(T_x)}\Phi(x)$. Since $T_{\Phi(w)}\in\cI$, as $D$ is definable, we have that $\bbN\setminus \alpha(T_x)$ is disjoint from $T_{\Phi(x)}$, hence $\alpha(T_x) \subseteq T_{\Phi(x)}$. Since $\Phi$ and $\alpha$ are isomorphisms and $x$ is arbitrary, applying the same argument to $\Phi^{-1}$, $\alpha^{-1}$ and $\Phi^{-1}(x)$ gives the thesis.
\end{proof}

We are ready to conclude our proof. Fix $x$ and $y$ and suppose that $x=_Sy$. By Claim~\ref{claim1tree}, we have that $\Phi(x)=_{\alpha(S\setminus T_x)}\Phi(y)$, so we can assume that $S\subseteq_\cI T_x$. Fix~$w$ such that $w\leq x$, $w\leq y$, and $T_w=S$. (Such $w$ can be constructed by taking $x$ on $\tilde S$ and nonterminal nodes lower than both $\tilde x(n)$ and $\tilde y(n)$ on $\bbN\setminus \tilde S$, where $\tilde S$ is a lift for $S$ in $\cP(\bbN)$.) It is enough to show that $\Phi(x)=_{\alpha(S)}\Phi(w)$. Fix $a$ and $b$ in $\prod_n\cN_n$ such that $\pi(a)=\Phi(x)$ and $\pi(b)=\Phi(w)$. If $\Phi(x)\neq_{\alpha(S)}\Phi(w)$, there is an $\cI$-positive $S'\subseteq \tilde S$ such that $a(n)\neq b(n)$ whenever $n\in \alpha(S')$. 

Since $S\subseteq_\cI T_x\cap T_w$, by Claim~\ref{claim2tree},
\[
\alpha(S')\subseteq_{\cJ}\alpha(T_x\cap T_w)=\alpha(T_x)\cap \alpha(T_w)=T_{\Phi(x)}\cap T_{\Phi(w)},
\]
and so every node in $\alpha(S')$ is terminal for both $a$ and $b$. Since these are different, $a$ and $b$ are not comparable on the positive set $\alpha(S')$, and so $\Phi(x)=\pi(a)$ is not comparable with $\Phi(w)=\pi(b)$. This is a contradiction, since $w\leq x$ implies that $\Phi(w)\leq \Phi(x)$. Hence $\Phi(w)=_{\alpha(S)}\Phi(x)$, and, by replacing $x$ with $y$ above, $\Phi(w)=_{\alpha(S)}\Phi(y)$, which implies that $\Phi(x)=_{\alpha(S)}\Phi(y)$. This concludes the proof.
\end{proof} 

\subsubsection{Graphs}\label{S.graphs}
In this subsection we prove Proposition~\ref{P.Th.CR}(\ref{Th.CR.graph}). The language of the theory of graphs has a single binary relation $E$ which codes adjacency. If $\cM_n$ are graphs then so is $\prod_n \cM_n/\cI$, and two vertices $a$ and $b$ are adjacent if and only if
\begin{equation}\label{eq.adjacent}
(\forall ^{\cI} n)\,E^{\cM_n}(\tilde a(n),\tilde b(n)).
\end{equation}
Recall that a graph is simple if it has no loops and no multiple edges. 
If the graphs $\cM_n$ are simple (and we will consider only simple graphs) then \eqref{eq.adjacent} is equivalent to 
\begin{equation}\label{eq.simple}
(\forall^\cI n) \, E^{\cM_n}(\tilde a(n),\tilde b(n))\wedge \tilde a(n)\neq \tilde b(n).
\end{equation}
Suppose now that $\cM_n$, for $n\in\bbN$, are just sets, and that $\cI$ is an atomless ideal on $\bbN$. Fix $a,b\in \prod_n \cM_n/\cI$ and $S\in\cP(\bbN)/\cI$ (with lifts $\tilde a,\tilde b$ and $\tilde S$). Define $\tilde\chi^{\tilde a\tilde b}_{\tilde S}$ by
\[ 
\tilde \chi_{\tilde S}^{\tilde a\tilde b}(n)=\left\{
\begin{array}{ll}
\tilde a(n)\quad &\text{ if }n\notin \tilde S\\
\tilde b(n)\quad &\text{ if }n\in \tilde S.
\end{array}
\right.
\]
Let 
\[
\chi_S^{ab}=\pi_\cI( \tilde \chi_{\tilde S}^{\tilde a\tilde b}).
\]
The definition of $\chi_S^{ab}$ does not depend on the choice of the lifts. Let 
\begin{equation}\label{eq.Aab}
A_{ab}=\{ \chi_S^{ab} \mid S \in \mathcal P(\bbN)/\cI \}.
\end{equation}
Write 
\begin{equation}\label{eq.perp}
a\perp b\text{ if }\pi_S(a)\neq \pi_S(b) \text{ for all nonzero }S\in \cPN/\cI,
\end{equation}
or equivalently, if $\forall^\cI n\,(\tilde a(n)\neq \tilde b(n))$.
 
If $a\perp b$ then the function $\cPN/\cI\ni S\mapsto \chi^{ab}_S\in A_{ab}$ is a bijection, and $A_{ab}$ is equipped with a canonical Boolean algebra structure, obtained when $\chi^{ab}_S$ is identified with $S\in \cP(\bbN)/\cI$. 

We are interested in the situation where the relation $\perp$ and the Boolean algebra structure on $A_{ab}$ are definable. Note that this is not always the case: 

\begin{example}\label{Ex.NullGraph}
If each $\cM_n$ is a graph with at least two vertices and $\cM_{n}$ is the null graph for infinitely many $n$, then for any two distinct vertices $a$ and $b$ in $\cM=\prod_n \cM_n/\Fin$ \eqref{eq.simple} fails and therefore $\cM$ is also a null graph. In this case every definable set in $\cM$ is finite or cofinite. Therefore neither the sets $A_{ab}$ nor the relation $\perp$ are definable and $\cM$ has $2^{2^{\aleph_0}}$ automorphisms in $\ZFC$. 
\end{example}

The \emph{random graph} $\mathcal R$ (also known as the \emph{Rado graph}) is the unique graph with countable set of vertices such that for every two disjoint finite sets of vertices $P$ and~$Q$ there is a vertex $v$ adjacent to all vertices in $P$ and not adjacent to any vertex in $Q$. 

\begin{definition}\label{Def.SuffRG}
Let $k$ be a positive natural number. We say that a graph $G$ is \emph{$k$-sufficiently random} if for every set $P$ of $k$ vertices and every vertex $u\notin P$ there is a vertex $v\neq u$ that is not adjacent to $u$ and is adjacent to all vertices in $P$ (the latter property implies $v\notin P$). 

We say that $G$ is \emph{sufficiently random} if it is 2-sufficiently random and it has at least three vertices.
\end{definition}

The class of sufficiently random graphs is clearly axiomatizable in the usual language of graphs, where there is a single binary relation $E$ coding adjacency.

The canonical models for randomness introduced by Gilbert (\cite{GilbertGraphs}, see also a general 
reference on random graphs such as e.g.\ \cite{frieze2016introduction}) are sufficiently random. To wit, if $n\geq 1$ and $0<p<1$, then $\bbG_{n,p}$ is a probability space of objects with $n$ vertices, and associated to the $n \choose 2$ pairs of these vertices we have independent random variables $x_{ij}$, $1\leq i<j\leq n$, such that $x_{ij}=1$ with probability $p$. Given $x_{ij}$, vertices $i$ and $j$ are connected if and only if $x_{ij}=1$. This describes a probability space on the space of all simple graphs with~$n$ vertices, and as $n\to \infty$ the probability that a graph in this space is sufficiently random converges to 1. 
The following is a consequence of standard facts about random graphs and it will play a role in the proof of Theorem~\ref{T.Graphs}. 

\begin{lemma} \label{L.sufficiently-random}
For every $0<p<1$, the probability that $\bbG_{n,p}$ is not sufficiently random converges to 0 as $n\to \infty$. 
\end{lemma}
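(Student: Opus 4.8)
The plan is to reduce the claim to a routine first‑moment estimate on the random graph $\bbG_{n,p}$, using the fact that ``sufficiently random'' is, by Definition~\ref{Def.SuffRG}, the conjunction of two conditions: having at least three vertices (which holds deterministically once $n\geq 3$), and being $2$‑sufficiently random. So it suffices to show that $\Pr[\bbG_{n,p}\text{ is not }2\text{-sufficiently random}]\to 0$. Unwinding the definition, $\bbG_{n,p}$ fails to be $2$‑sufficiently random exactly when there is a pair $P=\{p_1,p_2\}$ of distinct vertices together with a vertex $u\notin P$ such that \emph{no} vertex $v\notin P\cup\{u\}$ is simultaneously non‑adjacent to $u$ and adjacent to both $p_1$ and $p_2$. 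I would fix such a triple $(p_1,p_2,u)$ of distinct vertices and, for each of the remaining $n-3$ vertices $v$, note that the event ``$v$ is non‑adjacent to $u$ and adjacent to $p_1$ and to $p_2$'' has probability $(1-p)p^2$, and that these events are independent over distinct $v$ because they involve disjoint edge‑slots $\{v,u\},\{v,p_1\},\{v,p_2\}$.

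The key steps, in order: (i) write $q:=(1-p)p^2\in(0,1)$ and observe that for a fixed ordered triple of distinct vertices $(p_1,p_2,u)$, the probability that \emph{every} $v$ among the other $n-3$ vertices fails the required adjacency pattern is exactly $(1-q)^{n-3}$; (ii) take a union bound over all such triples — there are at most $n^3$ of them — to get
\[
\Pr[\bbG_{n,p}\text{ is not }2\text{-sufficiently random}]\leq n^3(1-q)^{n-3};
\]
(iii) observe $n^3(1-q)^{n-3}\to 0$ as $n\to\infty$ since $0<1-q<1$, so exponential decay beats the polynomial factor; (iv) combine with the deterministic observation that for $n\geq 3$ being $2$‑sufficiently random already implies being sufficiently random, so the probability that $\bbG_{n,p}$ is not sufficiently random is bounded by the same quantity plus the indicator of $n<3$, and hence tends to $0$.

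I do not expect any real obstacle here: the only points requiring a line of care are the independence claim in step (i) — which holds because for distinct $v$ the edge‑variables $x_{\{v,u\}},x_{\{v,p_1\}},x_{\{v,p_2\}}$ used in the event for $v$ are pairwise disjoint from those used for any other $v'$, so the Bernoulli trials defining $\bbG_{n,p}$ partition cleanly — and the bookkeeping that $v$ is automatically distinct from $p_1,p_2,u$ once we restrict to the $n-3$ other vertices, which matches the requirement $v\neq u$ and $v\notin P$ in Definition~\ref{Def.SuffRG}. The union bound in step (ii) is wasteful but more than sufficient; one could replace $n^3$ by $\binom{n}{2}(n-2)$, but there is no need. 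This is exactly the ``standard facts about random graphs'' referenced before the statement, so the write‑up should be short.
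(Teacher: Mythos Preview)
Your proof is correct and follows essentially the same first-moment/union-bound argument as the paper: the paper routes the estimate through the ``full level $3$ extension property'' and cites the bound $s^2 n^s(1-\varepsilon)^{n-s}$ from Alon--Spencer, whereas you compute the analogous bound $n^3(1-q)^{n-3}$ with $q=(1-p)p^2$ directly for the $2$-sufficiently random property. Your version is slightly more self-contained, but the underlying mathematics is identical.
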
 

\begin{proof} 
For $s\geq 1$, a graph is said to have the full level $s$ extension property if for any two disjoint sets of vertices $F_1$ and $F_2$ such that $|F_1|+|F_2|\leq s$ there is a vertex $v$ adjacent to all vertices in $F_1$ and not adjacent to any of the vertices in $F_2$. Note that if $s\geq 3$ and we add the requirement that $v$ does not belong to $F_2$ then every graph with at least three vertices and this strengthened version of the full level $s$ extension property is sufficiently random. 
	
The proof of \cite[Theorem~10.7.5]{alon2008probabilistic} (this is Theorem~10.4.5 in the 3rd edition of the book) shows that for any $s$ and $0<p<1$, the probability that $\bbG_{n,p}$ does not have the version of the full level $s$ extension property in which it is required that $v\notin F_2$ is (writing $\varepsilon=\min(p,1-p)^s$) not greater than $s^2n^s(1-\varepsilon)^{n-s}$. An inspection of the proof (paying attention to the exponent $n-a-b$ in the third displayed formula) shows that this is also an estimate that $\bbG_{n,p}$ does not have the strengthened full level $s$ extension property. 
\end{proof} 

The remaining part of the section is dedicated to a proof of the following strengthening of Proposition~\ref{P.Th.CR}(\ref{Th.CR.graph}).

\begin{proposition}\label{prop:GRrigid}
The theory of sufficiently random graphs recognizes coordinates. 
\end{proposition}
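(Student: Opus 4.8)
The plan is to adapt the strategy used for connected ramified sets in Proposition~\ref{P.ramified.CR}: I will exhibit, for suitable parameters $a\perp b$ in $\cM=\prod_n\cM_n/\cI$, a definable copy of the Boolean algebra $\cP(\bbN)/\cI$ sitting inside $\cM$ in the guise of the set $A_{ab}$ from~\eqref{eq.Aab}, show that it carries a definable Boolean algebra structure, then transport these copies under the isomorphism $\Phi$, and finally uniformize the family of local maps obtained this way into a single isomorphism $\alpha\colon\cP(\bbN)/\cI\to\cP(\bbN)/\cJ$ witnessing that $\Phi$ is coordinate-respecting.

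First I would verify that when the $\cM_n$ are $2$-sufficiently random (and have at least three vertices), the relation $a\perp b$ of~\eqref{eq.perp} is definable in $\cM$; the point is that $2$-sufficient randomness lets one, for any $n$ and any two distinct vertices $u,v$ of $\cM_n$, find a common ``distinguishing'' vertex, so that $\forall^\cI n\,(\tilde a(n)\neq\tilde b(n))$ can be detected by a first-order formula in $\cM$ (this is where Example~\ref{Ex.NullGraph} is ruled out, since the null-graph pathology there exactly corresponds to the failure of such extension properties). Next, for $a\perp b$ I would check that the Boolean operations on $A_{ab}$ — meet, join, complement, and the partial order $\chi^{ab}_S\leq\chi^{ab}_T$ corresponding to $S\subseteq_\cI T$ — are definable from the parameters $a,b$: here again sufficient randomness is the engine, allowing one to characterize, say, $\chi^{ab}_S\wedge\chi^{ab}_T = \chi^{ab}_{S\cap T}$ purely in terms of adjacency patterns with auxiliary ``probe'' vertices that exist because of the extension property. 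Because $\Phi$ is an isomorphism of $\calL$-structures, it sends $\perp$ to $\perp$, sends $A_{ab}$ onto $A_{\Phi(a)\Phi(b)}$, and intertwines the definable Boolean structures, hence yields a Boolean algebra isomorphism $\alpha_{a,b}\colon\cP(\bbN)/\cI\to\cP(\bbN)/\cJ$ via the identifications $S\leftrightarrow\chi^{ab}_S$ and $\alpha_{a,b}(S)\leftrightarrow\chi^{\Phi(a)\Phi(b)}_{\alpha_{a,b}(S)}$.

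The coherence step is where I would spend the most care: given two pairs $a\perp b$ and $a'\perp b'$, I need $\alpha_{a,b}=\alpha_{a',b'}$. As in the ramified-set argument, the strategy is to first handle ``overlapping'' pairs — for instance pairs $(a,b)$ and $(a,b')$ sharing a coordinate, where one compares $\chi^{ab}_S$ and $\chi^{ab'}_S$ directly on the set where $b$ and $b'$ agree — and then to link arbitrary pairs through a chain of such overlaps, using sufficient randomness once more to produce, for any given $a\perp b$ and $a'\perp b'$, an intermediate parameter simultaneously ``compatible'' with both in the relevant sense (concretely, choosing a sequence $\tilde c$ with $\tilde c(n)$ distinct from $\tilde a(n),\tilde b(n),\tilde a'(n),\tilde b'(n)$ for all but $\cI$-many $n$, which exists by the extension property). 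A small separation argument — if $\alpha_{a,b}\neq\alpha_{a',b'}$ then they disagree on a nonzero $S$, and one finds an element of $\cM$ that is mapped incompatibly, contradicting that $\Phi$ preserves the Boolean relations — finishes the coherence, exactly paralleling the final contradiction in the proof of Proposition~\ref{P.ramified.CR}. Setting $\alpha:=\alpha_{a,b}$ for any (hence every) valid pair then gives the map required by Definition~\ref{def:CR}, and checking \eqref{eq.CR} ``if $x=_Sy$ then $\Phi(x)=_{\alpha(S)}\Phi(y)$'' reduces, by the same decomposition trick as in the ramified case, to the statement about the $A_{ab}$'s that we already have; since $\alpha$ is visibly an isomorphism, $\Phi$ is (isomorphically) coordinate-respecting.

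\textbf{Main obstacle.} I expect the genuinely delicate point to be making the Boolean algebra structure on $A_{ab}$ — and especially the partial order $\subseteq_\cI$ — \emph{first-order} definable from $a$ and $b$ uniformly, together with showing it really matches the intended identification with $\cP(\bbN)/\cI$ rather than some coarsening; getting the right formulas will require a careful use of the $k$-sufficiently-random extension property (presumably $k=2$ suffices but one may need to probe with several auxiliary vertices), and it is plausible that, just as in the tree case where ``terminal nodes'' had to be tracked separately, some degenerate coordinates will need separate bookkeeping before the clean uniformization goes through.
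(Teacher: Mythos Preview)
Your proposal is correct and follows essentially the same route as the paper: definability of $\perp$ and $A_{ab}$ (Lemma~\ref{L.Aab.perp}), definability of the Boolean structure on $A_{ab}$ (Lemma~\ref{L.cZ}), coherence of the local maps $\alpha^{ab}$ via a shared-coordinate transitivity argument and an intermediate element (Lemma~\ref{L.alpha}), and finally the verification of~\eqref{eq.CR} by placing any pair $x,y$ inside a single $A_{ab}$. Your flagged ``main obstacle'' turns out to be easier than you fear---the order on $A_{ab}$ is captured by the one-line observation $S\subseteq S'\Leftrightarrow \chi^{ab}_{S'}\in A_{b,\chi^{ab}_S}$, and unlike the tree case there is no analog of terminal-node bookkeeping to do.
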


We start by showing that the sets $A_{ab}$ and the relation $\perp$ are definable in this setting.

\begin{lemma} \label{L.Aab.perp}
Suppose that $\cM=\prod_n \cM_n/\cI$ is a reduced product of sufficiently random graphs. Then for all $a,b\in\cM$ we have that 
\[
x\in A_{ab}\text{ if and only if } \forall z\, (E(z,a)\wedge E(z,b)\Rightarrow E(z,x)).
\]
If in addition $a$ and $b$ are distinct, then
\[
a\perp b\text{ if and only if }\forall x\, (b\in A_{xa}\Rightarrow b=x).
\]
\end{lemma}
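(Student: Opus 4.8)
The claim has two halves, each asserting that a set (or relation) which is \emph{a priori} only combinatorially defined in terms of lifts is in fact first-order definable in $\cM$, using the sufficient randomness hypothesis. I would prove the two biconditionals in turn, in each case checking the two inclusions separately.

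For the first biconditional, fix $a,b\in\cM$ and first argue the easy inclusion: if $x=\chi_S^{ab}$ for some $S\in\cPN/\cI$, then for any $z$ with $E(z,a)$ and $E(z,b)$ we have $\forall^\cI n\,(E^{\cM_n}(\tilde z(n),\tilde a(n))$ and $E^{\cM_n}(\tilde z(n),\tilde b(n)))$; since $\tilde\chi_{\tilde S}^{\tilde a\tilde b}(n)$ equals $\tilde a(n)$ or $\tilde b(n)$ for each $n$, this forces $\forall^\cI n\,E^{\cM_n}(\tilde z(n),\tilde x(n))$, i.e.\ $E(z,x)$. For the converse, suppose $x\notin A_{ab}$; then the $\cI$-positive set $T=\{n\mid \tilde x(n)\notin\{\tilde a(n),\tilde b(n)\}\}$ is witnessed. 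I want to build $z$ adjacent to both $a$ and $b$ but not to $x$: on $T$ use $2$-sufficient randomness with $P=\{\tilde a(n),\tilde b(n)\}$ (or a singleton if they coincide) and forbidden vertex $u=\tilde x(n)$ to get $\tilde z(n)\neq \tilde x(n)$ adjacent to both; off $T$ set $\tilde z(n)$ arbitrarily adjacent to $\tilde a(n)$ and $\tilde b(n)$, again by sufficient randomness (this is where "at least three vertices" and $k$-sufficient randomness for $k\le 2$ are needed). Then $E(z,a)\wedge E(z,b)$ holds mod $\cI$ but $E(z,x)$ fails on the positive set $T$, contradicting the right-hand side.

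For the second biconditional, assume $a\neq b$. If $a\perp b$, then for any $x$ with $b\in A_{xa}$ we have $\tilde b(n)\in\{\tilde x(n),\tilde a(n)\}$ for $\cI$-almost all $n$, and since $\tilde a(n)\neq\tilde b(n)$ for $\cI$-almost all $n$ this forces $\tilde b(n)=\tilde x(n)$ mod $\cI$, i.e.\ $b=x$. Conversely, if $\neg(a\perp b)$, the set $\{n\mid\tilde a(n)=\tilde b(n)\}$ is $\cI$-positive, call it $U$; take $\tilde x(n)=\tilde a(n)$ off $U$ and, on $U$, pick $\tilde x(n)\neq \tilde b(n)$ (possible since $\cM_n$ has at least two vertices), so $x\neq b$ but $\tilde b(n)\in\{\tilde x(n),\tilde a(n)\}$ everywhere, hence $b\in A_{xa}$ — contradicting the right-hand side. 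Combined with the first part, which shows $A_{xa}$ is definable uniformly in the parameters $x,a$, this shows $\perp$ is definable.

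The main obstacle is the converse direction of the first biconditional: one must extract the "bad coordinates" into an $\cI$-positive set and then \emph{simultaneously} on all coordinates invoke the sufficient-randomness extension property to build a single global witness $z$, while being careful that the off-$T$ coordinates (where $x$ agrees with $a$ or $b$) do not sabotage the construction — this is precisely why the hypothesis is needed both in its $2$-sufficiently-random form and in the "at least three vertices" clause, and why Example~\ref{Ex.NullGraph} shows some hypothesis is unavoidable. Everything else is a routine manipulation of lifts modulo $\cI$ together with the observation that membership "$\tilde c(n)\in\{\tilde a(n),\tilde b(n)\}$ for $\cI$-almost all $n$" is exactly what "$c\in A_{ab}$" encodes.
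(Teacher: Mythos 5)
Your proposal follows the paper's approach step for step, and the first biconditional is handled correctly (in fact a bit more explicitly than the paper, which leaves the off-$T$ coordinates of $\tilde z$ implicit). However, the converse direction of the second biconditional contains a concrete error.

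You construct $\tilde x$ by setting $\tilde x(n)=\tilde a(n)$ off $U$ and $\tilde x(n)\neq\tilde b(n)$ on $U$, then claim $\tilde b(n)\in\{\tilde x(n),\tilde a(n)\}$ everywhere. But off $U$ we have $\tilde a(n)\neq\tilde b(n)$ by the definition of $U$, so with your choice $\{\tilde x(n),\tilde a(n)\}=\{\tilde a(n)\}$ and hence $\tilde b(n)\notin\{\tilde x(n),\tilde a(n)\}$ there. Since $a\neq b$ forces $\bbN\setminus U$ to be $\cI$-positive, this shows $b\notin A_{xa}$, which is the opposite of what you need. The repair is to set $\tilde x(n)=\tilde b(n)$ off $U$ (keeping $\tilde x(n)\neq\tilde b(n)$ on $U$): then $\tilde b(n)=\tilde x(n)$ off $U$ and $\tilde b(n)=\tilde a(n)$ on $U$, so $\tilde b(n)\in\{\tilde x(n),\tilde a(n)\}$ for all $n$, giving $b\in A_{xa}$; and $x\neq b$ still holds because they differ on the $\cI$-positive set $U$. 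This corrected construction is exactly the witness $c$ used in the paper's proof. The remainder of your argument (forward direction of the second biconditional, both directions of the first) is sound and tracks the paper.
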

\begin{proof}
To prove the direct implication in the first assertion, assume $x\in A_{ab}$. Then $(\forall^\cI n) \,\tilde x(n)\in \{\tilde a(n),\tilde b(n)\}$. If $z\in\cM$ satisfies $E(z,a)$ and $ E(z,b)$, then the characterization of adjacency given in \eqref{eq.adjacent} implies $E(z,x)$. 

For the converse implication, assume $x\notin A_{ab}$. Then $S=\{n\mid \tilde x(n)\notin \{\tilde a(n),\tilde b(n)\}\}$ is $\cI$-positive. For each $n\in S$, by applying the definition of sufficiently random to $P=\{\tilde a(n),\tilde b(n)\}$ and $u=\tilde x(n)$, there is $\tilde z(n)\in \cM_n\setminus\{\tilde x(n)\}$ such that $\lnot E(\tilde z(n),\tilde x(n))$, $E(\tilde z(n),\tilde a(n))$, and $E(\tilde z(n),\tilde b(n))$. Since $S$ is $\cI$-positive, this implies $\lnot E(z,x)$, hence the right-hand side of the first assertion fails. 

For the second assertion, fix distinct $a$ and $b$ in $\cM$ and assume that the right-hand side fails. Thus there is $x\neq b$ such that $b\in A_{xa}$. Thus $b=\chi_S^{xa}$ for some $S\in \cP(\bbN)/\cI$. Since $x\neq b$, $S\neq 0$. Therefore $\pi_S(a)=\pi_S(b)$ and $a\not\perp b$. 

Vice versa, suppose that $a\not\perp b$, and let $\tilde S=\{n\mid\tilde a(n)=\tilde b(n)\}$. Since $a$ and $b$ are distinct and $a\not\perp b$, both $\tilde S$ and $\bbN\setminus\tilde S$ are $\cI$-positive. Let $\tilde c$ be such that $\tilde c(n)\neq\tilde a(n)$ for all $n\in \tilde S$ and $\tilde c(n)=\tilde b(n)$ for $n\notin\tilde S$. Then $\tilde b= \tilde \chi_{\tilde S}^{\tilde c\tilde a}$, and so $b\in A_{ca}$, but clearly $b\neq c$.
\end{proof}

\begin{lemma} \label{L.cZ} 
If $\cM$ is a reduced product of sufficiently random graphs then let 
\[
\cZ=\cZ^{\cM}=\{(a,b)\mid a\perp b\}.
\]
Then for $(a,b)\in\cZ$, the Boolean algebra structure on $A_{ab}$ which is induced by the bijective correspondence 
$\cP(\bbN)/\cI\ni S\mapsto \chi_S^{ab}\in A_{ab}$ is definable in $\prod_n \cM_n/\cI$.
\end{lemma}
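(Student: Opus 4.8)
The plan is to reduce the definability of the Boolean algebra structure on $A_{ab}$ to the definability of its underlying partial order, and then to extract that order from the parameters $a,b$ using the definability of the sets $A_{pq}$ established in Lemma~\ref{L.Aab.perp}. Fix $(a,b)\in\cZ$. Since $a\perp b$, the paragraph preceding Example~\ref{Ex.NullGraph} gives that $S\mapsto\chi^{ab}_S$ is a bijection $\cP(\bbN)/\cI\to A_{ab}$, so it transports the Boolean algebra structure of $\cP(\bbN)/\cI$ onto $A_{ab}$; in particular the least element of $A_{ab}$ is $\chi^{ab}_\emptyset=a$, the greatest is $\chi^{ab}_\bbN=b$, and the order is $\chi^{ab}_S\leq\chi^{ab}_T\iff S\subseteq_\cI T$. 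Since in any Boolean algebra $\wedge$ and $\vee$ are binary infimum and supremum and $\neg x$ is the unique $y$ with $x\wedge y$ the least element and $x\vee y$ the greatest (uniqueness holds by distributivity), all of $0,1,\wedge,\vee,\neg$ are first-order definable from $\leq$. Hence it suffices to show that the order $\leq$ on $A_{ab}$ is definable from $a$ and $b$.

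The key claim I would establish is that, for each $x\in A_{ab}$,
\[
A_{ax}=\{w\in A_{ab}\mid w\leq x\},
\]
the principal down-set of $x$ in $A_{ab}$. Granting this, Lemma~\ref{L.Aab.perp} — whose first assertion applies to an arbitrary pair, in particular to $(a,x)$, with no $\perp$-hypothesis — makes $A_{ax}$ definable from $a,x$ via $w\in A_{ax}\iff\forall z\,(E(z,a)\wedge E(z,x)\Rightarrow E(z,w))$, and likewise $A_{ab}$ is definable from $a,b$; therefore $w\leq x\iff w\in A_{ab}\wedge x\in A_{ab}\wedge w\in A_{ax}$ is definable from $a,b$, which completes the argument. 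To prove the claim: for $(\supseteq)$, if $w=\chi^{ab}_S$ and $x=\chi^{ab}_T$ with $S\subseteq_\cI T$, then the sequence equal to $\tilde a(n)$ off a lift $\tilde S$ of $S$ and to $\tilde x(n)$ on $\tilde S$ represents $w$ modulo $\cI$ (since $\tilde x(n)=\tilde b(n)$ for $\cI$-almost all $n\in\tilde S$), so $w=\chi^{ax}_S\in A_{ax}$. For $(\subseteq)$, if $w\in A_{ax}$ then fiberwise $\tilde w(n)\in\{\tilde a(n),\tilde x(n)\}\subseteq\{\tilde a(n),\tilde b(n)\}$ for $\cI$-almost all $n$ (because $x\in A_{ab}$), so $w\in A_{ab}$; and then $a\perp b$ forces $\{n\mid\tilde w(n)=\tilde b(n)\}\subseteq_\cI\{n\mid\tilde x(n)=\tilde b(n)\}$, that is, $w\leq x$.

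The only genuine work is this last claim, and the main (mild) obstacle is the bookkeeping modulo $\cI$ inside it: one must keep track that membership $x\in A_{ab}$ already pins down $\tilde x(n)\in\{\tilde a(n),\tilde b(n)\}$ for $\cI$-almost all $n$, and that $a\perp b$ makes the alternatives $\tilde w(n)=\tilde a(n)$ and $\tilde w(n)=\tilde b(n)$ mutually exclusive and jointly exhaustive modulo $\cI$, so that $A_{ax}$ is genuinely the principal ideal below $x$ rather than merely contained in one (and so that the representative exhibited in $(\supseteq)$ really witnesses $w\in A_{ax}$). Once the identity $A_{ax}=\{w\in A_{ab}\mid w\leq x\}$ is in hand, everything else is formal: the definability of $\leq$ on $A_{ab}$ from $a,b$, and then the definability of $0,1,\wedge,\vee,\neg$ from $\leq$, are routine first-order manipulations, giving that the Boolean algebra structure on $A_{ab}$ is definable in $\prod_n\cM_n/\cI$ with parameters $a$ and $b$.
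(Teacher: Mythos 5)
Your proof is correct and takes essentially the same approach as the paper's: both use Lemma~\ref{L.Aab.perp} to make the sets $A_{pq}$ definable from parameters, and both express the transported Boolean order on $A_{ab}$ as membership in such a set (the paper writes it as $\chi_{S'}^{ab}\in A_{b,\chi_S^{ab}}$, the dual of your $w\in A_{ax}$, and additionally reads off the complement directly from $\perp$ rather than deriving it from the order, but these are cosmetic variants).
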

\begin{proof} 
Fix $(a,b)\in \cZ$. The set $A_{ab}$ itself is definable from parameters $a$ and $b$ by Lemma~\ref{L.Aab.perp}. 

If $S,S'\in\cP(\bbN)/\cI$, then $S\subseteq S'$ if and only if $\chi_{S'}^{ab}\in A_{b,\chi_S^{ab}}$. The complement of $\chi_{S}^{ab}$ in $A_{ab}$ is the only $x\in A_{ab}$ such that $\chi_S^{ab}\perp x$, that is $\chi_{\bbN\setminus S}^{ab}$. 	
Thus for $(a,b)\in \cZ$ the set $A_{ab}$ is definable and so are the order and the complement in the Boolean structure that it carries, all using $a$ and $b$ as parameters. Since the union of two elements of a Boolean algebra is their minimal upper bound and all Boolean operations can be expressed in terms of union and complement, this completes the proof. 
\end{proof}

\begin{lemma} \label{L.alpha}
Consider reduced products $\cM=\prod_n\cM_n/\cI$ and $\cN=\prod_n\cN_n/\cJ$ of sufficiently random graphs associated with atomless ideals $\cI$ and $\cJ$ and an isomorphism $\Phi\colon\cM\to\cN$. Then there is an isomorphism $\alpha \colon\cP(\bbN)/\cI\to\cP(\bbN)/\cJ$ such that 
\[
\Phi(\chi_{S}^{ab})=\chi_{\alpha(S)}^{\Phi(a)\Phi(b)} 
\]
for all $(a,b)\in \cZ^{\cM}$ and all $S\in \cP(\bbN)/\cI$. 
\end{lemma}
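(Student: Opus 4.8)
The plan is to attach to each pair $(a,b)\in\cZ^\cM$ a Boolean algebra isomorphism $\alpha_{ab}\colon\cP(\bbN)/\cI\to\cP(\bbN)/\cJ$ satisfying the stated local identity, and then to show that $\alpha_{ab}$ does not depend on $(a,b)$. For the first part: by Lemma~\ref{L.Aab.perp} the relation $\perp$ and each set $A_{ab}$ are definable, so $\cZ^\cM$ is definable; since $\Phi$ is an isomorphism it carries $\cZ^\cM$ onto $\cZ^\cN$ and each $A_{ab}$ onto $A_{\Phi(a)\Phi(b)}$, and by Lemma~\ref{L.cZ} the Boolean algebra structure carried by $A_{ab}$ under the bijection $S\mapsto\chi_S^{ab}$ is definable from $a,b$, so $\Phi$ restricts to an isomorphism of these Boolean algebras. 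Transporting along $S\mapsto\chi_S^{ab}$ and $T\mapsto\chi_T^{\Phi(a)\Phi(b)}$ gives the required $\alpha_{ab}$, characterized by $\Phi(\chi_S^{ab})=\chi_{\alpha_{ab}(S)}^{\Phi(a)\Phi(b)}$. Using $\chi_S^{ab}=\chi_{\bbN\setminus S}^{ba}$ and that $\alpha_{ba}$ preserves complements one also gets $\alpha_{ab}=\alpha_{ba}$ for every $(a,b)\in\cZ^\cM$.

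The crux of the argument is the claim that $\alpha_{uv}=\alpha_{uw}$ whenever $u,v,w\in\cM$ are pairwise $\perp$. To prove it, fix $S$ and consider $\chi_S^{uv}$ (which agrees with $u$ off $S$ and with $v$ on $S$) and $\chi_{\bbN\setminus S}^{uw}$ (which agrees with $u$ on $S$ and with $w$ off $S$); since $v\perp u$ and $u\perp w$, these disagree in every coordinate, so $(\chi_S^{uv},\chi_{\bbN\setminus S}^{uw})\in\cZ^\cM$. Applying $\Phi$ and writing $U=\alpha_{uv}(S)$, $V=\alpha_{uw}(S)$, we have $\Phi(\chi_S^{uv})=\chi_{U}^{\Phi(u)\Phi(v)}$ and $\Phi(\chi_{\bbN\setminus S}^{uw})=\chi_{\bbN\setminus V}^{\Phi(u)\Phi(w)}$, and since $\Phi$ preserves $\cZ$ these two elements are $\perp$ in $\cN$. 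Using that $\Phi(u),\Phi(v),\Phi(w)$ are pairwise $\perp$, a direct inspection of representatives shows that the coordinates on which they agree form, modulo $\cJ$, exactly the set $V\setminus U$; hence $V\setminus U\in\cJ$. The symmetric computation with $(\chi_{\bbN\setminus S}^{uv},\chi_S^{uw})\in\cZ^\cM$ gives $U\setminus V\in\cJ$, so $U=V$; as $S$ was arbitrary, $\alpha_{uv}=\alpha_{uw}$.

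To finish I use sufficient randomness: since each $\cM_n$ is $2$-sufficiently random, any three distinct vertices of $\cM_n$ can be avoided by a fourth. Given $(a,b),(a,b')\in\cZ^\cM$, choose $b^*\in\cM$ with $\tilde b^*(n)\notin\{\tilde a(n),\tilde b(n),\tilde b'(n)\}$ for all $n$; then $a,b,b^*$ are pairwise $\perp$ and so are $a,b^*,b'$, so the claim gives $\alpha_{ab}=\alpha_{ab^*}=\alpha_{ab'}$. Thus $\alpha_{ab}$ depends only on $a$; call it $\alpha_a$. For arbitrary $a,a'$, choose $b$ with $\tilde b(n)\notin\{\tilde a(n),\tilde a'(n)\}$ for all $n$; then, using $\alpha_{uv}=\alpha_{vu}$ together with the independence of the second coordinate just obtained, $\alpha_a=\alpha_{ab}=\alpha_{ba}=\alpha_{ba'}=\alpha_{a'b}=\alpha_{a'}$. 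Hence all the maps $\alpha_{ab}$ coincide with a single Boolean algebra isomorphism $\alpha$, which is the one required by the statement.

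I expect the main obstacle to be the coherence claim for pairwise-$\perp$ triples: getting the bookkeeping of the overlap set exactly right, and making sure the auxiliary vertices chosen in the last step really are available from $2$-sufficient randomness (as opposed to merely ``at least three vertices''). Everything else is formal once the definability inputs of Lemmas~\ref{L.Aab.perp} and~\ref{L.cZ} and the fact that $\Phi$ preserves $\cZ$ are in hand.
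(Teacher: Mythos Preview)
Your argument is correct and follows a genuinely different path from the paper in the coherence step. The paper proves directly that $\alpha^{ab}=\alpha^{ac}$ whenever $(a,b),(a,c)\in\cZ$ (with no assumption that $b\perp c$): assuming otherwise, it picks $S$ with $\alpha^{ab}(S)\cap\alpha^{ac}(S)=\emptyset$ and uses $2$-sufficient randomness to build an auxiliary vertex $d$ which, on $S$, is adjacent to $b,c$ but not $a$ and, off $S$, is adjacent to $a$; preservation of the edge relation under $\Phi$ then yields a contradiction. So in the paper, sufficient randomness enters precisely at the heart of the coherence argument, via adjacency.

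Your route instead proves coherence only for pairwise-$\perp$ triples, by a direct (non-contradiction) computation that uses nothing about edges: only that $\Phi$ preserves $\perp$ and the explicit shape of the $\chi$-elements. You then invoke sufficient randomness in a lighter way, merely to produce a fourth vertex avoiding three given ones, which lets you reduce arbitrary $(a,b),(a,b')\in\cZ$ to the pairwise-$\perp$ case via an intermediate $b^*$. The final chain through a common $b$ with $b\perp a$ and $b\perp a'$ (using only $|\cM_n|\geq 3$) matches the paper's use of an intermediate $e$. Your approach has the pleasant feature that the core coherence computation is theory-agnostic (it would go through in any setting where $\perp$ and the Boolean structure on $A_{ab}$ are definable); the paper's approach is a one-shot argument that avoids the extra reduction step but leans more specifically on the graph structure. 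Both are short and valid.
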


\begin{proof}
The proof proceeds in several steps. First, fix $(a,b)\in \cZ^\cM$. Since the 	Boolean algebra structure on $A_{ab}$ is definable by Lemma~\ref{L.cZ}, there is an isomorphism 
\[
\alpha^{ab}\colon\cP(\bbN)/\cI\to\cP(\bbN)/\cJ
\]
obtained by sending $S$ to the unique $T$ such that $\Phi(\chi_{S}^{ab})=\chi_{T}^{\Phi(a)\Phi(b)}$. 

Second, all $(a,b)\in \mathcal Z$ satisfy the symmetry property $\alpha^{ab}=\alpha^{ba}$. This is because all $S\in\mathcal P(\bbN)/\cI$ and $T\in\cP(\bbN)/\cJ$ satisfy $\chi_{S}^{ab}=\chi_{\bbN\setminus S}^{ba}$ and $\chi_{T}^{\Phi(a)\Phi(b)}=\chi_{\bbN\setminus T}^{\Phi(b)\Phi(a)}$. 
 
Next, fix $a,b$ and $c$ in $\cM$ with $(a,b)$ and $(a,c)$ in $\cZ$. We claim that $\alpha^{ab}=\alpha^{ac}$. Assume otherwise and fix an $\cI$-positive $S$ such that $\alpha^{ab}(S)\cap\alpha^{ac}(S)=\emptyset$. Lift $S$ to $\tilde S\in \cP(\bbN)$. Let $d$ be constructed in such a way that: 
\begin{itemize}
\item for all $n\in\tilde S$, $\tilde d(n)$ is connected to both $\tilde b(n)$ and $\tilde c(n)$ but not to $\tilde a(n)$,
\item for all $n\notin\tilde S$, $\tilde d(n)$ is connected to $\tilde a(n)$.
\end{itemize}
Such a $d$ exists as each graph is sufficiently random. Note that $\neg E(a,d)$. On the other hand, we have $E(d,\chi_S^{ab})$ and $E(d,\chi_S^{ac})$, hence $E(\Phi(d),\chi_{\alpha^{ab}(S)}^{\Phi(a)\Phi(b)})$ and $E(\Phi(d),\chi_{\alpha^{ac}(S)}^{\Phi(a)\Phi(c)})$. As $\alpha^{ab}(S)$ and $\alpha^{ac}(S)$ are disjoint, the union of their complements is (modulo $\cJ$) equal to $\mathbb{N}$. Lifting $\Phi(d), \chi_{\alpha^{ac}(S)}^{\Phi(a)\Phi(c)}, \chi_{\alpha^{ab}(S)}^{\Phi(a)\Phi(b)}$ respectively, and using the fact that $E(x,y)$ if and only if $\forall^\cJ n\, (E(\tilde x(n),\tilde y(n)))$, we have that $E(\Phi(a),\Phi(d))$, which is a contradiction. 
 
Therefore the maps $\alpha^{ab}$ satisfy a transitivity property: if $(a,b)$ and $(a,c)$ in $\cZ$ then $\alpha^{ab}=\alpha^{ac}$.
		
We are ready to prove that for arbitrary $(a,b)$ and $(c,d)$ in $\cZ$, $\alpha^{ab}=\alpha^{cd}$.
Fix arbitrary $(a,b)$ and $(c,d)$ in $\cZ$. Since each $\cM_n$ has at least three points we can find $e\in\cM$ such that $(a,e)$ and $(e,c)$ are in $\cZ$. By the transitivity and symmetry properties we have $\alpha^{ab}=\alpha^{ae}=\alpha^{ce}=\alpha^{cd}$. Since $(a,b)$ and $(c,d)$ were arbitrary, setting $\alpha=\alpha^{ab}$ for some (any) $(a,b)\in\cZ$ we have the required isomorphism. 
\end{proof}

We are ready to conclude our proof.

\begin{proof}[Proof of Proposition~\ref{prop:GRrigid}] 
Fix sufficiently random graphs $\cM_n$ and $\cN_n$ for $n\in\bbN$, atomless ideals $\cI$ and $\cJ$, and an isomorphism
\[
\Phi\colon\prod_n\cM_n/\cI \to\prod_n\cN_n/\cJ.
\]
We first show that for all $x,y\in\cM$ there exists $(a,b)\in\cZ$ such that $\{ x,y \}\subseteq A_{ab}$. 
Fix $x$ and $y$ in $\cM$. If $x\perp y$, then $(x,y)\in\cZ$ and obviously $x,y\in A_{xy}$. If $x\not\perp y$, let $\tilde S=\{n\mid \tilde x(n)=\tilde y(n)\}$. For $n\in \tilde S$, let $\tilde z(n)$ be any element of $\cM_n$ different from $\tilde x(n)$, and if $n\notin\tilde S$, let $\tilde z(n)=\tilde x(n)$. Then $z\perp y$, and $x,y\in A_{yz}$.

Now, suppose that $x$ and $y$ are such that (using Notation~\ref{Notation.2.5}) $x=_Sy$ for some $S\in\cP(\bbN)/\cI$. As $\perp$ is definable, we can assume that $S$ is nonzero. Let $a$ and $b$ such that $x,y\in A_{ab}$. Let $S_1$ and $S_2$ be such that $x=\chi_{S_1}^{ab}$ and $y=\chi_{S_2}^{ab}$. Since $x=_S y$, then $(S\cap S_1) \Delta (S\cap S_2)\in\cI$. Using $\alpha\colon \cP(\bbN)/\cI\to \cP(\bbN)/\cJ$ provided by Lemma~\ref{L.alpha} we have 
\[
\Phi(x)=_{\alpha(S)\setminus\alpha(S_1)}\Phi(a)\text{ and }\Phi(x)=_{\alpha(S)\cap\alpha(S_1)}\Phi(b)
\]
and
\[
\Phi(y)=_{\alpha(S)\setminus\alpha(S_2)}\Phi(a)\text{ and }\Phi(y)=_{\alpha(S)\cap\alpha(S_2)}\Phi(b).
\]
Since $\alpha$ is a homomorphism, $S\cap S_1=S\cap S_2$ implies $\alpha(S)\cap\alpha(S_1)=\alpha(S)\cap\alpha(S_2)$ and $\alpha(S)\setminus\alpha(S_1)=\alpha(S)\setminus \alpha(S_2)$. This implies that $\Phi(x)=_{\alpha(S)}\Phi(y)$, that is, the thesis.
\end{proof}

\subsection{A general local criterion}\label{S.local2}
Propositions~\ref{P.ramified.CR} and \ref{prop:GRrigid} have a similar pattern, which can be generalized as follows.

Fix sets $\cM_n$, for $n\in\bbN$, and let $\cI\subseteq\mathcal P(\bbN)$ be an ideal. Let $\cM=\prod_n\cM_n/\cI$. If $a$ and $b$ are in $\cM$ and $S\in\cP(\bbN)$, as in \S\ref{S.graphs}, let $\chi_S^{ab}$ be the unique element of $\cM$ such that 
\[
\pi_S(\chi_S^{ab})=\pi_S(b)\text{ and }\pi_{\bbN\setminus S}(\chi_S^{ab})=\pi_{\bbN\setminus S}(a).
\]
Let $A_{ab}=\{\chi_S^{ab}:S\in\cP(\bbN)/\cI\}$, and write $a\perp b$ if $\pi_S(a)\neq\pi_S(b)$ for every nonzero $s\in\cP(\bbN)/\cI$.

If $a\perp b$, we have a canonical Boolean algebra structure on $A_{ab}$ given by the isomorphism between $A_{ab}$ and $\cP(\bbN)/\cI$, obtained by mapping $S\to\chi_S^{ab}$. (Notice that if $a\not\perp b$, then this map is not injective.) In both Propositions~\ref{P.ramified.CR} and \ref{prop:GRrigid}, the sets $A_{ab}$, for $a\perp b$, as well as their canonical Boolean algebra structures, were definable. 

Fix a theory $T$. By the Feferman--Vaught theorem (\cite{feferman1959first} or \cite[Proposition~6.3.2]{ChaKe}), there is a theory $T^*$ such that all reduced products of models of $T$ over atomless ideals are models of $T^*$.

In the following definition, it is required that the set $\cZ$ is first-order definable and that there is a first-order definition of a system of Boolean algebras that is correctly interpreted in every reduced product of models of $T$. While the theory $T^*$ has other models, we do not put any requirements on the interpretation of these first-order definitions in models of $T^*$ that are not reduced products. 

\begin{definition} \label{def:locallyCR}
Let $T$ be a first order theory, and suppose that $\cZ$ is a set of pairs definable in $T^*$. We say that $T$ \emph{recognizes coordinates at} $\cZ$ if for all $(a,b)\in\cZ$ we have that $a \perp b$ and the sets $A_{ab}$ (as in \eqref{eq.Aab}), as well as their canonical Boolean algebra structure, are uniformly definable\footnote{`Uniformly definable' means that a single formula with parameters $a$ and $b$ defines $A_{ab}$ for all relevant pairs $a,b$ in all relevant reduced products, and that analogous fact applies to Boolean operations in the canonical Boolean algebra structure on $A_{ab}$.} in all reduced products of models of $T$ over atomless ideals.
\end{definition}

The proof of the following lemma is left as an (easy) exercise for the interested reader.
\begin{lemma} \label{lemma:localcopies}
Let $T$ be a theory that recognizes coordinates at a nonempty definable set $\cZ$. Let $\cM_n$ and $\cN_n$ be models of $T$, and let $\cI$ and $\cJ$ be atomless ideals on $\bbN$. 
Let $\cM=\prod_n\cM_n/\cI$ and $\cN=\prod_n\cN_n/\cJ$, and suppose that $\Phi\colon \cM\to\cN$ is an isomorphism. 

Then for all $(a,b)\in \cZ^\cM$ there is an isomorphism 
\[
\alpha^{ab}\colon\cPN/\cI\to \cPN/\cJ
\]
 such that every $S\in \cPN/\cI$ satisfies
 \[
 \Phi(\chi^{ab}_S)=\chi^{\Phi(a)\Phi(b)}_{\alpha^{ab}(S)}.\eqno\qed
 \]
\end{lemma}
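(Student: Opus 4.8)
\textbf{Proof plan for Lemma~\ref{lemma:localcopies}.}

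The plan is to reduce everything to the definability provided by the hypothesis ``$T$ recognizes coordinates at $\cZ$''. Fix $(a,b)\in\cZ^\cM$. By definition of recognizing coordinates at $\cZ$, we have $a\perp b$, and moreover the set $A_{ab}$ together with its Boolean algebra structure is definable by a fixed formula with parameters $a,b$ (the same formula working in all reduced products of models of $T$ over atomless ideals). In particular $\cZ$ is definable in $T^*$ and $\cN=\prod_n\cN_n/\cJ$ is a model of $T^*$ by the Feferman--Vaught theorem, so since $\Phi$ is an isomorphism, $(\Phi(a),\Phi(b))\in\cZ^\cN$, hence $\Phi(a)\perp\Phi(b)$ and $A_{\Phi(a)\Phi(b)}$ carries the same definable Boolean algebra structure with parameters $\Phi(a),\Phi(b)$.

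Since $A_{ab}$, its partial order, and its complementation are defined by formulas with parameters $a,b$, and $\Phi$ is an isomorphism carrying $a\mapsto\Phi(a)$, $b\mapsto\Phi(b)$, the restriction $\Phi\restriction A_{ab}$ is a bijection onto $A_{\Phi(a)\Phi(b)}$ which is an isomorphism of the two definable Boolean algebras. Because $a\perp b$, the map $S\mapsto\chi^{ab}_S$ is a Boolean algebra isomorphism $\cPN/\cI\to A_{ab}$, and similarly $T\mapsto\chi^{\Phi(a)\Phi(b)}_T$ is a Boolean algebra isomorphism $\cPN/\cJ\to A_{\Phi(a)\Phi(b)}$. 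Composing, we obtain a Boolean algebra isomorphism
\[
\alpha^{ab}\colon\cPN/\cI\to\cPN/\cJ,\qquad \alpha^{ab}\colon S\mapsto\text{ the unique }T\text{ with }\Phi(\chi^{ab}_S)=\chi^{\Phi(a)\Phi(b)}_T,
\]
and by construction every $S\in\cPN/\cI$ satisfies $\Phi(\chi^{ab}_S)=\chi^{\Phi(a)\Phi(b)}_{\alpha^{ab}(S)}$, as required.

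The only point that needs a little care — and the reason the excerpt flags this as ``not nearly as straightforward as it may sound'' in a related remark — is the passage from ``$A_{ab}$ is defined by a formula'' to ``$\Phi$ maps it isomorphically onto $A_{\Phi(a)\Phi(b)}$ as \emph{Boolean algebras}'': one must check that the definable predicates witnessing the partial order and complement on $A_{ab}$ are literally the \emph{same} formulas (up to swapping the parameters) as those witnessing the structure on $A_{\Phi(a)\Phi(b)}$, which is exactly what the word ``uniformly'' in Definition~\ref{def:locallyCR} guarantees, combined with the fact that both $\cM$ and $\cN$ are reduced products of models of $T$ over atomless ideals. Once that is in hand, the conclusion is a formal consequence of $\Phi$ being an isomorphism of $\cL$-structures. $\hfill\qed$
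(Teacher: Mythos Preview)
Your argument is correct and is exactly the intended one: the paper leaves this lemma as an exercise, and the proof amounts to transporting the uniformly definable Boolean-algebra structure on $A_{ab}$ along the isomorphism $\Phi$ and then conjugating by the canonical bijections $S\mapsto\chi^{ab}_S$ and $T\mapsto\chi^{\Phi(a)\Phi(b)}_T$, precisely as you wrote. One small simplification: you do not need to invoke Feferman--Vaught or $T^*$ explicitly, since the hypothesis already hands you that both $\cM$ and $\cN$ are reduced products of models of $T$ over atomless ideals, which is all the uniform definability clause in Definition~\ref{def:locallyCR} requires.
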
 

As in the proof of Propositions~\ref{P.ramified.CR} and \ref{prop:GRrigid}, we need the local isomorphisms to cohere. Once again, we leave this proof as an exercise.
\begin{proposition}\label{prop:glueingcopies} 
In the setting of Lemma~\ref{lemma:localcopies}, suppose that
\begin{itemize}
\item $\alpha^{ab}=\alpha^{cd}$ for every $(a,b),(c,d)\in\cZ^{\cM}$, and
\item for all $x,y\in\cM$ there are $a,b$ with $(a,b)\in\cZ^{\cM}$ and $x,y\in A_{ab}$.
\end{itemize} 
Then $T$ recognizes coordinates. \qed
\end{proposition}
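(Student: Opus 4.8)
The plan is to combine Lemma~\ref{lemma:localcopies} with the two hypotheses of Proposition~\ref{prop:glueingcopies} to produce a single isomorphism $\alpha\colon\cPN/\cI\to\cPN/\cJ$ witnessing that the given isomorphism $\Phi\colon\cM\to\cN$ is coordinate-respecting, in the sense of Definition~\ref{def:CR} (recalling that we have agreed to drop the qualifier `isomorphically'). First I would define $\alpha:=\alpha^{ab}$ for some fixed $(a,b)\in\cZ^{\cM}$ (which is nonempty since $\cZ$ is a nonempty definable set and $\cM$ is a reduced product of models of $T$; here one should note that $\cZ^\cM\neq\emptyset$ follows from $\cZ^{\cM_n}\neq\emptyset$ for enough $n$, or more simply from the fact that $\cZ$ is definable and nonempty in the theory $T^*$). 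By the first bulleted hypothesis this definition does not depend on the choice of $(a,b)$, so $\alpha$ is well-defined, and by Lemma~\ref{lemma:localcopies} it is an isomorphism of Boolean algebras. So the only real work is to verify the defining property \eqref{eq.CR} of a coordinate-respecting map.

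Next I would take arbitrary $x,y\in\cM$ and $S\in\cPN/\cI$ with $x=_S y$, and aim to show $\Phi(x)=_{\alpha(S)}\Phi(y)$. By the second bulleted hypothesis, choose $(a,b)\in\cZ^{\cM}$ with $x,y\in A_{ab}$; write $x=\chi^{ab}_{S_1}$ and $y=\chi^{ab}_{S_2}$ for suitable $S_1,S_2\in\cPN/\cI$. The hypothesis $x=_S y$ translates, via the definition of $\chi^{ab}_\bullet$ and the fact that $a\perp b$ (so that the correspondence $S'\mapsto\chi^{ab}_{S'}$ is a bijection onto $A_{ab}$ respecting coordinates), into the Boolean identity $S\cap S_1=S\cap S_2$ in $\cPN/\cI$ — equivalently, $S\setminus S_1 = S\setminus S_2$. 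Applying Lemma~\ref{lemma:localcopies} to the pair $(a,b)$, we get $\Phi(x)=\chi^{\Phi(a)\Phi(b)}_{\alpha(S_1)}$ and $\Phi(y)=\chi^{\Phi(a)\Phi(b)}_{\alpha(S_2)}$ (using $\alpha=\alpha^{ab}$). Since $\alpha$ is a Boolean algebra homomorphism, $S\cap S_1=S\cap S_2$ gives $\alpha(S)\cap\alpha(S_1)=\alpha(S)\cap\alpha(S_2)$ and likewise $\alpha(S)\setminus\alpha(S_1)=\alpha(S)\setminus\alpha(S_2)$. By the coordinate-wise description of $\chi^{\Phi(a)\Phi(b)}_\bullet$ — namely $\pi_{T}(\chi^{cd}_T)=\pi_T(d)$ and $\pi_{\bbN\setminus T}(\chi^{cd}_T)=\pi_{\bbN\setminus T}(c)$ — this is exactly enough to conclude that $\Phi(x)$ and $\Phi(y)$ agree on $\alpha(S)$: on $\alpha(S)\cap\alpha(S_1)=\alpha(S)\cap\alpha(S_2)$ both equal $\Phi(b)$, and on $\alpha(S)\setminus\alpha(S_1)=\alpha(S)\setminus\alpha(S_2)$ both equal $\Phi(a)$, so $\Phi(x)=_{\alpha(S)}\Phi(y)$ as required. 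This is the same bookkeeping computation already carried out at the end of the proof of Proposition~\ref{prop:GRrigid}, now isolated from the graph-specific content.

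Since $\alpha$ is an isomorphism, the resulting coordinate-respecting map is in fact isomorphically coordinate-respecting, so $T$ recognizes coordinates in the sense of Definition~\ref{Def.CR.Th}. I do not expect any genuine obstacle here: the content of the proposition has been front-loaded into its two hypotheses (uniform definability of the local copies $A_{ab}$, packaged in Lemma~\ref{lemma:localcopies}, and coherence of the $\alpha^{ab}$), and what remains is the routine translation between the relation $=_S$ on $\cM$ and Boolean operations on the index algebra. The only point requiring a modicum of care is making sure the bijection $S'\mapsto\chi^{ab}_{S'}$ is used correctly — i.e.\ that from $x,y\in A_{ab}$ one really does recover $S_1,S_2$ with $x=_S y\iff S\cap S_1=S\cap S_2$ — but this is immediate from $a\perp b$ and the definition of $\chi^{ab}_\bullet$. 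Hence the proof is a short assembly of the two quoted results, exactly as the paper indicates by leaving it ``as an exercise''.
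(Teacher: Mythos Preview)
Your proposal is correct and is precisely the argument the paper has in mind: the paper leaves this proposition as an exercise, and your verification is the straightforward abstraction of the computation already carried out at the end of the proof of Proposition~\ref{prop:GRrigid}. One minor remark: your aside about $\cZ^{\cM_n}\neq\emptyset$ is not quite apt, since $\cZ$ is defined at the level of reduced products rather than fibers; but you do not need it, as the second bulleted hypothesis already forces $\cZ^{\cM}\neq\emptyset$ (take any $x,y\in\cM$).
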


\section{Open Colouring Axioms}\label{S.OCAsharp}
In this section, we introduce a self-strengthening of the Open Colouring Axiom. More precisely, Definition~\ref{Def.OCAsharp} below gives a sharpening of the reformulation of the axiom $\OCAT$ introduced in \cite{Fa:Cauchy} that will (in spite of being a consequence of $\OCAT$) considerably simplify some of the uniformization arguments.

\begin{notation}\label{Notation}
For distinct $a$ and $b$ in $ \twoN$ let 
\begin{align*}
\Delta(a,b)&=\min\{n\mid a(n)\neq b(n)\}\text{ and }\\
a\wedge b&=a\rs \Delta(a,b). 
\end{align*}
In particular, $\Delta(a,b)=|a\wedge b|$. 
If $Z\subseteq \twoN$ let
\[
\Delta(Z)=\{a\wedge b\mid a,b\text{ are distinct elements of $Z$}\}. 
\]
For $s\in\{0,1\}^{<\bbN}$ let
\[
[s]=\{a\in \twoN\mid a(i)=s(i)\text{ for all }i<|s|\}.
\]
 Using this notation we have an alternative (equivalent) definition of $\Delta(Z)$: 
\[
\Delta(Z)=\{s\in \twolN\mid [s^\frown i]\cap Z\neq \emptyset\text{ for }i=0,1\}.
\] 
\end{notation}
\
In \cite{Fa.Luzin}, the second author defined an apparent strengthening of $\OCAT$ named $\OCAinfty$.\footnote{In \cite{Fa:Cauchy} a weaker axiom was called $\OCAinfty$, although a proof that $\OCAinfty$ as introduced in \cite{Fa.Luzin} is a consequence of $\PFA$ was given. (We should note that this proof contains a glaringly obvious typo; in 4. of Lemma~3.1, `whenever $s\subseteq t$’ should be `for some $t$ such that $s\subseteq t$.’)} This axiom was proved to be equivalent to $\OCAT$ (\cite[\S 5]{moore2021some}, see also \cite[Theorem 8.6.6]{Fa:STCstar}). $\OCAsharp$ can be viewed as a refined version of $\OCAinfty$, and Moore’s proof that $\OCAT$ implies $\OCAinfty$ easily adapts to prove that $\OCAT$ implies $\OCAsharp$. 

The space of unordered pairs $[X]^2$ in a metric space $X$ is topologized as follows. Consider the diagonal,
\[
\Diag_X=\{(x,x) \mid x \in X \}
\] 
and identify symmetric subsets of $X^2 \setminus \Diag_X$ with subsets of $[X]^2$. We call a subset of $[X]^2$ open if the associated symmetric subset of $X^2 \setminus \Diag_X$ is open.

\begin{definition} \label{Def.OCAsharp} 
$\OCAsharp$ is the following statement. Suppose that $X$ is a separable metric space and that $\cV_j$ is a countable family of symmetric open subsets of $X^2\setminus \Diag_X$ such that $\bigcup\cV_j\supseteq \bigcup\cV_{j+1}$ for all $j\in \bbN$. 
Then one of the following alternatives holds. 
\begin{enumerate}
\item \label{OCAsharp.1} There are sets $X_n$, for $n\in \bbN$, such that $X=\bigcup_n X_n$ and 
\[
[X_n]^2\cap\bigcup\cV_n =\emptyset
\] 
for all $n$. 
\item\label{OCAsharp.2} There are an uncountable $Z\subseteq \twoN$, an injective $f\colon Z\to X$, and $\rho\colon \Delta(Z)\to \bigcup_j \cV_j$\footnote{The reader will hopefully forgive us for pointing out the obvious, that $\bigcup\cV_j$ and $\bigcup_j \cV_j$ are two very different sets.} such that $\rho(s)\in \cV_{|s|}$ for all $s$ and all distinct $a$ and $b$ in $Z$ satisfy
\[
\{f(a),f(b)\}\in \rho(a\wedge b). 
\]
\end{enumerate}
\end{definition}

To make the relation between $\OCAsharp$ and $\OCAT$ (in the form of $\OCAinfty$) more obvious, the reader may want to take a look at the reformulation of $\OCAT$ in terms of uniformization of coherent families of functions given in \cite[Proposition~2.2.11]{Fa:AQ}. 
With $K_0^n=\bigcup\cV_n$, the first alternatives of $\OCAinfty$ and $\OCAsharp$ are equivalent, and the second alternative of the latter is a sharper variant of the second alternative of the former in which one keeps track of the witness (typically a basic open set), provided by $\rho$, for the reason why $\{f(a), f(b)\}$ belongs to $K^n_0$. 

Our isolation of the not-so-easy-to-parse $\OCAsharp$ is, in our humble opinion, compensated by considerable reduction in complexity in the proof of the $\OCA$ lifting theorem of \cite{Fa:AQ} (see \S\ref{S.Uniformization.Fin}), as well as the proof of our main result (as compared with our original proof that will mercifully not appear anywhere), see~\S\ref{S.Uniformization}.

Our proof of the following is based on \cite[\S 5]{moore2021some}, and we are grateful to Boban Veli\v ckovi\'c for pointing out that $\OCAsharp$ is probably a consequence of $\OCAT$. 

\begin{theorem}$\OCAT$ implies $\OCAsharp$. 
\end{theorem}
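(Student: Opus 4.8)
The plan is to follow Moore's argument from \cite[\S 5]{moore2021some} (which shows $\OCAT\Rightarrow\OCAinfty$) and to carefully track the extra data provided by $\rho$. So assume $\OCAT$, let $X$ be separable metric, and let $(\cV_j)_j$ be a decreasing-union family of symmetric open subsets of $X^2\setminus\Diag_X$. If alternative \eqref{OCAsharp.1} of $\OCAsharp$ fails, we must produce the uncountable $Z\subseteq\twoN$, the injection $f\colon Z\to X$, and the labelling $\rho\colon\Delta(Z)\to\bigcup_j\cV_j$ with $\rho(s)\in\cV_{|s|}$ and $\{f(a),f(b)\}\in\rho(a\wedge b)$ for distinct $a,b\in Z$. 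First I would set up, exactly as in Moore's proof, an auxiliary separable metric space $Y$ whose points encode pairs $(x,\sigma)$ where $x\in X$ and $\sigma$ is a ``threshold'' sequence recording, for each $j$, a basic open neighbourhood witnessing membership in $\bigcup\cV_j$ along the relevant strand; one then defines an open colouring $[Y]^2=K_0\cup K_1$ so that a $K_0$-homogeneous uncountable set gives, by the usual $\Delta$-system / tree analysis, an uncountable $Z\subseteq\twoN$ together with a coherent assignment of basic open sets to the nodes $\Delta(Z)$. The point of building the witness into the points of $Y$ is precisely that a $K_1$-decomposition of $Y$ pushes down to a decomposition of $X$ of the form required by \eqref{OCAsharp.1}, so that the failure of \eqref{OCAsharp.1} forces, via $\OCAT$ applied to $[Y]^2=K_0\cup K_1$, the existence of an uncountable $K_0$-homogeneous set.

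The key steps, in order: (1) define $Y$ and the colouring $K_0$ on $[Y]^2$ so that $K_0$ is open and so that a countable $K_1$-cover of $Y$ projects to a countable cover $(X_n)$ of $X$ with $[X_n]^2\cap\bigcup\cV_n=\emptyset$ — this is where the monotonicity $\bigcup\cV_j\supseteq\bigcup\cV_{j+1}$ is used, to align the index of the cover with the index of the family; (2) invoke $\OCAT$: either $Y$ has a countable $K_1$-cover, giving \eqref{OCAsharp.1}, or there is an uncountable $K_0$-homogeneous $W\subseteq Y$; (3) from $W$, pass to an uncountable subset on which the ``threshold'' data is sufficiently coherent (a pressing-down / $\Delta$-system argument on the countably many basic open sets involved), extract an uncountable $Z\subseteq\twoN$ by coding the branching of the natural tree of initial segments of the threshold sequences (this is the standard move turning an uncountable homogeneous set into a subset of $\twoN$ with prescribed $\Delta(Z)$), and read off $f$ as the projection $W\to X$; (4) define $\rho(s)$, for $s\in\Delta(Z)$, to be the basic open set $\in\cV_{|s|}$ attached to the common threshold data of the two strands splitting at $s$ — here $K_0$-homogeneity of $W$ guarantees $\{f(a),f(b)\}\in\rho(a\wedge b)$, and the construction of $Y$ guarantees $\rho(s)\in\cV_{|s|}$.

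The main obstacle I expect is step (1)–(3): getting the bookkeeping right so that the witnessing basic open set at level $j$ genuinely lands in $\cV_j$ (not merely in $\bigcup\cV_j$) and depends only on the node $a\wedge b$ and not on the particular pair $a,b\in[a\wedge b^\frown i]\cap Z$. This is exactly the content of the footnoted typo the authors flag in \cite{Fa:Cauchy} (``for some $t$ such that $s\subseteq t$'' rather than ``whenever $s\subseteq t$''), and it is the place where one must be careful that the coherence extracted from $K_0$-homogeneity is strong enough to make $\rho$ well-defined on $\Delta(Z)$. Everything else — openness of $K_0$, separability of $Y$, the tree/$\Delta$-system extraction of $Z$ from an uncountable homogeneous set — is routine and is carried out essentially verbatim as in \cite[\S 5]{moore2021some}, so I would state those lemmas and cite Moore rather than reprove them, spending the detail on the refinement that records $\rho$.
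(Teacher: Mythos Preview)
Your outline is in the right spirit—auxiliary space, open colouring, $\OCAT$ dichotomy—but there is a genuine gap in the construction of $Y$, and the workaround you propose (pressing-down / $\Delta$-system to extract $Z$ and make $\rho$ well-defined) will not do the job as stated.

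The paper's $Y$ has \emph{three} coordinates, not two: $Y=\twoN\times\prod_j\cV_j\times X$, and $K_0$ is defined by declaring $\{(a,\mu,x),(b,\nu,y)\}\in K_0$ when $a\neq b$, $x\neq y$, $\mu(\Delta(a,b))=\nu(\Delta(a,b))$, and $\{x,y\}\in\mu(\Delta(a,b))$. The extra $\twoN$-coordinate is the whole trick: it decouples the \emph{branching level} (determined by $\Delta(a,b)$) from the \emph{threshold data} $\mu,\nu$. With this setup, from a $K_0$-homogeneous $H$ the set $Z$ is simply the projection to the first coordinate, $f$ is the projection to the third, and $\rho(s)=\mu(|s|)$ for any $(a,\mu,x)\in H$ with $s\sqsubset a$ and $a(|s|)=0$; the $K_0$-condition $\mu(\Delta(a,b))=\nu(\Delta(a,b))$ is exactly what makes $\rho$ well-defined. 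No pressing-down or $\Delta$-system is needed or used. By contrast, with your two-coordinate $Y=X\times\prod_j\cV_j$ there is no natural candidate for ``branching level'': if you try $\Delta(\mu,\nu)$ you need $\mu$ and $\nu$ to \emph{agree} at the level where they first \emph{disagree}, and if you quantify over $j$ you lose the link between the node $s$ and the requirement $\rho(s)\in\cV_{|s|}$. The obstacle you flag in step (4) is real, and it is resolved not by a refinement argument on the homogeneous set but by building the $\twoN$-coordinate into $Y$ from the start.

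Second, the $K_1$-cover case is not a ``projection'' of a cover of $Y$ to a cover of $X$. One first closes each piece $Y_k$ (harmless since $K_0$ is open), and then for each $x\in X$ uses the Baire category theorem in $\twoN\times\prod_j\cV_j$ to find $k,m,s\in\{0,1\}^m,t\in\prod_{j<m}\cV_j$ with $[s]\times[t]\subseteq\{(a,\mu):(a,\mu,x)\in Y_k\}$. The pieces $X_{k,s,t}$ of the resulting partition of $X$ satisfy $[X_{k,s,t}]^2\cap\bigcup\cV_{|s|}=\emptyset$: given distinct $x,y\in X_{k,s,t}$ and $V\in\cV_{|s|}$, one chooses $a,b\in[s]$ with $\Delta(a,b)=|s|$ and $\mu,\nu\in[t]$ with $\mu(|s|)=\nu(|s|)=V$, and $K_1$-homogeneity of $Y_k$ forces $\{x,y\}\notin V$. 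The monotonicity $\bigcup\cV_j\supseteq\bigcup\cV_{j+1}$ is then used only at the very end, to re-enumerate the countable family $\{X_{k,s,t}\}$ as $(X_n)$ with $[X_n]^2\cap\bigcup\cV_n=\emptyset$. None of this is visible in your sketch, and it does not follow from a bare ``$K_1$-cover of $Y$ projects to a cover of $X$''.
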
\label{T.OCAs}

\begin{proof}
Define $Y=\twoN\times \prod_j \cV_j\times X$. Since each $\cV_j$ is countable, $\prod_j \cV_j$ is naturally homeomorphic to the Baire space and $Y$ has a natural separable metrizable topology. Define a subset $K_0$ of $[Y]^2$ by letting $\{(a,\mu ,x),(b,\nu,y)\}\in K_0$ if
\begin{enumerate}
\item [($K_0$)] \quad $a\neq b$, $x\neq y$, $\mu(\Delta(a,b))=\nu(\Delta(a,b))$, and $\{x,y\}\in \mu(\Delta(a,b))$. 
\end{enumerate}
Since each element of $\bigcup_j \cV_j$ is symmetric, $K_0$ is a symmetric subset of $Y^2$, clearly disjoint from the diagonal. It is evidently an open subset of $[Y]^2$ in its natural topology.

Assume that $H\subseteq Y$ is uncountable and $K_0$-homogeneous. We will prove that the alternative \eqref{OCAsharp.2} of $\OCAsharp$ holds. 
Since $H$ is $K_0$-homogeneous, if $(a,\mu,x)$ and $(b,\nu,y)$ are distinct elements of $H$ then $a\neq b$ and $x\neq y$. Therefore the set 
\[
Z=\{a\mid (a,\mu,x)\in H\text{ for some }\mu,x\}
\] 
is uncountable and $f(a)=x$ if $(a,\mu,x)\in H$ for some $\mu$ defines an injection from $Z$ into $X$. 
Define $\rho\colon \Delta(Z)\to \bigcup_j \cV_j$ as follows. For $s\in \Delta(Z)$ choose $(a,\mu,x)\in H$ with $a(|s|) = 0$ such that there exists some $(b,\nu,y) \in H$ with $a\wedge b = s$ and let $\rho(s)=\mu(|s|)$. To see that $\rho(s)$ does not depend on the choice of $(a,\mu,x)$, pick $(a',\mu',x')\in H$ with $a'(|s|)=0$ such that there exists some $(b',\nu',y') \in H$ with $a'\wedge b' = s$. Then necessarily $a' \wedge b = s$ and since both $\{(a,\mu,x),(b,\nu,y)\}\in K_0$ and $\{(a',\mu',x'),(b,\nu,y)\}\in K_0$ we have $\mu(|s|)=\nu(|s|) = \mu'(|s|)$, as required. This shows $\rho$ is well defined. Further, $\rho(s)=\mu(|s|)\in\mathcal V_{|s|}$ by construction. 
Lastly, if $a$ and $b$ are distinct members of $Z$ then there are unique $(a,\mu,x)$ and $(b,\nu,y)$ in $H$ such that $f(a)=x$, $f(b)=y$, and $\{x,y\}\in \rho(a\wedge b)=\mu(\Delta(a,b))$, and therefore we have the alternative \eqref{OCAsharp.2} of $\OCAsharp$.

Now suppose that $Y$ has no uncountable $K_0$-homogeneous subsets. By $\OCAT$ it can be covered by the union of sets $Y_k$, for $k\in \bbN$, such that $[Y_k]^2\cap K_0=\emptyset$ for all~$k$. Because $K_0$ is an open subset of $[Y]^2$, the property $[Y_k]^2\cap K_0=\emptyset$ is preserved by replacing $Y_k$ by its closure and therefore we can and will assume that for each $k\in \bbN$, $Y_k$ is a closed subset of $Y$.
We will infer that alternative \eqref{OCAsharp.1} of $\OCAsharp$ holds. By the Baire Category Theorem, for each $x\in X$ there exists $k \in \bbN$ such that the closed set
\[
Z_{k,x}=\{(a,\mu)\in \twoN\times \prod_j \cV_j\mid (a,\mu,x)\in Y_k\}
\] 
is nonmeager in $\twoN\times \prod_j \cV_j$. We can therefore pick for each $x \in X$ some $k=k_x$, $m=m_x$, $s_{x}\in \{0,1\}^m$, and $t_{x}\in \prod_{j<m} \cV_j$ such that $[s_{x}]\times [t_{x}] \subseteq Z_{k,x}$. 

Fix $m,k$ in $\bbN$, $s\in \{0,1\}^m$, and $t\in \prod_{j<m} \cV_j$. Let 
\[
X_{k,s,t}=\{x\in X\mid k_x=k, s_{x}=s,\text{ and } t_{x}=t\}. 
\]
We will prove that $[X_{k,s,t}]^2$ is disjoint from $\bigcup\cV_{|s|}$. Towards this, let $m=|s|$ and fix distinct $x$ and $y$ in $X_{k,s,t}$. Let $V\in \cV_m$. In order to prove that $\{x,y\} \notin V$, we additionally fix some $a,b \in [s]$ and $\mu,\nu \in [t]$ 
such that $\Delta(a,b)=m$ and $\mu(m)=\nu(m)=V$. Then $(a,\mu)\in Z_{k,x}$ and $(b,\nu)\in Z_{k,y}$, so since $[X_n]^2$ is disjoint from $K_0$, yet $\mu(\Delta(a,b))=\nu(\Delta(a,b))= V$, 
we necessarily have $\{x,y\}\notin V$. Since $V\in \cV_m$ was arbitrary, this implies that $\{x,y\}\notin \bigcup\cV_m$. Since $x$ and $y$ were arbitrary, this proves that $[X_{k,s,t}]^2$ is disjoint from $\bigcup\cV_m$.

To complete the proof, it remains to re-enumerate the family $\{X_{k,s,t}\}$ as $\tilde X_n$, for $n\in \bbN$, so that $[\tilde X_n]^2\cap \bigcup\cV_n=\emptyset$ for all $n$. Since the sets $\bigcup \cV_n$ form a decreasing sequence, we only need to make sure that $n$ such that $\tilde X_n=X_{k,s,t}$ is not smaller than $|s| = |t|$. Since we are allowed to have $\tilde X_j=\emptyset$ for infinitely many $j$, this is straightforward. 
\end{proof} 

\subsection{A uniformization principle for families of partial selectors}\label{S.partial.selectors}
In this subsection we study families of local liftings of a given homomorphism. In \S\ref{S.Uniformization} we will prove, using $\OCAT$, that they can be uniformized. 

\begin{definition}\label{Def.PartialSelector}
A \emph{partial selector} for a family $\cY_n$, for $n\in \bbN$, is a pair $(g,D(g))$ where $D(g)\subseteq \bbN$ and $g\in \prod_{n\in D(g)} \cY_n$. A \emph{family of partial selectors} for a family $\cY_n$, for $n\in \bbN$ is any 
\begin{equation}\label{eq.cF}
\textstyle\cF\subseteq \{(g,D(g))\mid D(g)\subseteq \bbN, g\in\prod_{n\in D(g)}\cY_n \}. 
\end{equation}
If $\cY_n$ are topological spaces we then topologize the space of partial selectors as follows. Fix a point $0_n\in \cY_n$, and identify $(g,D(g))$ with $\hat g\in \prod_n \cY_n$ which agrees with $g$ on $D(g)$ and satisfies $g(n)=0_n$ for $n\in \bbN\setminus D(g)$. Via this identification, a set $\cF$ of partial selectors is identified with a subset of $\prod_n\cY_n$ and equipped with the subspace topology. 
\end{definition}

In our applications all $\cY_n$ will be separable and metrizable, 
hence the topology on $\cF$ will also be separable and metrizable. 
If all spaces $\cY_n$ are Polish then so is the space of partial selectors. 

For $g,h$ in $\cF$ write
\begin{equation}\label{eq.Diff}
\Diff(g,h)=\{n\in D(g)\cap D(h)\mid g(n)\neq h(n)\},
\end{equation}
and recall that, for $Z\subseteq \twoN$, $\Delta(Z)=\{x\wedge y\mid x,y\in Z, x\neq y\}$.

The following notion was introduced in \cite{Ve:OCA} and \cite{Just:WAT} (but see \cite[II.3.8 and II.4]{Sh:PIF}).

\begin{definition} \label{Def.tree-like}
A family $\cA$ of almost disjoint sets of integers is \emph{tree-like} if there is an order $\prec$ on its domain $\calD=\bigcup\cA$ such that $\<\calD,\prec\>$ is a tree of height~$\omega$ and each element of $\cA$ is included in a unique maximal branch of this tree. 
\end{definition} 

Equivalently, $\cA$ is tree-like if there is an injection $f\colon\bbN\to\twolN$ such that the image of every $A\in\cA$ is included in a single branch of the tree $\twolN$ and for different $A$ and $B$ in $\cA$ the corresponding branches are different. The simplest example of a tree-like family is given as follows. 

\begin{example} \label{ex.perfect} 
Suppose that $J(s)$, for $s\in \twolN$, are pairwise disjoint nonempty finite subsets of $\bbN$. For $h\in \twoN$ let 
\[
B(h)=\bigcup_n J(h\rs n). 
\]
Then $B(h)\cap B(h’)=\bigcup_{s\sqsubset h\wedge h’} J(s)$ for $h\neq h’$. Let $\prec$ be the ordering on $\bigcup_s J(s)$ such that $m\prec n$ if and only if $m$ and $n$ belong to the same $J(s)$ and $m<n$, or if $m\in J(s)$, $n\in J(t)$, and $s\sqsubset t$. Extend $\prec$ to a tree ordering on $\bbN$ such that all $n\in \bbN\setminus \bigcup_s J(s)$ are minimal elements. Then each $B(h)$ is a maximal branch of $(\bbN,\prec)$. 

A tree-like family of the form $\{B(h)\mid h\in \twoN\}$ is said to be \emph{perfect} (because it is a perfect subset of $\cPN$). 
\end{example}

For $s\in \twolN$ let $|s|$ denote its length (equivalently, its domain). 
Case \eqref{3.Unif} of Proposition~\ref{P.Unif} below mirrors one of the main technical contributions in \cite{Fa:AQ}, Lemma 3.13.5, where MA had been used to produce an almost disjoint family $\cA$ and, for every $A\in \cA$, an uncountable set of functions that pairwise disagree on~$A$. We obtain analogous conclusions using $\OCAT$ (under the guise of $\OCAsharp$) only. 

\begin{proposition}\label{P.Unif} 
Assume $\OCAT$. If $\cF$ is a family of partial selectors for a family~$\cY_n$, for $n\in \bbN$, of separable metrizable topological spaces, then one of the following possibilities holds. 
\begin{enumerate}
\item \label{1.Unif} There are $\cF_n$, for $n\in \bbN$, such that $\cF=\bigcup_n \cF_n$, and for all $n$ and all $g,h$ in $\cF_n$ we have $|\Diff(g,h)|\leq n$. 
\item \label{3.Unif} There are a perfect tree-like almost disjoint family $\cA$, an uncountable $Z\subseteq \twoN$, and $f\colon Z\to \cF$ such that for every $A\in \cA$ and all distinct $x,y$ in $Z$ we have 
\[
(\Diff(f(x),f(y))\cap A)\setminus \Delta(x,y)\neq \emptyset.
\] 
\pushcounter
\end{enumerate} 
\end{proposition}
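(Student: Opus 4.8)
The plan is to apply $\OCAsharp$ to the space $X=\cF$, viewed as a separable metrizable space via the identification of partial selectors with elements of $\prod_n\cY_n$ described in Definition~\ref{Def.PartialSelector}. First I would set up the colourings. For each $n$, let $\cV_n$ be the family of all basic open rectangles $U\times V$ in $(\prod_m\cY_m)^2$ with the property that every pair $(g,h)$ with $g\in U$, $h\in V$ satisfies $|\Diff(g,h)\cap\{0,\dots,N\}|>n$ for a suitable $N$ witnessing the separation — more precisely, $\cV_n$ should consist of symmetric open sets $O\subseteq\cF^2\setminus\Diag_\cF$ such that every $(g,h)\in O$ has at least $n+1$ coordinates below some fixed level on which $g$ and $h$ are defined and disagree. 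Since the $\cY_n$ are metrizable and $\Diff$ is determined by finitely many coordinates on any basic open set, each $\cV_n$ is a countable family of symmetric open subsets of $\cF^2\setminus\Diag_\cF$, and $\bigcup\cV_n$ is exactly the set of pairs $(g,h)$ with $|\Diff(g,h)|>n$ (here using that if $(g,h)$ has $>n$ disagreements then some basic open neighbourhood of $(g,h)$ inherits that property); these sets are visibly decreasing in $n$. I would then invoke $\OCAsharp$ for $X=\cF$ and this family $(\cV_n)$.

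In the first alternative of $\OCAsharp$ we get $\cF=\bigcup_n\cF_n$ with $[\cF_n]^2\cap\bigcup\cV_n=\emptyset$, which says precisely that $|\Diff(g,h)|\leq n$ for all $g,h\in\cF_n$, giving case \eqref{1.Unif}. In the second alternative we obtain an uncountable $Z\subseteq\twoN$, an injection $f\colon Z\to\cF$, and $\rho\colon\Delta(Z)\to\bigcup_j\cV_j$ with $\rho(s)\in\cV_{|s|}$ such that $\{f(a),f(b)\}\in\rho(a\wedge b)$ for all distinct $a,b\in Z$. The idea is now to read off the perfect tree-like family $\cA$ from $\rho$: for each $s\in\Delta(Z)$, since $\rho(s)\in\cV_{|s|}$, every pair in $\rho(s)$ — in particular $\{f(a),f(b)\}$ whenever $a\wedge b=s$ — has more than $|s|$ disagreements below some finite level, so we can choose a finite set $J(s)\subseteq\bbN$ of size $|s|+1$ contained in that level on which all pairs $\{g,h\}\in\rho(s)$ are defined and disagree. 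Thinning $Z$ if necessary (passing to a perfect, hence uncountable, subset and correspondingly trimming $\Delta(Z)$) we may arrange that the $J(s)$ are pairwise disjoint, as in Example~\ref{ex.perfect}; then $\cA=\{B(h)\mid h\in\twoN\text{ a branch through }\Delta(Z)\}$ with $B(h)=\bigcup_n J(h\rs n)$ is a perfect tree-like almost disjoint family. For distinct $x,y\in Z$ and any $A=B(h)\in\cA$, since $A$ meets infinitely many levels $J(h\rs n)$, pick $n>|x\wedge y|$ large enough that $J(h\rs n)$ lies above the level witnessing $\rho(x\wedge y)$; then — because $\{f(x),f(y)\}\in\rho(x\wedge y)$ and $J(x\wedge y)$ witnesses disagreement only up to its own level — we instead argue using $J(s)$ for an initial segment $s$ of $h$: any $m\in J(x\wedge y)\subseteq A$ lies in $\Diff(f(x),f(y))$ and, since $|J(x\wedge y)|=|x\wedge y|+1>|x\wedge y|$, at least one such $m$ is outside $\Delta(x,y)$ (which has size $|x\wedge y|$ when we identify $\Delta(x,y)$ with $\{0,\dots,|x\wedge y|-1\}$); hence $(\Diff(f(x),f(y))\cap A)\setminus\Delta(x,y)\neq\emptyset$, yielding case \eqref{3.Unif}.

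The main obstacle I expect is the bookkeeping that makes the second alternative genuinely produce a \emph{perfect tree-like} family: $\OCAsharp$ hands us $\rho$ and the values $\rho(s)\in\cV_{|s|}$, but we must (i) extract from each open set $\rho(s)$ a genuinely finite disagreement-witnessing set $J(s)$ of cardinality $>|s|$, (ii) ensure these $J(s)$ are pairwise disjoint — which forces a thinning of $Z$ to a perfect subset, and one must check this thinning does not destroy the covering/homogeneity data — and (iii) keep the indexing coherent so that $A\cap\Diff(f(x),f(y))$ really does escape $\Delta(x,y)$ for \emph{every} branch $A$ simultaneously, not just for one. The cardinality margin $|J(s)|=|s|+1$ versus $|\Delta(x,y)|=|x\wedge y|$ is exactly what is needed and is no accident: it is the reason $\cV_n$ is defined with the threshold ``more than $n$ disagreements'' rather than ``at least $n$''. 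Everything else is routine: definability of $\Diff$ from finitely many coordinates gives openness of the $\cV_n$, countability of the $\cY_n$'s bases gives their countability, and the monotonicity $\bigcup\cV_n\supseteq\bigcup\cV_{n+1}$ is immediate.
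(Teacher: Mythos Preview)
Your overall strategy --- set up open colourings on $\cF$ so that $\bigcup\cV_n$ captures the pairs with more than $n$ disagreements, apply $\OCAsharp$, and in the second alternative read off a tree-like family from the function $\rho$ --- is exactly the paper's approach. There is, however, a genuine gap in the final step, and it is precisely the one you flagged as ``the main obstacle'': your construction of $\cA$ does not give what the conclusion demands.

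The problem is this. In alternative \eqref{OCAsharp.2}, for distinct $x,y\in Z$ the only information you have is that $\{f(x),f(y)\}\in\rho(x\wedge y)$, so the disagreement set you control is $J(x\wedge y)\subseteq\Diff(f(x),f(y))$. Your family $\cA$ consists of sets $B(h)=\bigcup_n J(h\rs n)$, and $B(h)$ meets $J(s)$ only when $s=h\rs|s|$, i.e.\ when $h$ extends $s$. But the statement requires $(\Diff(f(x),f(y))\cap A)\setminus\Delta(x,y)\neq\emptyset$ for \emph{every} $A\in\cA$, including all the $B(h)$ with $h$ \emph{not} extending $x\wedge y$. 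For such $h$ you have no reason to expect $B(h)\cap J(x\wedge y)\neq\emptyset$, and indeed with pairwise disjoint $J(s)$'s this intersection is empty. The pigeonhole argument ``$|J(x\wedge y)|=|x\wedge y|+1>|\{0,\dots,\Delta(x,y)-1\}|$'' only helps once you know $J(x\wedge y)\subseteq A$, which is exactly what fails.

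The paper's remedy is to inflate the threshold: instead of asking for more than $n$ disagreements at level $n$, it asks for at least $n+(4^{n+1}-1)/3$. A counting lemma (Lemma~\ref{L.trivial}) then produces pairwise disjoint $B(s)\subseteq A(s)\setminus|s|$ with $|B(s)|=2^{|s|}$, with no thinning of $Z$ needed. The point of the size $2^{|s|}$ is that one can enumerate $B(s)=\{k(s,t):t\in\{0,1\}^{|s|}\}$ and set $B_h=\{k(s,h\rs|s|):s\in\Delta(Z)\}$ for \emph{arbitrary} $h\in\twoN$. Now every $B_h$ picks exactly one element from every $B(s)$, so for any $x\neq y$ in $Z$ and any $h$, the element $k(x\wedge y,\,h\rs|x\wedge y|)$ lies in $B_h\cap B(x\wedge y)\subseteq B_h\cap\Diff(f(x),f(y))$ and is $\geq|x\wedge y|=\Delta(x,y)$. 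This is the missing idea: the branches of the tree-like family must be transversal to the level-sets $B(s)$, not unions of them.
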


The second part of the following will be applied in situations given by the second alternative of $\OCAsharp$, with $D=\Delta(Z)$ for a carefully chosen uncountable subset $Z$ of the Cantor set. 

\begin{lemma} \label{L.trivial} 
\begin{enumerate}
\item \label{1.trivial} Suppose that $S(n,i)$, for $n\in \bbN$ and $i<2^n$, are finite sets such that $|S(n,i)|\geq n+(4^{n+1}-1)/3$ for all $n$ and $i$. Then there are pairwise disjoint sets $F(n,i)\subseteq S(n,i)\setminus n$ such that $|F(n,i)|=2^n$ for all $n$ and $i$. 
\item \label{2.trivial} If $A(s)\in \Fin$ for $s\in D \subseteq \twolN$ satisfy $|A(s)|\geq |s|+(4^{|s|+1}-1)/3$ for all $s \in D$, then there are pairwise disjoint $B(s)\subseteq A(s)\setminus |s|$ for $s\in D$ such that $|B(s)| = 2^{|s|}$ for all $s \in D$.
\end{enumerate}
\end{lemma}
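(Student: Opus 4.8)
\textbf{Proof plan for Lemma~\ref{L.trivial}.}

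The plan is to prove part \eqref{1.trivial} first, and then derive part \eqref{2.trivial} by a purely bookkeeping reduction to it. For \eqref{1.trivial}, the key quantity to track is the cumulative `budget' needed if we process the sets $S(n,i)$ in a fixed order. There are $2^n$ sets at level $n$, and from the set $S(n,i)$ we need to extract a subset $F(n,i)$ of size $2^n$ that avoids $n$ (i.e.\ is contained in $S(n,i)\setminus n$) and, crucially, is disjoint from all previously chosen sets. The total size of everything chosen at levels $0,1,\dots,n-1$ is $\sum_{j<n} 2^j\cdot 2^j = \sum_{j<n}4^j = (4^n-1)/3$. So when we come to choose $F(n,i)$, the part of $S(n,i)$ that is `forbidden' --- either because it lies below $n$, or because it was already used --- has size at most $n + (4^n-1)/3 + (\text{what was already chosen at level }n\text{ before index }i)$. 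Since at level $n$ we will choose at most $2^n$ further sets each of size $2^n$, a safe uniform bound for the forbidden part at any stage of level $n$ is $n + (4^n-1)/3 + (2^n)^2 = n + (4^n-1)/3 + 4^n = n + (4^{n+1}-1)/3$. This is exactly the hypothesized lower bound on $|S(n,i)|$, so at every stage $S(n,i)$ minus the forbidden part is nonempty --- in fact has at least $2^n$ elements --- and we can pick $F(n,i)$. I would make this a formal induction on the pair $(n,i)$ ordered lexicographically, with induction hypothesis ``all $F(n',i')$ chosen so far are pairwise disjoint, each avoids its own $n'$, and has the right size,'' and observe that the arithmetic above is precisely what is needed to continue.

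For part \eqref{2.trivial}, the point is that $D\subseteq\twolN$ and for each length $\ell$ there are at most $2^\ell$ strings of that length in $D$, so $D$ injects, length-by-length, into the index set $\{(n,i)\mid n\in\bbN,\ i<2^n\}$ used in \eqref{1.trivial}: enumerate $D\cap\{0,1\}^\ell$ (which has at most $2^\ell$ elements) as $s^\ell_0,s^\ell_1,\dots$ and send $s^\ell_i$ to $(\ell,i)$. Setting $S(\ell,i) := A(s^\ell_i)$ for indices in the range of this injection, and $S(\ell,i)$ equal to any sufficiently large auxiliary finite set disjoint from everything (say a block of fresh integers, or simply $S(\ell,i):=A(s^\ell_0)$ padded --- anything with $|S(\ell,i)|\geq \ell+(4^{\ell+1}-1)/3$) for the unused indices, the hypothesis $|A(s)|\geq |s|+(4^{|s|+1}-1)/3$ guarantees all the sets $S(\ell,i)$ meet the size requirement of \eqref{1.trivial}. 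Applying \eqref{1.trivial} yields pairwise disjoint $F(\ell,i)\subseteq S(\ell,i)\setminus\ell$ with $|F(\ell,i)|=2^\ell$; restricting to the indices in the range of the injection and setting $B(s^\ell_i):=F(\ell,i)$ gives pairwise disjoint $B(s)\subseteq A(s)\setminus|s|$ with $|B(s)|=2^{|s|}$, as required.

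I do not expect a serious obstacle here; this is a counting lemma and the only real content is getting the constant $(4^{\ell+1}-1)/3$ right. The one place to be careful is the worst-case accounting \emph{within} a single level $n$: one must not merely bound the sets chosen at earlier levels but also allow for up to $2^n$ sets of size $2^n$ chosen at the current level, which contributes the extra $4^n$ term and turns $(4^n-1)/3$ into $(4^{n+1}-1)/3 = (4^n-1)/3 + 4^n$. A secondary minor point is to make sure, in the reduction for \eqref{2.trivial}, that the `padding' sets $S(\ell,i)$ for unused indices do not accidentally force us to remove elements we actually need --- but since we only ever read off $B(s)$ from the genuine indices, and disjointness of the full family $\{F(\ell,i)\}$ certainly implies disjointness of any subfamily, this causes no trouble.
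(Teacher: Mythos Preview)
Your plan is correct and matches the paper's proof essentially line for line: a greedy choice of the $F(n,i)$ processed lexicographically in $(n,i)$, followed by the level-by-level injection of $D$ into $\{(n,i):i<2^n\}$ with padding for part~\eqref{2.trivial}. One arithmetic slip to tighten when you write it out: at stage $(n,i)$ only $i\le 2^n-1$ sets have already been chosen at level $n$, so the forbidden part has size at most $n+(4^n-1)/3+(2^n-1)2^n=n+(4^{n+1}-1)/3-2^n$, which is what actually leaves the required $2^n$ elements; the uniform bound $n+(4^{n+1}-1)/3$ you wrote would only guarantee that $S(n,i)$ minus the forbidden part is nonempty.
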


\begin{proof}
 \eqref{1.trivial}: If $S(n,i)$ are as described, a fairly dumb algorithm produces the sets $F(n,i)$. Let $F(0,0)$ be any element of $S(0,0)$. Assume that $F(k,i)$ as required had been chosen for all $k<n$ and $i<2^k$. Then $I=\bigcup_{k<n,i<2^k} F(k,i)\cup n$ has cardinality not greater than $n+\sum_{k<n,i<2^k} 2^k=n+(4^{n}-1)/3$. Therefore $S(n,i)\setminus I$ has cardinality at least~$4^n$ for each $i<n$. We can now choose pairwise disjoint $F(n,i)\subseteq S(n,i)\setminus I $ of cardinality $2^n$ each recursively in~$i$. 

\eqref{2.trivial}: 
Enumerate the $n$th level of $\twolN$ as $s(n,i)$, for $i<2^n$. Define the sets $S(n,i)$, for $n\in \bbN$ and $i<2^n$ by setting $S(n,i) = A(s(n,i))$ when $s(n,i) \in D$ and by choosing $S(n,i)$ to be an arbitrary set of cardinality $n+(4^{n+1}-1)/3$ when $s(n,i) \notin D$. By part \eqref{1.trivial} of this lemma, there are pairwise disjoint $F(n,i)\subseteq S(n,i)\setminus n$ for $n\in\bbN$ and $i<2^n$, of cardinality $2^{n}$. Then by defining for every $s \in D$ the set $B(s) = F(n,i)$, where $n$ and $i<2^n$ are such that $s = s(n,i)$, we find that the sets $B(s) \subseteq A(s)\setminus |s|$ are as required.
\end{proof}

\begin{proof}[Proof of Proposition~\ref{P.Unif}]
Fix $l$. Since $\cY_l$ is second-countable, let $V(l,m,i)$, for $m\in \bbN$ and $i=0,1$, be basic open sets such that $V(l,m,0)$ and $V(l,m,1)$ are disjoint and for every pair of distinct points $y_0,y_1$ in $\cY_l$ there is $m$ such that $y_i\in V(l,m,i)$ for $i=0,1$. 

For a fixed $n$, let 
\[
\langle S(n,m),u^n_{m}\rangle, \text{ for $m\in \bbN$}, 
\] 
enumerate all pairs such that $S(n,m)\subseteq \bbN$, with $|S(n,m)|=n+(4^{n+1}-1)/3$ and $u^n_{m}\colon S(n,m)\to \bbN$. Let, for all $m$ and $i=0,1$, 
\begin{multline*}
U^n_{m,i}=\{g\in \cF\mid S(n,m)\subseteq D(g),
 g(l)\in V(l,u^n_m(l),i) \text{ for all }l\in S(n,m)\}. 
\end{multline*}
We claim that 
\begin{equation}\label{eq.4n-1/3}
 \{(g_0,g_1)\in \cF^2\mid |\Diff(g_0,g_1)|\geq n+(4^{n+1}-1)/3\} =\bigcup_m U^n_{m,0}\times U^n_{m,1}. 
\end{equation} 
To prove the converse inclusion, note that if $g_i\in U^n_{m,i}$ for some $m$ and $i=0,1$, then $S(n,m)\subseteq D(g_0)\cap D(g_1)$ and for every $l\in S(n,m)$ we have $g_0(l)\neq g_1(l)$, therefore $\Diff(g_0,g_1)\supseteq S(n,m)$. 

To prove the direct inclusion, fix two elements $g_0$ and $g_1$ of $\cF$ which satisfy $|\Diff(g_0,g_1)|\geq n+(4^{n+1}-1)/3$. Then there is $m$ such that $S(n,m)\subseteq \Diff(g_0,g_1)$, $|S(n,m)|=n+(4^{n+1}-1)/3$, $g_i(l)\in V(l,u^n_m(l),i)$ for $i=0,1$ and all $l\in S(n,m)$. 

Let $W^n_m=U^n_{m,0}\times U^n_{m,1}\cup U^n_{m,1}\times U^n_{m,0}$ and let $\cW^n=\{W^n_m\mid m\in \bbN\}$. By $\OCAT$ and Theorem~\ref{T.OCAs}, $\OCAsharp$ holds. 
Assume that the alternative \eqref{OCAsharp.1} of $\OCAsharp$ applied to $X=\cF$ and the family $\cW^n$, for $n\in \bbN$, holds. Fix a partition $X=\bigcup_n X_n$ such that we have $[X_n]^2\cap \bigcup\mathcal W^n=\emptyset$ for all $n$. Then with $\cF_n=X_n$, for all $g,h$ in $\cF_n$ we have $\Diff(g,h)< n + (4^{n+1}-1)/3$, and therefore after re-enumerating the sets $\cF_n$ and interleaving empty sets, the alternative \eqref{1.Unif} of Proposition~\ref{P.Unif} holds. 

It will therefore suffice to prove that the alternative \eqref{OCAsharp.2} of $\OCAsharp$ implies \eqref{3.Unif} of Proposition~\ref{P.Unif}. By \eqref{OCAsharp.2} of $\OCAsharp$, there are $Z\subseteq \twoN$, $f\colon Z\to \cF$, and for every $s\in \Delta(Z)$ there is $m(s)\in \bbN$ such that if $a_0,a_1$ are distinct elements of $Z$ with $s=a_0 \wedge a_1$ and $j=|s|$, then 
\[
\{f(a_0),f(a_1)\}\in W^j_{m(s)}. 
\] 
This means (after swapping $a_0$ with $a_1$ if needed) that $f(a_i)(l)\in V(j,u^j_{m(s)}(l),i)$ for all $l\in S(j,m(s))$ and $i=0,1$. For $s\in \Delta(Z)$ let $A(s)=S(|s|,m(s))$. Then \eqref{eq.4n-1/3} implies $\Diff(f(a_0), f(a_1))\supseteq A(s)$, and $|A(s)|= |s|+(4^{|s|+1}-1)/3$. We may assume that $Z$ has no isolated points by removing all basic open sets $U$ such that $Z\cap U$ is countable from $Z$. Furthermore, \eqref{2.trivial} of Lemma~\ref{L.trivial} implies that there are pairwise disjoint sets $B(s)\subseteq A(s)\setminus |s|$ such that $|B(s)| = 2^{|s|}$, for $s\in \Delta(Z)$. To recap, we have the following. 
\begin{enumerate}\popcounter
\item \label{2.Unif} There are an uncountable $Z\subseteq \twoN$, $f\colon Z\to \cF$, and pairwise disjoint $B(s)\subset \bbN$ for $s\in \Delta(Z)$ such that $|B(s)|= 2^{|s|}$ and for all $x\neq y$ in~$Z$ we have $\Diff(f(x),f(y)) \setminus \Delta(x,y) \supseteq B(x\wedge y)$. 
\end{enumerate}
To prove that this implies \eqref{3.Unif}, enumerate each $B(s)$ as $k(s,t)$, for $t\in \{0,1\}^{|s|}$. For $h\in \twoN$ let 
\[
B_h=\{k(s,h\rs |s|)\mid s\in \Delta(Z)\}.
\]
For $h\neq h'$ in $\twoN$ we have $B_h\cap B_{h'}\subseteq \bigcup_{|s|<\Delta(h.h')} B(s)$, and therefore these sets form a perfect tree-like almost disjoint family $\cA$. Since for distinct $x$ and $y$ in~$Z$ we have $\Diff(f(x),f(y)) \setminus \Delta(x,y) \supseteq B(x\wedge y)$ and $B(x\wedge y)\cap B_h\neq \emptyset$, \eqref{3.Unif} follows. 
\end{proof}

\section{Automorphisms of $\cP(\bbN)/\Fin$}\label{S.Uniformization.Fin}

The following is the first part of Theorem~\ref{thm:OCAPomega} as stated in the introduction.
\begin{theorem} \label{T.OCA.Fin}
 Assume $\OCAT$. Then all automorphisms of $\cP(\bbN)/\Fin$ are trivial. 
\end{theorem}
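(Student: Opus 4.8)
The strategy is the classical two-step reduction, now executed with $\OCAsharp$ (available via $\OCAT$ by Theorem~\ref{T.OCAs}) playing the role that $\MA$ plus a hands-on uniformization argument played in \cite{Fa:AQ}. First I would fix an automorphism $\Phi$ of $\cP(\bbN)/\Fin$ and, using a standard partition-of-$\bbN$ argument, reduce to producing a \emph{continuous} (equivalently, finite-to-one induced) lifting on a cofinite set: it suffices to show that for every partition $\bbN=\bigsqcup_k I_k$ into finite intervals there is an infinite $L\subseteq \bbN$ such that $\Phi$ restricted to the subalgebra generated by $\{I_k : k\in L\}$ is given by a single function on $\bigcup_{k\in L} I_k$, and then patch countably many such pieces together. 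This packaging is exactly what turns the problem into one about uniformizing a coherent family of partial selectors.

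Next I would set up the family of partial selectors. For each $k$ and each atom-like piece, choose a lift of $\Phi$ on the finite Boolean algebra $\cP(I_k)$, i.e. a map $g$ whose domain $D(g)$ is a subset of $\bbN$ and whose values record, coordinatewise, a chosen representative of $\Phi$ applied to subsets supported on a finite part of $\bbN$; the coherence of $\Phi$ (that it is a single homomorphism) forces these local liftings to agree off finite sets, so $\Diff(g,h)$ is finite for any two of them. This puts us precisely in the hypothesis of Proposition~\ref{P.Unif}, applied with each $\cY_n$ a (countable, discrete, hence separable metrizable) space of candidate values. I would then invoke Proposition~\ref{P.Unif}: either alternative \eqref{1.Unif} holds — we get $\cF=\bigcup_n\cF_n$ with $|\Diff(g,h)|\le n$ for all $g,h\in\cF_n$ — or alternative \eqref{3.Unif} holds, yielding a perfect tree-like almost disjoint family $\cA$, an uncountable $Z\subseteq\twoN$, and $f\colon Z\to\cF$ with $(\Diff(f(x),f(y))\cap A)\setminus\Delta(x,y)\neq\emptyset$ for all $A\in\cA$ and distinct $x,y\in Z$.

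The heart of the argument — and the step I expect to be the main obstacle — is to \emph{rule out} alternative \eqref{3.Unif}. Here one uses that $\Phi$ is an \emph{automorphism} (in particular injective and, crucially, that $\Phi^{-1}$ is also a homomorphism of $\cP(\bbN)/\Fin$): the tree-like family $\cA$ together with the uncountable disagreement pattern would let us build, for each branch, a subset of $\bbN$ whose $\Phi$-images encode uncountably many genuinely distinct traces along the almost disjoint sets; combined with a counting/ccc-type reflection (this is where Hausdorff-gap-style reasoning, or rather its avoidance, enters, and where the countable chain condition of $\cP(\bbN)/\Fin$ bites), this contradicts the fact that $\Phi$ is a function defined on all of $\cP(\bbN)/\Fin$. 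Concretely, one shows that the second alternative produces a contradiction because along a single branch $B_h$ the values $f(x)$ for the uncountably many relevant $x$ would have to be $\Phi$-images of sets that are pairwise $\Fin$-equivalent yet whose lifts disagree on $B_h$ outside a finite set depending only on $x\wedge y$, which is impossible once one pushes everything through $\Phi^{-1}$. Having eliminated \eqref{3.Unif}, alternative \eqref{1.Unif} gives, by a pigeonhole/fusion argument over the intervals $I_k$, an infinite $L$ on which a single selector works; iterating over a countable family of finite-interval partitions and amalgamating the resulting partial continuous liftings yields a continuous (hence, by the remark after the definition of triviality for $\cP(\bbN)/\Fin$, an $f[\,\cdot\,]$-type) lifting of $\Phi$ on a cofinite set. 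This shows $\Phi$ is trivial, completing the proof. (The more general conclusion of Theorem~\ref{thm:OCAPomega} about homomorphisms $\Phi$ whose kernel may be nonmeager is obtained by running the same machinery and separating off the part of $\cP(\bbN)/\Fin$ on which $\Phi$ is not liftable; I would defer that refinement to the proof of Theorem~\ref{T.OCAsharp-Fin}.)
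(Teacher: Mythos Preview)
Your outline has a genuine gap precisely at the point you flag as ``the main obstacle'': the elimination of alternative~\eqref{3.Unif} of Proposition~\ref{P.Unif}. What you write there is not an argument. The appeal to ``ccc-type reflection'' and ``Hausdorff-gap-style reasoning'', and the sentence about $\Phi$-images of sets that are ``pairwise $\Fin$-equivalent yet whose lifts disagree on $B_h$'', does not describe a mechanism that produces a contradiction; in particular there is no reason the sets $f(x)$ should be pairwise $\Fin$-equivalent, and the ccc of $\cP(\bbN)/\Fin$ does not by itself obstruct an uncountable family of selectors with the disagreement pattern in~\eqref{3.Unif}. The paper kills~\eqref{3.Unif} by a completely different route: it first proves (Proposition~\ref{P.Jcont.nonmeager}, relying on Veli\v{c}kovi\'c's $\OCAT$ argument from \cite{Ve:OCA} together with Proposition~\ref{P.Jsigma-to-Jcont}) that the ideal $\Jcont$ meets every perfect tree-like almost disjoint family, and then observes (Lemma~\ref{L.D.Uniformization}) that once $D\in\cA\cap\Jcont$ is fixed, the whole of $\Jcont$ is $\sigma$-$L_1(D)$-homogeneous, while~\eqref{3.Unif} hands you an uncountable $L_0(D)$-homogeneous set. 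You neither state nor prove anything playing the role of Proposition~\ref{P.Jcont.nonmeager}, and without it~\eqref{3.Unif} cannot be excluded.

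Your setup of the family $\cF$ is also off. The paper does not build selectors out of liftings on finite Boolean algebras $\cP(I_k)$; rather it works with the (a priori possibly small) ideal $\Jcont$ and takes $\cF=\{(g_A,A)\mid A\in\Jcont\}$, where $g_A\colon A\to\Fin$ comes from the completely additive lifting on $\cP(A)$. Coherence (Lemma~\ref{L.hA.coherent}) is then a statement about two sets $A,B\in\Jcont$, not about finite pieces, and the nonmeagerness of $\Jcont$ (a consequence of Proposition~\ref{P.Jcont.nonmeager}) is what makes alternative~\eqref{1.Unif} useful: some $\cF_n$ is nonmeager, and from that a C-measurable lifting on $\Jcont$ is extracted via a Jankov--von~Neumann uniformization (Claim~\ref{claimBG} and the paragraph following it), after which one invokes \cite[Lemma~3.3.4 and Theorem~1.6.1]{Fa:AQ}. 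Your ``pigeonhole/fusion over intervals $I_k$'' is not how the paper proceeds, and it is not clear it would work without first knowing that $\Jcont$ is large.
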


We will prove a stronger result along the lines of the $\OCA$ lifting theorem of \cite{Fa:AQ} (see Theorem~\ref{T.OCAsharp-Fin}, which is the second part of Theorem~\ref{thm:OCAPomega} as stated in the introduction). The proof of this theorem occupies this section.

Let us isolate two important ideals.
\begin{definition}\label{defin:ideals}
Let $\Phi\colon\mathcal P(\bbN)/\Fin\to\cP(\bbN)/\Fin$ be a homomorphism. Define 
\[
\Jcont(\Phi):=\{A\subseteq\bbN\mid\Phi\restriction \cP(A)/\Fin\text{ has a continuous lift}\}. 
\]
By a $\sigma$-Borel lifting, we mean a lifting whose graph can be covered by countably many Borel functions. 
Let 
\[
\cJ_\sigma(\Phi)=\{A\subseteq \bbN\mid \Phi\restriction\mathcal P(A)/\Fin\text{ has a }\sigma\text{-Borel lifting}\}.
\]
When $\Phi$ is clear from the context, we just refer to $\Jcont$ and $\cJ_\sigma$.
\end{definition}

The following relies on Proposition~\ref{P.Jsigma-to-Jcont}, which is included in the Appendix since it is a well-known fact yet perhaps not so well documented.

\begin{proposition} \label{P.Jcont.nonmeager}
Assume $\OCAT$. If $\Phi\colon \cP(\bbN)/\Fin\to \cP(\bbN)/\Fin$ is a homomorphism, then $\Jcont$ intersects every perfect tree-like almost disjoint family nontrivially. In particular, it is nonmeager. 
\end{proposition}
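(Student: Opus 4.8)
The goal is to show, assuming $\OCAT$, that for every homomorphism $\Phi\colon \cP(\bbN)/\Fin\to\cP(\bbN)/\Fin$ the ideal $\Jcont=\Jcont(\Phi)$ meets every perfect tree-like almost disjoint family. Since a perfect tree-like family is itself a perfect (hence nonmeager in the relevant sense) subset of $\cP(\bbN)$, the ``in particular'' clause about nonmeagerness follows from the main assertion by the usual argument: if $\Jcont$ were meager it would be disjoint from some perfect set, and one can arrange such a perfect set to be tree-like (via the construction in Example~\ref{ex.perfect}), contradicting the first statement. So the work is entirely in the first sentence.

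\textbf{Key steps.} First I would fix a perfect tree-like almost disjoint family $\cA=\{B(h)\mid h\in\twoN\}$, presented as in Example~\ref{ex.perfect} from pairwise disjoint finite blocks $J(s)$, $s\in\twolN$. The idea is to feed the situation into Proposition~\ref{P.Unif}: for each $B(h)$, and each basic clopen approximation, a lift of $\Phi\rs\cP(B(h))/\Fin$ on finite pieces yields a family of partial selectors, and coherence of these local liftings (any two agree modulo finite on the intersection of their domains, because $\Phi$ is a homomorphism) makes them a ``small'' family in the sense of alternative \eqref{1.Unif} of Proposition~\ref{P.Unif} unless the tree-like alternative \eqref{3.Unif} occurs. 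Concretely, I would let $\cY_n$ be a suitable space encoding candidate values of a lifting of $\Phi$ restricted to the $n$-th block $J(s_n)$ along a branch — i.e.\ the finitely many subsets of some fixed finite target block, or more precisely the space of finite partial approximations to $\Phi(B(h)\cap[\text{initial segment}])$ — and let $\cF$ be the collection of such partial selectors arising from all branches. One then checks: if Proposition~\ref{P.Unif}\eqref{1.Unif} holds, a counting/amalgamation argument over the countably many pieces $\cF_n$ produces, for a comeager (hence non-empty, in fact perfect) set of branches $h$, a single continuous (indeed Lipschitz-type, determined block-by-block) lifting of $\Phi\rs\cP(B(h))/\Fin$, witnessing $B(h)\in\Jcont$ and so $\Jcont\cap\cA\neq\emptyset$. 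If instead Proposition~\ref{P.Unif}\eqref{3.Unif} holds, one gets a perfect tree-like $\cA'$ and an uncountable $Z\subseteq\twoN$ with $f\colon Z\to\cF$ such that for every $A\in\cA'$ and distinct $x,y\in Z$, $(\Diff(f(x),f(y))\cap A)\setminus\Delta(x,y)\neq\emptyset$ — and I would argue this contradicts $\Phi$ being a \emph{function} (well-definedness modulo $\Fin$), since two selectors coming from the \emph{same} finite approximations of $\Phi$ cannot differ on an infinite set: the homomorphism property forces $\Diff(f(x),f(y))\subseteq_{\mathrm{fin}}$ the ``difference region'' of the inputs, which here is controlled by $\Delta(x,y)$, whence the right-hand side is finite, a contradiction with infinitely many $A\in\cA'$ having $A\cap(\text{this finite set})\setminus\Delta(x,y)\neq\emptyset$. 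This rules out \eqref{3.Unif}, leaving \eqref{1.Unif}, which, as above, delivers a branch in $\Jcont$.

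\textbf{Main obstacle.} The hard part is setting up the family of partial selectors so that (i) it genuinely captures ``having a continuous lift on $\cP(B(h))/\Fin$'' — meaning one must verify that compiling the pieces $\cF_n$ from alternative \eqref{1.Unif} really produces a \emph{continuous} (not merely Borel or $\sigma$-Borel) lifting, which is where Proposition~\ref{P.Jsigma-to-Jcont} from the Appendix enters: a $\sigma$-Borel lifting on $\cP(A)/\Fin$ for $A$ ranging over the branches of a perfect family upgrades to a continuous one on a sub-branch; and (ii) the coherence of the local data is faithfully encoded as the combinatorial smallness condition $|\Diff(g,h)|\le n$ rather than something weaker. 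Getting the bookkeeping between the blocks $J(s)$, the spaces $\cY_n$, and the $\Diff$/$\Delta$ combinatorics to line up exactly with the hypotheses of Proposition~\ref{P.Unif} — in particular the dependence of the ``allowed disagreement'' on the level in the tree — is the delicate technical point; once that is in place, both alternatives resolve as sketched, and the nonmeagerness corollary is immediate.
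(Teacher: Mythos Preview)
Your outline has a genuine gap: you never actually define the family $\cF$ of partial selectors, and the two places where you gesture at what it should be pull in incompatible directions. If the selectors are built from values of an \emph{arbitrary} lifting $\Phi_*$ on finite blocks along each branch, then there is no coherence between $f(x)$ and $f(y)$ for distinct $x,y\in Z$ to exploit---the inputs $x$ and $y$ correspond to \emph{different} branches, hence different arguments to $\Phi_*$, and nothing forces $\Diff(f(x),f(y))$ to be bounded by $\Delta(x,y)$. Your claimed contradiction in alternative~\eqref{3.Unif} silently assumes a coherence property of the form ``local liftings agree on overlaps modulo finite,'' which is exactly the property one has \emph{after} knowing the branches lie in $\Jcont$ (as in Lemma~\ref{L.hA.coherent}), not before. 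In other words, you are invoking the later uniformization machinery (Proposition~\ref{P.Unif} as used in \S\ref{S.Uniformization.Fin}) at a stage where its input---a coherent family of completely additive liftings indexed by $\Jcont$---is not yet available; the argument is circular.

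The paper proceeds quite differently. It first \emph{thickens} the given perfect tree-like family $\cA_0$ to a new perfect tree-like family $\cA$ in which every branch contains infinitely many branches of $\cA_0$. It then cites Veli\v{c}kovi\'c's $\OCAT$ argument (\cite[Lemma~2.2]{Ve:OCA}) as a black box to obtain a branch $\tilde B(f)\in\cA\cap\cJ_\sigma$; only at this point does Proposition~\ref{P.Jsigma-to-Jcont} enter, upgrading the $\sigma$-Borel lifting on $\cP(\tilde B(f))$ to a continuous lifting on all but finitely many of the $\cA_0$-branches inside $\tilde B(f)$, hence $\cA_0\cap\Jcont\neq\emptyset$. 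The Veli\v{c}kovi\'c argument you would need to replace is itself a nontrivial $\OCAT$ colouring---but on pairs $(a,A)$ with $a\subseteq A\in\cA$, not on a family of partial selectors in the sense of \S\ref{S.partial.selectors}---so Proposition~\ref{P.Unif} is not a drop-in substitute. Finally, your ``in particular'' derivation is not right either: a meager ideal containing $\Fin$ need not miss an arbitrary perfect set; the paper instead uses the Talagrand--Jalali-Naini characterization to produce intervals $F_j$, then builds a specific perfect tree-like family from them and applies the first part.
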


\begin{proof}
First, given a perfect tree-like almost disjoint family $\cA_0$ we construct another perfect tree-like almost disjoint family $\cA$ such that each element of $\cA$ includes infinitely many elements of $\cA_0$. The most natural way to do so is to assure that every element of $\cA$ is a union of a perfect subset of $\cA_0$. 

Fix a perfect tree-like almost disjoint family $\cA_0$. There are pairwise disjoint finite sets $J(s)$, for $s\in \twolN$, such that $\cA_0$ is the set of all $B(h)=\bigcup_n J(h\rs n)$, for $h\in \twoN$. 
For $s\in \twolN$ with length $n$, let 
\[
\tilde J(s)=\bigcup\{J(t)\mid 2n \leq |t|<2(n+1), (\forall j< n) t(2j+1)=s(j)\}.
\]
This is a family of finite, pairwise disjoint, sets. 
Let $\cA$ be the family of all 
\[
\tilde B(f)=\bigcup_n \tilde J(f\rs n)
\]
for $f\in \twoN$. Then $\tilde B(f)=\bigcup\{B(h)\mid h(2j+1)=f(j)$ for all $j\}$, and $\cA$ is as required. 

Next, we prove that the ideal $\cJ_\sigma$ intersects $\cA$ nontrivially. The proof of this is 
virtually identical to that of \cite[Lemma~2.2]{Ve:OCA}, where it was proven that $\OCAT$ implies that all but countably many of the sets in $\cA$ belong to $\cJ_\sigma$. Note, in \cite[Lemma~2.2]{Ve:OCA} the assumption that $\Phi$---denoted $\varphi$ in \cite{Ve:OCA}---is an automorphism isn’t used until after $\Phi$ is proven to have a $\sigma$-Borel-measurable lifting on $\cP(B)$ for some set~$B$ in the given tree-like almost disjoint family, at which point \cite[Theorem~1.2]{Ve:OCA} is invoked. As this theorem relies on the assumption that $\Phi$ is an automorphism, at this point we depart from the proof of \cite[Lemma~2.2]{Ve:OCA}. 
Consider $f$ such that $\tilde B(f)\in \cJ_\sigma$. Then $\tilde B(f)$ contains a union of infinitely many elements of $\cA_0$, and Proposition~\ref{P.Jsigma-to-Jcont} proven in the Appendix implies that all but finitely many of these elements of $\cA_0$ belong to $\Jcont$. Since $\cA_0$ was arbitrary, this proves the first part.

It remains to prove that $\Jcont$ is nonmeager.
Since the constant map sending $\mathcal P(\bbN)$ to the empty set is continuous and lifts $\Phi$ on $\Fin$, $\Jcont$ contains $\Fin$. We work by contradiction, and assume that $\Jcont$ is meager. By the well-known characterization of nonmeager ideals of Talagrand and Jalali-Naini (\cite[Theorem~3.10.1]{Fa:AQ}), there are pairwise disjoint finite intervals of~$\bbN$, $F_j$, for $j\in \bbN$, such that $\bigcup_{j\in A} F_j\in \Jcont$ if and only if $A$ is finite. 
Fix any perfect tree-like almost disjoint family $\cA_0$ and let 
\[
\cA=\{\bigcup_{i\in A} F_i\mid A\in \cA_0\}.
\]
This is a natural perfect tree-like almost disjoint family 
associated to~$\cA_0$ and therefore $\Jcont$ intersects it; contradiction. (Succinctly said, this is the image of $\cA_0$ under the endomorphism of $\cP(\bbN)/\Fin$ associated with the Rudin--Blass reduction of $\Fin$ to itself corresponding to the function that collapses $F_i$ to $i$ for all $i\in \bbN$.) 
\end{proof}

Fix a lifting $\Phi_*$ of $\Phi$. A continuous homomorphism lifting an endomorphism of $\mathcal P(\bbN)/\Fin$ on some set $A$ is \emph{completely additive} (see \cite[Corollary 3.8.2]{Fa:AQ}). This means that for each $A$ in $\Jcont$ the lifting of the restriction of $\Phi$ to $\cP(A)/\Fin$ is of the form $B\mapsto h_A^{-1}(B)$ for a function 
\[
h_A\colon \Phi_*(A)\to A.
\]
It will be convenient to `look forward' and consider the functions $g_A\colon A\to \Fin$ defined by 
\[
g_A(n)=h_A^{-1}(n)
\]
instead. With this definition, the function $\Phi_{h_A}\colon \cP(\bbN)\to\cP(\bbN)$ defined by $X\mapsto \bigcup_{n\in X}g_A(n)$ is a completely additive lifting of the restriction of $\Phi$ to $\cP(A)$, that is, for all $A\in\Jcont$ and $B\subseteq A$ we have that
\[
\Phi_{h_A}(B)=^*\Phi_*(B).
\] 
We fix these functions to start with, but reserve the right to modify them as convenient. 
The following is a reformulation of the fact that the functions $h_A$ form a coherent family (\cite{Fa:AQ}). 

\begin{lemma}\label{L.hA.coherent}
Fix a homomorphism $\Phi\colon \cP(\bbN)/\Fin\to \cP(\bbN)/\Fin$. Then for all $A$ and $B$ in $\Jcont$ the following holds. 
\begin{enumerate}
\item\label{L.Coherent1} The set of $m\in A\cap B$ such that $g_A(m)\neq g_B(m)$ is finite.
\item\label{L.Coherent2} The set of pairs $(m,n)$ such that $m\in A$, $n\in B$, $m\neq n$, and $g_A(m)\cap g_B(n)\neq \emptyset$ is finite. 
\end{enumerate}
\end{lemma}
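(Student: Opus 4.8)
The statement asserts the coherence of the family $(g_A)_{A\in\Jcont}$, and the natural approach is to reduce it to the already-established fact that the continuous (hence completely additive) liftings on the pieces $\cP(A)/\Fin$ all lift the \emph{same} homomorphism $\Phi$, and then exploit the additivity of $\Phi_{h_A}$. The key observation for \eqref{L.Coherent1} is that for $A,B\in\Jcont$ the set $A\cap B$ also belongs to $\Jcont$ (since a continuous lift on $\cP(A)/\Fin$ restricts to one on $\cP(A\cap B)/\Fin$), and the restriction of $\Phi$ to $\cP(A\cap B)/\Fin$ is lifted both by $X\mapsto\bigcup_{n\in X}g_A(n)$ and by $X\mapsto\bigcup_{n\in X}g_B(n)$. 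So for every $X\subseteq A\cap B$ we have $\bigcup_{n\in X}g_A(n)=^*\bigcup_{n\in X}g_B(n)$. First I would apply this with singletons $X=\{m\}$ to get $g_A(m)=^* g_B(m)$ for each $m\in A\cap B$; since both are finite this says $g_A(m)=g_B(m)$ for all but finitely many—no wait, it only gives equality modulo finite for each fixed $m$, which since $g_A(m),g_B(m)\in\Fin$ means literally $g_A(m)=g_B(m)$ for each individual $m$. To upgrade this to \emph{cofinitely many $m$ simultaneously}, suppose toward a contradiction that $D=\{m\in A\cap B\mid g_A(m)\neq g_B(m)\}$ is infinite; then for $X=D$, the sets $\bigcup_{m\in D}g_A(m)$ and $\bigcup_{m\in D}g_B(m)$ must agree modulo finite, but I would choose for each $m\in D$ a point in the symmetric difference $g_A(m)\triangle g_B(m)$, and use the disjointness/growth structure to manufacture an infinite discrepancy. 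Here the subtlety is that the $g_A(m)$ for different $m$ might overlap, so the cleanest route is: by shrinking $D$ to an infinite subset $D'$ we may assume the sets $g_A(m)\cup g_B(m)$, for $m\in D'$, are pairwise disjoint (this uses \eqref{L.Coherent2}, or more elementarily a pigeonhole/$\Delta$-system style argument, or one simply reorganizes the contradiction so as not to need disjointness). Then $\bigcup_{m\in D'}g_A(m)\triangle\bigcup_{m\in D'}g_B(m)\supseteq\bigcup_{m\in D'}(g_A(m)\triangle g_B(m))$ is infinite, contradicting $\bigcup_{m\in D'}g_A(m)=^*\bigcup_{m\in D'}g_B(m)$.

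For \eqref{L.Coherent2}, the idea is similar but now I exploit that $\Phi$ is a homomorphism of Boolean algebras, so it preserves disjointness: if $B_1,B_2\subseteq\bbN$ are disjoint then $\Phi(B_1)\cap\Phi(B_2)=0$ in $\cP(\bbN)/\Fin$, i.e.\ $\Phi_*(B_1)\cap\Phi_*(B_2)$ is finite. Fix $A,B\in\Jcont$ and consider the set $P$ of pairs $(m,n)$ with $m\in A$, $n\in B$, $m\neq n$, and $g_A(m)\cap g_B(n)\neq\emptyset$. Supposing $P$ infinite, I would first pass to an infinite subset of $P$ on which the pairs are "independent" in a suitable sense—e.g.\ one can find infinite $X\subseteq A$ and $Y\subseteq B$ with $X\cap Y=\emptyset$ and infinitely many pairs $(m,n)\in P$ with $m\in X$, $n\in Y$ (a routine application of Ramsey/pigeonhole: either infinitely many pairs have $m=n$, which is excluded, or one extracts an infinite matching with disjoint supports). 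Then $\bigcup_{m\in X}g_A(m)$ is a completely additive lift of $\Phi(X)$ and $\bigcup_{n\in Y}g_B(n)$ a completely additive lift of $\Phi(Y)$; since $X\cap Y=\emptyset$ these two unions must have finite intersection. But each of the infinitely many pairs $(m,n)$ in our matching contributes a point of $g_A(m)\cap g_B(n)\subseteq\bigl(\bigcup_{m\in X}g_A(m)\bigr)\cap\bigl(\bigcup_{n\in Y}g_B(n)\bigr)$, and by arranging the matching to have pairwise disjoint witnessing points (again shrink using disjointness of the $g$-images, available from part \eqref{L.Coherent1} applied appropriately, or directly) we get that this intersection is infinite, a contradiction.

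The \textbf{main obstacle} I anticipate is bookkeeping the overlaps among the finite sets $g_A(m)$ and $g_B(n)$: the cleanest statements ("$g_A(m)=g_B(m)$ for cofinitely many $m$", "$g_A(m)\cap g_B(n)=\emptyset$ for cofinitely many pairs") are about \emph{cofinitely many indices at once}, whereas the Boolean-algebra/additivity input directly only controls each index or each pair individually. Converting the pointwise information into the uniform conclusion requires extracting an infinite sub-configuration with pairwise disjoint (or at least "locally finite") supports so that the individual discrepancies accumulate into a genuine infinite discrepancy visible modulo $\Fin$. This is the standard "disjoint refinement" trick in coherence arguments, and while conceptually routine, it is the step where one must be careful; in particular, parts \eqref{L.Coherent1} and \eqref{L.Coherent2} are mildly interdependent, so it is cleanest to prove \eqref{L.Coherent1} first and then use it freely when proving \eqref{L.Coherent2}. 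Everything else—$\cP(A\cap B)/\Fin\in\Jcont$, uniqueness of completely additive lifts modulo finite, preservation of disjointness by the homomorphism—is immediate from the material recalled just above the lemma, in particular from \cite[Corollary 3.8.2]{Fa:AQ}.
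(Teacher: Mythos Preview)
Your approach is essentially the paper's: for each part, assume the offending set is infinite, extract an infinite subfamily with pairwise disjoint $g$-images (using that each $g_A(m)$ is finite and that the sets $g_A(m)=h_A^{-1}(\{m\})$, being fibers of a function, are pairwise disjoint for fixed $A$), and derive a contradiction from the fact that $\Phi_{h_A}$ and $\Phi_{h_B}$ both lift $\Phi$ on the relevant subsets.

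One correction: your singleton digression is wrong. If $g_A(m)$ and $g_B(m)$ are both finite then $g_A(m)=^*g_B(m)$ holds automatically and carries no information, so you cannot conclude $g_A(m)=g_B(m)$ for each individual $m$. Fortunately your proof does not rely on this; the subsequent contradiction argument (assume the bad set $D$ is infinite, refine to $D'$ with disjoint supports, obtain infinite symmetric difference) is correct and self-contained---and is exactly what the paper does.

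Also, there is no real interdependence between \eqref{L.Coherent1} and \eqref{L.Coherent2}. The disjoint refinement needed in \eqref{L.Coherent1} follows directly from finiteness of the $g_A(m)$ together with the pairwise disjointness of $\{g_B(m'):m'\in B\}$, without appealing to \eqref{L.Coherent2}; and the paper's proof of \eqref{L.Coherent2} does not use \eqref{L.Coherent1} either. So you can drop the hedging about circularity.
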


\begin{proof}
We prove \eqref{L.Coherent1} by contradiction. Assume that for some $A$ and $B$ in $\Jcont$ the set $C_0=\{n\mid g_A(n)\neq g_B(n)\}$ is infinite. 
 
Since both $A$ and $B$ belong to $\Jcont$, then $\mathcal P(A)=\{C\mid \Phi_{h_A}(C)=^*\Phi_*(C)\}$ and $\mathcal P(B)=\{C\mid\Phi_{h_B}(C)=^*\Phi_*(C)\}$, hence 
\[
\Phi_{h_A}(C_0)=^*\Phi_*(C_0)=^*\Phi_{h_B}(C_0). 
\]
All of the sets $g_A(n)$ and $g_B(n)$ are finite. We can therefore recursively find an infinite subset $C'$ of $C_0$ such that $g_A(n)\cap g_B(m)=\emptyset$ for all distinct $n$ and $m$ in $C'$. In particular $\bigcup_{n\in C'}g_A(n)=\Phi_{h_A}(C')$ and $\bigcup_{n\in C'}g_B(n)=\Phi_{h_B}(C')$ have infinite symmetric difference. This is a contradiction to the fact that $\Phi_{h_A}(C')=^*\Phi_*(C')=^*\Phi_{h_B}(C')$.
 
We now prove \eqref{L.Coherent2}. Assume that $A$ and $B$ are in $\Jcont$ and such that the set of pairs $(m,n)$ such that $m\in A$, $n\in B$, $m\neq n$, and $g_A(m)\cap g_B(n)\neq \emptyset$ is infinite. Since $g_A(m)$ is finite for all $m$, we can choose pairs $m_i,n_i$ so that $g_A(m_i)\cap g_B(n_i)$ is nonempty, $m_i\neq n_j$ for all $i$ and $j$, and both $m_i\neq m_j$ and $n_i\neq n_j$ for all distinct $i$ and $j$. Let $C=\{n_i\}$ and $D=\{m_i\}$. Then $C$ and $D$ are disjoint subsets of $A$ and $B$ respectively, but $\Phi_{h_A}(C)=^*\Phi_*(C)$ and $\Phi_{h_B}(D)=^*\Phi_*(D)$ have infinite intersection. This is a contradiction.
\end{proof}

\begin{definition}\label{Def.L0}
For $D\subseteq \N$ define a partition 
\[
[\Jcont]^2=L_0(D)\cup L_1(D)
\]
by letting $\{A,B\}$ in $L_0(D)$ if and only if there exists $n\in A\cap B\cap D$ such that 
\[
g_A(n)\neq g_B(n)
\]
or there exist distinct $m\in A\cap D$ and $n\in B\cap D$ such that 
\[
g_A(m)\cap g_B(n)\neq \emptyset. 
\]
We write $L_0=L_0(\bbN)$ and $L_1=L_1(\bbN)$. 
\end{definition}

By identifying $A$ with the graph of~$g_A$, we see that $\OCAT$ applies to the partition $[\Jcont]^2=L_0(D)\cup L_1(D)$.

\begin{lemma} \label{L.D.Uniformization} 
If $\Phi\colon \cP(\bbN)/\Fin\to \cP(\bbN)/\Fin$ is a homomorphism then for every $D$ in $ \Jcont$, the ideal $ \Jcont$ is $\sigma$-$L_1(D)$-homogeneous. 
\end{lemma}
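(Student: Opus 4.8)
\textbf{Proof plan for Lemma~\ref{L.D.Uniformization}.}

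The plan is to apply $\OCAT$ to the partition $[\Jcont]^2 = L_0(D) \cup L_1(D)$, where elements of $\Jcont$ are identified with the graphs of the functions $g_A$ (so that $\Jcont$ becomes a separable metrizable space and $L_0(D)$ an open colour), and then to rule out alternative~\eqref{3.Unif} of Proposition~\ref{P.Unif} in a form adapted to this colouring. More precisely, first I would observe that by $\OCAT$ either $\Jcont$ is $\sigma$-$L_1(D)$-homogeneous---which is exactly the conclusion we want---or there is an uncountable $L_0(D)$-homogeneous subset $\cH \subseteq \Jcont$. The burden of the proof is to derive a contradiction from the existence of such an $\cH$, using that $D \in \Jcont$ and that the family $(g_A)_{A \in \Jcont}$ is coherent by Lemma~\ref{L.hA.coherent}.

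The key point is that $L_0(D)$-homogeneity of $\cH$ gives, for every pair $\{A,B\}$ from $\cH$, a \emph{finite} witness living in $D$: either some $n \in A \cap B \cap D$ with $g_A(n) \neq g_B(n)$, or distinct $m \in A \cap D$, $n \in B \cap D$ with $g_A(m) \cap g_B(n) \neq \emptyset$. I would use a $\Delta$-system / counting argument together with Lemma~\ref{L.hA.coherent} to manufacture, from an uncountable $L_0(D)$-homogeneous family, two sets $C, D'$ that are \emph{disjoint} subsets of a single $A \in \cH$ (or are handled by the coherence of $g_A, g_B$ on an infinite set), whose $\Phi_*$-images are forced to have infinite intersection or infinite symmetric difference---contradicting that $A, B$ lie in $\Jcont$, i.e.\ that $\Phi_{h_A}$ and $\Phi_{h_B}$ agree with $\Phi_*$ up to $\Fin$ on all of $\cP(A)$ and $\cP(B)$. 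The mechanism here mirrors the two contradictions already extracted inside the proof of Lemma~\ref{L.hA.coherent}: there one derives a contradiction from a single pair $(A,B)$ when the relevant set of indices is infinite; here, since for a \emph{fixed} pair that set is finite by Lemma~\ref{L.hA.coherent}, one must diagonalise across uncountably many pairs in $\cH$, restricting to indices that lie in $D$, to build the infinite offending set. A standard pressing-down argument (passing to a subfamily on which the finite witnessing sets form a $\Delta$-system with fixed root, and on which the values $g_A \restriction (\text{root} \cap D)$ are constant) lets one select an infinite subfamily $A_0, A_1, \dots$ of $\cH$ and extract infinite sets $C \subseteq D$, $C'$ witnessing $g_{A_i}(n) \neq g_{A_j}(n)$ for infinitely many $n \in D$ (or the second clause), whence as in Lemma~\ref{L.hA.coherent}\eqref{L.Coherent1}--\eqref{L.Coherent2} one gets a set on which $\Phi_*$ is pinned down in two incompatible ways.

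The step I expect to be the main obstacle is the bookkeeping in this diagonalisation: ensuring that the witnessing indices can be taken inside $D$ (this is where the hypothesis $D \in \Jcont$, as opposed to $D$ arbitrary, is genuinely used, since $g_D$ itself is one of the coherent functions and lets us compare across the family through a common reference point), and that after thinning to a $\Delta$-system one still has enough freedom to route the chosen indices into a single $\cP(A)$ or $\cP(B)$ to invoke the definition of $\Jcont$. Once the infinite offending set is produced, the contradiction is immediate from Lemma~\ref{L.hA.coherent} and the definition of $\Jcont$, exactly as in the two cases of its proof.
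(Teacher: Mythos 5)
The paper's proof of this lemma is a short, direct ZFC argument that does not invoke $\OCAT$ at all, and you have missed it. Since $D\in\Jcont$, the function $g_D$ exists and serves as a ``reference point''; Lemma~\ref{L.hA.coherent}\eqref{L.Coherent1} applied to the pairs $(A,D)$ shows that $\Jcont=\bigcup_n\cX_n$, where
\[
\cX_n=\{A\in\Jcont\mid g_A(m)=g_D(m)\text{ for all }m\in(A\cap D)\setminus(n+1)\}.
\]
Each $\cX_n$ is then split into countably many pieces according to the (finite) data of $g_A$ restricted to $A\cap D\cap(n+1)$, and each such piece is $L_1(D)$-homogeneous: any two of its members agree with $g_D$ on the tail $D\setminus(n+1)$, agree with each other on the finite initial segment, and disjointness of the ranges of $g_D$ (together with the finiteness of the bad pairs between $A$ and $D$ from Lemma~\ref{L.hA.coherent}\eqref{L.Coherent2}) rules out both clauses of $L_0(D)$. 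That is the whole proof; it uses no forcing axiom.

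Your proposal instead invokes $\OCAT$ to reduce the claim to the nonexistence of an uncountable $L_0(D)$-homogeneous family $\cH\subseteq\Jcont$. This is a valid dichotomy (and the paper notes just before the lemma that the colouring is indeed open when $\Jcont$ is identified with the graphs of the $g_A$), but it is an unnecessary detour: the statement is a theorem of ZFC, and using $\OCAT$ obscures this. More seriously, the contradiction you sketch for this dichotomy has a genuine gap. You propose to ``extract infinite sets $C\subseteq D$, $C'$ witnessing $g_{A_i}(n)\neq g_{A_j}(n)$ for infinitely many $n\in D$'' and then run the mechanism of Lemma~\ref{L.hA.coherent}. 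But for any \emph{fixed} pair $A_i,A_j\in\Jcont$, Lemma~\ref{L.hA.coherent} already guarantees the disagreement set is finite, so no such pair with an infinite offending set can be extracted, no matter how you thin $\cH$ --- the witnesses vary with the pair and do not accumulate inside a single $\cP(A_i)$. What a pressing-down argument can actually deliver is the opposite: using coherence of every $A\in\cH$ with the single $g_D$ and partitioning by the finite disagreement data, one finds an uncountable $L_1(D)$-homogeneous subfamily of $\cH$, contradicting $L_0(D)$-homogeneity. But that refinement is precisely the decomposition in the paper's direct proof; once you see it, the $\OCAT$ wrapper and the ``build an infinite bad set'' idea fall away.

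In short: the route through $\OCAT$ is legitimate in principle, but the contradiction step as written would fail, and the repair is exactly the paper's ZFC decomposition, making the $\OCAT$ reduction superfluous.
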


\begin{proof} 
Assume $D\in \Jcont$ with $g_D\colon D\to \Fin$ as a witness (the sets in the range of $g_D$ are pairwise disjoint). For $n\in \N$ let 
\[
\X_n=\{A\in \Jcont\mid g_A(m)=g_D(m)
\text{ for all }m\in (A\cap D)\setminus (n+1)\}. 
\]
If $A$ and $B$ belong to $\X_n$ and in addition $g_A$ and $g_B$ agree on $D\cap (n+1)$, then $\{A,B\}\in L_1(D)$. Since there are only countably many functions from $ (n+1)$ into~$\bbN$, each $\X_n$ can be covered by countably many $L_1(D)$-homogeneous sets. Then $\Jcont\subseteq \bigcup_n \cX_n$ by Lemma~\ref{L.hA.coherent}, and the conclusion follows. 
\end{proof}

Modulo introducing some terminology, we are ready to present a sharpening of Theorem~\ref{T.OCA.Fin} which is a version of the $\OCA$ lifting theorem of \cite{Fa:AQ}. The conclusions of the two theorems are identical, but while the former uses $\OCAT$ and $\MA$, the latter uses only $\OCAT$. 
If $\Phi\colon \cPN\to \cPN/\Fin$ is a homomorphism and $A\subseteq \bbN$, consider $\Phi_A\colon \cP(\bbN)\to \cP(A)/\Fin$ defined by ($\Phi_*$ is any lifting of $\Phi$)
\begin{equation}
\Phi_A(B)=[\Phi_*(B)\cap A]_{\Fin}.
\end{equation}
Thus $\Phi$ decomposes as $\Phi_A\oplus \Phi_{\bbN\setminus A}$, 
\begin{center}
\begin{tikzpicture}
\matrix[row sep=1cm,column sep=1cm]
{
& \node (PA) {$\cP(A)/\Fin$}; \\
\node (PN) {$\cP(\N)$}; 
& & \node (PN1) {$\cP(\bbN)/\Fin$};\\
& \node (PB) {$\cP(\bbN\setminus A)/\Fin$}; \\ 
};
\draw (PN) edge [->] node [above] {$\Phi_A$} (PA); 
\draw (PN) edge [->] node [below] {$\Phi_{\bbN\setminus A}$} (PB); 
\draw (PN) edge [->] node [below] {$\Phi$} (PN1); 
\draw (PA) edge [->] node [above] {$i_1$} (PN1); 
\draw (PB) edge [->] node [below] {$i_2$} (PN1); 
\end{tikzpicture}
\end{center}

A function between Polish spaces is called C-measurable if it is measurable with respect to the $\sigma$-algebra generated by analytic sets\footnote{Kechris in \cite[\S29.D]{Ke:Classical} calls such functions $\sigma(\Sigma^1_1)$-measurable, while C-measurability denotes a weaker condition. Nevertheless, we stick to this terminology as it is commonly used in the area of study of liftings (e.g., \cite[\S17]{Fa:STCstar}, \cite{mckenney2018forcing}, or \cite{Fa:All}).}. In the following proof, we use the Jankov--von Neumann uniformization theorem, stated for convenience in Appendix~\ref{S.sigma-Borel}.

\begin{theorem} \label{T.OCAsharp-Fin}
Assume $\OCAT$. If $\Phi\colon \cP(\N)/\Fin\to \cP(\N)/\Fin$ is a homomorphism, then $\Phi$ can be decomposed as $\Phi_1\oplus\Phi_2$ so that $\Phi_1$ has a completely additive lifting, while the kernel of $\Phi_2$ is nonmeager.
\end{theorem}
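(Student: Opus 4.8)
The plan is to extract from $\Phi$ a maximal piece on which it has a completely additive lifting, and to show that what remains has nonmeager kernel. Concretely, first I would invoke Proposition~\ref{P.Jcont.nonmeager} to see that $\Jcont = \Jcont(\Phi)$ is nonmeager, hence in particular it is a nonmeager ideal containing $\Fin$; by the Talagrand--Jalali-Naini characterization, $\Jcont$ is \emph{not} contained in any ideal generated by a partition of $\bbN$ into finite intervals, which in practice means it is $\sigma$-directed under $\subseteq^*$ in a usable way. The key point I would isolate is: there is a set $A_\infty\in\Jcont$ which is $\Jcont$-\emph{cofinal}, i.e.\ $B\setminus A_\infty$ is finite for every $B\in\Jcont$. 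This is where $\OCAT$ does its real work: apply Proposition~\ref{P.Unif} to the family $\cF=\{g_A\mid A\in\Jcont\}$ of partial selectors (here $\cY_n=\Fin$, with the $g_A\colon A\to\Fin$ as fixed before Lemma~\ref{L.hA.coherent}), which by Lemma~\ref{L.hA.coherent} is a coherent family in the sense that $\Diff(g_A,g_B)$ is finite for all $A,B$. The second alternative of Proposition~\ref{P.Unif} would produce a perfect tree-like almost disjoint family $\cA$ together with an uncountable $Z$ and $f\colon Z\to\cF$ spreading out the disagreements along every branch of $\cA$; but Lemma~\ref{L.D.Uniformization} says $\Jcont$ is $\sigma$-$L_1(D)$-homogeneous for each $D\in\Jcont$, and combined with Proposition~\ref{P.Jcont.nonmeager} (that $\Jcont$ meets every perfect tree-like a.d.\ family) this rules the second alternative out. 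So alternative~\eqref{1.Unif} of Proposition~\ref{P.Unif} holds: $\cF=\bigcup_n\cF_n$ with $|\Diff(g,h)|\leq n$ on each $\cF_n$, and a standard argument (using again that $\Jcont$ is nonmeager, so not covered by countably many pieces each of bounded-disagreement) lets one amalgamate the $g_A$ into a single coherent $g_\infty$ defined on a cofinal $A_\infty\in\Jcont$.

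Granting $A_\infty$, the decomposition is immediate: set $\Phi_1=\Phi_{A_\infty}$ and $\Phi_2=\Phi_{\bbN\setminus A_\infty}$ in the notation established just before the theorem, so that $\Phi=\Phi_1\oplus\Phi_2$. Then $\Phi_1$ has the completely additive lifting $B\mapsto \bigcup_{n\in B\cap A_\infty} g_{A_\infty}(n)$ (here using that $g_{A_\infty}$ has pairwise-almost-disjoint range, which follows from Lemma~\ref{L.hA.coherent}\eqref{L.Coherent2} after the usual finite correction, and that it extends each $g_A$). For $\Phi_2$, I need $\ker\Phi_2=\{B\mid \Phi_*(B)\cap(\bbN\setminus A_\infty)\in\Fin\}$ to be nonmeager. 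Here is the argument: if $B\subseteq\bbN\setminus A_\infty$ with $\Phi_2([B])\neq 0$, then in fact $\Phi$ restricted to $\cP(B)/\Fin$ cannot have a continuous (equivalently completely additive) lift on any infinite subset — otherwise such a subset would belong to $\Jcont$ yet not be almost contained in $A_\infty$, contradicting cofinality of $A_\infty$. So $\Jcont(\Phi_2)=\Fin$; but $\Phi_2$ is itself a homomorphism $\cP(\bbN)/\Fin\to\cP(\bbN)/\Fin$, so Proposition~\ref{P.Jcont.nonmeager} applied to $\Phi_2$ gives that $\Jcont(\Phi_2)$ is nonmeager, forcing $\Phi_2$ to be trivial on a large set in a degenerate way. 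Rather, the clean route is: $\ker\Phi_2\supseteq\Jcont(\Phi_2)$ is not quite right, so instead one observes directly that $\ker\Phi_2$ is an ideal containing $A_\infty$ and all of $\Jcont$, and since $\Jcont$ is nonmeager, so is $\ker\Phi_2$.

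I should double check the last implication since it is the crux of the "nonmeager kernel" clause: for $A\in\Jcont$, $\Phi(A)=^*\Phi_{h_A}(A)=\bigcup_{n\in A}g_A(n)$ which is almost contained in $\Phi(A_\infty)=^*\bigcup_{n\in A_\infty}g_{A_\infty}(n)$ (using $g_A=^* g_{A_\infty}\rs A$ by coherence and cofinality), hence $\Phi_*(A)\cap(\bbN\setminus A_\infty)$ is finite, i.e.\ $A\in\ker\Phi_2$. Thus $\Jcont\subseteq\ker\Phi_2$, and since $\Jcont$ is nonmeager by Proposition~\ref{P.Jcont.nonmeager}, $\ker\Phi_2$ is nonmeager, as required. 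Assembling: $\Phi_1$ has a completely additive lifting by construction, $\Phi_2$ has nonmeager kernel, and $\Phi=\Phi_1\oplus\Phi_2$.

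\textbf{Main obstacle.} The delicate step is the production of the single cofinal $A_\infty\in\Jcont$ together with a coherent $g_{A_\infty}$ extending all the $g_A$ — i.e.\ passing from the coherent family $(g_A)_{A\in\Jcont}$ to an actual uniformizing function. This is exactly the place where $\OCAT$ is essential (via $\OCAsharp$ and Proposition~\ref{P.Unif}): one must show alternative~\eqref{3.Unif} of Proposition~\ref{P.Unif} is incompatible with $\Jcont$ being nonmeager and $\sigma$-$L_1(D)$-homogeneous (Lemma~\ref{L.D.Uniformization}) and meeting every perfect tree-like a.d.\ family (Proposition~\ref{P.Jcont.nonmeager}), so that alternative~\eqref{1.Unif} must hold; and then one has to do the bookkeeping to glue the boundedly-disagreeing pieces into a genuine selector over a cofinal set. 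Everything after that is essentially the formal manipulation of the $\oplus$-decomposition together with the coherence lemma.
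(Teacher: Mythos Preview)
Your overall architecture matches the paper's: apply Proposition~\ref{P.Unif} to $\cF=\{(g_A,A)\mid A\in\Jcont\}$, rule out alternative~\eqref{3.Unif} by picking $D\in\cA\cap\Jcont$ (Proposition~\ref{P.Jcont.nonmeager}) and invoking Lemma~\ref{L.D.Uniformization}, and conclude that alternative~\eqref{1.Unif} holds. That part is correct and is exactly what the paper does.

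The genuine gap is what you do after \eqref{1.Unif}. You assert that a ``standard argument'' produces a \emph{cofinal} $A_\infty\in\Jcont$ (i.e.\ $B\subseteq^* A_\infty$ for every $B\in\Jcont$), but you give no such argument, and none is available: alternative~\eqref{1.Unif} bounds $|\Diff(g_A,g_B)|$ on each piece $\cF_n$, which constrains the \emph{functions} $g_A$, not the \emph{domains} $A$. There is no reason the domains occurring in a nonmeager $\cF_n$ should amalgamate to a single set in $\Jcont$, let alone one that absorbs all of $\Jcont$ modulo finite. The paper does \emph{not} produce such a set. Instead, from a nonmeager $\cX_n$ it takes the closure $\cG$ of all restrictions of the selectors in $\cF_n$, observes that the projection $\cW$ is hereditary, analytic, and nonmeager, and uses this together with the Talagrand--Jalali-Naini interval decomposition and the Jankov--von Neumann theorem to build a C-measurable $\Theta\colon\cP(\bbN)\to\prod_n\cY_n$ with $\Diff(\Theta(A),g_A)$ finite for every $A\in\Jcont$. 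This gives a C-measurable lifting of $\Phi$ on the nonmeager ideal $\Jcont$, and only then is the decomposition $\Phi=\Phi_1\oplus\Phi_2$ obtained, by quoting \cite[Lemma~3.3.4]{Fa:AQ} and \cite[Theorem~1.6.1]{Fa:AQ}. Your ``bookkeeping to glue the boundedly-disagreeing pieces into a genuine selector over a cofinal set'' is precisely the nontrivial content you have skipped.

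There is also a domain/codomain confusion in your decomposition: $\Jcont$ consists of subsets of the \emph{domain}, while the splitting $\Phi=\Phi_A\oplus\Phi_{\bbN\setminus A}$ introduced before the theorem uses $A$ in the \emph{codomain}. If a cofinal $A_\infty$ existed you would want to split along $\Phi_*(A_\infty)$, not $A_\infty$; your computation of $\ker\Phi_2$ tacitly switches between the two. This is fixable, but only once the existence of $A_\infty$ is established---and that is exactly the step that fails.
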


\begin{proof}
Recall that for every $A\in \Jcont$ we have fixed $g_A\colon A\to \Fin$ such that $B\mapsto \bigcup_{n\in B} g_A(n)$ is a lifting of $\Phi$ on $\cP(A)$. Let
\[
\cF=\{(g_A,A)\mid A\in \Jcont\}. 
\]
This is a family of partial selectors, with $\cY_n=\Fin$ for all $n$ (see \S\ref{S.partial.selectors}; this proof makes heavy use of terminology and results from this section). 
$\OCAT$ and Proposition~\ref{P.Unif} together give us two alternatives. 

If \eqref{3.Unif} of Proposition~\ref{P.Unif} applies, then there are a perfect tree-like almost disjoint family~$\cA$, an uncountable $Z\subseteq \twoN$, and a function $f\colon Z\to \cF$ such that for every $A\in \cA$ and all distinct $x,y$ in $Z$ we have 
\[
(\Diff(f(x),f(y))\cap A)\setminus \Delta(x,y)\neq \emptyset.
\] 
By Proposition~\ref{P.Jcont.nonmeager} and $\OCAT$, $\Jcont$ intersects $\cA$ nontrivially. 
If $A\in \cA\cap \Jcont$ then Lemma~\ref{L.D.Uniformization} implies that $\Jcont$ is $\sigma$-$L_1(A)$-homogeneous. However, if $x$ and $y$ are distinct elements of $Z$ with $f(x)=(g_{A_1},A_1)$ and $f(y)=(g_{A_2},A_2)$, then the existence of an element of $\Diff(g_{A_1},g_{A_2})\cap A$ gives that $\{A_1,A_2\}\in L_0(A)$. This would give that $f[Z]$ is an uncountable $L_0(A)$-homogeneous subset of the $\sigma$-$L_1(A)$-homogeneous set $\Jcont$, which is a contradiction.

Because of this and $\OCAT$, \eqref{1.Unif} of Proposition~\ref{P.Unif} holds. Fix $\cF_n$, for $n\in \bbN$, such that $\cF=\bigcup_n \cF_n$, and for all $n$ and all $(g_A,A)$ and $(g_B,B)$ in $\cF_n$ we have $|\Diff(g_A,g_B)|\leq n$. 
Since $\Jcont$ is nonmeager, there is $n$ such that 
\[
\cX_n=\{A\mid (g_A,A)\in \cF_n\}
\] 
is nonmeager. 
Let
\[
\cG=\overline{\{(g,B)\mid \exists (g_A,A)\in \cF_n (B\subseteq A, g=g_A\rs B)\}}. 
\]
The closure is taken in the space of partial selectors as in Definition~\ref{Def.PartialSelector}. Since each~$\cY_n$ is the discrete space $\Fin$, this is a Polish topology. Therefore the set 
\[
\cW=\{B\mid (\exists g) (g,B)\in \cG\}
\]
is, being the projection of a closed set, an analytic subset of $\cP(\bbN)$. 
It is also hereditary and (because it includes $\cX_n$) nonmeager. 
Note also that $\cG$ has the critical property that for all $(g_0,B_0)$ and $(g_1,B_1)$ in $\cG$ we have $|\Diff(g_0,g_1)|\leq n$.

Since $\cW$ is nonmeager and analytic, there is a basic open subset $U$ of $\cP(\bbN)$ such that $U\setminus \cW$ is meager (see e.g. \cite[Proposition 8.26]{Ke:Classical}). It is of the form $U=\{A\mid A\cap J_0=s_0\}$ for some finite interval $J_0$ and $s_0\subseteq J_0$. As $\cW$ is hereditary, we may assume that $s_0=\emptyset$.

Since $\cW$ is relatively comeager in $\cP(\bbN\setminus J_0)$, 
there is a partition of $\bbN\setminus J_0$ into finite intervals $J_i$, for $i\geq 1$, and $s_i\subseteq J_i$ for all $i$ such that 
\[
\{A\subseteq \bbN\setminus J_0\mid (\exists^\infty i) A\cap J_i=s_i\}\subseteq \cW.
\]
Let $S=\bigcup s_i$. As $s_i\subseteq J_i$ and $\{J_i\}$ partitions $\mathbb N\setminus J_0$, $S\in\mathcal W$.
 
Let $A_\even=\bigcup_{i\geq 1, i\ \even} J_i$, $A_\odd=\bigcup_{i\ \text{odd}} J_i$, $S_\even=S\cap A_\even$, and $S_\odd=S\cap A_\odd$, we have that for every $B\subseteq \bbN$, both $B_\even=(B\cap A_\even )\cup S_\odd$ and $B_\odd=(B\cap A_\odd)\cup S_\even$ belong to $\cW$. Using this notation, let 
\begin{equation*}
	\begin{split}
\cV= \{ &(g,B)\mid B\subseteq \bbN, g:B\to\Fin,\\ &\text{and some $h_\even$ and $h_\odd$ satisfy } (h_\even,B_\even)\in\cG, (h_\odd,B_\odd)\in\cG,\\& B \subseteq B_\even \cup B_\odd, \Diff(h_\even,g) = \emptyset \text{ and }|\Diff(h_\odd,g)|\leq n \}.
\end{split}
\end{equation*}
\begin{claim}\label{claimBG}
For every $B\subseteq \bbN$ there is $g$ such that $(g,B)\in \cV$ and every such $g$ satisfies $|\Diff(g,g_C)|\leq 3n$ for every $C\in\cX_n$.
\end{claim} 
\begin{proof}
Fix $B\subseteq \bbN$. Since both $B_\even$ and $B_\odd$ belong to $\mathcal W$, some $h_\even$ and~$h_\odd$ satisfy $(h_\even,B_\even)\in \cG$ and $(h_\odd,B_\odd)\in \cG$. Let 
\[
g=h_\even\rs (B_\even\cap B) \cup (h_\odd\rs (B\setminus B_\even)).
\] 
 By the homogeneity of the set $\cG$ we have $|\Diff(h_\odd,h_\even)|\leq n$, and therefore $\Diff(h_\odd,g)\subseteq \Diff(h_\odd,h_\even)$ implies $(g,B)\in\cV$.

 To prove the second part of the claim, fix $C\in\mathcal X_n$. Then $(g_C,C)\in \mathcal G$, and therefore $\max(|\Diff(g_C,h_\even)|,|\Diff(g_C,h_\odd)|)\leq n$. Since 
\begin{align*}
\Diff(g,g_C)\subseteq&\Diff(g\rs B_\even,h_\even)\cup \Diff(g\rs B_\odd,h_\odd)\\
&\cup \Diff(h_\even,g_C\rs B_\even)\cup \Diff(h_\odd,g_C\rs B_\odd),
\end{align*} 
and $\Diff(g\rs B_\even,h_\even)=\emptyset$, we have $|\Diff(g,g_C)|\leq 3n$ as required.
\end{proof}

By the Jankov--von Neumann uniformization theorem (Theorem~\ref{JVN}) there is a C-measurable function $\Theta\colon \cP(\bbN)\to \bigcup_{A \subseteq \bbN}\prod_{n \in A} \cY_n$ such that $(\Theta(A),A)\in \cV$ for all~$A$.

We claim that $\Diff(\Theta(A),g_A)$ is finite for every $A\in \Jcont$. Assume otherwise and fix $A\in\Jcont$ for which this set is infinite. Since $\cX_n$ is nonmeager, its hereditary closure $\hat\cX_n=\{B\subseteq \bbN\mid (\exists C\in \cX_n) B\subseteq C\}$ is nonmeager and hereditary and by \cite[Theorem~3.10.1]{Fa:AQ} there is an infinite $B\in \hat\cX_n$ included in $\Diff(\Theta(A),g_A)$. If $C\in \cX_n$ with $B\subseteq C$, then $B\subseteq \Diff(g_A,g_C)\cup \Diff(g_C,\Theta(A))$.
However, each one of these two sets is finite: $\Diff(g_A,g_C)$ is finite because both $A$ and $C$ belong to $\Jcont$ and $|\Diff(g_C,\Theta(A))|\leq 3n$ by Claim~\ref{claimBG}; contradiction.
 
Therefore the function $\tilde\Theta\colon \cP(\bbN)\to \cP(\bbN)$ defined by 
\[
\tilde\Theta(A)=\bigcup\Theta[A]
\]
is a C-measurable lifting of $\Phi$ on $\Jcont$. By \cite[Lemma~3.3.4]{Fa:AQ}, $\Phi$ decomposes as $\Phi_1\oplus\Phi_2$ where $\Phi_1$ has a continuous lifting and the kernel of $\Phi_2$ is nonmeager. By \cite[Theorem~1.6.1]{Fa:AQ}, $\Phi_1$ has a completely additive lifting, as required. 
\end{proof}

The conclusion of Theorem~\ref{T.OCAsharp-Fin} is incompatible with CH. An obvious route towards strengthening it is by requiring $\Phi$ to have a continuous lifting; this strengthening was even named Strong Extension Principle and conjectured to follow from forcing axioms in \cite[\S 4]{Fa:AQ} (the weak Extension Principle is a generalization of the conclusion of Theorem~\ref{T.OCAsharp-Fin}---or rather, its dual restatement in terms of Boolean algebras, see \cite[\S 1.3.1]{Fa:STCstar}---to \v Cech--Stone remainders of locally compact Polish spaces other than $\bbN$). This conjecture is, however, false. 
By \cite{dow2014non}, there is an endomorphism of $\cP(\bbN)/\Fin$ which does not have a lifting which is an endomorphism of $\cP(\bbN)$. 

\section{Proof of Theorem \ref{th:main}}\label{S:proof} 

Each of the two subsections of the present section is centered around a proposition. These two, together, will give the main result of this section:
\begin{theorem}\label{T.OCACR}
Assume $\OCA_T$ and $\MAsl$. Suppose that $\cM_n$ and $\cN_n$, for $n\in \bbN$, are countable sets and 
\[
\Phi\colon \prMF\to \prNF
\]
is a coordinate-respecting function. Then $\Phi$ is of twisted product form.
\end{theorem}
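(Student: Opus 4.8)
The plan is to reduce the statement to the $\Fin$-version of the $\OCA$ lifting theorem, Theorem~\ref{T.OCAsharp-Fin}, applied to the homomorphism $\alpha=\alpha^\Phi\colon\cP(\bbN)/\Fin\to\cP(\bbN)/\Fin$ that comes packaged with a coordinate-respecting function. First I would observe that, since $\Phi$ is coordinate-respecting, for each $S\in\cP(\bbN)/\Fin$ we have the induced map $\Phi_S\colon(\prMF)\rs S\to(\prNF)\rs\alpha(S)$ fitting into the commuting square of Definition~\ref{def:CR}, and that $\Phi_S$ inherits bijectivity (and, in the relevant categories, homomorphism status) from $\Phi$. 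The key point is that $\alpha$ is a homomorphism of $\cP(\bbN)/\Fin$, so by Theorem~\ref{T.OCAsharp-Fin} it decomposes as $\alpha_1\oplus\alpha_2$ with $\alpha_1$ completely additive and $\ker(\alpha_2)$ nonmeager. Since $\Phi$ is a bijection one expects $\alpha$ to be an isomorphism (this is the hypothesis that $\Phi$ is isomorphically coordinate-respecting, which by the convention adopted after Definition~\ref{Def.CR.Th} is built into ``coordinate-respecting''); an isomorphism of $\cP(\bbN)/\Fin$ with a completely additive part on a nonmeager set must in fact be completely additive everywhere, hence \emph{trivial} in the sense of Definition~\ref{Def.trivial}, i.e.\ induced by a bijection $f\colon\bbN\to\bbN$ (modulo finite). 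This $f$ will be the index map in the desired twisted product form.

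Next I would use $f$ to transport the problem to a ``diagonal'' situation. After re-indexing by $f$ we may assume $\alpha=\id$ on $\cP(\bbN)/\Fin$, so that $\Phi$ now \emph{coordinate-wise respects} $\Fin$: for all $a,b$ and all $S$, $a=_S b$ implies $\Phi(a)=_S\Phi(b)$. In particular, for each $n$, $a=_{\{n\}}b$ implies $\Phi(a)=_{\{n\}}\Phi(b)$, which says precisely that $\Phi(a)(n)$ depends only on $a(n)$; this produces fiber maps $h_n\colon\cM_n\to\cN_n$ with $a\mapsto(h_n(a(n)))_n$ lifting $\Phi$ on each singleton. The remaining task is to show these local liftings cohere into a global one, i.e.\ that $a\mapsto(h_n(a(n)))$ actually lifts $\Phi$ (modulo $\Fin$) on all of $\prMF$, not just on each coordinate separately. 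This is the step where I expect to need a genuine uniformization argument: the family of partial liftings (restrictions of $\Phi$ to sets $\cP(A)$, or equivalently a family of partial selectors for the spaces $\cY_n$ built from the $\cM_n,\cN_n$) must be shown not to split into uncountably many pairwise-incompatible pieces, and this is exactly what Proposition~\ref{P.Unif} (the $\OCA$ dichotomy for families of partial selectors) is designed to rule out, in tandem with Proposition~\ref{P.Jcont.nonmeager} to locate a tree-like family inside the ``good'' ideal. The role of $\MAsl$ enters here only through Proposition~\ref{lem:IprodccmodFin} (as flagged in the introduction), presumably to guarantee that the relevant ideal of sets on which $\Phi$ has a continuous/completely additive lifting is nonmeager and closed enough to apply the dichotomy.

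The main obstacle, and the technical heart of the proof, is the coherence/uniformization step: producing a \emph{single} bijection $f$ and \emph{single} fiber maps $h_n$ that simultaneously witness twisted product form, rather than a merely ``locally trivial'' isomorphism whose local data vary from coordinate to coordinate. Concretely, once the $h_n$ are defined coordinatewise one must run the $\OCA^\#$-flavored argument of \S\ref{S.partial.selectors}: set up the partition of $[\cF]^2$ analogous to $L_0/L_1$ of Definition~\ref{Def.L0} recording where two partial liftings disagree, show (using that $\Phi_S$ is a well-defined isomorphism for every $S$, hence the partial liftings are pairwise ``almost compatible'' à la Lemma~\ref{L.hA.coherent}) that there is no uncountable homogeneous set for the open colouring, invoke Proposition~\ref{P.Unif} to get the $\sigma$-homogeneous alternative, and finally assemble the countably many homogeneous pieces — together with a nonmeagerness argument and the Jankov--von Neumann uniformization as in the proof of Theorem~\ref{T.OCAsharp-Fin} — into a global Borel (hence, after a further reduction, completely additive) lifting of $\Phi$. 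That global lifting is then automatically of twisted product form, since a completely additive isomorphism between reduced products modulo $\Fin$ is exactly one induced by an index bijection and fiber bijections. The passage from ``$\Phi$ has a global Borel lifting'' to ``that lifting can be taken of the form $a\mapsto(h_n(a(f(n))))$'' is where the coordinate-respecting hypothesis does its real work: it forces the Borel lifting to be, coordinate-by-coordinate, a function of a single input coordinate, which is precisely the product-form constraint.
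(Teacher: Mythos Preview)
Your reduction to $\alpha^\Phi=\id$ via Theorem~\ref{T.OCA.Fin} is correct and matches the paper. The genuine gap is the next step, where you write
\begin{quote}
for each $n$, $a=_{\{n\}}b$ implies $\Phi(a)=_{\{n\}}\Phi(b)$, which says precisely that $\Phi(a)(n)$ depends only on $a(n)$; this produces fiber maps $h_n\colon\cM_n\to\cN_n$.
\end{quote}
This is false on two levels. First, $=_S$ is a relation in the \emph{reduced} product: by Notation~\ref{Notation.2.5}, $(\prMF)\rs S$ is a singleton whenever $S\in\Fin$, so $a=_{\{n\}}b$ holds vacuously for all $a,b$ and the implication carries no information. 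Second, $\Phi(a)$ lives in $\prNF$, so ``$\Phi(a)(n)$'' is not defined; only $\Phi_*(a)(n)$ for a chosen lifting $\Phi_*$ makes sense, and that depends on the lifting. The coordinate-respecting hypothesis is a purely modulo-$\Fin$ statement and yields no pointwise data whatsoever. In particular there is no direct way to define fiber maps $h_n$; if there were, the theorem would be a triviality not requiring $\OCAT$ or $\MAsl$. (The paper's final remark in \S\ref{S:proof} shows that a shortcut of this flavour \emph{does} work when $\sup_n|\cM_n|<\infty$, using finitely many global sections of $\prod_n\cM_n$; but it fails for infinite fibres.)

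What actually has to be done---and what your later paragraphs gesture toward but do not supply---is to \emph{produce} local product-form liftings before one can uniformize them. The paper defines $\cJprod=\{A\subseteq\bbN:\Phi_A\text{ is of product form}\}$ and proves (Proposition~\ref{lem:IprodccmodFin}) that $\cJprod$ meets every uncountable almost disjoint family; this is where both $\OCAT$ and $\MAsl$ are used, via an open colouring on pairs $(a,A)$ and a $\sigma$-linked forcing that collapses a putative $K_0$-homogeneous set onto two global sections $s^0,s^1$. Only after this does one have, for each $A\in\cJprod$, a sequence $(g^A_n)_{n\in A}$ of fibre maps, and Proposition~\ref{P.Uniformization} then uniformizes these (using $\OCAT$ alone, through Proposition~\ref{P.Unif} and a short Baire-category argument) into a single $(\tilde g_n)_n$ witnessing product form. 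Your proposal skips the existence step entirely and therefore has nothing for the uniformization machinery to act on.
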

Once Theorem~\ref{T.OCACR} is proved, Theorem~\ref{th:main} follows immediately:
\begin{proof}[Proof of Theorem~\ref{th:main}]
	Suppose that $\cM_n$ and $\cN_n$, for $n\in \bbN$, are countable structures of the same language and 	$\Phi\colon \prMF\to \prNF$	is a coordinate-respecting isomorphism. By $\OCAT$, $\MAsl$, and Theorem~\ref{T.OCACR}, $\Phi$ is of twisted product form. 
	Lemma~\ref{L.Phi.1} gives that $\Phi$ is trivial, and therefore the thesis.
\end{proof}

Let us fix some notation. Suppose that $\cM_n$, $\cN_n$, for $n\in\bbN$, are countable sets. Since $\prod_n\cM_n$ is empty as soon as some $\cM_n$ is, and the product of singletons is a singleton, we will always assume that all sets $\cM_n$ and $\cN_n$ have size at least two. We let $\cM=\prod_n\cM_n/\Fin$ and $\cN=\prod_n\cN_n/\Fin$, while for an infinite $X\in\cP(\bbN)$, we write $\cM\restriction X$ for $\prod_{n\in X}\cM_n/\Fin$, and let $\pi_X\colon \cM\to \cM\restriction X$ be the canonical quotient map. 

Let $ \Phi\colon \prMF\to \prNF$ be a coordinate-respecting function. By definition, we have an isomorphism $\alpha=\alpha^\Phi\colon \cPNF\to \cPNF$ such that for every $X\in \cPNF$ there exists a function $\Phi_X \colon \cM\rs X \to \cN\rs\alpha(X)$ that makes the following diagram commute
\begin{center}
\begin{tikzpicture}
\matrix[row sep=1cm,column sep=1cm]
{
& & \node (M1) {$\cM$}; && &\node (M2) {$\cN$};&\\
& & \node (Q1) {$\cM\rs X$}; &&& \node (Q2) {$\cN\rs\alpha(X)$} ;\\
};
\draw (M1) edge [->] node [above] {$\Phi$} (M2);
\draw (Q1) edge [->] node [above] {$\Phi_X$} (Q2);
\draw (M1) edge [->] node [left] {$\pi_X$} (Q1);
\draw (M2) edge [->] node [right] {$\pi_{\alpha(X)}$} (Q2);
\end{tikzpicture}
\end{center}
By $\OCAT$ and Theorem~\ref{T.OCA.Fin}, $\alpha$ is trivial, hence there is a bijection $f$ between cofinite subsets of $\bbN$ such that $X\mapsto f^{-1}[X]$ lifts $\alpha$. By removing finite sets from $\bbN$ we may assume that $f$ is a permutation of $\bbN$, and by re-indexing the $\cN_n$ we may assume that $f$ is the identity function. 
 
We will show that $ \Phi$ modified in this way is of product form, which will imply that the original $\Phi$ was of twisted product form. 

In Proposition~\ref{lem:IprodccmodFin} we will prove, using $\OCAT$ and $\MAsl$, that the ideal 
\begin{equation}\label{eq.Jprod}
\cJprod=\cJprod( \Phi)=\{A\subseteq \bbN\mid\text{ $\Phi_A$ is of product form}\}
\end{equation}
is nonmeager, thus proving a local version of Theorem~\ref{T.OCACR}. In \S\ref{S.Uniformization} we will use $\OCAT$ to uniformize local liftings associated with elements of $\cJprod$ and obtain a lifting of product form.

\subsection{Local version of Theorem~\ref{T.OCACR}}
 
A subset of a poset $\bbP$ is \emph{linked} if every two conditions in it are compatible. A poset $\bbP$ is said to be \emph{$\sigma$-linked}\index{S@$\sigma$-linked} if $\bbP$ is the union of countably many linked subsets. 

$\MAsl$ asserts that if $\bbP$ is $\sigma$-linked then for every family of $\aleph_1$ dense subsets of $\bbP$ some filter intersects all of them (see e.g., \cite{Ku:Book}). 

In the Proposition~\ref{lem:IprodccmodFin} below and elsewhere, countable sets are considered with respect to the discrete topology hence every product $\prod_n \cM_n$ of such sets is equipped with a natural Polish product topology.

\begin{proposition}\label{lem:IprodccmodFin} 
Assume $\OCAT$ and $\MA(\sigma$-linked). 
 If $\cM_n$ and $\cN_n$, for $n\in \bbN$, are countable sets and 
\[
\Phi\colon \prMF\to \prNF
\]
is a coordinate-respecting function with $\alpha^\Phi = \id$, then the ideal $\cJprod$ intersects every uncountable almost disjoint family. In particular $\cJprod$ is nonmeager.
 \end{proposition}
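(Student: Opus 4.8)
The plan is to show that $\cJprod$ intersects every uncountable almost disjoint family; the "in particular" then follows from the Talagrand–Jalali-Naini characterization of nonmeager ideals (as used in the proof of Proposition~\ref{P.Jcont.nonmeager}), since a meager ideal would be disjoint from the almost disjoint family built on the intervals $F_j$ witnessing meagerness. So fix an uncountable almost disjoint family $\cA$, and suppose toward a contradiction that $\Phi_A$ is not of product form for any $A\in\cA$.

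First I would set up the combinatorial skeleton. For each $A\in\cA$, the coordinate-respecting hypothesis (with $\alpha^\Phi=\id$) gives a function $\Phi_A\colon\cM\rs A\to\cN\rs A$ fitting into the commuting square, and I want to use $\MAsl$ to produce, for each $A$, a \emph{partial selector} $g_A\in\prod_{n\in D(g_A)}\cN_n^{\cM_n}$ — i.e.\ fiberwise maps $h_n\colon\cM_n\to\cN_n$ defined on an infinite subset $D(g_A)\subseteq A$ — that lifts $\Phi_A$ on $\cP(D(g_A))/\Fin$. This is where the hard work sits: I would design a $\sigma$-linked forcing $\bbP_A$ whose conditions are finite approximations to such fiberwise maps together with finite promises (a finite subset of $\cM_n$ on which $h_n$ is already decided, for finitely many $n$), and use countably many dense sets — indexed by elements of $\cM\rs A$ and by integers — to guarantee that the generic object is total on an infinite set and genuinely lifts $\Phi_A$ coordinatewise. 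The $\sigma$-linkedness should come from the fact that two conditions are compatible once their finite "decided" parts agree, and there are only countably many such finite parts since all $\cM_n,\cN_n$ are countable; so this is $\MAsl$ applied with $\aleph_1$ dense sets, which is exactly what is available. The non-triviality of $\Phi_A$ (failure of product form) is used to derive, in the contradiction step, that $D(g_A)$ can be taken with $A\setminus D(g_A)$ infinite, or more precisely to feed into the uniformization dichotomy below.

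Next I would assemble $\cF=\{(g_A,D(g_A))\mid A\in\cA\}$ as a family of partial selectors for $\cY_n=\cN_n^{\cM_n}$ (discrete countable spaces, so the space of partial selectors is Polish) and apply Proposition~\ref{P.Unif}, which under $\OCAT$ gives two alternatives. The coherence of the local liftings — any two of them agree modulo finite on the overlap of their domains, proved exactly as in Lemma~\ref{L.hA.coherent} using that a common refinement would otherwise produce a set on which two distinct fiberwise maps both lift $\Phi$ — rules out alternative \eqref{3.Unif}: on a set $A\in\cA$ meeting the tree-like family from that alternative we would get an uncountable family of selectors pairwise disagreeing cofinally on $A$, contradicting coherence (one packages this via an $L_0/L_1$-type partition and a $\sigma$-homogeneity lemma analogous to Lemma~\ref{L.D.Uniformization}). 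Hence alternative \eqref{1.Unif} holds: $\cF=\bigcup_n\cF_n$ with any two members of $\cF_n$ differing on at most $n$ coordinates. Since $\cA$ is uncountable, some $\cF_n$ is uncountable, so we find $A\neq B$ in $\cA$ whose selectors agree off a finite set; but $A,B$ are almost disjoint, so $D(g_A)\cap D(g_B)$ is already finite, which gives no contradiction directly — so instead I would observe that a single $\cF_n$ being uncountable, together with coherence, lets one glue the $g_A$'s for $A$ ranging over that uncountable subfamily into a single fiberwise map $h_n\colon\cM_n\to\cN_n$ defined on $\bigcup\{D(g_A)\}$, which is $\Fin$-cofinite (being the union of an almost disjoint refinement... ) and lifts $\Phi_A$ in product form for those $A$, contradicting our assumption that no $\Phi_A$ is of product form.

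The main obstacle I expect is the $\MAsl$ construction of the local fiberwise lifting $g_A$: one must verify that the natural poset of finite conditions really is $\sigma$-linked and that countably many dense sets suffice to force totality-on-an-infinite-set plus the coordinatewise lifting property, and in particular that the commuting-square data for $\Phi_A$ can be "read off coordinate by coordinate" along the generic. This is precisely the step the authors flag in the introduction as the sole use of $\MAsl$ in the paper, so it is genuinely the crux; everything downstream ($\OCAsharp$/Proposition~\ref{P.Unif} dichotomy plus coherence) is a variation on the $\cP(\bbN)/\Fin$ argument of \S\ref{S.Uniformization.Fin}.
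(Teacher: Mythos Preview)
Your approach has genuine gaps at both of its main steps. First, your use of $\MAsl$ to produce a product-form lifting $g_A$ on some infinite $D(g_A)\subseteq A$ cannot work as stated: to ensure the generic fiberwise map actually lifts $\Phi_A$ you need a dense set for each element of $\cM\rs A$, and $|\cM\rs A|=2^{\aleph_0}$, far beyond the $\aleph_1$ dense sets $\MAsl$ allows. More fundamentally, there is no a priori reason any infinite subset of $A$ lies in $\cJprod$ --- that is exactly what the proposition must establish --- so it is unclear what your poset could possibly force. Second, even granting the $g_A$'s, Proposition~\ref{P.Unif} yields nothing here: the domains $D(g_A)\subseteq A$ inherit almost-disjointness from $\cA$, so $\Diff(g_A,g_B)$ is automatically finite and alternative~\eqref{1.Unif} holds trivially. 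Your proposed gluing then shows at best that each $D(g_A)\in\cJprod$, not that any $A\in\cA$ is; and a product map lifting $\Phi$ on each $D(g_A)$ separately need not lift $\Phi$ on their union --- that is precisely the uniformization problem the paper treats in \S\ref{S.Uniformization}, \emph{after} $\cJprod$ is already known to be nonmeager. You have in effect imported the template of \S\ref{S.Uniformization.Fin} and \S\ref{S.Uniformization} into the proof of the existence step that those sections presuppose.

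The paper's argument is quite different in shape. It fixes a lifting $\Phi_*$ and colours pairs $\{(a,A),(b,B)\}$ with $a,b\in\prod_n\cM_n$, $A,B\in\cA$: the pair is in $K_0$ when $A\neq B$ and some $n\in A\cap B$ has $a(n)=b(n)$ but $\Phi_*(a)(n)\neq\Phi_*(b)(n)$. The role of $\MAsl$ is \emph{not} to build liftings but to rule out an uncountable $K_0$-homogeneous set $H$: a $\sigma$-linked poset of finite approximations produces two sequences $s^0,s^1\in\prod_n\cM_n$ such that $a(n)\in\{s^0(n),s^1(n)\}$ for all $(a,A)\in H$ and $n\in A$; then $\alpha^\Phi=\id$ forces $\Phi_*(a)(n)=\Phi_*(s^i)(n)$ for all but finitely many such $n$, contradicting $K_0$-homogeneity. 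With no such $H$, $\OCAT$ gives a countable $K_1$-cover, and from countable dense subsets of the pieces one reads off fiberwise maps $h_{kn}$ directly, proving $A\in\cJprod$ for all but countably many $A\in\cA$.
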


\begin{proof} 
 We will prove that for every almost disjoint family $\cA$ all but countably many of its elements belong to $\cJprod$ (see \eqref{eq.Jprod}). Using the well-known characterization of nonmeagerness of ideals on $\bbN$ due to Jalali-Naini and Talagrand (\cite[Theorem~3.10.1]{Fa:AQ}) again, this will imply that the ideal $ \cJprod$ is nonmeager. 

Fix $\cA$ and a lifting $\Phi_*$ of $\Phi$. Let $\cX=\{ (a,A) : A\in\mathcal{A}, a \in \prM \}$. Consider the partition $[\cX]^2 = K_0 \sqcup K_1$ defined by $\{(a,A),(b,B)\} \in K_0$ if and only if
\begin{enumerate}
\item $A\neq B$ and 
\item there is $n\in A\cap B$ such that $a(n)=b(n)$ and $\Phi_*(a)(n)\neq\Phi_*(b)(n)$.
\end{enumerate}
Note that $K_0$ is an open subset of $[\cX]^2$ in the separable metrizable topology obtained when $\cX$ is viewed as a subset of $\prM\times \cPN\times \prN$, via the identification $(a,A) \mapsto (a,A,\Phi_*(a))$.
 
We first aim to show that there exists no uncountable $K_0$-homogeneous subset of~$\cX$. Suppose this is not the case, and let $H\subseteq\cX$ be an uncountable $K_0$-homogeneous set. Consider the forcing $\mathbb{P}$ with conditions of the form $ p=(F_p,s^0_p,s^1_p,n_p)$ satisfying
\begin{itemize}
\item $F_p\subset H$ is finite, $n_p\in \bbN$ and $s^0_p,s^1_p\in\prod_{n<n_p}\cM_n$,
\item for every $(a,A),(b,B)\in F_p$ with $(a,A)\neq (b,B)$, $A\cap B\subseteq n_p$,
\item $\forall (a,A)\in F_p$, $\forall n\in A\cap n_p$, $a(n)\in \{ s^0_p(n), s^1_p(n) \}$. 
\end{itemize}
The order relation on $\mathbb{P}$ is given by
\[
q\leq p\Leftrightarrow F_p\subseteq F_q, n_p\leq n_q, s^0_p\subseteq s^0_q \text{ and } s^1_p\subseteq s^1_q.\]
\begin{claim} \label{C.5.3}The poset
$\mathbb{P}$ is $\sigma$-linked. 
\end{claim}
\begin{proof}
To $p\in \bbP$ associate the invariant
\[
\bbI(p)=\< s^0_p, s^1_p, (a\upharpoonright n_p,A\upharpoonright n_p): (a,A)\in F_p\>.
\]
There are only countably many choices for $\mathbb I(p)$. We prove that if $\mathbb I(p)=\mathbb I(q)$, then $p$ and $q$ are compatible. This will conclude the proof of the claim.

Define $F_r=F_p\cup F_q$ and choose $n_r$ large enough to have $A\cap B\subseteq n_r$ for all distinct $(a,A)$ and $(b,B)$ in $F_r$. It remains to define $s^0_r(n)$ and $s^1_r(n)$ for every $n\in n_r\setminus n_p$ so that $r$ is a condition. Any such $r$ will extend both $p$ and $q$. 

Fix $n\in n_r\setminus n_p$. Since $p\in \bbP$, there is at most one $(a,A)\in F_p$ such that $n\in A$. Similarly, there is at most one $(b,B)\in F_q$ such that $n\in B$. If both $(a,A)$ and $(b,B)$ are defined, 
let $s^0_r(n)=a(n)$ and $s^1_r(n)=b(n)$. Otherwise, we may choose one (or both) $s^j_r(n)$ arbitrarily. 

Clearly $r$ defined in this manner belongs to $\bbP$ and extends both $p$ and $q$. 
\end{proof}

Let $\{ (a_\xi, A_\xi): \xi < \omega_1\}$ be an enumeration of $H$ (by having passed to a subset if necessary, we may assume that $H$ has cardinality $\aleph_1$). Consider the sets $E_\xi=\{q\in \bbP\mid (\exists \eta > \xi) (a_\eta, A_\eta) \in F_q\}$, for $\xi < \omega_1$, and $E_n=\{q\in \bbP\mid n_q>n\}$, for $n\in\bbN$. 
Each $E_n$ is dense in $\bbP$, as can be seen from the proof of Claim~\ref{C.5.3}. 
Each one of the $E_\xi$ is a nonempty subset of $\bbP$. Since $\bbP$ has the countable chain condition, some $p\in \bbP$ forces that the generic filter $\dot G$ intersects uncountably many of the $E_\xi$ nontrivially. (To see this, assume otherwise and fix a maximal antichain consisting of conditions that decide the supremum of the set of all $\xi$ such that $\dot G\cap E_\xi\neq \emptyset$. Since the antichain is countable we have $E_\xi=\emptyset$ for all but countably many $\xi$; contradiction.) Since $\xi<\eta$ implies $E_\xi\supseteq E_\eta$, each $E_\xi$ is dense below $p$.

By $\MA(\sigma$-linked) there exists a filter $G \subseteq \bbP$ that intersects all $E_n$ and all $E_\xi$ nontrivially. Therefore $\bigcup_{q\in G} F_q \subseteq H$ is uncountable and the set $\{n_q: q\in G\}$ is infinite. Hence, by shrinking $H$ if necessary we can actually assume that there exist $s^0,s^1\in\prod_n\cM_n$ such that 
\[ 
\forall (a,A)\in H \ \forall n\in A\ a(n)\in\{s^0(n),s^1(n)\}. 
\]
Fix $(a,A)\in H$, and let 
\[
I^0_{(a,A)}=\{ n\in A: a(n)=s^0(n) \}\text{ and }
I^1_{(a,A)}=\{ n\in A: a(n)=s^1(n) \}. 
\]
Note that $A= I^0_{(a,A)}\cup I^1_{(a,A)}$, and 
\[
\pi_{I^i_{(a,A)}}(a)=\pi_{ I^i_{(a,A)}}(s^i)\text{ for }i=0,1.
\]
As $\alpha=\alpha^\Phi$ is the identity, we have that 
\[
\pi_ {I^i_{(a,A)}}(\Phi_*(a))=\pi_{ I^i_{(a,A)}}(\liftphi(s^i))\text{ for }i=0,1.
\]
 Thus, by shrinking $H$ further if necessary, we can assume that there exists $m\in \bbN$ such that for all $(a,A),(b,B)\in H$, 
\[
\liftphi(a)(n)=\liftphi(s^i)(n) \text{ if }n\geq m \text{ and } n\in I^i_{(a,A)},
\]
and $\liftphi(a)\rs m = \liftphi(b)\rs m$.
 
We now check that for any distinct $(a,A),(b,B)\in H$, and every $ n\in A\cap B$,
\[
\text{ if } a(n)=b(n)\text{ then } \liftphi(a)(n)=\liftphi(b)(n),
\]
which clearly contradicts $K_0$-homogeneity of $H$. Indeed, let $n\in A\cap B$, with $a(n)=b(n)$, if $n< m$ then $\liftphi(a)(n)=\liftphi(b)(n)$, if $n\geq m$, and $n\in I^i_{(a,A)}$, then also $n\in I^i_{(b,B)}$, and 
\[ 
\liftphi(a)(n)=\liftphi(s^i)(n)=\liftphi(b)(n).
\]

Thus far we have proved that there exists no uncountable $K_0$-homo\-ge\-neous subset of $\mathcal{X}$.
By applying $ \OCAT$, let $\mathcal{X}=\bigcup_{n\in\bbN} X_n$, where every $X_n$ is $K_1$-homogeneous. For each $n$ fix a dense countable subset $D_n$ of $X_n$.

Define 
\[
\mathcal{A}_{\text{bad}}=\{ A\in\mathcal{A}: \exists a\in\prod_n\cM_n\,( (a,A)\in\bigcup_n D_n 
)\}.
\]
For every two natural numbers $k$ and $n$, pick a map $h_{kn}\colon\cM_n\to\cN_n$ with the property that for every $u\in\cM_n$: 
\begin{center}\begin{minipage}{0.85\textwidth}
 if there exists $(b,B)\in D_k$ such that $n\in B$ and $b(n)=u$, then there exists $(b,B)\in D_k$ such that $n\in B$, $b(n)=u$ and $\liftphi(b)(n)=h_{kn}(u)$.
\end{minipage} \end{center}
It suffices to prove the following claim to finish the proof of Proposition~\ref{lem:IprodccmodFin}.

\begin{claim}\label{cl:claim prod}
For every $A\in \mathcal{A}\setminus \mathcal{A}_{\textrm{bad}}$ there exists $k\in \bbN$ such that for every $a\in \prod_{n}\cM_n$:
\[
\Phi_*(a)(n)=h_{kn}(a(n)) \quad \forall^\infty n\in A.
\]
\end{claim}

\begin{proof}
 Suppose otherwise, and fix $A\in \mathcal{A}\setminus \mathcal{A}_{bad}$ and a sequence $(a_k)_k$ of elements of $\prod_n\cM_n$ such that for every $k$ the set
\[
Z_k=\{ n\in A: \liftphi(a_k) (n) \neq h_{kn}(a_k(n)) \}
\]
is infinite.

Let $Y_k\subseteq Z_k$ be infinite and pairwise disjoint, and fix $a\in\prod_n\cM_n$ such that $a\rs Y_k=a_k\rs Y_k$ for every $k$. Now suppose $(a,A)\in X_k$. Since $a\rs Y_k=a_k\rs Y_k$, there is $m$ such that for all $n\in Y_k$ with $n\geq m$ we have $\Phi_*(a)(n)=\Phi_*(a_k)(n)$. By density of $D_k$, pick $n\in Y_k$ with $n\geq m$ and $(b,B)\in D_k$ such that 
\[
n\in B\text{ and } b(n)=a(n) \text{ and } \liftphi(b)(n)=\liftphi(a)(n).
\]
By the choice of $h_{kn}$, it follows that there exists $(b',B')\in D_k$ such that $n\in B'$, $b'(n)=a(n)$, and $\liftphi(b')(n)=h_{kn}(a(n))$. By $K_1$-homogeneity of $X_k$, we get $\liftphi(b')(n)=\liftphi(a)(n)$. Thus, 
\[
\liftphi(a_k)(n)=\liftphi(a)(n)=\liftphi(b')(n)=h_{kn}(a(n))=h_{kn}(a_k(n)).
\] 
 This contradicts $n\in Z_k$.
\end{proof}  
This proves that $\cA\setminus\cJprod$ is countable and in particular that $\cJprod$ intersects $\cA$ nontrivially.
\end{proof}

\begin{remark}\label{rmk:noMA}
	All applications of $\MAsl$ in the present paper filter through Proposition~\ref{lem:IprodccmodFin}. We conjecture that the conclusion of Proposition~\ref{lem:IprodccmodFin} follows from $\OCAT$ alone. 
		On the other hand, the proof we have in mind would (if correct) be more laborious, because the key difficulties (first and foremost, the fact that the structures $\cM_n$ and $\cN_n$ can be infinite) seem to demand additional extensive detours to be circumvented.
		It would be desirable to find a concise and elegant proof that $\OCAT$ suffices to prove Proposition~\ref{lem:IprodccmodFin}.
\end{remark}

\subsection{Uniformization}\label{S.Uniformization}

In the remaining part of the proof of Theorem~\ref{T.OCACR} we will not be using $\MAsl$. This is why the conclusion of Proposition~\ref{lem:IprodccmodFin} is included into the assumptions of the main result of this subsection, Proposition~\ref{P.Uniformization}. 

\begin{proposition}\label{P.Uniformization}
Assume $\OCAT$. Let $\cM_n$ and $\cN_n$, for $n\in \bbN$, be countable sets, and let
\[
\Phi\colon \prMF\to \prNF
\]
be a coordinate-respecting function with $\alpha^\Phi = \id$ such that the ideal $\cJprod$ intersects every perfect tree-like almost disjoint family nontrivially. Then~$\Phi$ is of product form.
\end{proposition}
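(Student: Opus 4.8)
The plan is to mimic, in this more general setting, the uniformization argument used in the proof of Theorem~\ref{T.OCAsharp-Fin}, now working with the family of local fiberwise liftings coming from elements of $\cJprod$ rather than the coherent family $(g_A)$. First, for each $A\in\cJprod$ the restriction $\Phi_A$ is of product form, so we may fix a sequence $(h^A_n)_{n\in A}$ with $h^A_n\colon\cM_n\to\cN_n$ such that $a\mapsto(h^A_n(a(n)))_{n\in A}$ lifts $\Phi_A$. As in Lemma~\ref{L.hA.coherent}, the hypothesis $\alpha^\Phi=\id$ together with the fact that two such local liftings must agree modulo $\Fin$ on the overlap of their domains yields coherence: for all $A,B\in\cJprod$ the set $\{n\in A\cap B\mid h^A_n\neq h^B_n\}$ is finite. (This is the only coherence condition needed; there is no analogue of the second clause of Lemma~\ref{L.hA.coherent} because here the $h^A_n$ are honest functions on coordinates, not set-valued.) I would then form the family of partial selectors $\cF=\{(h^A,A)\mid A\in\cJprod\}$ with $\cY_n$ the (countable, hence Polish-discrete) space of all functions $\cM_n\to\cN_n$, exactly as in Definition~\ref{Def.PartialSelector}, and apply Proposition~\ref{P.Unif}.

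Proposition~\ref{P.Unif} gives two alternatives. I would rule out alternative \eqref{3.Unif} using the tree-likeness hypothesis: if there were a perfect tree-like almost disjoint family $\cA$, an uncountable $Z\subseteq\twoN$ and $f\colon Z\to\cF$ with $(\Diff(f(x),f(y))\cap A)\setminus\Delta(x,y)\neq\emptyset$ for all $A\in\cA$ and distinct $x,y\in Z$, then by hypothesis some $A\in\cA\cap\cJprod$; but the coherence established above says that for any two elements $(h^{A_1},A_1),(h^{A_2},A_2)$ of $\cF$ the set $\Diff(h^{A_1},h^{A_2})\cap A\subseteq\{n\in A_1\cap A_2\cap A\mid h^{A_1}_n\neq h^{A_2}_n\}$ is finite. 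Shrinking $Z$ to an uncountable set on which the (finite) value of this set depends only on $x\wedge y$ in the obvious way, or more directly running the standard counting argument (as in the proof of Theorem~\ref{T.OCAsharp-Fin}, where one notes that an uncountable $L_0$-homogeneous set inside a $\sigma$-$L_1$-homogeneous set is impossible), produces a contradiction: pairwise-disjoint-on-a-branch infinite pieces of $\Diff$'s cannot all be nonempty when every pairwise $\Diff$ is finite. So alternative \eqref{1.Unif} holds: $\cF=\bigcup_n\cF_n$ with $|\Diff(g,g')|\le n$ for all $g,g'\in\cF_n$.

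From here the argument parallels the second half of the proof of Theorem~\ref{T.OCAsharp-Fin} essentially verbatim. Since $\cJprod$ is nonmeager (Proposition~\ref{lem:IprodccmodFin} is folded into the hypothesis, so $\cJprod$ meets every perfect tree-like a.d.\ family and is therefore nonmeager by Talagrand--Jalali-Naini), some $\cX_n=\{A\mid(h^A,A)\in\cF_n\}$ is nonmeager. Taking the closure $\cG$ of $\{(g,B)\mid\exists(h^A,A)\in\cF_n\ (B\subseteq A,\ g=h^A\rs B)\}$ in the Polish space of partial selectors, the projection $\cW=\{B\mid\exists g\ (g,B)\in\cG\}$ is analytic, hereditary, nonmeager, and all pairs in $\cG$ have $\Diff$ of size $\le n$. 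One then extracts a basic clopen $U=\{A\mid A\cap J_0=\emptyset\}$ with $U\setminus\cW$ meager, a partition of $\bbN\setminus J_0$ into intervals $J_i$ with traces $s_i$ so that the comeager set $\{A\mid\exists^\infty i\ A\cap J_i=s_i\}\subseteq\cW$, splits into even/odd halves, defines the relation $\cV$ exactly as in that proof (with $h_\even,h_\odd\in\cG$ over $B_\even,B_\odd$, $\Diff(h_\even,g)=\emptyset$, $|\Diff(h_\odd,g)|\le n$), proves the analogue of Claim~\ref{claimBG} ($(g,B)\in\cV$ exists for every $B$ and any such $g$ has $|\Diff(g,h^C)|\le 3n$ for $C\in\cX_n$), applies the Jankov--von Neumann uniformization (Theorem~\ref{JVN}) to get a C-measurable $\Theta$ with $(\Theta(A),A)\in\cV$, and checks via Talagrand--Jalali-Naini plus the $3n$ bound that $\Diff(\Theta(A),h^A)$ is finite for every $A\in\cJprod$. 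Thus $B\mapsto(\Theta(\bbN)(n)(b(n)))_n$ — i.e.\ the single sequence of fiber maps $h_n:=\Theta(\bbN)(n)$ — lifts $\Phi$ on all of $\cJprod$; since $\cJprod$ is nonmeager and $\Phi$ is a function defined everywhere, a standard argument (using that $\cJprod$ is dense, or that any hereditary nonmeager ideal containing $\cJprod$ must be all of $\cP(\bbN)$ by coherence — or simply that $\Phi$ is given on every element of an almost disjoint family and these cohere) upgrades this to: $a\mapsto(h_n(a(n)))_n$ lifts $\Phi$ on $\cP(\bbN)/\Fin$, so $\Phi$ is of product form. The main obstacle I anticipate is the bookkeeping in transplanting the Claim~\ref{claimBG}/$\Theta$ machinery when the $\cY_n$ are spaces of functions rather than finite subsets of $\bbN$; but since these spaces are still countable and discrete, every step — closure in a Polish space, the $\Diff$ counting, Jankov--von Neumann — goes through unchanged, and the genuinely new input is only the coherence of the $h^A_n$, which is immediate once $\alpha^\Phi=\id$.
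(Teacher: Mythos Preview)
Your approach is correct, and the first half (fixing local liftings $h^A$, establishing coherence, applying Proposition~\ref{P.Unif}, and ruling out alternative~\eqref{3.Unif} via the tree-like hypothesis) matches the paper's proof essentially verbatim. The second half, however, diverges. You transplant the full machinery of Theorem~\ref{T.OCAsharp-Fin}: the closed set $\cG$, the analytic hereditary $\cW$, the even/odd split, the set $\cV$, and Jankov--von Neumann uniformization. The paper instead runs a short finitary argument: having fixed a nonmeager $\cG_{\bar n}$ with pairwise $\Diff$ bounded by $\bar n$, it attempts to recursively build a sequence $(A(i),B(i),k(i),x(i))_{i<\bar n}$ with $A(i),B(i)\in\cG_{\bar n}$, $g^{A(i)}_{k(i)}(x(i))\neq g^{B(i)}_{k(i)}(x(i))$, and such that the side-conditions keep certain sets $\cA_i,\cB_i\subseteq\cG_{\bar n}$ nonmeager. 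Since such a sequence forces $|\Diff(g^{A(i)},g^{B(i)})|>i$, the recursion halts before $\bar n$; at the halting stage, a counting argument over the countable sets $\cM_k,\cN_k$ produces functions $\tilde g_k$ such that $\{C\in\cG_{\bar n}\mid g^C_k=\tilde g_k\}$ is relatively comeager for every $k$, and a Talagrand--Jalali-Naini argument (your Claim~\ref{C.2} analogue) finishes. This avoids any measurable selection. Your route works, but note that the Jankov--von Neumann step is strictly unnecessary: by (the analogue of) Claim~\ref{claimBG} there \emph{exists} $g$ with $(g,\bbN)\in\cV$, and any such $g$ already satisfies $|\Diff(g,h^C)|\leq 3n$ for every $C\in\cX_n$; taking $h_k=g(k)$ and running your final Talagrand--Jalali-Naini argument suffices. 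Your closing ``upgrade'' from $\cJprod$ to all of $\prod_n\cM_n/\Fin$ is slightly underspecified (the phrases about density and almost disjoint families are not quite the right mechanism), but the correct argument---if $S_a=\{k\mid\Phi_*(a)(k)\neq h_k(a(k))\}$ were infinite, pick an infinite $X\subseteq S_a$ with $X\subseteq C$ for some $C\in\cX_n$, then combine $|\Diff(h,h^C)|\leq 3n$ with the fact that $h^C$ lifts $\Phi_C$---is the one the paper uses.
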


\begin{proof} 
Using the notation set up before Definition~\ref{Def.CR.Th}, if $A\in\mathcal P(\bbN)/\Fin$, then $\Phi_A$ is the induced function between $\prod_n\cM_n/\Fin\restriction A$ and $\prod_{n\in A}\cN_n/\Fin\restriction A$.

For every $A\in \cJprod$ fix a sequence 
\[
g^A=(g^A_n: n\in A)
\] 
of functions with $g^A_n\colon \cM_n\to \cN_n$ such that the function $\Psi_A\colon \prod_{n \in A} \cM_n \to \prod_{n \in A} \cN_n $ defined by
\begin{equation}\label{eq.PsiA}
	\Psi_A(a)(n)=g^A_n(a(n)), \text{ for }n\in A,
\end{equation}
is a lifting of $\Phi_A$.

Consider the Polish space $\cY_n=(\cN_n)^{\cM_n}$. 
Then $\{(g^A,A)\mid A\in \cJprod\}$ is a family of partial selectors as defined and studied in \S\ref{S.partial.selectors}. 
The following shows that the functions $g^A$, for $A \in \cJprod $, form a coherent family in the sense of \cite[\S 2]{Fa:AQ}. 

\begin{claim}\label{C.Coherence} 
For all $A$ and $B$ in $\cJprod $ the set
\[ 
\Diff(g^A,g^B)= \{j \in A \cap B : g^A_j \neq g^B_j\}
\]
is finite. 
\end{claim}

\begin{proof} 
The proof is very similar to the proof of Lemma~\ref{L.hA.coherent}. Fix $A$ and $B$, let $S=\Diff(g^A,g^B)$, and choose $c\in \prod_n\cM_n$ such that $g^A_n(c(n))\neq g^B_n(c(n))$ for all $n\in S$. The assumptions on $\Phi$ imply that $\pi_S(\Phi_A(\pi_A(c)))=\pi_S(\Phi_B(\pi_B(c)))$, but on the other hand $\Psi_A(c)(n)\neq \Psi_B(c)(n)$ for all $n\in S$; therefore $S$ is finite. 
\end{proof}

\begin{claim}\label{C.Uniformization.coherent}
	There are sets $\cG_n$, for $n\in\bbN$, such that $\cJprod=\bigcup_n \cG_n$, and for all $n$ and all $A,B$ in $\cG_n$ we have $|\Diff(g^A,g^B)|\leq n$.
\end{claim}
\begin{proof}	Assume otherwise. By $\OCAT$ and Proposition~\ref{P.Unif} the following holds. 
\begin{enumerate}[label=($\ast$)]
\item\label{c2}There are a perfect tree-like almost disjoint family $\cA$, an uncountable $Z\subseteq \twoN$, and $f\colon Z\to \cJprod$ such that for every $A\in \cA$ and all distinct $x,y$ in $Z$ we have 
\[
(\Diff(g^{f(x)},g^{f(y)})\cap A)\setminus \Delta(x,y)\neq \emptyset. 
\]
\end{enumerate}
 By assumption, $\cJprod$ intersects every perfect tree-like almost disjoint family. Fix $B\in \cA\cap \cJprod$. For $x\in Z$ let $n(x)=\max (\Diff(g^{f(x)},g^B))$ (with $\max\emptyset=0$). Since $Z$ is uncountable, there is $n$ such that $Z'=\{x\in Z\mid n(x) < n\}$ is uncountable. Choose distinct $y,z$ in $Z'$ satisfying $\Delta(y,z)\geq n$ and fix $j$ in $(\Diff(g^{f(y)},g^{f(z)})\cap B)\setminus n$. Then $g^{B}_j=g^{f(y)}_j\neq g^{f(z)}_j=g^B_j$; contradiction. 
\end{proof}

Since $\cJprod$ is nonmeager (again by Talagrand and Jalali-Naini, \cite[Theorem~3.10.1]{Fa:AQ}), there is $\bar n$ such that $\cG_{\bar n}$ as provided by Claim~\ref{C.Uniformization.coherent}
is nonmeager. 
We will attempt to find a sequence $(A(i), B(i), k(i), x(i))$, for $i\in \bbN$, with the following properties for all $i$. 
\begin{enumerate}
	\item $A(i)$ and $B(i)$ belong to $\cG_{\bar n}$. 
	\item $k(i)\in A(i)\cap B(i)$. 
	\item $x(i)\in \cM_{k(i)}$. 
	\item $	g^{A(i)}_{k(i)}(x(i))\neq g^{B(i)}_{k(i)}(x(i))$. 
	\item If $1\leq j<i$ then $g^{A(i)}_{k(j)}(x(j))=g^{A(j)}_{k(j)}(x(j))$ and $g^{B(i)}_{k(j)}(x(j))=g^{B(j)}_{k(j)}(x(j))$. 
	\item Each of the following two sets is nonmeager. 
	\begin{align*}
		\cA_i&=\{C\in \cG_{\bar n}\mid (\forall j<i) k(j)\in C\text{ and } g^C_{k(j)}(x(j))=g^{A(j)}_{k(j)}(x(j))\},\\ 
		\cB_i&=\{C\in \cG_{\bar n}\mid (\forall j<i) k(j)\in C\text{ and } g^C_{k(j)}(x(j))=g^{B(j)}_{k(j)}(x(j))\}. 
	\end{align*}
\end{enumerate}
	These conditions imply that $\Diff(g^{A(i)}, g^{B(i)})\supseteq \{k(j)\mid j\leq i\}$, and by the choice of~$\cG_{\bar n}$, the recursive construction of this sequence has to stop before $i=\bar n$. 
	Note that $\cA_0=\cB_0=\cG_{\bar n}$ and 
	set $k(-1)=-1$. 
	
	Fix the least $i\leq \bar n$ such that $(A(i), B(i), k(i), x(i))$ cannot be chosen (such $i$ exists and $i<\bar n$). What this implies is that for every $k>k(i-1)$, every $x\in \cM_k$, and all $y\neq z$ in $\cN_k$, at least one of the sets $\{C\in \cA_i\mid g^C_k(x)=y\}$ and
	$\{C\in \cB_i\mid g^C_k(x)=z\}$
	is meager. Since $\cM_k$ and $\cN_k$ are countable, this means that for every $k>k(i-1)$ there is a function $\tilde g_k\colon \cM_k\to \cN_k$ such that the set 
	\[
	\calH_k=\{C\in \cA_i \cup \cB_i\mid g^C_k=\tilde g_k\}
	\]	
	is relatively comeager in $\cA_i\cup \cB_i$. 
Therefore $\bigcap_{k>k(i-1)}\calH_k$ is relatively comeager in $\cA_i\cap \cB_i$. 
Let $\hat \calH$ denote the hereditary closure of this set. 

For $A\in \hat \calH$ pick $C\in \calH$ such that $A\subseteq B$ and let $g^A_k=g^C_k$ for all $k\in A\setminus k(i-1)$. Note that $g^A_k=\tilde g_k$ for all $k\in A\setminus k(i-1)$. 
\cite[Theorem~3.10.1(a)]{Fa:AQ} implies that there exists $\bar k$ such that every finite subset of $\bbN\setminus \bar k$ is in $\hat \calH$.

\begin{claim}\label{C.2}
For every $a\in \prod_n \cM_n$ the set $S_a=\{k\mid \Phi_*(a)(k)\neq \tilde g_k(a(k))\}$ is finite. 
\end{claim}

\begin{proof}
Assume otherwise. Since $\hat \cG_{\bar n}$ is nonmeager and hereditary, there is an infinite $X\subseteq S_a$ in $\hat \calH$ (\cite[Theorem~3.10.1 (a)]{Fa:AQ} gives a considerably stronger statement). Let $Y\in\calH$ be such that $X\subseteq Y$. We have $g^Y_k=\tilde g_k$ for all $k>k(i-1)$, in particular for all $k\in X\setminus k(i-1)$. Therefore $g^Y_k(a(k))\neq \Phi_*(a)(k)$ for infinitely many $k$, contradicting our assumption that $g^Y$ witnesses $Y\in \cJprod$. 
\end{proof}

Claim~\ref{C.2} implies that $\tilde g_k$, for $k>k(i-1)$, provide a lifting of product form. This is the thesis.
\end{proof}

\begin{proof}[Proof of Theorem~\ref{T.OCACR}]
Suppose that $\cM_n$ and $\cN_n$, for $n\in \bbN$, are countable sets, and that $\Phi\colon \prMF\to \prNF$
	is a coordinate-respecting function. Exactly as in the discussion at the beginning of \S\ref{S:proof}, we can assume that $\alpha^{\Phi}$, the automorphism of $\mathcal P(\bbN)/\Fin$ given by the definition of coordinate-respecting, is the identity. By $\OCAT$, $\MAsl$, and Proposition~\ref{lem:IprodccmodFin}, the ideal $\cJprod$ intersects every uncountable almost disjoint family, and in particular every perfect tree-like one. By $\OCAT$ and Proposition~\ref{P.Uniformization}, $\Phi$ is of product form. 
\end{proof}
\begin{remark}
In a previous version of this manuscript we had a different, longer, proof of Theorem~\ref{T.OCACR}. We had shown that the hypotheses of Proposition~\ref{P.Uniformization} imply the existence of a Baire-measurable lifting and that every coordinate-respecting map between reduced products with Baire-measurable lifting is of twisted product form. 
\end{remark}
\begin{remark}
In the case when $\sup_n |\cM_n|$ is finite,  a considerably simpler argument shows that every coordinate-respecting map $\Phi\colon\prod \cM_n/\Fin \to \prod\cN_n/\Fin$ such that  $\alpha^\Phi=\id$ is automatically of product form. 
Indeed, suppose that $\sup_n |\cM_n| =m$ is finite  and fix enumerations $\cM_n=\{ x^n_i:i< |\cM_n| \}$ for each $n$. For $i<m$, define $a_i\in\prod_n \cM_n$ by
\[
a_i(n)=\begin{cases}x^n_i&\text{if }i<|\cM_n|,\\
x^n_0&\text{otherwise}.
\end{cases}
\]
	It is easy to verify that the sequence $(g_n)_n$ of maps $g_n\colon \cM_n\to \cN_n$ defined by  $g_n(x^n_i)=\Phi_\ast(a_i)(n)$  satisfies $\Phi[ (g_n) ] = \Phi$.
 In this case Propositions~\ref{lem:IprodccmodFin} and ~\ref{P.Uniformization} are a fortiori true in $\ZFC$, and Theorem~\ref{T.OCACR} follows from (in fact, is equivalent to) the statement that every automorphism of the Boolean algebra $\cP(\bbN)/ \Fin$ is trivial.
 
We suspect that this equivalence does not hold in general, when the structures~$\cM_n$ are allowed to be infinite. 

The assumption that $\alpha^\Phi=\id$ cannot be dropped from the above argument. 
If $\cP(\bbN)/\Fin$ is identified with $\prod_{\Fin} \{0,1\}$ (where $\{0,1\}$ is the two-element Boolean algebra), then each one of its automorphisms is coordinate-respecting, yet $\CH$ implies that some automorphisms are nontrivial, or equivalently, not of product form.
\end{remark}
\section{Applications} \label{S.Applications}

\subsection{Maximal rigidity}
The following is used in the proofs of Theorems~\ref{T.Fields}--\ref{T.Graphs}.

\begin{theorem}\label{cor:triv} 
Assume $ \OCAT+\MAsl$. If $\cM_n$, $\cN_n$, for $n\in \bbN$ are structures of the same finite language whose theory recognizes coordinates, then every isomorphism $\Phi\colon \prMF\to \prNF$ is trivial. 
 
In particular, $\prMF\cong\prNF$ if and only if there is a bijection $f$ between cofinite subsets of $\bbN$ such that $\cM_{f(n)}\cong \cN_n$ for all~$n$. 
\end{theorem}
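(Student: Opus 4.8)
\textbf{Proof plan for Theorem~\ref{cor:triv}.}
The plan is to assemble the pieces already developed in the paper. Fix structures $\cM_n$, $\cN_n$ in the same finite language whose theory $T$ recognizes coordinates, and let $\Phi\colon \prMF\to \prNF$ be an isomorphism. First I would invoke the hypothesis that $T$ recognizes coordinates (Definition~\ref{Def.CR.Th}): since $\Phi$ is an isomorphism between reduced products of models of $T$, it is (isomorphically) coordinate-respecting. Then I would apply Theorem~\ref{th:main}, whose hypotheses are exactly $\OCAT+\MAsl$ together with $\Phi$ being a coordinate-respecting isomorphism; this yields that $\Phi$ is trivial. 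This already proves the first assertion.

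For the "in particular" clause, one direction is immediate: if $f$ is a bijection between cofinite subsets of $\bbN$ with $\cM_{f(n)}\cong\cN_n$ for all $n$, pick isomorphisms $h_n\colon\cM_{f(n)}\to\cN_n$ and observe that $\Phi[f,(h_n)]$ (Definition~\ref{Def.trivial}) is a well-defined isomorphism between $\prMF$ and $\prNF$; the fact that each $h_n$ is an isomorphism of $\calL$-structures makes this a (bijective) trivial homomorphism, and since the language is finite the extra clause in Definition~\ref{Def.trivial}\eqref{3.Def.trivial} is automatic. For the converse, suppose $\Phi\colon\prMF\to\prNF$ is an isomorphism. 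By the first part, $\Phi$ is trivial, so it is of twisted product form $\Phi=\Phi[f,(h_n)]$ with $h_n\colon\cM_{f(n)}\to\cN_n$, and (the language being finite) all but finitely many $h_n$ are homomorphisms. Applying Lemma~\ref{L.Phi.1}: part \eqref{1.L.Phi.1} shows $f$ is a Rudin--Keisler reduction of $\Fin$ to $\Fin$, hence (after discarding a finite set) a bijection between cofinite subsets of $\bbN$; part \eqref{3.L.Phi.1} gives that $h_n$ is a bijection for all but finitely many $n$; and part \eqref{2.L.Phi.1} gives that $h_n$ is a homomorphism for all but finitely many $n$. Since $\Phi$ is an isomorphism, $\Phi^{-1}$ is also trivial (being an isomorphism between reduced products of models of $T$), and running the same analysis on $\Phi^{-1}$ shows that for all but finitely many $n$ the inverse map $h_n^{-1}\colon\cN_n\to\cM_{f(n)}$ is a homomorphism as well; thus $h_n$ is an isomorphism $\cM_{f(n)}\cong\cN_n$ for all but finitely many $n$. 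Shrinking $f$ to a cofinite set on which all these conditions hold, we obtain the desired bijection between cofinite subsets of $\bbN$ with $\cM_{f(n)}\cong\cN_n$.

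I expect the only genuine subtlety to be the bookkeeping in the converse: ensuring that "all but finitely many" statements coming separately from $\Phi$ and from $\Phi^{-1}$, together with the Rudin--Keisler property of $f$, can be simultaneously arranged by passing to a single cofinite subset of $\bbN$ and correspondingly reindexing. This is routine once one notes that a Rudin--Keisler reduction of $\Fin$ to $\Fin$ is, modulo finite sets, a bijection, so that restricting $f$ to a cofinite set and throwing away its (finite) non-injective part and the (finitely many) bad indices causes no loss. The main conceptual content is entirely carried by Theorem~\ref{th:main} and by the verification (Proposition~\ref{P.Th.CR}) that the relevant theories recognize coordinates; here we simply package those results.
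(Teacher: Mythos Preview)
Your proposal is correct and follows the same approach as the paper: invoke Definition~\ref{Def.CR.Th} to get that $\Phi$ is coordinate-respecting, then apply Theorem~\ref{th:main}. The paper's own proof is just those two sentences; you have added a careful unpacking of the ``in particular'' clause, which the paper leaves implicit. One small point in that unpacking: applying the first part to $\Phi^{-1}$ yields a priori \emph{new} twist data $(g,(k_m))$, not directly the maps $h_n^{-1}$; you should observe (via $\Phi^{-1}\circ\Phi=\id$ and Lemma~\ref{L.Phi.1}\eqref{1.L.Phi.1}) that $f\circ g=^*\id$ and $k_m\circ h_{g(m)}=\id$ for all but finitely many $m$, which then identifies $k_m$ with $h_{g(m)}^{-1}$ and gives that the latter is a homomorphism --- exactly the bookkeeping you anticipated.
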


\begin{proof}
Since the theory of $\cM_n$ and $\cN_n$ recognizes coordinates, $\Phi$ is coordinate-respecting. By Theorem~\ref{th:main}, $\Phi$ is trivial.
\end{proof}

\begin{proof}[Proof of Theorem~\ref{T.Fields}]
Assume $\OCAT+\MAsl$. Suppose that $\cF_n$, $\cG_n$ are two sequences of fields that are finite or countable. 

We need to prove that $\cF=\prod_n \cF_n/\Fin$ and $\cG=\prod_n \cG_n/\Fin$ are isomorphic if and only if there is a bijection $f$ between cofinite subsets of~$\bbN$ such that $\cF_{f(n)}\cong \cG_{n}$ for all $n\in \dom(f)$. Only the direct implication requires a proof, and it follows from Theorem~\ref{cor:triv} and the fact that the theory of fields recognizes coordinates (Lemma~\ref{L.Th.CR.F}). 

This also implies that all automorphisms of $\cF$ (and all isomorphisms between $\cF$ and $\cG$, if any) are trivial. 
\end{proof} 

\begin{proof}[Proof of Theorems~\ref{T.LO} and \ref{T.Trees}]
Assume $ \OCAT+\MAsl$. Recall that linear orders and trees are examples of connected ramified sets (see \S\ref{S.ramified}), and fix an isomorphism between reduced products of connected ramified sets, $\Phi\colon \prod_n \cM_n/\Fin\to \prod_n \cN_n/\Fin$. Since the theory of connected ramified sets recognizes coordinates, (Proposition~\ref{P.ramified.CR}), by Theorem~\ref{cor:triv} $\Phi$ is trivial and there is a bijection $f$ between cofinite subsets of $\bbN$ such that $\cM_{f(n)}\cong \cN_{n}$ for all but finitely many $n$. 
\end{proof}

\begin{proof}[Proof of Theorem~\ref{T.Graphs}] 
Assume $ \OCAT+\MAsl$ and fix $p,q$ in $(0,1)$. 
Recall, from the paragraph preceding the statement of Theorem~\ref{T.Graphs}, that we define $\bbG_{\infty,p}$ to be the random variable $\prod_n \bbG_{n,p}$, where $\bbG_{n,p}$ are independent random graphs, and that 
for every $0<p<1$ this defines Borel probability measure $\mu_p$ on the space of all sequences of graphs~$(\cG_n)_n$ such that $\{0,\dots, n-1\}$ is the set of vertices of~$\cG_n$. We will prove that the set of $(\cG_n)_n$ such that all but finitely many of $\cG_n$ are sufficiently random (Definition~\ref{Def.SuffRG}) has full $\mu_p\times\mu_q$ measure. 

Specializing the definition on \cite[p. 173]{alon2008probabilistic} to the case when $s=3$, we say that a graph~$\cG$ has the full level $3$ extension property if for any set $F$ of three distinct vertices of $\cG$ and any function $\chi\colon F\to \{0,1\}$ there is a vertex $w$ in $\cG\setminus F$ such that for all $v\in F$, $v$ is adjacent to $w$ if and only if $\chi(v)=1$. Let $x_{n,p}$ be the probability that the random graph $\bbG_{n,p}$ does not have the full level $3$ extension property. Then \cite[Theorem~10.7.5]{alon2008probabilistic} implies that 
\[
x_{n,p}\leq \min(3^2 n^3 (1-p^3)^{n-p^3}, 3^2 n^3 (1-(1-p)^3)^{n-(1-p)^3}) . 
\]
If $\max(1-p^3, 1-(1-p)^3)<r<1$, then for all but finitely many $n$ the expression on the right-hand side of the displayed formula is smaller than $r^n$, and therefore $\sum_n x_{n,p}<\infty$. By the Borel--Cantelli lemma, $\cG_n$ has the full level $3$ extension property for all but finitely many $n$ with probability $1$. 

Assume that $(\cG_n)_n$	and $(\mathcal H_n)_n$ are sequences of graphs all but finitely many of which are sufficiently random (Definition~\ref{Def.SuffRG}). Since the set of such sequences has full $\mu_p\times \mu_q$ measure, it remains to prove that any isomorphism 
\[
\Phi\colon \prod_n \cG_n/\Fin\to \prod_n \mathcal H_n/\Fin
\] 
is trivial. 
Since the theory of sufficiently random graphs recognizes coordinates (Proposition~\ref{prop:GRrigid}), by Theorem~\ref{cor:triv} and $ \OCAT+\MAsl$ we have that $\prod_n \cG_n/\Fin\cong \prod_n \mathcal H_n/\Fin$ if and only if $\cG_n\cong \mathcal H_{n}$ for all sufficiently large $n$ (we cannot have $\cG_m\cong \mathcal H_n$ unless $m=n$ because isomorphic graphs have the same number of vertices). 

Since the Rado graph is, being random, sufficiently random, the same argument applies to prove rigidity of its reduced powers; of course in this case an automorphism may involve a permutation of coordinates. 
\end{proof}

It would be desirable to prove the analog of our Theorem~\ref{th:main} with $\Fin$ replaced by other analytic ideals. $\OCAT$ and $\MA$ imply that for many analytic P-ideals (more precisely, for all non-pathological analytic P-ideals) the isomorphism between quotients $\cP(\bbN)/\cI$ exists if and only if the ideals $\cI$ and~$\cJ$ are Rudin--Keisler isomorphic (\cite{Fa:AQ}), and it is relatively consistent with ZFC that quotients over analytic ideals are isomorphic if and only if the ideals are Rudin--Keisler isomorphic (\cite{FaSh:Trivial}, see \cite{Gha:FDD} for a simpler proof). 

This should imply that, assuming $\OCAT$ and $\MA$($\sigma$-linked), if $\cM_n$ and $\cN_n$, for $n\in \bbN$, are models of a theory that recognizes coordinates and $\cI$ and $\cJ$ are nonpathological analytic P-ideals on $\bbN$, then every isomorphism $\Phi\colon \prod_n \cM_n/\cI\to \prod_n \cN_n/\cJ$ is of product form. Our results easily imply the following (extremely) poor man’s version of this conjecture.

\begin{corollary} \label{C.IJ}
Suppose that $\cM_n$ and $\cN_n$ are models of a theory that recognizes coordinates and that $\cI$ and $\cJ$ are ideals such that $\cP(\bbN)/\cI$ and $\cP(\bbN)/\cJ$ are not isomorphic. Then $\prod_n \cM_n/\cI$ and $\prod_n \cN_n/\cJ$ are not isomorphic. 
\end{corollary}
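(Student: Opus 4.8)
The plan is to argue by contradiction, reading off an isomorphism of the quotient Boolean algebras directly from the definition of ``recognizes coordinates''. Suppose $\cM_n$ and $\cN_n$ are models of a theory $T$ that recognizes coordinates, and suppose toward a contradiction that there is an isomorphism
\[
\Phi\colon \prod_n\cM_n/\cI\to\prod_n\cN_n/\cJ.
\]
Since $T$ recognizes coordinates, by Definition~\ref{Def.CR.Th} the map $\Phi$ is isomorphically coordinate-respecting; in particular, by Definition~\ref{def:CR} there is an \emph{isomorphism} $\alpha=\alpha^\Phi\colon \cP(\bbN)/\cI\to\cP(\bbN)/\cJ$. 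This contradicts the hypothesis that $\cP(\bbN)/\cI$ and $\cP(\bbN)/\cJ$ are not isomorphic. I would emphasize, as a remark, that unlike Theorems~\ref{T.Fields}--\ref{T.Graphs} this costs no forcing axiom whatsoever: it uses only the ``combinatorial'' half of the analysis (the recognition of coordinates), not the $\OCAT+\MAsl$ uniformization of Theorem~\ref{th:main}.

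The one point that needs care is the standing atomlessness convention: the notions of coordinate-respecting map (Definition~\ref{def:CR}) and hence of a theory recognizing coordinates (Definition~\ref{Def.CR.Th}) are phrased for atomless ideals, so strictly speaking the cleanest statement is for atomless $\cI$ and $\cJ$, where the argument above is literally a one-line unwinding of definitions. For general ideals I would observe that the witnessing data is in fact available for an arbitrary ideal: in the ring/field case the copy of $\cP(\bbN)/\cI$ is the Boolean algebra of central idempotents $\bbI(\cZ(\prod_n\cM_n/\cI))$, which is canonically isomorphic to $\cP(\bbN)/\cI$ and quantifier-free definable for \emph{any} ideal (the proof of Lemma~\ref{L.Th.CR.F} never uses atomlessness); and in the ramified-set and sufficiently-random-graph cases the local Boolean algebras $A_{ab}$ of \S\ref{S.local} and their canonical Boolean structure are again set up over an arbitrary ideal, so an isomorphism $\Phi$ transports a definable copy of $\cP(\bbN)/\cI$ onto a definable copy of $\cP(\bbN)/\cJ$, yielding the required $\alpha^\Phi$.

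I expect the only mild obstacle to be bookkeeping around this last point, i.e.\ making explicit, for whichever range of ideals the statement is meant to cover, that ``$\Phi$ an isomorphism $\Longrightarrow$ a definable/interpretable copy of $\cP(\bbN)/\cI$ maps onto one of $\cP(\bbN)/\cJ$ $\Longrightarrow$ $\cP(\bbN)/\cI\cong\cP(\bbN)/\cJ$'' — everything else is immediate. If one simply adopts the convention (as elsewhere in the paper) that the ideals in question are atomless, the corollary is an immediate consequence of Definitions~\ref{def:CR} and~\ref{Def.CR.Th} together with Proposition~\ref{P.Th.CR}, and no further work is needed.
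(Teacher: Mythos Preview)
Your argument is correct and is exactly the paper's proof: an isomorphism $\Phi$ would, by Definition~\ref{Def.CR.Th}, produce an isomorphism $\alpha^\Phi\colon\cP(\bbN)/\cI\to\cP(\bbN)/\cJ$, contradicting the hypothesis. Your discussion of the atomlessness convention is a fair caveat but not needed here, since the paper's Definitions~\ref{def:CR} and~\ref{Def.CR.Th} are stated only for atomless ideals and the corollary is implicitly read under that standing assumption.
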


\begin{proof} 
The assumption on $\cI$ and $\cJ$ asserts that $\alpha^\Phi$ as required cannot exist. 
\end{proof} 

\subsection{Continuum Hypothesis and the strong failure of rigidity}\label{S.CH}

We provide a (mostly well-known) proof of Proposition~\ref{P.CH}. 
By \cite{JonOl:Almost}, reduced products over $\Fin$ are countably saturated. Since a nontrivial reduced product over $\Fin$ of countably many structures each of which has cardinality $\leq 2^{\aleph_0}$ has cardinality $2^{\aleph_0}$, CH implies that such reduced products are saturated. Therefore two such reduced products are isomorphic if and only if they are elementarily equivalent. 
The Feferman--Vaught theorem (\cite{feferman1959first}) easily implies that if the theories of $\cM_n$ and $\cN_n$ converge to the same limit in the logic topology, then $\prod_n \cM_n/\Fin$ and $\prod_n \cN_n/\Fin$ are elementarily equivalent (see \cite{ghasemi2014reduced}). 
 Since the logic topology on the space of all theories in a fixed countable language is compact and metrizable, by passing to a subsequence one can assure that the theories converge in the logic topology and therefore \eqref{0.P.CH} and in particular the cases of reduced products of fields, \eqref{1.P.CH}, and of trees, \eqref{2.P.CH}, follow.

\eqref{3.P.CH} 
 Let $\calL_n$ and $\calL'_n$ be sequences of at most countable discrete linear orderings with endpoints such that for every $m\in\bbN$ both sets $\{n\mid |\calL_n|\leq m \}$ and $\{ n\mid |\calL'_n|\leq m \}$ are finite. 
 As in \eqref{1.P.CH} and \eqref{2.P.CH}, it suffices to prove that $\prLF$ and $\prLpF$ are elementarily equivalent. By the Feferman--Vaught theorem it suffices to prove that the theories of every such sequence of linear orderings 
 converge in the logic topology (this implies that they all necessarily converge to the same limit). 
 
 By \L o\'s's Theorem, it suffices to prove that all ultraproducts $\prod_n \calL_n/\cU$ (for $\cU$ nonprincipal) are elementarily equivalent. The assumption on cardinalities of the $\calL_n$'s implies that each such ultrapower has cardinality $2^{\aleph_0}$. 
Consider the equivalence relation $\sim$ on $\prod_n \calL_n/\cU$ defined by $x\sim y$ if and only if the interval $(x,y)$ is finite. Each equivalence class is countable, hence the quotient is a dense linear ordering with endpoints that has cardinality $2^{\aleph_0}$. Moreover, since all $\calL_n$ are discrete, every $\sim$-equivalence class is isomorphic to $\bbZ$, except for the two $\sim$-classes corresponding to either of the endpoints. Since the theory of dense linear orderings with endpoints is complete, the quotient linear orderings are isomorphic. Any isomorphism between them can be lifted to an isomorphism of the ultrapowers because corresponding equivalence classes have the same order type (same as $\bbZ$, $\bbN$, or the converse of $\bbN$). 

\eqref{4.P.CH} 
Recall that $\bbG_{n,p}$ denotes the random graph with~$n$ vertices in which every pair of distinct vertices is connected with probability $p\in (0,1)$, and these events are independent.\footnote{Also recall that this is a misnomer; $\bbG_{n,p}$ is a probability space, not a graph. When we assert that a sequence of random graphs has some property, this means that it has the property with probability 1.} Fix $p$. 
By the 0-1 law for first-order theory of random graphs (\cite{fagin1976probabilities} or \cite{glebskii1969range}) and the Borel--Cantelli lemma, there is a set of full $\mu_p$ measure (see the proof of Theorem~\ref{T.Graphs}), such that for every $(\cG_n)_n$ in this set the theories of graphs $\cG_n$ converge to the same limit. 
Then CH implies that all reduced products of graphs, $\prod_n \cG_n$, for $(\cG_n)_n$ in this set, are isomorphic. This is because, analogously to the proof of the first part of the proof of Theorem~\ref{T.LO}, the
Feferman--Vaught theorem implies that all of these reduced products are elementarily equivalent and under CH countable saturation implies full saturation. 

\section{Rigidity of products}\label{S.products}

Our analysis of coordinate-respecting isomorphisms has quotable consequences that are, to the best of our knowledge, new. As in \S\ref{S.CoordinateRespecting}, one can define trivial isomorphisms between (non-reduced) products. In case of two-element Boolean algebras, the product is naturally isomorphic to $\cP(\bbN)$ and every automorphism of $\cP(\bbN)$ is determined by a permutation of the atoms and therefore trivial. In case of products of fields the triviality is again easily proven, as minimal idempotents are simply characteristic functions on atoms, but the following results may be of independent interest, already known, or both. We leave the proofs to the interested reader.

\begin{proposition} 
If $\calL_n$, $\cM_n$, for $n\in \bbN$, are sequences of connected ramified sets and $\Phi\colon \prod_n \calL_n\to \prod_n \cM_n$ is an isomorphism, then there are a permutation $\pi$ of $\bbN$ and isomorphisms $\sigma_n\colon \calL_n\to \cM_{\pi(n)}$ for $n\in \bbN$ such that, for all $x\in\prod_n\calL_n$, 
\(
\Phi(x)(\pi(n))=\sigma_n(x(n))\).
\end{proposition}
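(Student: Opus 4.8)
The plan is to mimic, for the non-reduced case, the strategy already carried out for reduced products of connected ramified sets in \S\ref{S.ramified}, but with the crucial simplification that now the index set is not blurred by an ideal, so the quotient algebra $\cP(\bbN)$ contains atoms and the isomorphism $\alpha^\Phi$ can actually pick out coordinates. First I would observe that the non-reduced product $\prod_n \calL_n$ is itself a connected ramified set (connectedness of each $\calL_n$ gives downward directedness of the product, and predecessors of an element $x$ are exactly the sequences $y$ with $y(n)\le x(n)$ for all $n$, which form a linear order as a product of linear orders). So $\Phi$ is an isomorphism between two connected ramified sets. The definable sets $B_{x,y}$, $D$, and $S_{x,y}$ (now living in $\cP(\bbN)$ rather than $\cP(\bbN)/\Fin$) make sense verbatim; the proof of Lemma~\ref{L2.14} and of the coherence of the maps $\alpha_x$ goes through unchanged with $\cI=\{\emptyset\}$, giving an \emph{isomorphism} $\alpha\colon\cP(\bbN)\to\cP(\bbN)$ such that $x=_S y$ implies $\Phi(x)=_{\alpha(S)}\Phi(y)$ for all $x,y$ and $S$, together with the analogue of Claim~\ref{claim2tree}, $\alpha(T_x)=T_{\Phi(x)}$ for the set $T_x$ of coordinates where $x$ is terminal.

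Next I would extract the permutation. An automorphism of the Boolean algebra $\cP(\bbN)$ is automatically completely additive and is therefore induced by a permutation $\pi$ of $\bbN$, via $\alpha(S)=\pi[S]$; this is the point where the ideal-free setting buys us everything, since there is no $\OCAT$ needed. Using $\alpha(\{n\})=\{\pi(n)\}$ together with the coordinate-respecting property, for each $n$ I get that whenever two elements of $\prod_k \calL_k$ agree on coordinate $n$, their $\Phi$-images agree on coordinate $\pi(n)$; hence the value $\Phi(x)(\pi(n))$ depends only on $x(n)$, and this defines a map $\sigma_n\colon\calL_n\to\cM_{\pi(n)}$ with $\Phi(x)(\pi(n))=\sigma_n(x(n))$. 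Applying the same reasoning to $\Phi^{-1}$ (whose associated Boolean algebra automorphism is $\alpha^{-1}$, induced by $\pi^{-1}$) shows each $\sigma_n$ is a bijection.

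Finally I would check that each $\sigma_n$ is an order isomorphism, not merely a bijection. For this, fix $n$ and compare two elements $x,x'$ of the product that agree off coordinate $n$; then $x\le x'$ in the product iff $x(n)\le x'(n)$ in $\calL_n$, and $\Phi(x)\le\Phi(x')$ in $\prod_k\cM_k$ iff $\Phi(x)(\pi(n))\le\Phi(x')(\pi(n))$ in $\cM_{\pi(n)}$, since $\Phi(x)$ and $\Phi(x')$ agree on all other coordinates $\pi(k)$, $k\ne n$. Because $\Phi$ is an order isomorphism, $x\le x'\iff\Phi(x)\le\Phi(x')$, which translates exactly to $x(n)\le x'(n)\iff\sigma_n(x(n))\le\sigma_n(x'(n))$; as this holds for all values, $\sigma_n$ and its inverse are order-preserving. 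I expect the only mildly delicate point to be verifying that the definable sets $B_{x,y}$ behave correctly in the non-reduced product so that the machinery of \S\ref{S.ramified} transfers, i.e.\ confirming that with $\cI$ trivial the arguments of Lemma~\ref{L2.14} and the coherence lemma produce a genuine isomorphism $\alpha$ of $\cP(\bbN)$; once that is in hand, the passage from $\alpha$ to the permutation $\pi$ and the fibre maps $\sigma_n$ is routine, which is presumably why the authors leave it to the reader.
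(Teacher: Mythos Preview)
Your approach is correct and is exactly what the paper intends: re-run the coordinate-recognition machinery of \S\ref{S.ramified} with the trivial ideal $\cI=\{\emptyset\}$, obtain an automorphism $\alpha$ of $\cP(\bbN)$, extract the permutation $\pi$ from the action of $\alpha$ on atoms, and read off the fibre maps $\sigma_n$ from $\alpha(\{n\})=\{\pi(n)\}$.

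One correction, though it does not affect the argument: the product $\prod_n\calL_n$ is \emph{not} a ramified set in general. The set of predecessors of $x$ is $\prod_n\{a\in\calL_n : a\le x(n)\}$, a product of linear orders, and such a product fails to be linear as soon as two factors have more than one element. Fortunately you never use this claim: the proofs of Lemma~\ref{L2.14}, the coherence lemma, and Claims~\ref{claim1tree}--\ref{claim2tree} only use that each \emph{factor} $\calL_n$ is a connected ramified set, and they go through verbatim with $\cI=\{\emptyset\}$ (atomlessness is never invoked in those arguments). So you may simply drop the opening observation about the product being ramified and proceed directly to transplanting the \S\ref{S.ramified} machinery.
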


\begin{proposition} 
If $\cG_n$, $\calH_n$, for $n\in \bbN$, are sequences of random graphs and $\Phi\colon \prod_n \cG_n\to \prod_n\calH_n$ is an isomorphism, then there are a permutation $\pi$ of $\bbN$ and isomorphisms $\sigma_n\colon \cG_n\to \calH_{\pi(n)}$ for $n\in \bbN$ such that, for all $x\in\prod_n\mathcal G_n$,
\(
\Phi(x)(\pi(n))=\sigma_n(x(n))\).
\end{proposition}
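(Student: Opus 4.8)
The plan is to run the coordinate-recognition argument of \S\ref{S.graphs} for the \emph{trivial} ideal $\{\emptyset\}$, viewing the (non-reduced) product $\prod_n\cG_n$ as the reduced product $\prod_n\cG_n/\{\emptyset\}$, for which $\cP(\bbN)/\{\emptyset\}=\cP(\bbN)$. First I would observe that nowhere in the proof of Proposition~\ref{prop:GRrigid} (that is, in Lemmas~\ref{L.Aab.perp}, \ref{L.cZ}, \ref{L.alpha} and the concluding paragraph) is atomlessness of the ideal actually used: what is used is only that each fibre has at least three elements and is $2$-sufficiently random, which holds here since the $\cG_n$ and $\calH_n$ are copies of the Rado graph. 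So Proposition~\ref{prop:GRrigid} applies verbatim with $\cI=\cJ=\{\emptyset\}$ and yields a Boolean-algebra isomorphism $\alpha=\alpha^\Phi\colon\cP(\bbN)\to\cP(\bbN)$ with $\Phi(\chi_S^{ab})=\chi_{\alpha(S)}^{\Phi(a)\Phi(b)}$ for all $(a,b)\in\cZ$ and $S\subseteq\bbN$; exactly as in the last lines of that proof this upgrades to: $x=_Sy$ implies $\Phi(x)=_{\alpha(S)}\Phi(y)$ for all $x,y\in\prod_n\cG_n$. Running the same argument for the isomorphism $\Phi^{-1}$ (also an isomorphism of products of Rado graphs) and comparing the two on some $A_{ab}$ shows that the isomorphism attached to $\Phi^{-1}$ is $\alpha^{-1}$, so $\Phi^{-1}$ is likewise coordinate-respecting, with $\alpha^{\Phi^{-1}}=\alpha^{-1}$.

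Second, I would invoke the elementary fact --- provable in ZFC, with no appeal to forcing axioms --- that every isomorphism $\alpha\colon\cP(\bbN)\to\cP(\bbN)$ is trivial: $\cP(\bbN)$ is a complete atomic Boolean algebra, $\alpha$ carries atoms to atoms and preserves all suprema, so there is a permutation $\pi$ of $\bbN$ with $\alpha(\{n\})=\{\pi(n)\}$ and $\alpha(S)=\pi[S]$ for every $S\subseteq\bbN$.

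Third, I would extract the fibrewise maps. For each $n$, apply the commuting square defining $\Phi_S$ (see the discussion after Definition~\ref{def:CR}) with $S=\{n\}$, for which $\alpha(S)=\{\pi(n)\}$. The restriction of $\prod_m\cG_m$ to the single coordinate $n$ is the graph $\cG_n$ itself, and likewise the restriction of $\prod_m\calH_m$ to the coordinate $\pi(n)$ is $\calH_{\pi(n)}$, so, writing $\sigma_n=\Phi_{\{n\}}$, the square becomes exactly $\sigma_n(x(n))=\Phi(x)(\pi(n))$ for all $x\in\prod_m\cG_m$. By the properties of $\Phi_S$ recorded there, $\sigma_n$ is a bijective graph homomorphism; since $\alpha^{\Phi^{-1}}=\alpha^{-1}$ one checks that $(\Phi^{-1})_{\{\pi(n)\}}=\sigma_n^{-1}$, which is therefore also a graph homomorphism, so $\sigma_n\colon\cG_n\to\calH_{\pi(n)}$ is an isomorphism of graphs. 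The displayed identity $\Phi(x)(\pi(n))=\sigma_n(x(n))$ is then precisely what has been produced, completing the proof.

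The main (if modest) obstacle is the first step: one must check, line by line, that atomlessness is never invoked in \S\ref{S.graphs}, so that Definition~\ref{def:locallyCR} and Proposition~\ref{prop:GRrigid} survive the passage to the ideal $\{\emptyset\}$ --- for instance, in Lemma~\ref{L.Aab.perp} the sets that were ``$\cI$-positive'' are now simply nonempty, and in Lemma~\ref{L.alpha} the auxiliary $e$ with $(a,e),(e,c)\in\cZ$ is built coordinatewise using $|\cG_n|\geq 3$. With Proposition~\ref{P.ramified.CR} replacing Proposition~\ref{prop:GRrigid}, the identical three-step recipe proves the companion statement for products of connected ramified sets.
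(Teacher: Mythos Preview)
Your proposal is correct and is exactly the argument the paper has in mind: the paper explicitly leaves this proof ``to the interested reader'', and the intended route is precisely to rerun the coordinate-recognition machinery of \S\ref{S.graphs} with the trivial ideal, observe that every automorphism of the complete atomic Boolean algebra $\cP(\bbN)$ is induced by a permutation of atoms, and then read off the fibre maps from the commuting squares at singletons. Your observation that atomlessness is never invoked in Lemmas~\ref{L.Aab.perp}--\ref{L.alpha} and the proof of Proposition~\ref{prop:GRrigid} is accurate.

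One small point worth making explicit: when you write ``by the properties of $\Phi_S$ recorded there, $\sigma_n$ is a bijective graph homomorphism'', the paper's blanket claim that $\Phi_S$ is a homomorphism whenever $\Phi$ is deserves a word of justification in the relational setting. The clean way to see it here is the one you implicitly use: given $u,v\in\cG_n$ with $E(u,v)$, extend to $x,y\in\prod_m\cG_m$ with $x(n)=u$, $y(n)=v$ and $E(x(m),y(m))$ for every $m\neq n$ (possible because each Rado graph has an edge); then $E(x,y)$ in the product, hence $E(\Phi(x),\Phi(y))$, hence $E(\sigma_n(u),\sigma_n(v))$. Your appeal to $(\Phi^{-1})_{\{\pi(n)\}}=\sigma_n^{-1}$ then handles the reverse implication. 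With that one line added, the argument is complete.
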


\section{Concluding remarks}
We have only scratched the surface, and many natural questions remain intact. We list the ones that are in our opinion most interesting.

First, what other theories recognize coordinates in reduced products? Can recognizing coordinates be re-framed in terms of the existence of a copy of $\cP(\bbN)/\Fin$ in the $\eq$ (see \cite[p. 151]{Hodg:Model}) of the reduced product at hand? Or, are theories that recognize coordinates characterized by the existence of local copies of $\cP(\bbN)/\Fin$, like the ones obtained for ramified sets and random graphs (\S\ref{S.local} and \S\ref{S.local2}), in reduced products of its models? An easy observation is that if $T$ recognizes coordinates and $T’$ is a theory in an expanded language extending $T$, then $T’$ recognizes coordinates as well. 

Second, more can be said for certain reduced products of structures whose theory is not necessarily coordinate-recognizing. In this context one could consider more liberal notions of triviality of isomorphisms: ones that allow for the possibility that the structures comprising the product can be further decomposed into products of structures of the same theory. We make the following related observation. Fix a language $\mathcal L$. Let $T$ be an $\mathcal{L}$-theory. We say that products of models of $T$ \emph{have a unique decomposition} if whenever we have natural numbers $n$ and $k$ and models of~$T$ such that $\prod_{i\leq k}\mathcal M_i\cong\prod_{j\leq n}\mathcal N_j$, then $k=n$, and there is a permutation of $n$, $\sigma$, such that $\cM_i\cong\cN_{\sigma(i)}$ for all $i\leq n$. 

\begin{proposition}
Suppose that $T$ recognizes coordinates. Then products of models of $T$ have a unique decomposition.
\end{proposition}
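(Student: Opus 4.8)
The plan is to reduce the statement to the rigidity of a reduced product by ``spreading out'' the finite product. Let $I=\{i\le k\}$, $J=\{j\le n\}$ and suppose $\Phi_0\colon\prod_{i\in I}\cM_i\to\prod_{j\in J}\cN_j$ is an isomorphism. Consider the structure $X=\bigl(\prod_{i\in I}\cM_i\bigr)^{\bbN}/\Fin$; applying $\Phi_0$ in each of the $\bbN$ many coordinates yields an isomorphism $\Psi\colon X\to X'$, where $X'=\bigl(\prod_{j\in J}\cN_j\bigr)^{\bbN}/\Fin$. The first point is that $X$ and $X'$ are reduced products of models of $T$ over atomless ideals: regrouping coordinates, $X\cong\prod_{(m,i)\in\bbN\times I}\cM_i/\cI$, where $\cI=\{A\subseteq\bbN\times I\mid\{m\mid(m,i)\in A\text{ for some }i\}\text{ is finite}\}$, which is easily seen to be atomless; similarly $X'\cong\prod_{(m,j)\in\bbN\times J}\cN_j/\cJ$ for the analogous ideal $\cJ$ on $\bbN\times J$. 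Since $T$ recognizes coordinates, $\Psi$ is coordinate-respecting, so there is an isomorphism $\alpha\colon\cP(\bbN\times I)/\cI\to\cP(\bbN\times J)/\cJ$ with $x=_S y\Rightarrow\Psi(x)=_{\alpha(S)}\Psi(y)$. Applying this to $\Psi$ and to $\Psi^{-1}$, and using that a Boolean-algebra automorphism $\gamma$ with $\gamma(S)\subseteq S$ for all $S$ must be the identity, one gets the two-sided statement $x=_S y\iff\Psi(x)=_{\alpha(S)}\Psi(y)$, which in particular pins $\alpha$ down uniquely.

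Next I would identify, inside $\cP(\bbN\times I)/\cI$, the subalgebra $\cB_I=\{[\bbN\times F]\mid F\subseteq I\}$ of \emph{slice-invariant} classes; it is isomorphic to $2^{|I|}$, with atoms the rows $R_i=[\bbN\times\{i\}]$, and $X\rs R_i\cong\cM_i^{\bbN}/\Fin$. The key claim is that $\alpha(\cB_I)=\cB_J$. To prove it, note that every permutation $\sigma$ of $\bbN$ induces an automorphism $\theta_\sigma$ of $X$ (permuting the $\bbN$ many copies), that $\Psi$ intertwines the $\theta_\sigma$ on $X$ and $X'$ because $\Psi$ acts ``within each copy'', and that $\theta_\sigma$ transports the equivalence relation $=_S$ to $=_{\sigma_\ast(S)}$, where $\sigma_\ast$ is the induced permutation of $\cP(\bbN\times I)/\cI$ (and likewise on the $\cN$-side). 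Feeding this through the uniqueness of $\alpha$ shows that $\alpha$ intertwines the induced $\Sym(\bbN)$-actions on the two Boolean algebras. A short combinatorial argument---using that a subset of $\bbN\times I$ realizes only finitely many ``columns'' $F\subseteq I$---shows that the classes fixed by all such $\sigma_\ast$ are exactly the slice-invariant ones, so $\alpha$ restricts to an isomorphism $2^{|I|}\cong 2^{|J|}$. Hence $|I|=|J|$, i.e.\ $k=n$, and $\alpha$ sends atoms to atoms: $\alpha(R_i)=R'_{\sigma(i)}$ for a permutation $\sigma$ of $I$, where $R'_j=[\bbN\times\{j\}]$.

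Finally I would read off the fiberwise isomorphisms from constant sequences. Writing $\mathrm{c}(u)$ for the constant sequence with value $u$, and $c_i$, $\Phi_0(c)_j$ for coordinates, one has $X\rs R_i\cong\cM_i^{\bbN}/\Fin$, $X'\rs R'_{\sigma(i)}\cong\cN_{\sigma(i)}^{\bbN}/\Fin$, and the induced map $\Psi_{R_i}\colon\cM_i^{\bbN}/\Fin\to\cN_{\sigma(i)}^{\bbN}/\Fin$ satisfies $\Psi_{R_i}(\mathrm{c}(c_i))=\mathrm{c}(\Phi_0(c)_{\sigma(i)})$ for every $c\in\prod_{i\in I}\cM_i$. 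Since $\Psi_{R_i}$ is a well-defined injection, $\Phi_0(c)_{\sigma(i)}$ depends only on $c_i$, so there are bijections $h_i\colon\cM_i\to\cN_{\sigma(i)}$ with $\Phi_0(c)_j=h_{\sigma^{-1}(j)}(c_{\sigma^{-1}(j)})$ for all $c$ and all $j$. Because operations and relations in a finite product are computed coordinatewise, the fact that both $\Phi_0$ and $\Phi_0^{-1}$ are homomorphisms forces every $h_i$ to be an isomorphism, giving $\cM_i\cong\cN_{\sigma(i)}$ and completing the proof.

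I expect the main obstacle to be the second step: showing that the ``block structure'' of the finite product---precisely the data that passing to a reduced product modulo $\Fin$ normally destroys---is recoverable as the $\Sym(\bbN)$-invariant part of $\cP(\bbN\times I)/\cI$ and is preserved by $\alpha$. The first and third steps should be routine bookkeeping about reduced products.
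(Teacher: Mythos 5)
Your proof is correct, and it takes a genuinely different route from the paper's. The paper argues by contrapositive and in one line: given a failure of unique decomposition, it asserts that one can build an isomorphism between reduced products of models of $T$ that is not isomorphically coordinate-respecting ``exactly as in Example~2.9'', contradicting the hypothesis. Your argument is direct: you lift $\Phi_0$ to an isomorphism $\Psi$ of reduced products by spreading the finite product over $\bbN$ (a construction the paper's sketch also implicitly uses), invoke recognition of coordinates to obtain the unique isomorphism $\alpha$, and then make the essential new observation that the slice-invariant subalgebra $\cB_I\cong 2^{|I|}$ of $\cP(\bbN\times I)/\cI$ is precisely the fixed-point algebra of the induced $\Sym(\bbN)$-action, and that $\alpha$ intertwines these actions because $\Psi$ commutes with permuting copies. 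This pins $\alpha$ down on the slice-invariant part and lets you read off both the permutation $\sigma$ and the fiberwise isomorphisms $h_i$ explicitly from constant sequences. What each approach buys: the paper's proof is brief, but its appeal to Example~2.9 is terse --- that example handles only the case $1\times\cA\times\cB$, and it is left to the reader to carry out the generalization to $k$ vs.\ $n$ factors and to show the forced $\alpha$ cannot be an isomorphism. Your proof is longer but self-contained, constructive (it produces $\sigma$ and the $h_i$ rather than merely deriving a contradiction), and the $\Sym(\bbN)$-fixed-point trick for recovering the block structure of $\cP(\bbN\times I)/\cI$ is a clean idea that is not present in the paper.

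Two small points worth tidying if you write this up fully. First, the well-definedness of $\Psi_S$ and the biconditional $x=_S y\iff\Psi(x)=_{\alpha(S)}\Psi(y)$ rely on $\alpha$ being an isomorphism; your argument via $\gamma=\alpha^{\Psi^{-1}}\circ\alpha^{\Psi}$ with $\gamma(S)\subseteq S$ for all $S$ is the right one, but it deserves to be spelled out since the paper's Definition~2.6 only states one implication. Second, the ``short combinatorial argument'' for identifying the $\Sym(\bbN)$-fixed classes should explicitly use that $\cP(I)$ is finite: if infinitely many $m$ realize column $F$ and infinitely many realize $F'\ne F$, a permutation swapping two disjoint infinite witness sets moves $[S]$, so all but finitely many columns must coincide.
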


\begin{proof}
If products of models of $T$ do not have a unique decomposition, we can construct an isomorphism between reduced products of models of $T$ which is not coordinate-respecting, exactly as in Example~\ref{Ex.Th.CR}. This contradicts the fact that~$T$ recognizes coordinates.\end{proof}

Given the above, our results provide a roundabout tool for proving that certain classes of objects have unique decompositions. We mainly focused on 
 isomorphisms, and ignored the possibility of obtaining rigidity results for homomorphisms. It should be possible to extend the results of \S\ref {S.Uniformization.Fin} and \S\ref{S:proof} for coordinate-respecting homomorphisms between reduced products, obtaining results along the lines of the $\OCA$ lifting theorem (Theorem~\ref{T.OCAsharp-Fin}). A more delicate issue would be to describe theories which do recognize coordinates for homomorphisms, and whether a local version is sufficient in this case.

In a different direction, focusing on the ideal we are quotienting by, for what reduced products of the form $\prod_n \cM_n/\cI$ can analogous rigidity results be proven? In the case of two-element Boolean algebras and nonpathological analytic P-ideals, an affirmative answer has been provided by the $\OCA$ lifting theorem of \cite{Fa:AQ}. 
Corollary~\ref{C.IJ} is a very soft result in this direction. 
Encouraged by Theorem~\ref{T.OCA.Fin}, we conjecture that $\OCAT$ alone suffices for all rigidity results for quotients of reduced products of countable structures modulo analytic ideals known to follow from $\OCAT$ and $\MA$ (see \cite{farah2022corona} for an overview). 

In the present paper we did not consider reduced products of continuous structures (\cite{BYBHU}). In the well-studied case of \cstar-algebras, satisfactory rigidity results exist even in the wider context of coronas (\cite{mckenney2018forcing}, \cite{vignati2018rigidity}, but see \cite{farah2022corona} for a general picture). It is likely that this can be generalized to other continuous structures.

\appendix 

\section{From $\sigma$-Borel to continuous}\label{S.sigma-Borel}

For the reader’s convenience, following the referee's suggestion, we recall some basic descriptive set-theoretic terminology.

Suppose that $X$ and $Y$ are Polish spaces. A function $\Theta\colon X\to Y$ is called \emph{C-measurable} if it is measurable with respect to the $\sigma$-algebra generated by analytic sets (sometimes denoted $\sigma(\Sigma^1_1)$). 
The following is a consequence of Jankov--von Neumann's uniformization theorem (see \cite[18.A]{Ke:Classical}):

\begin{theorem}\label{JVN}
Let $X$ and $Y$ be Polish spaces, and let $\cZ\subseteq X\times Y$ be analytic. Then there is a C-measurable selection $\Theta$ for $\cZ$; that is, a C-measurable function $\Theta$ whose domain is the projection of $\cZ$ to $X$ and such that $(x,\Theta(x))\in \cZ$ for every $x\in \dom(\Theta)$. \qed 
\end{theorem}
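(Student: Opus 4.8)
The plan is to deduce Theorem~\ref{JVN} from the classical Jankov--von Neumann uniformization theorem as presented in \cite[\S18.A]{Ke:Classical}, after two easy reductions. First I would reduce to the case $Y=\bbNN$. Fix a continuous surjection $\varphi\colon\bbNN\to Y$ (every Polish space is a continuous image of Baire space) and set $\cZ'=\{(x,z)\in X\times\bbNN : (x,\varphi(z))\in\cZ\}$. Since $\id_X\times\varphi$ is continuous, hence Borel, $\cZ'$ is analytic. If $\Theta'$ is a C-measurable selection for $\cZ'$, then $\Theta=\varphi\circ\Theta'$ works: the composite of a Borel map with a C-measurable map is C-measurable (the sets with preimage in $\sigma(\Sigma^1_1)$ under $\Theta'$ form a $\sigma$-algebra containing the Borel sets), $\dom(\Theta)=\dom(\Theta')=\pi_X[\cZ']=\pi_X[\cZ]$ because $\varphi$ is onto, and $(x,\Theta(x))=(x,\varphi(\Theta'(x)))\in\cZ$ by construction.

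It then remains to handle an analytic $\cZ\subseteq X\times\bbNN$, which is the heart of the matter. Write $\cZ$ as the projection to $X\times\bbNN$ of a closed set $F\subseteq X\times\bbNN\times\bbNN$, so that for each $x$ in the projection $B=\pi_X[\cZ]$ the set of $(z,w)$ with $(x,z,w)\in F$ is a nonempty closed subset of $\bbNN\times\bbNN$; equivalently it is the set of branches of a finitely branching-in-approximations tree $T_x$ of finite partial approximations to such pairs. Define $\Theta(x)$ to be the first coordinate of the \emph{leftmost branch} of $T_x$, i.e.\ the first coordinate of its lexicographically least element, for $x\in B$. The key computation is the measurability of $\Theta$: for a basic clopen $[s]\subseteq\bbNN$ one has $\Theta(x)\in[s]$ if and only if $x\in B$, there is an infinite branch of $T_x$ whose first coordinate extends $s$, and for every $t$ of length $|s|$ with $t<_{\mathrm{lex}}s$ there is no such branch extending $t$. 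The first two clauses are analytic in $x$ (they express membership of $x$ in a projection of $F$ intersected with a clopen constraint), and the last clause is a countable conjunction of coanalytic conditions, so $\Theta^{-1}([s])$ lies in $\sigma(\Sigma^1_1)$. Since the sets $[s]$ generate the Borel $\sigma$-algebra of $\bbNN$, $\Theta$ is C-measurable, $\dom(\Theta)=B=\pi_X[\cZ]$, and $(x,\Theta(x))\in\cZ$.

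The main thing to be careful about is precisely this measurability bookkeeping in the leftmost-branch argument: one must check that "$T_x$ has an infinite branch extending a given finite sequence in the first coordinate" is genuinely analytic uniformly in $x$, and that imposing lexicographic minimality introduces only a coanalytic correction, so that the whole preimage stays inside the $\sigma$-algebra generated by analytic sets. Everything else is routine, and for a fully detailed treatment I would simply refer the reader to \cite[\S18.A]{Ke:Classical}; we have recalled the statement here only for the reader's convenience.
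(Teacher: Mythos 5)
The paper offers no proof of this statement; it declares it a consequence of the Jankov--von Neumann uniformization theorem and cites \cite[\S18.A]{Ke:Classical}, which is also where your sketch ends up, so your bottom line is unobjectionable and your reduction to $Y=\bbNN$ is clean. However, the ``key computation'' you single out in your second paragraph is incorrect. You \emph{define} $\Theta(x)$ as the first coordinate of the lexicographically least element of the closed set $F_x=\{(z,w):(x,z,w)\in F\}$, but the characterization you then write down---that $\Theta(x)\in[s]$ iff some branch of $T_x$ has first coordinate extending $s$ and no branch has first coordinate extending any $t<_{\mathrm{lex}}s$ of the same length---describes a different object, namely the lex-least element of the section $\cZ_x=\{z:\exists w\,(x,z,w)\in F\}$. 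These two do not agree: the first coordinate of the lex-least \emph{pair} need not be the lex-least first coordinate of a pair, since a pair can win a tie-break at some position because of its second component. Worse, $\cZ_x$ is only analytic, and an analytic subset of $\bbNN$ need not have a lexicographic minimum at all; for instance if $\cZ_x=\{z:\exists n\,z(n)>0\}$ your criterion would force $\Theta(x)=(0,0,0,\dots)$, which does not even lie in $\cZ_x$. So the stated biconditional is false, and the ``countable conjunction of coanalytic conditions'' bookkeeping does not compute the preimages of the $\Theta$ you actually defined.

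The repair is the standard one and is short. First show that $U\colon B\to\bbNN\times\bbNN$, where $U(x)$ is the lex-least element of the closed set $F_x$, is C-measurable: for a basic clopen $[\sigma]\subseteq\bbNN\times\bbNN$, the set $U^{-1}[[\sigma]]$ equals $\{x:F_x\cap[\sigma]\neq\emptyset\}$ (analytic, being a projection of a closed set) minus the union of the countably many analytic sets $\{x:F_x\cap[\sigma']\neq\emptyset\}$ over $\sigma'<_{\mathrm{lex}}\sigma$ of the same length; these Boolean operations stay inside $\sigma(\Sigma^1_1)$. Then $\Theta$ is the composition of $U$ with the continuous first-coordinate projection, hence C-measurable, with $\dom(\Theta)=B$ and $(x,\Theta(x))\in\cZ$ because $U(x)\in F_x$ and $\cZ$ is the projection of $F$. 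This is exactly the argument behind \cite[\S18.A]{Ke:Classical}, and for the paper's purposes the bare citation is the appropriate level of detail.
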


Let $\Phi\colon \cP(\bbN)\to \cP(\bbN)/\Fin$ be a homomorphism. To $\Phi$ one associates the ideals $\cJ_\sigma$ and $\Jcont$ (Definition~\ref{defin:ideals}). The following lemma states that if a set in~$\J_\sigma$ is partitioned into countably many sets, then all but finitely many of them belong to $\Jcont$.

\begin{proposition}\label{P.Jsigma-to-Jcont}
Suppose that $\Phi\colon \cP(\bbN)\to \cP(\bbN)/\Fin$ is a homomorphism, $\bbN=\bigsqcup_n A_n$, and there are Borel-measurable functions $F_n\colon \cP(\bbN)\to \cP(\bbN)$, for $n\in \bbN$, whose graphs cover the graph of a lifting of $\Phi$. Then for all but finitely many $n$ the restriction of $\Phi$ to $\cP(A_n)$ has a continuous lifting. 
\end{proposition}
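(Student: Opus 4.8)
The plan is to show that the set $\cX = \{n \mid \Phi\restriction\cP(A_n)/\Fin$ has no continuous lifting$\}$ is finite by deriving a contradiction from the assumption that $\cX$ is infinite. First, by passing to a subsequence I would replace $(A_n)$ with $(A_n)_{n\in\cX}$, so without loss of generality assume every $\Phi\restriction\cP(A_n)/\Fin$ fails to have a continuous lifting. Fix a lifting $\Phi_*$ of $\Phi$ whose graph is covered by the Borel functions $F_n\colon\cP(\bbN)\to\cP(\bbN)$; since the graph of $\Phi_*$ is covered, for every $B\subseteq\bbN$ there is $n$ with $\Phi_*(B) = F_n(B)$. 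The natural strategy is to use a Baire-category / Kuratowski--Ulam argument on $\cP(A_n)$ for each $n$ to pin down, for comeagerly many $B\subseteq A_n$, which of the Borel functions $F_m$ agrees with $\Phi_*$ on $\cP(A_n)$, and then upgrade a Borel-measurable lifting on a large piece of $A_n$ to a continuous one using the well-known fact (cf.\ \cite[\S 3]{Fa:AQ}) that a Baire-measurable homomorphism $\cP(A)/\Fin\to\cP(\bbN)/\Fin$ has a continuous (indeed completely additive) lifting on $\cP(A')/\Fin$ for some $A'=^* A$.

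In more detail: for a fixed $n$, the sets $C_{n,m} = \{B\subseteq A_n \mid \Phi_*(B) = F_m(B)\}$ have Borel (hence Baire-measurable) membership and $\bigcup_m C_{n,m} = \cP(A_n)$. By the Baire Category Theorem there is some $m=m(n)$ for which $C_{n,m(n)}$ is nonmeager in $\cP(A_n)$, hence comeager in some basic clopen set $\{B \mid B\cap s_n = t_n\}$ for a finite $s_n$ and $t_n\subseteq s_n$. Since $\Phi$ is a homomorphism and $F_{m(n)}$ is Borel, a standard argument (replacing $\Phi_*$ by the Borel function $B\mapsto F_{m(n)}(B\cap(A_n\setminus s_n))\triangle (\text{correction on }s_n)$ and checking it is a lifting modulo $\Fin$ on $\cP(A_n\setminus s_n)$) produces a \emph{Borel-measurable} lifting of $\Phi\restriction\cP(A_n\setminus s_n)/\Fin$; that is, $A_n\setminus s_n\in\cJ_\sigma(\Phi)$ — in fact it even lies in the finer ideal of sets on which $\Phi$ has a single Borel lifting. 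Then the key input is: a homomorphism $\cP(A)/\Fin\to\cP(\bbN)/\Fin$ with a Borel (or even merely Baire-measurable) lifting has a \emph{continuous} lifting after deleting a finite subset of $A$; this is exactly the content of the $\sigma$-Borel-to-continuous phenomenon and is where \cite[Lemma 3.3.4, Theorem 1.6.1]{Fa:AQ} (or the analogous automatic-continuity results quoted in the proof of Theorem~\ref{T.OCAsharp-Fin}) get invoked. Applying this to each $n$ would give $A_n\in\Jcont(\Phi)$ for every $n$, contradicting the standing assumption, and hence $\cX$ is finite.

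The main obstacle I expect is the passage from ``a nonmeager set of $B\subseteq A_n$ on which $\Phi_*$ agrees with one Borel function'' to ``an honest Borel-measurable lifting of $\Phi$ on $\cP(A_n')/\Fin$ for some $A_n'=^*A_n$''. One has to be careful that $\Phi_*$ itself need not be Borel anywhere; what the category argument gives is comeagreness in a subcube, and one must use the homomorphism property together with the fact that modifying a lifting on a $\Fin$-small set does not change the induced map, to spread the ``good'' behavior from a comeager subcube of $\cP(A_n)$ to all of $\cP(A_n')$ for a cofinite $A_n'\subseteq A_n$ — this is a Kuratowski--Ulam-style fusion, entirely analogous to the nonmeagerness manipulations in the proof of Theorem~\ref{T.OCAsharp-Fin} (choosing finite intervals $J_i$ and $s_i\subseteq J_i$ so that a set meeting infinitely many $J_i$ in $s_i$ lands in the good set). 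Once a Borel lifting on a cofinite piece is in hand, the remaining step (Borel $\Rightarrow$ continuous lifting after shrinking) is standard and cited. A secondary subtlety is bookkeeping the finitely many exceptional coordinates: each $n$ might require discarding a finite $s_n$, but since we only claim the conclusion for \emph{all but finitely many} $n$, and for each individual $n$ we get $A_n$ minus a finite set into $\Jcont$, a final appeal to the fact that $\Jcont$ is an ideal closed under finite modifications (which it is, trivially, since a finite set is always in $\Jcont$ and $\Jcont$ is closed under unions and subsets) cleans this up.
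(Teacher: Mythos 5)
Your argument hinges on the claim that, for fixed $n$ and $m$, the set $C_{n,m}=\{B\subseteq A_n : \Phi_*(B)=F_m(B)\}$ is Borel (hence Baire-measurable), so that ``nonmeager'' upgrades to ``comeager in some clopen cube''. That claim is unjustified: $F_m$ is Borel, but $\Phi_*$ is an \emph{arbitrary} lifting with no definable regularity whatsoever, so the predicate $\Phi_*(B)=F_m(B)$ in the variable $B$ has no bounded complexity. Since $\bigcup_m C_{n,m}=\cP(A_n)$, the Baire Category Theorem does force some $C_{n,m}$ to be nonmeager, but without the Baire property a nonmeager set (think of a Bernstein set) need not be comeager on any nonempty open set. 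So you cannot localize the agreement to a cube, and the subsequent replacement of $\Phi_*$ by a Borel formula built from $F_{m(n)}$ has nothing to stand on. This is precisely the obstruction that makes the proposition nontrivial, and it is not a bookkeeping issue one can defer to ``a Kuratowski--Ulam-style fusion''.

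The paper's proof is organized so that no Baire-measurability question about sets cut out by $\Phi_*$ is ever posed. Lemma~\ref{L.Jsigma-to-Jcont} fixes a partition $\bbN=A\sqcup B$ and a single Borel $F$, and works slice-by-slice: the set
\[
\cX=\{(a,b,c)\in\cP(A)\times\cP(B)\times\cP(\bbN):F(a\cup b)\cap\Phi_*(A)=^*c\}
\]
\emph{is} Borel, because the only appearance of $\Phi_*$ is through the fixed set $\Phi_*(A)$; the unknown value $\Phi_*(a)$ has been traded for the free coordinate $c$. Novikov's theorem makes $\cY=\{(a,c):\{b:(a,b,c)\in\cX\}\text{ is comeager}\}$ analytic, and Jankov--von Neumann uniformization produces a C-measurable selector $G_0$ with $G_0(a)=^*\Phi_*(a)$ on the set $\cT$ of $a$'s whose slice through $c=\Phi_*(a)$ is comeager---without ever needing $\cT$ to be Borel. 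If $\cT$ is relatively comeager somewhere, automatic continuity finishes that coordinate; otherwise the proof of Proposition~\ref{P.Jsigma-to-Jcont} runs a genuine \emph{fusion} across coordinates, recursively choosing $a_n\subseteq A_n$, clopen conditions $[s_n]$, dense open $U_{ni}$, and nonmeager $\cX_n\subseteq\cP(\bigcup_{j>n}A_j)$ so that $F_n(\bigcup_{j\le n}a_j\cup b)\cap\Phi_*(A_n)\neq^*\Phi_*(a_n)$ for all $b\in\cX_n$ and the $\cX_n$'s nest coherently; the diagonal $a=\bigcup_n a_n$ then lies in every $\cX_n$, so \emph{no} $F_n$ agrees with $\Phi_*(a)$, contradicting the covering hypothesis. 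Your coordinate-by-coordinate scheme cannot reproduce this: each $F_m$ is defined on $\cP(\bbN)$, so its behaviour on $\cP(A_n)$ depends on the choices made outside $A_n$, and the ``good cube'' in $\cP(A_n)$ must be selected in concert with what is being fixed on $\cP(\bigcup_{j\neq n}A_j)$---this is exactly the content of conditions~(1)--(4) in the paper's recursion, and it is missing from your proposal.
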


Our proof of Proposition~\ref{P.Jsigma-to-Jcont} proceeds by a recursive construction that halts at~$n$ such that the restriction of $\Phi$ to $\cP(A_n)$ has a continuous lifting. 
The recursive construction is facilitated by Lemma~\ref{L.Jsigma-to-Jcont} below, and this lemma is preceded by a 
notation-introducing paragraph. 

In the following we identify $\cP(\bbN)$ and $\twoN$, by identifying a subset of $\bbN$ with its characteristic function. If $A=B\sqcup C$ then we naturally identify $\cP(A)$ with $\cP(B)\times \cP(C)$. In this situation, if $\cB\subseteq \cP(B)$ and $\cC\subseteq \cP(C)$ it will then be convenient to write 
\[
\cB\oplus \cC=\{b\cup c\mid b\in \cB, c\in \cC\}. 
\] 
If $s$ is a function from a finite subset of $\bbN$ into $\{0,1\}$ then we write 
\[
[s]=\{x\in \twoN\mid x(i)=s(i)\text{ for all }i\in \dom(s)\}. 
\]

\begin{lemma}\label{L.Jsigma-to-Jcont}
Suppose $\Phi\colon \cP(\bbN)\to \cP(\bbN)/\Fin$ is a homomorphism with lifting $\Phi_*$, $\bbN=A\sqcup B$, $[s] \cap \cP(B)$ is a relatively clopen subset of $\cP(B)$, and $F\colon \cP(\bbN)\to \cP(\bbN)$ is Borel-measurable. 
\begin{enumerate}
\item \label{L.Jsigma-to-Jcont.1} Then the restriction of $\Phi$ to the set $\cT$ of all $a\subseteq A$ such that the set 
\[
\cZ(a)=\{b\in [s]\cap \cP(B)\mid F(a\cup b)\cap \Phi_*(A)=^*\Phi_*(a)\}
\]
is comeager in $[s]\cap \cP(B)$ has a C-measurable lifting.
\item \label{L.Jsigma-to-Jcont.2}If $\cT$ is relatively comeager in some clopen subset of $\cP(A)$, then~$\Phi$ has a continuous lifting on $\cP(A)$. 
\end{enumerate}
\end{lemma}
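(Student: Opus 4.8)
The two parts of Lemma~\ref{L.Jsigma-to-Jcont} are complementary: the first produces a C-measurable lifting on the comeager-fiber set $\cT$, and the second upgrades a C-measurable (in fact, merely Baire-measurable) lifting on a comeager subset to a genuinely continuous one. For part \eqref{L.Jsigma-to-Jcont.1}, the plan is to apply the Jankov--von Neumann uniformization theorem (Theorem~\ref{JVN}). First I would check that the set
\[
\cZ = \{(a,b) \in \cP(A)\times([s]\cap\cP(B)) \mid F(a\cup b)\cap \Phi_*(A) =^* \Phi_*(a)\}
\]
is Borel: it is obtained from the Borel function $F$, the fixed set $\Phi_*(A)$, and the map $a\mapsto\Phi_*(a)$ restricted to a countable (hence Borel-parametrized, via representatives) family, together with the $=^*$ relation, which is Borel. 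Then $\cT$ is exactly the set of $a$ whose fiber $\cZ(a)$ is comeager in $[s]\cap\cP(B)$; by the Kuratowski--Ulam theorem (countable additivity of the meager ideal in the parametrized form, e.g.\ \cite[\S 8.J]{Ke:Classical} or \cite[Theorem~3.10.1]{Fa:AQ}-adjacent machinery), $\cT$ is Borel, and the set
\[
\cZ^* = \{(a,b)\in\cT\times([s]\cap\cP(B)) \mid b\in\cZ(a)\}
\]
still has comeager (hence nonempty) fibers over $\cT$, and is Borel, hence analytic. Apply Theorem~\ref{JVN} to get a C-measurable selector $a\mapsto b(a)$, and set $\Psi(a) = F(a\cup b(a))\cap\Phi_*(A)$: this is a composition of C-measurable maps, so C-measurable, and by construction $\Psi(a) =^* \Phi_*(a)$ for $a\in\cT$, i.e.\ $\Psi$ is a C-measurable lifting of $\Phi\restriction\cT$.

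For part \eqref{L.Jsigma-to-Jcont.2}, suppose $\cT$ is relatively comeager in a clopen $[t]\cap\cP(A)$. The plan is the standard argument (as in \cite[\S 1--3]{Fa:AQ}) that a homomorphism $\cP(\bbN)/\Fin\to\cP(\bbN)/\Fin$ with a Baire-measurable lifting on a comeager set has a continuous one on all of $\cP(A)$: using that $\Phi$ is a homomorphism, first reduce to $[t]=\cP(A)$ by a translation argument (replacing $A$ by $A\setminus\dom(t)$ and absorbing the finitely many coordinates in $t$, which changes $\Phi$ only by a fixed finite modification). Then, since by part \eqref{L.Jsigma-to-Jcont.1} (or by hypothesis) we have a C-measurable, in particular Baire-measurable, lifting $\Psi$ on the comeager set $\cT$, one finds a partition of $A$ into consecutive finite intervals $I_k$ and, for each $k$, a value $s_k\subseteq I_k$, such that $\{a\subseteq A \mid \exists^\infty k\ a\cap I_k = s_k\}\subseteq\cT$ and $\Psi$ is continuous on this set (this uses the Talagrand--Jalali-Naini characterization, \cite[Theorem~3.10.1]{Fa:AQ}, and the continuity of a Baire-measurable map on a comeager $G_\delta$). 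Splitting the index set of the $I_k$'s into evens and odds and using that $\Phi$ is a homomorphism, one writes an arbitrary $a\subseteq A$ as a ``Boolean combination'' of two such generic sets and defines the continuous lifting on all of $\cP(A)$ by the corresponding combination of the values of $\Psi$, checking modulo $\Fin$ that this still lifts $\Phi$.

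The main obstacle I anticipate is the bookkeeping in part \eqref{L.Jsigma-to-Jcont.2}: verifying Borel-ness of $\cZ$ and $\cT$ in part \eqref{L.Jsigma-to-Jcont.1} is routine descriptive set theory, but the passage from a Baire-measurable lifting on a comeager set to an honest continuous lifting on \emph{all} of $\cP(A)$ requires the interval-partition/even-odd trick and careful use of the homomorphism property to control the finite errors, and it is here that one must be precise about what ``$=^*$'' allows. Fortunately this is exactly the argument already run (twice) in the proof of Theorem~\ref{T.OCAsharp-Fin} above, so the present lemma can largely cite or mirror that reasoning rather than redo it from scratch.
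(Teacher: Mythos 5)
Your plan for part \eqref{L.Jsigma-to-Jcont.2} is fine and mirrors what the paper does (the paper is terse there, deferring to standard machinery; you spell it out, which is a feature, not a bug). But your proof of part \eqref{L.Jsigma-to-Jcont.1} has a genuine gap.

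You claim that
\[
\cZ = \{(a,b) \in \cP(A)\times([s]\cap\cP(B)) \mid F(a\cup b)\cap \Phi_*(A) =^* \Phi_*(a)\}
\]
is Borel, arguing that it is ``obtained from the Borel function $F$, the fixed set $\Phi_*(A)$, and the map $a\mapsto\Phi_*(a)$ restricted to a countable family.'' But $\Phi_*$ is an \emph{arbitrary} lifting of a homomorphism that carries no definability assumption; the whole point of Proposition~\ref{P.Jsigma-to-Jcont} and its parent results is to \emph{derive} definability of a lifting, not to assume it. The map $a\mapsto\Phi_*(a)$ ranges over continuum-many inputs and is in no sense Borel-parametrized, so $\cZ$ (and hence $\cT$ and $\cZ^*$) need not be Borel, or even analytic. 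Consequently Jankov--von Neumann cannot be applied to $\cZ^*$ to select a witness $b(a)$, and the construction of $\Psi$ stalls at the first step.

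This is precisely the difficulty the paper's proof is engineered to dodge. Instead of selecting a \emph{witness} $b$, the paper selects the \emph{comeager value} $c$: it forms the three-variable set
\[
\cX=\{(a,b,c)\mid F(a\cup b)\cap \Phi_*(A)=^*c\},
\]
which \emph{is} Borel because it mentions only $F$, the fixed set $\Phi_*(A)$, and $=^*$ --- never the function $a\mapsto\Phi_*(a)$ --- and then takes
\[
\cY=\{(a,c)\mid \{b\mid (a,b,c)\in\cX\}\text{ is comeager}\},
\]
which is analytic by Novikov's theorem. Jankov--von Neumann gives a C-measurable selector $a\mapsto G_0(a)$ on the projection of $\cY$. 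Only a posteriori, for $a\in\cT$, does one observe that both $\{b:F(a\cup b)\cap\Phi_*(A)=^*G_0(a)\}$ and $\{b:F(a\cup b)\cap\Phi_*(A)=^*\Phi_*(a)\}$ are comeager, hence meet, hence $G_0(a)=^*\Phi_*(a)$. Note that $\cT$ itself is never required to be Borel: $G_0$ is defined on a definable superset, and $\cT$ is only used to certify agreement. To repair your argument you would have to switch from selecting the witness $b$ to selecting the value $c$ (or the pair $(b,c)$) along these lines, at which point you have essentially reproduced the paper's proof.
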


\begin{proof}
\eqref{L.Jsigma-to-Jcont.1}: For simplicity of notation, we may assume $[s] \cap \cP(B) =\cP(B)$. Since the Boolean operations $\cup$, $\cap$ and $\Delta$ are continuous, the function 
\[
(a,b,c)\mapsto (F(a\cup b)\cap \Phi_*(A))\Delta c
\] 
is Borel, and therefore the set 
\[
\cX=\{(a,b,c)\in \cP(A)\times \cP(B)\times \cP(\bbN)\mid F(a\cup b)\cap \Phi_*(A)=^*c\}
\]
is, being the preimage of $\Fin$ by the nameless Borel function introduced couple of lines ago, Borel. The set
\[
\cY=\{(a,c)\in \cP(A)\times \cP(\bbN)\mid \{b\subseteq B\mid (a,b,c)\in \cX\}\text{ is comeager}\}
\]
is, by Novikov's theorem (\cite[Theorem~29.3]{Ke:Classical}), analytic. By our assumption, for every $a\in \cT$ the set $\cZ(a)=\{b\subseteq B \mid (a,b,\Phi_*(a))\in \cX\}$ is comeager in $\cP(B)$, in particular the section $\cY_a$ is nonempty for all $a\in \cT$. 
 
 Therefore the Jankov--von Neumann uniformization theorem implies that there exists a C-measurable function 
\[
G_0\colon \cP(A)\to \cP(\bbN)
\]
such that for all $a \in \cT$ the set $\cX(a)$ consisting of all $b\subseteq B$ that satisfy
\begin{equation}\label{eq:A3_1}
	F(a\cup b)\cap \Phi_*(A)=^*G_0(a)
\end{equation}	
is comeager. In addition, for every $a \in \cT$ the set $\cZ(a)$ consisting of all $b\subseteq B$ such that
\begin{equation}\label{eq:A3_2}
	F(a\cup b)\cap \Phi_*(A)=^*\Phi_*(a)
\end{equation}
is comeager. 
Hence for each $a \in \cT$ there is $b\in \cX(a)\cap \cZ(a)$, and for this $b$ both (\ref{eq:A3_1}) and (\ref{eq:A3_2}) hold, thus $G_0(a)=^*\Phi_*(a)$.
 
 \eqref{L.Jsigma-to-Jcont.2}: Assume that $\cT$ is relatively comeager in some clopen subset of $\cP(A)$. By the already proven part of this lemma,~$\Phi$ has a C-measurable lifting on $\cT$. Since every C-measurable function is Baire-measurable,~$\Phi$ has a continuous lifting on $\cP(A)$.
\end{proof}

\begin{proof}[Proof of Proposition~\ref{P.Jsigma-to-Jcont}]
Fix a partition $\bbN=\bigsqcup_n A_n$ and Borel-mea\-su\-ra\-ble functions $F_n\colon \cP(\bbN)\to \cP(\bbN)$ whose graphs cover the graph of a lifting $\Phi_*$ of~$\Phi$. 
 
It suffices to prove that the restriction of $\Phi$ to $\cP(A_n)$ has a continuous lifting for some $n$. Assume this is not the case. Since the ideal $\Jcont$ is closed under finite changes of its elements, 
 this implies that for every $n$ and every nonempty clopen subset $[t] \cap \cP(A_n)$ of $\cP(A_n)$, the restriction of $\Phi$ to $[t] \cap \cP(A_n)$ has no continuous lifting. 
 
It will be convenient to write $\textstyle C_n=\bigcup_{j>n}A_j$.

We will recursively choose sets $a_n\subseteq A_n$ and $\cX_n\subseteq \cP(C_n)$, along with clopen subsets $[s_n] \cap \cP(C_n)$ of $\cP(C_n)$ and decreasing sequences $(U_{ni})_i$ of open subsets of $\cP(C_n)$ such that the following conditions hold for all $n$. 
 \begin{enumerate}
 \item\label{I.sigma-Borel} $F_n(\bigcup_{j\leq n} a_j\cup b)\cap \Phi_*(A_n)\neq^* \Phi_*(a_n)$ for all $b\in \cX_n$. 
 \item $\cX_n \supseteq \bigcap_i U_{ni}$ and $U_{ni}$ is an open dense subset of $[s_n]\cap \cP(C_n)$ for every $i \in \bbN$. 
 \item $\{a_{n+1}\}\oplus \cX_{n+1}\subseteq \cX_n$. 
 \item\label{V.sigma-Borel} For every $k < n$, all sets $b \subseteq C_k$ that satisfy both $b \cap \bigcup_{k< j\leq n} A_j = \bigcup_{k < j\leq n} a_j$ and $b \cap C_n \in [s_n]$ belong to $U_{kn}$. 
 \end{enumerate}
We will describe the selection of $a_n$, $\cX_n$, $(U_{ni})_i$ and $[s_n]$.
For $n=0$, our assumption that $\Phi$ has no continuous lifting on $\cP(A_0)$ and Lemma~\ref{L.Jsigma-to-Jcont} together imply that for some $a_0\subseteq A_0$ the set $\cX_0$ of all $x\subseteq C_0$ such that $F_0(a_0\cup x)\cap \Phi_*(A_0)\neq^* \Phi_*(a_0)$ is nonmeager. 
Since this set is, as a preimage of a Borel set by a Borel-measurable function, Borel, there is a clopen set $[s_0] \cap \cP(C_0) \subseteq \cP(C_0)$ such that $[s_0]\cap \cX_0$ is relatively comeager in $[s_0]\cap \cP(C_0)$. Choose a decreasing sequence $(U_{0i})_i$ of dense open subsets of $[s_0]\cap \cP(C_0)$ whose intersection is contained in $[s_0]\cap \cX_0$. 
 
This describes the construction of $a_0$, $\cX_0$, $(U_{0i})_i$ and $[s_0]$. 

Suppose that $a_n$, $\cX_n$, $(U_{ni})_i$ and $[s_n]$ as required had been chosen. Then we can write $[s_n] \cap \cP(C_n)=[t_n] \cap \cP(A_{n+1}) \oplus [u_n] \cap \cP(C_{n+1})$ for clopen sets $[t_n] \cap \cP(A_{n+1}) \subseteq \cP(A_{n+1})$ and $[u_n] \cap \cP(C_{n+1}) \subseteq \cP(C_{n+1})$. By the Kuratowski--Ulam theorem (see e.g., \cite[\S 8.K]{Ke:Classical}), the set 
\begin{multline*}
\cT_n=\{a\in [t_n] \cap \cP(A_{n+1})\mid \text{the set }\{b \in [u_n]\cap \cP(C_{n+1})\mid a\cup b \in \cX_n\}\\ \text{ is relatively comeager in $[u_n]\cap \cP(C_{n+1})$}\}
\end{multline*}
is relatively comeager in $[t_n]\cap \cP(A_{n+1})$. 
Since the intersection of comeager sets is comeager, by the fact that $\Phi$ has no continuous lifting on $[t_n]\cap \cP(A_n)$, and by applying Lemma~\ref{L.Jsigma-to-Jcont} to $F_{n+1}$, $\cT_n$, and $[u_n]$, we can find $a_{n+1} \in [t_n]\cap \cP(A_{n+1})$ such that 
 \begin{multline*}
 \cX_{n+1}=\{b\in [u_n]\cap \cP(C_{n+1})\mid a_{n+1}\cup b\in \cX_n,\\ F_{n+1}(\bigcup_{j\leq n} a_j \cup a_{n+1}\cup b)\cap \Phi_*(A_{n+1})\neq^* \Phi_*(a_{n+1})\}
 \end{multline*}
 is nonmeager in $[u_n]\cap\cP(C_{n+1})$. Being a Borel set, $\cX_{n+1}$ is relatively comeager in $[s_{n+1}]\cap \cP(C_{n+1})$ for some relatively clopen $[s_{n+1}] \cap \cP(C_{n+1})\subseteq [u_n]\cap \cP(C_{n+1})$. We can moreover assume that the clopen set $[s_{n+1}]\cap \cP(C_{n+1})$ was chosen sufficiently small so that for all $k\leq n$ and all $b \in [s_{n+1}]\cap \cP(C_{n+1})$ the set $\bigcup_{k < j\leq n + 1} a_j \cup b$ belongs to $U_{kn}$. Choose then a decreasing sequence $(U_{n+1i})_i$ of dense open subsets of $[s_{n+1}]\cap \cP(C_{n+1})$ with intersection contained in $[s_{n+1}]\cap \cX_{n+1}$.
 
Then $\{a_{n+1}\}\oplus \cX_{n+1}\subseteq \cX_n$, and the sets $a_{n+1}$, $\cX_{n+1}$, $(U_{n+1i})_i$ and $[s_{n+1}]$ satisfy the requirements. 
 
This describes the recursive construction. Given the sets $a_n$, for $n\in \bbN$, let $a=\bigcup_n a_n$. By the assumption, $F_n(a)=^* \Phi_*(a)$ for some $n$. Finally,  \eqref{V.sigma-Borel} implies that $a \cap C_n = \bigcup_{j > n} a_j \in \bigcap_{j} U_{nj} \subseteq \cX_n$. However, $a\cap A_n=a_n$, hence $F_n(a)\cap \Phi_*(A_n)=^* \Phi_*(a_n)$, but this contradicts \eqref{I.sigma-Borel}. 
\end{proof}

\begin{remark}
Proposition~\ref{P.Jsigma-to-Jcont} can be generalized by replacing $\cP(\bbN)$ with an arbitrary product of finite sets (and considering its reduced product over $\Fin$ in place of $\mathcal P(\bbN)/\Fin$). Since we do not need this fact, and its proof is exactly the same as the one of Proposition~\ref{P.Jsigma-to-Jcont} (with more notation), we decided to leave it to the reader.
\end{remark}

\bibliographystyle{amsplain}
\bibliography{Bibliography_Aut}
\end{document}